\mathchardef\varGamma="0100 
\mathchardef\varTheta="0102 
\mathchardef\varLambda="0103 
\mathchardef\varXi="0104 
\mathchardef\varPi="0105 
\mathchardef\varSigma="0106 
\mathchardef\varPhi="0108 
\mathchardef\varPsi="0109 
\mathchardef\varOmega="010A 
\font\medit=ptmri at 11pt
\font\smallmedit=ptmri at 9pt
\def\sk{\hskip-2pt\mathop{\phantom{\vrule width1pt height2.7pt depth0pt}}\limits^*\hskip-2.7pt}
\def\w{^{\phantom i}}
\def\q{p}
\def\done{}
\def\bl{B}
\def\ee{E}
\def\mf{M}
\def\nr{N}
\def\tyb{T\hskip-2.3pt_y\w\hskip-.7pt\sd}
\def\ez{\mathcal{E}}
\def\pl{\mathcal{L}}
\def\on{\mathcal{N}}
\def\oy{\mathcal{Y}}
\def\tw{\mathcal{T}}
\def\zy{\mathcal{Z}}
\def\lf{\varLambda}
\def\mg{\ve}
\def\vx{H}
\def\pv{P}
\def\da{\Delta}
\def\vps{\varPsi}
\def\cro{\overline{\hskip-2pt\partial}}
\def\zx{c}
\def\du{A}
\def\ic{\varGamma}
\def\hg{\hat{g\hskip2pt}\hskip-1.3pt}
\def\dv{S}
\def\bs{\varSigma}
\def\vt{{\tau\hskip-4.55pt\iota\hskip.6pt}} 
\def\tp{\vt\hskip-1.8pt_+\w}
\def\tm{\vt\hskip-1.7pt_-\w}
\def\tpm{\vt\hskip-1.5pt_\pm\w}
\def\tmp{\vt\hskip-1pt_\mp\w}
\def\htp{\hat\vt\hskip-1.8pt_+\w}
\def\htm{\hat\vt\hskip-1.7pt_-\w}
\def\htpm{\hat\vt\hskip-1.6pt_\pm\w}
\def\gp{g}
\def\hr{\rho}
\def\hp{h\nnh^+}
\def\hm{h\nnh^-}
\def\hpm{h\nh^\pm}
\def\navp{\nabla\hskip-.5pt\vt}
\def\bbR{\mathrm{I\!R}}
\def\rto{\bbR^2}
\def\bbC{{\mathchoice {\setbox0=\hbox{$\displaystyle\mathrm{C}$}\hbox{\hbox 
to0pt{\kern0.4\wd0\vrule height0.9\ht0\hss}\box0}} 
{\setbox0=\hbox{$\textstyle\mathrm{C}$}\hbox{\hbox 
to0pt{\kern0.4\wd0\vrule height0.9\ht0\hss}\box0}} 
{\setbox0=\hbox{$\scriptstyle\mathrm{C}$}\hbox{\hbox 
to0pt{\kern0.4\wd0\vrule height0.9\ht0\hss}\box0}} 
{\setbox0=\hbox{$\scriptscriptstyle\mathrm{C}$}\hbox{\hbox 
to0pt{\kern0.4\wd0\vrule height0.9\ht0\hss}\box0}}}} 
\def\bbCP{\bbC\mathrm{P}}
\def\bbZ{\hbox{$\mathsf{Z}\hskip-4.5pt\mathsf{Z}$}}
\def\dimc{\dim_{\hskip.4pt\bbC}\w\nh}
\def\lie{\pounds}
\def\hs{\hskip.7pt}
\def\hh{\hskip.4pt}
\def\hn{\hskip-.4pt}
\def\nh{\hskip-.7pt}
\def\nnh{\hskip-1.5pt}
\def\hrz{^{\hskip.5pt\text{\rm hrz}}}
\def\la{\langle}
\def\ra{\rangle}
\def\lr{\langle\,,\rangle}
\def\sd{\varSigma}
\def\sb{B}
\def\ks{\mathsf{K}}
\def\ls{\mathsf{L}}
\def\vs{\mathsf{V}}
\def\ws{\mathsf{W}}
\def\ve{\varepsilon}
\def\hyp{\hskip.5pt\vbox
{\hbox{\vrule width2.5ptheight0.5ptdepth0pt}\vskip2pt}\hskip.5pt}
\begin{document}

\renewcommand{\theequation}{\arabic{section}.\arabic{equation}}
\title[K\"ah\-ler manifolds with geodesic hol\-o\-mor\-phic 
gradients]{K\"ah\-ler manifolds with geodesic\\ hol\-o\-mor\-phic gradients}

\author{Andrzej DERDZINSKI}

\address{Department of Mathematics, The Ohio State University, Columbus, OH 
43210, USA}

\email{andrzej@math.ohio-state.edu}


\author{Paolo PICCIONE}

\address{Instituto de Matem\'atica e Estat\'\i stica, Universidade de S\~ao 
Paulo, Rua do Mat\~ao 1010, CEP 05508-900, S\~ao Paulo, SP, Brazil}

\email{piccione@ime.usp.br}

\theoremstyle{plain} 
\newtheorem{theorem}{Theorem}[section]
\newtheorem{lemma}[theorem]{Lemma}
\newtheorem{corollary}[theorem]{Corollary}
\newtheorem{proposition}[theorem]{Proposition}
\newtheorem{claim}[theorem]{Claim}
\theoremstyle{definition} 
\newtheorem{definition}[theorem]{Definition}
\newtheorem{remark}[theorem]{Remark}
\newtheorem{example}[theorem]{Example}
\newtheorem{notation}[theorem]{Notation}

\selectlanguage{english}
\begin{abstract}
A vector field on a Riemannian manifold is called geodesic if its integral 
curves are reparametrized geodesics. We classify compact K\"ah\-ler manifolds 
admitting nontrivial real-holomorphic geodesic gradient vector fields 
that satisfy an additional in\-te\-gra\-bil\-i\-ty condition.
They 
are all biholomorphic to bundles of complex projective spaces.
\vskip 0.5\baselineskip

\selectlanguage{francais}
\noindent\hbox{\sc R\'esum\'e.\hskip2.5pt--\hskip2pt}{\bf Vari\'et\'es 
k\"ahl\'eriennes admettant des gradients g\'eod\'e\-siques holomorphes.} \ 
Un champ de vecteurs sur une vari\'et\'e rieman\-nienne est dit g\'eod\'esique 
si ses courbes int\'egrales sont g\'eod\'esiques non param\'etr\'es. On 
classifie des vari\'et\'es k\"ahl\'eriennes compactes qui admettent des 
gradients g\'eod\'esiques r\'eels \hskip-.64ptholomorphes \hskip-.64ptnon \hskip-.64pttriviaux \hskip-.64pt
satisfaisant \hskip-.64pt\`a \hskip-.64ptune \hskip-.64ptcondition \hskip-.64ptadditionnelle \hskip-.64ptd'int\'egra\-bilit\'e.         
Elles sont toutes 
biholomorphes \`a fibr\'es en espaces projectifs complexes.
\selectlanguage{english}
\end{abstract}

\keywords{Hol\-o\-mor\-phic gradient, geodesic gradient, trans\-nor\-mal 
function}


\subjclass{53C55}

\maketitle 

\section*{Introduction\done}
\setcounter{equation}{0}
We say that a vector field on a Riemannian manifold is {\medit geodesic\/} if 
its integral curves are re\-pa\-ram\-e\-trized geodesics. The present paper 
discusses
\begin{equation}\label{trp}
\begin{array}{l}
\mathrm{triples\ }\hs(\mf\nh,g,\vt)\hs\mathrm{\ consisting\ of\ a\ compact\ 
complex\ manifold\ }\hs\mf\nh\mathrm{,\ a}\\
\mathrm{K}\ddot{\mathrm{a}}\mathrm{hler\ metric\ }\,g\,\mathrm{\ on\ 
}\,\mf\nnh\mathrm{,\ and\ a\ nonconstant\ function\ 
}\,\vt:\mf\nh\to\bbR\mathrm{,}\\
\mathrm{the\hs\hs\ }\,g\hyp\mathrm{gra\-di\-ent\hs\hs\ of\hs\hs\ which\hs\hs\ 
is\hs\hs\ both\hs\hs\ geodesic\hs\hs\ and\hs\hs\ 
real}\hyp\mathrm{hol\-o\-mor\-phic.}
\end{array}
\end{equation}
We observe (Remark~\ref{dppdm}) that for $\,m=\dimc\nh\mf\hs$ and 
$\,d_\pm\w\nh=\dimc\nh\sd^\pm\nh$, where $\,\sd^+$ and $\,\sd^-$ are the 
maximum and minimum level sets of $\,\vt$, one then has
\begin{equation}\label{ddm}
d_+\w+\hs\,d_-\w\hs\,\ge\,\,m\,-\,1\,\,\ge\hs\,\,d_\pm\w\hs\,\ge\,\,0\hh,
\end{equation}
and every $\,(d_+\w,d_-\w,m)\in\bbZ^3$ satisfying (\ref{ddm}) is realized by 
some $\,(\mf\nh,g,\vt)\,$ with (\ref{trp}).

One of our three main results, Theorem~\ref{cpbdl}, classifies the triples 
(\ref{trp}) such that
\begin{equation}\label{spn}
\mathrm{Ker}\hskip2.3ptd\pi^+\mathrm{\ and\ 
}\,\hs\mathrm{Ker}\hskip2.3ptd\pi^-\mathrm{\ span\ an\ in\-te\-gra\-ble\ 
distribution\ on\ }\,\mf'\nh.
\end{equation}
Here $\,\mf'\nh=\mf\smallsetminus(\sd^+\nnh\cup\sd^-)$, while 
$\,\pi^\pm\nnh:\mf\nnh\smallsetminus\nh\sd^\mp\nh\to\sd^\pm$ sends each 
$\,x\in\mf\nnh\smallsetminus\nh\sd^\mp$ to the unique point nearest $\,x\,$ in 
$\,\sd^\pm\nh$. (In case (\ref{trp}) $\,\pi^\pm$ always are disk-bundle 
projections, and their vertical distributions 
$\,\mathrm{Ker}\hskip2.3ptd\pi^\pm$ span a vector sub\-bun\-dle of 
$\,T\nnh\mf'\nh$, cf.\ Section~\ref{cc}; however, (\ref{trp}) does not imply 
(\ref{spn}) -- see Remark~\ref{nintg}.)

As a consequence of Theorem~\ref{cpbdl}, in every triple with (\ref{trp}) and  
(\ref{spn}),
\begin{equation}\label{bdl}
\begin{array}{l}
\mf\,\mathrm{\nh\ is\nh\ bi\-hol\-o\-mor\-phic\nh\ to\nh\ a\nh\ bundle\nh\ 
of\nh\ (pos\-i\-tive\hs}\hyp\hs\mathrm{di\-men\-sion\-al)\nh\ complex}\\
\mathrm{projective\hs\ spaces\hs\ over\hs\ some\hs\ base\hs\ manifold\ 
}\,\,\,\sb\,\,\mathrm{\ with\ }\,\,\dimc\nnh\sb\ge0.
\end{array}
\end{equation}
The remaining two main results of the paper, Theorems~\ref{tgimm} 
and~\ref{dicho}, deal with the general case of (\ref{trp}), that is, do not 
assume (\ref{spn}).

According to Theorem~\ref{tgimm}, whenever $\,\varPi^\pm$ is a leaf of either 
(obviously in\-te\-gra\-ble) vertical distribution 
$\,\mathrm{Ker}\hskip2.3ptd\pi^{\hs\pm}\nh$, the {\medit other\/} projection 
$\,\pi^\mp$ maps $\,\varPi^\pm\nh\cap\mf'$ onto the image $\,F(\bbCP^k)\,$ of 
some totally geodesic hol\-o\-mor\-phic immersion $\,F:\bbCP^k\nh\to\sd^\mp$ 
inducing on $\,\bbCP^k$ a multiple of the Fu\-bi\-ni-Stu\-dy metric, with 
$\,k=k_\pm\w\nh\ge0\,$ given by $\,k_\pm\w\nh=m-1-\hs d_\pm\w$. 
Both $\,\sd^\pm$ are themselves (connected) totally geodesic 
compact complex sub\-man\-i\-folds of $\,\mf\nh$, cf.\ Remark~\ref{ascdt}(iii).

The third main result reveals a dichotomy involving the assignment
\begin{equation}\label{asg}
\mf'\nh\ni x\,
\mapsto\,d\pi\nh_x^{\hs\pm}\nh(\mathrm{Ker}\hskip2.3ptd\pi\nh_x^\mp\hn)
\in\hs\mathrm{Gr}\nh_k\w\nh(\tyb^\pm\nh)\hskip7pt\mathrm{for}\hskip7pt
y=\pi^\pm\nh(x)\hh,
\end{equation}
$\mathrm{Gr}\nh_k\w\nh(\tyb^\pm\nh)\,$ being the complex Grass\-mann\-i\-an, 
with $\,k=k_\pm\w$ defined as before.

Specifically, Theorem~\ref{dicho} states that one of the following two cases 
has to occur. First, (\ref{asg}) may be {\medit constant\/} on every leaf of 
$\,\mathrm{Ker}\hskip2.3ptd\pi\nh_x^{\hs\pm}$ in $\,\mf'\nh$, that is, on 
every fibre $\,\varPi^\pm$ of the projection 
$\,\pi^\pm\nnh:\mf\nnh\smallsetminus\nh\sd^\mp\nh\to\sd^\pm$ restricted to 
$\,\mf'\nh$, with either sign $\,\pm\hs$. Otherwise, $\,l=k_\mp\w$ and 
$\,k=k_\pm\w$ are positive for both signs $\,\pm\hs$, while (\ref{asg})
restricted to any such leaf $\,\varPi^\pm$ must be a composite mapping 
$\,\varPi^\pm\nnh\to\hh\bbCP\hh^l\nnh
\to\hh\mathrm{Gr}\nh_k\w\nh(\tyb^\pm\nh)\,$ 
formed by a hol\-o\-mor\-phic bundle projection 
$\,\varPi^\pm\nnh\to\hh\bbCP\hh^l\nh$, having the fibre 
$\,\bbC\smallsetminus\{0\}$, and a {\medit nonconstant hol\-o\-mor\-phic 
embedding\/} $\,\bbCP\hh^l\nnh\to\hh\mathrm{Gr}\nh_k\w\nh(\tyb^\pm\nh)$.

The first case of Theorem~\ref{dicho} is equivalent to condition (\ref{spn}), 
and the immersions $\,\bbCP^k\nh\to\sd^\pm\nh$, mentioned in the above summary 
of Theorem~\ref{tgimm}, are then embeddings, for both signs $\,\pm\hs$, while 
their images constitute foliations of $\,\sd^\pm\nh$, both with the same leaf 
space $\,\sb\,$ appearing in (\ref{bdl}). See Remark~\ref{infty}.

In the second case (cf.\ Remark~\ref{nncst}) the images of these immersions, 
rather than being pairwise disjoint, are totally geodesic, 
hol\-o\-mor\-phic\-al\-ly immersed complex projective spaces, an uncountable 
family of which passes through each point of $\,\sd^\pm\nh$.

Three special classes of the objects (\ref{trp}) have been studied before. One 
is provided by the gradient K\"ah\-\hbox{ler\hs-}\hskip0ptRic\-ci sol\-i\-tons 
discovered by Koiso \cite{koiso} and, independently, Cao \cite{cao}, where 
$\,\vt\,$ is the sol\-i\-ton function; two more -- by special 
K\"ah\-\hbox{ler\hs-}\hskip0ptRic\-ci potentials $\,\vt\,$ on compact 
K\"ah\-ler manifolds \cite{derdzinski-maschler-06}, and by triples with 
(\ref{trp}) such that $\,\mf\,$ is a (compact) complex surface 
\cite{derdzinski-kp}. Each of these three classes satisfies (\ref{spn}).

The papers \cite{derdzinski-maschler-06,derdzinski-kp} provide complete 
explicit descriptions of the classes discussed in them. Our 
Theorem~\ref{cpbdl} generalizes their classification results, namely, 
\cite[Theorem 16.3]{derdzinski-maschler-06} and 
\cite[Theorem 6.1]{derdzinski-kp}.

For more details on the preceding two paragraphs, see Remark~\ref{clone}.

Functions with geodesic gradients on arbitrary Riemannian manifolds, usually 
called {\it trans\-nor\-mal}, have been studied extensively as well 
\cite{wang,miyaoka,bolton}.

Both authors' research was supported in part by a FAPESP\nh--\hs OSU 2015 
Regular Research Award (FAPESP grant: 2015/50265-6). The authors wish to thank 
Fangyang Zheng for helpful comments.

\section{Preliminaries\done}\label{pr}
\setcounter{equation}{0}
Manifolds, mappings and tensor fields, including Riemannian metrics and 
functions, are by definition of class $\,C^\infty\nnh$. A (sub)manifold 
is always assumed connected.

Our sign convention about the curvature tensor $\,R=\nnh R\hn^\nabla$ of a 
connection $\,\nabla\,$ in a vector bundle $\,\ee\,$ over a manifold $\,\mf\,$ 
is
\begin{equation}\label{cur}
R\hh(\hn v,w)\hh\xi\,=\,\nabla\hskip-3pt_w\w\nh\nabla\hskip-3pt_v\w\hs\xi\,
-\,\nabla\hskip-3pt_v\w\nh\nabla\hskip-3pt_w\w\hs\xi\,
+\,\nabla\hskip-3pt_{[v,w]}\w\hh\xi
\end{equation}
for any section $\,\xi\,$ of $\,\ee\,$ and vector fields $\,v,w\,$ tangent to 
$\,\mf\nh$. One may treat $\,R\hh(\hn v,w)$, the covariant derivative 
$\,\nabla\xi$, and any function $\,f\,$ on $\,\mf\,$ as bundle morphisms
\begin{equation}\label{nwt}
R\hh(\hn v,w),\hs f:\ee\to\ee\hh,\hskip22pt
\nabla\xi:T\nnh\mf\to\ee
\end{equation}
sending $\,\xi\,$ or $\,v\,$ as above to $\,R\hh(\hn u,v)\hh\xi$, $\,f\xi\,$ 
or, respectively, $\,\nabla\hskip-3pt_v\w\xi$. Notation of (\ref{nwt}) is used 
in the next three displayed relations. 

In the case of a Riemannian manifold $\,(\mf\nh,g)$, the symbol $\,\nabla\,$ 
will always stand for the Le\-vi-Ci\-vi\-ta connection of $\,g\,$ as well as 
the $\,g$-gra\-di\-ent. Given a function $\,\vt\,$ and vector fields 
$\,w,w\hh'$ on $\,(\mf\nh,g)$, one has the Lie-de\-riv\-a\-tive relation
\begin{equation}\label{lvg}
[\nh\lie\hskip-.5pt_v\w g](\hn w,w\hh'\hh)=2\hh g(\dv\nh w,w\hh'\hh)\hh,\hskip8pt\mathrm{where}
\hskip5ptv=\navp\hskip5pt\mathrm{and}\hskip5pt\dv
=\nabla\nh v:T\nnh\mf\to T\nnh\mf\hh,
\end{equation}
due to the local-coordinate equalities 
$\,[\nh\lie\hskip-.5pt_v\w g]_{jk}\w=v_{j,k}\w+v_{k,j}\w=2v_{j,k}\w$. For 
vector fields $\,v,u\,$ on a manifold $\,\mf\,$ and a bundle morphism 
$\,\bl:T\nnh\mf\to T\nnh\mf\nh$, the Leib\-niz rule gives 
$\,[\nh\lie\hskip-.5pt_v\w\bl]u=[v,\bl u]-\bl[v,u]
=[\nabla\hskip-3pt_u\w\bl]u+\bl\nabla\hskip-3pt_u\w v
-\nabla\hskip-3pt_{\bl u}\w v$, and so
\begin{equation}\label{lie}
\lie\hskip-.5pt_v\w\bl\,=\,\nabla\hskip-3pt_u\w\bl\,+\,[\bl,\nabla\nh v]\hh.
\end{equation}
Next, let $\,u\,$ be a Kil\-ling vector field on a Riemannian manifold 
$\,(\mf\nh,g)$. The Ric\-ci and Bian\-chi identities imply, as in 
\cite[bottom of p.\ 572]{derdzinski-roter-07}, the well-known relation
\begin{equation}\label{scd}
\nabla\hskip-3pt_v\w\du\,=\,R(\hn u,v)\hh,\hskip12pt\mathrm{with\ }\,\,\du
=\nabla\nh u\hh.
\end{equation}
Since the flow of a Kil\-ling field preserves the Le\-vi-Ci\-vi\-ta 
connection, (\ref{scd}) also follows from the classical Lie-de\-riv\-a\-tive 
equality$\,[\nh\lie\hskip-.5pt_u\w\nnh\nabla]\hn _v\w w
=[\nabla\hskip-3pt_v\w\du]\hs w-R\hh(\hn u,v)\hh w$, 
with $\,\du=\nabla\nh u$, cf.\ \cite[formula (1.8) on p. 337]{schouten}, valid 
for any connection $\,\nabla\,$ 
in $\,T\nnh\mf\nh$.

Whenever $\,\vt:\mf\to\bbR\,$ is a function on a Riemannian manifold 
$\,(\mf\nh,g)$, we have
\begin{equation}\label{tnd}
\nabla Q\,=\,2\hh\nabla\hskip-3pt_v\w v\hh,\hskip12pt\mathrm{where}\hskip7ptv
=\navp\hskip7pt\mathrm{and}\hskip7ptQ=g(\hn v,v)\hh,
\end{equation}
as one sees noting that, in local coordinates,
$\,(\vt\nnh_{,\hs k}\w\vt^{\hh,\hh k})\nh_{,\hh j}\w
=2\vt\nnh_{,\hs kj}\w\vt^{\hh,\hh k}$. Also, obviously
\begin{equation}\label{dvt}
d_v\w\vt\,=\,g(\hn v,\navp)\,=\,Q\hskip12pt\mathrm{if}\hskip7ptv=\navp\hskip7pt
\mathrm{and}\hskip7ptQ=g(\hn v,v)\hh.
\end{equation}
\begin{remark}\label{dvwwp}Relation (\ref{lvg}) becomes 
$\,d_v\w[\hs g(\hn w,w\hh'\hh)]=2\hh g(\dv\nh w,w\hh'\hh)\,$ if, in addition, 
$\,v$ commutes with $\,w\,$ and $\,w\hh'\nh$. Namely, 
$\,d_v\w=\nh\lie\hskip-.5pt_v\w$ on functions, so that we may evaluate 
$\,d_v\w[\hs g(\hn w,w\hh'\hh)]\,$ using the Leib\-niz rule for the Lie 
derivative with $\,\lie\hskip-.5pt_v\w w=\lie\hskip-.5pt_v\w w\hh'\nh=0$.
\end{remark}
\begin{remark}\label{gpgrd}Whenever the $\,g$-gra\-di\-ent 
$\,v=\navp\,$ of a function $\,\vt\,$ on a Riemannian manifold 
$\,(\mf\nh,g)\,$ is tangent to a sub\-man\-i\-fold $\,\varPi\,$ with the 
sub\-man\-i\-fold metric $\,g'\nnh$, the restriction of $\,v\,$ to 
$\,\varPi\,$ obviously equals the $\,g'\nnh$-gra\-di\-ent of 
$\,\vt:\varPi\to\bbR$.
\end{remark}
\begin{remark}\label{cifot}Given a manifold $\,\mf\,$ and 
$\,\sigma,\vt:\mf\to\bbR$, we call $\,\sigma\,$ {\it a\/ $\,C^\infty$ function 
of\/} $\,\vt\,$ if $\,\vt\,$ is nonconstant (so that its range $\,\vt(\mf)\,$ 
is an interval) and $\,\sigma=\chi\circ\tau\,$ for some $\,C^\infty$ function 
$\,\chi:\vt(\mf)\to\bbR$. Note that $\,\chi\,$ is then uniquely determined by 
$\,\sigma$ and $\,\vt$. We will denote by $\,\sigma\,$ both the original 
function $\,\mf\to\bbR\,$ and the function $\,\chi:I\to\bbR\,$ of the variable 
$\,\vt\in I\nh$.
\end{remark}
Let $\,(t,s)\mapsto x(t,s)\in\mf\,$ be a fixed {\medit variation of curves\/} 
in a manifold $\,\mf\nh$, that is, a $\,C^\infty$ mapping in which the real 
variables $\,t,s\,$ range independently over intervals. The {\medit partial 
derivative\/} $\,x_t\w$ (or, $\,x_s\w$) then assigns to each 
$\,(t_0\w,s_0\w)\,$ the velocity vector at $\,t_0\w$ (or, $\,s_0\w$) of the 
curve $\,t\mapsto x(t,s_0\w)\,$ or, respectively, $\,s\mapsto x(t_0\w,s)$. 
(Thus, $\,x_t\w$ and $\,x_s\w$ are sections of a specific pull\-back 
bundle.) A connection $\,\nabla\,$ on $\,\mf\,$ allows us to define 
the {\medit mixed sec\-ond-or\-der partial derivatives\/} $\,x_{ts}\w$ and 
$\,x_{st}\w$ of the variation, so that, for instance, the value of 
$\,x_{ts}\w$ at $\,(t_0\w,s_0\w)\,$ is the $\,\nabla\nh$-co\-var\-i\-ant 
derivative, at the parameter $\,s_0\w$, of the vector field 
$\,s\mapsto x_t\w(t_0\w,s)\,$ along the curve $\,s\mapsto x(t_0\w,s)$, and 
analogously for $\,x_{st}\w$. Obviously, $\,x\nh_{st}\w\nh=x_{ts}\w$ when 
$\,\nabla\,$ is tor\-sion-free, cf.\ \cite[p.\ 101]{derdzinski-maschler-06}.
\begin{remark}\label{jcobi}For a tor\-sion-free connection $\,\nabla\,$ on a 
manifold $\,\mf\,$ and a smooth variation 
$\,(t,s)\mapsto x(t,s)=\exp_{y(s)}\w\hs(t-t_0\w)\xi(s)\,$ of 
$\,\nabla\nh$-ge\-o\-des\-ics, with $\,(t,s)\,$ near $\,(t_0\w,s_0\w)$ in 
$\,\rto$ and a vector field 
$\,s\mapsto\xi(s)\in T\hskip-3pt_{y(s)}\w\hskip-.7pt\mf\,$ along a curve 
$\,s\mapsto y(s)\in\mf\nh$, let $\,t\mapsto\hat w(t)\,$ be the Ja\-co\-bi 
vector field along the geodesic $\,t\mapsto x(t)=x(t,s_0\w)\,$ defined by 
$\,\hat w(t)=x_s\w(t,s_0\w)\,$ (notation of the last paragraph). Then 
$\,[\nabla\hskip-3pt_{\dot x}\w\hat w](t_0\w)
=[\nabla\hskip-3pt_{\dot y}\w\hs\xi](s_0\w)$, 
which is nothing else than $\,x\nh_{st}\w\nh=x_{ts}\w$ (see above) at 
$\,(t,s)=(t_0\w,s_0\w)$. Also, clearly, $\,\hat w(t_0\w)=\dot y(s_0\w)$. Note 
that $\,\nabla\hskip-3pt_{\dot y}\w\xi\,$ may be nonzero even if $\,y(s)=y\,$ 
is a constant curve, as it then equals the ordinary derivative of 
$\,s\mapsto\xi(s)\in T\hskip-3pt_y\w\hskip-.7pt\mf\,$ with respect to $\,s$.
\end{remark}
\begin{remark}\label{tlspc}Let $\,\nr\,$ be a vector bundle over a manifold 
$\,\sd$. We use the same symbol $\,\nr\hs$ for its total space, which we 
identify, as a set, with 
$\,\{(y,\xi):y\in\sd\,\mathrm{\ and\ }\,\xi\in\nr\hskip-2.4pt_y\w\}$. Given 
a connection $\,\mathrm{D}\,$ in $\,\nr\hs$ and vector fields $\,v,w\,$ 
tangent to $\,\sd$,
\begin{equation}\label{cmt}
\mathrm{the\ }\hs\mathrm{D}\hh\hyp\hn\mathrm{hor\-i\-zon\-tal\ lifts\ of\ 
}\hs v\hs\mathrm{\ and\ }\hs w\hs\mathrm{\ commute\ if\ so\ do\ 
}\hs v,w\hs\mathrm{\ and\ }\hs R^{\mathrm{D}}\nh(\hn v,w)=0
\end{equation}
(notation of (\ref{nwt})). Namely, at any $\,x=(y,\xi)\in\nr\nnh$, the vertical 
(or, horizontal) component of the Lie bracket of the horizontal lifts of 
$\,v\,$ and $\,w\,$ equals 
$\,R_y^{\mathrm{D}}\nh(\hn v_y\w,w_y\w)\hh\xi$ (an easy exercise) or, 
respectively, the horizontal lift of $\,[v,w]_y\w$, cf.\ 
\cite[p.\ 10]{kobayashi-nomizu}.
\end{remark}
\begin{remark}\label{nexpm}Let $\,\mathrm{D}\,$ be the normal connection in 
the normal bundle $\,\nr\hskip-2.3pt\sd\,$ of a totally geodesic 
sub\-man\-i\-fold $\,\sd\,$ in a Riemannian manifold $\,(\mf\nh,g)$. We denote by 
$\,\mathrm{Exp}^\perp\nnh:U\nh\to\mf\,$ the normal exponential mapping 
of $\,\sd$, the domain of which is an open sub\-man\-i\-fold $\,\,U$ of the 
total space $\,\nr\hskip-2.3pt\sd\,$ such that, for every normal space 
$\,\nr\hskip-2.4pt_y\w\sd$, where $\,y\in\sd^\pm\nh$, the intersection 
$\,\,U\nh\cap\nr\hskip-2.4pt_y\w\sd\,$ is nonempty and star-shap\-ed (in the 
sense of being a union of line segments emanating from $\,0$). 
Remark~\ref{jcobi} leads to the following well-known description of the 
differential $\,d\hs\mathrm{Exp}^\perp_{(y,\,\xi)}$ of $\,\mathrm{Exp}^\perp$ 
at any $\,(y,\xi)\in\,U\nnh$, cf.\ Remark~\ref{tlspc}. Specifically, we may 
assume that $\,\xi\ne0\,$ since, clearly, 
$\,d\hs\mathrm{Exp}^\perp_{(y,\,\xi)}=\hs\mathrm{Id}$ when $\,\xi=0$, under 
the obvious isomorphic identification 
$\,T_{(y,0)}\w[\nr\hskip-2.3pt\sd]
=T\hskip-3pt_y\w\sd\oplus\nr\hskip-2.4pt_y\w\sd
=T\hskip-3pt_y\w\mf\nh$. The point $\,y\in\sd\,$ and the normal vector 
$\,\xi\in\nr\hskip-2.4pt_y\w\sd$ thus have the property that the nontrivial 
geodesic $\,r\mapsto x(\hn r\nh)=\exp_y\w\hn r\xi\,$ is defined for all 
$\,r\in[\hs0,1\hh]$. If $\,r>0$, a vector tangent to 
$\,\nr\hskip-2.3pt\sd\,$ at $\,(y,r\xi)\,$ can be uniquely written as 
$\,r\eta+w_r\hrz\nh$, where 
$\,\eta\in\nr\hskip-2.4pt_y\w\sd=T_{(y,\,r\xi)}\w[\nr\hskip-2.4pt_y\w\sd]\,$ 
is vertical and $\,w_r\hrz$ denotes the $\,\mathrm{D}\hh$-\hn hor\-i\-zon\-tal 
lift of some $\,w\in T\hskip-3pt_y\w\sd$. Then, for the Ja\-cobi field 
$\,r\mapsto\hat w(\hn r\nh)\,$ along our geodesic $\,r\mapsto x(\hn r\nh)\,$ 
such that $\,\hat w(0)=w\,$ and 
$\,[\nabla\hskip-3pt_{\dot x}\w\hat w](0)=\eta$,
\begin{equation}\label{dxp}
d\hs\mathrm{Exp}^\perp_{(y,\,r\xi)}(r\eta+w_r\hrz)\,
=\,\hat w(\hn r\nh)\hskip13pt\mathrm{whenever}\hskip6ptr\in[\hs0,1\hh]\hh.
\end{equation}
In fact, linearity of both sides in $\,(\eta,w)\,$ allows us to consider two 
separate cases, $\,w=0\,$ and $\,\eta=0$. For $\,s\,$ close to $\,0\,$ 
in $\,\bbR\,$ and $\,r\in[\hs0,1\hh]$, let us set 
$\,x(r,s)=\exp_{y(s)}\w\hs r\xi(s)$, where in the former case 
$\,(y(s),\xi(s))=(y,\xi+s\eta)$, and in the latter $\,s\mapsto\xi(s)\,$ is the 
$\,\mathrm{D}\hh$-\nh par\-al\-lel normal vector field with $\,\xi(0)=\xi\,$ 
along a fixed curve $\,s\mapsto y(s)\in\sd\,$ such that $\,y(0)=y\,$ and 
$\,\dot y(0)=w$. Thus, in both cases, the curve $\,s\mapsto (y(s),r\xi(s))\,$ 
in $\,\,U\hs$ has, at $\,s=0$, the velocity $\,r\eta+w_r\hrz$. The 
velocity at $\,s=0$ of its $\,\mathrm{Exp}^\perp\nh$-im\-age curve 
$\,s\mapsto x(r,s)\,$ therefore equals the left-hand side of (\ref{dxp}). At 
the same time this last velocity is $\,\hat w(\hn r\nh)=x_s\w(r,0)\,$ for 
$\,\hat w\,$ defined as in Remark~\ref{jcobi} with the variable $\,t\,$ and 
$\,(t_0\w,s_0\w)\,$ replaced by $\,r\,$ and $\,(0,0)$. Now (\ref{dxp}) follows 
since the two definitions of $\,\hat w\,$ agree: according to 
Remark~\ref{jcobi}, both Ja\-cobi fields denoted by $\,\hat w\,$ satisfy the 
same initial conditions at $\,s=0$.
\end{remark}
\begin{remark}\label{kiljc}Every Kil\-ling vector field $\,u\,$ on a 
Riemannian manifold is a Ja\-co\-bi field along any geodesic 
$\,t\mapsto x(t)$. In fact, the local flow of $\,u$, applied to the geodesic, 
yields a variation of geodesics. (Equivalently, one may note that (\ref{scd}) 
with $\,v=\dot x$, evaluated on $\,\dot x$, is precisely the Ja\-co\-bi 
equation.)
\end{remark}
\begin{remark}\label{ttgim}Let $\,\vps:\varPi\to\mf\,$ be a totally geodesic 
immersion of a manifold $\,\varPi\hs$ in a Riemannian manifold $\,(M,g)$. If 
$\,\vps(\nh\varLambda)\subseteq\sd\,$ and 
$\,\vps(\varPi\nh\smallsetminus\varLambda)\subseteq\mf\smallsetminus\sd\,$ for 
sub\-man\-i\-folds $\,\varLambda\,$ of $\,\varPi\hs$ and $\,\sd\,$ of 
$\,\mf\nh$, such that $\,\sd\,$ is totally geodesic in $\,(M,g)$, then, for 
$\,\sd$ endowed with the sub\-man\-i\-fold metric, 
$\,\vps:\varLambda\to\sd\,$ is a totally geodesic immersion.

In fact, every point of $\,\varLambda\,$ has a neighborhood $\,\,U\,$ in 
$\,\varPi\,$ on which $\,\vps\,$ is an embedding with a totally geodesic image 
$\,\vps(\hn u)$. Our claim now follows since the sub\-man\-i\-fold 
$\,\vps(\varLambda\cap U)\,$ of $\,\sd$, being the 
intersection of the totally geodesic sub\-man\-i\-folds $\,\vps(\hn u)$ and 
$\,\sd$, must itself be totally geodesic.
\end{remark}
\begin{remark}\label{crvcm}Let $\,R,R\hh'$ and $\,\hat R\,$ be the curvature 
tensors of connections $\,\nabla,\nabla'$ in vector bundles $\,\ee,\ee'$ over 
a fixed base manifold and, respectively, of the connection $\,\hat\nabla\,$ 
induced by them in the vector bundle $\,\mathrm{Hom}\hs(\ee,\ee')$. Then 
$\,\hat R\,$ is given by the com\-mu\-ta\-tor-type formula  
$\,\hat R(\hn v,w)\hh\varTheta\hs
=\hs[R\hh'\nh(\hn v,w)]\varTheta\hs-\hs\varTheta[R(\hn v,w)]$, cf.\ 
(\ref{nwt}), for any section $\,\varTheta\,$ of 
$\,\mathrm{Hom}\hs(\ee,\ee')\,$ (that is, any 
vec\-tor-bun\-dle morphism $\,\varTheta:\ee\to\ee'$) and vector fields 
$\,v,w\,$ tangent to the base. This trivially follows from (\ref{cur}) and the 
fact that $\,[\hat\nabla\hskip-3pt_v\w\varTheta]\hh\xi
=\nabla'\hskip-6pt_v\w(\varTheta\hh\xi)-\varTheta\nabla\hskip-3pt_v\w\xi\,$ 
whenever $\,\xi\,$ is a section of $\,\ee$.
\end{remark}

\section{Projectability of distributions\done}\label{pd}
\setcounter{equation}{0}
As usual, whenever $\,\pi:\mf\to\sb\,$ is a mapping between manifolds, we 
say that a vector field $\,w\,$ (or, a distribution $\,\ez$) on 
$\,\mf\,$ is $\,\pi${\medit-pro\-ject\-a\-ble\/} if
\begin{equation}\label{prj}
d\pi\nnh_x\w w_x\w=u_{\pi(x)}\w\hskip12pt\mathrm{or,\ 
respectively,}\hskip8ptd\pi\nnh_x\w(\ez\nnh_x\w)=\mathcal{H}_{\pi(x)}\w
\end{equation}
for some vector field $\,u\,$ (or, some distribution $\,\mathcal{H}$) on 
$\,\sb\,$ and all $\,x\in\mf\nh$.
\begin{remark}\label{liebr}Let $\,\pi:\mf\to\sb\,$ be a bundle projection. A 
vector field $\,w\,$ on $\,\mf\,$ is $\,\pi$-pro\-ject\-a\-ble if and only if, 
for every section $\,v\,$ of the vertical distribution 
$\,\mathcal{V}=\,\mathrm{Ker}\hskip2.3ptd\pi$, the Lie bracket $\,[v,w]\,$ is 
also a section of $\,\mathcal{V}\nh$. This is easily verified in local 
coordinates for $\,\mf\,$ that make $\,\pi\,$ appear as a 
Car\-te\-sian-prod\-uct projection.
\end{remark}
\begin{remark}\label{cmmut}For $\,\pi,\mf,\sb,\mathcal{V}\,$ as in 
Remark~\ref{liebr}, a $\,\pi$-pro\-ject\-a\-ble vector field $\,w\,$ on 
$\,\mf\nh$, and $\,x\in\mf\,$ such that $\,w_x\w\nh\ne0$, every prescribed 
value $\,u_x\w\nh\in\mathcal{V}_{\nh x}\w$ is realized by a local section 
$\,u\,$ of $\,\mathcal{V}\,$ commuting with $\,w$. Namely, we may first 
prescribe such $\,u\,$ along a fixed co\-di\-men\-sion-one sub\-man\-i\-fold 
containing $\,x$, which is transverse to $\,w\,$ at $\,x$, and then use the 
local flow of $\,w\,$ to spread $\,u\,$ over a neghborhood of $\,x$.
\end{remark}
\begin{remark}\label{prjct}Given a vector field $\,v\,$ and a distribution 
$\,\ez\,$ on a manifold, the local flow $\,t\mapsto e^{tv}$ of $\,v\,$ 
preserves $\,\ez\,$ if and only if, whenever $\,w\,$ is a local 
section of $\,\ez\nh$, so is $\,[v,w]$. Namely, 
$\,[v,w]=\lie\hskip-.5pt_v\w w$, while, denoting by 
$\,\varTheta\mapsto (de^{tv})\varTheta\,$ the push-for\-ward action of 
$\,e^{tv}$ on tensor fields $\,\varTheta\,$ of any type, we have
\begin{equation}\label{ddt}
d\hs[(de^{tv})\varTheta]\hh/dt\,
=\,-\hs(de^{tv})\lie\hskip-.5pt_v\w\varTheta\hh.
\end{equation}
In fact, when $\,t=0$, (\ref{ddt}) is just the definition of 
$\,\lie\hskip-.5pt_v\w \varTheta$, while, for arbitrary $\,t,$ it follows 
from the group\hh-homo\-mor\-phic property of $\,t\mapsto e^{tv}\nh$.
\end{remark}
\begin{remark}\label{pralg}We say that a vector field $\,w\,$ (or, a 
distribution $\,\mathcal{H}$) on a manifold $\,\mf$ is {\medit 
pro\-ject\-able along an in\-te\-gra\-ble distribution\/} $\,\mathcal{V}\,$ on 
$\,\mf\nh$, or $\,\mathcal{V}\nh${\medit-pro\-ject\-a\-ble}, if it is 
$\,\pi$-pro\-ject\-a\-ble, as in (\ref{prj}), when restricted to any open 
sub\-man\-i\-fold of $\,\mf\,$ on which $\,\mathcal{V}$ forms the vertical 
distribution $\,\mathrm{Ker}\hskip2.3ptd\pi\,$ of a bundle projection $\,\pi$. 
For $\,w\,$ this amounts to invariance of $\,\mathcal{V}\,$ under the local 
flow of $\,w$, cf.\ Remarks~\ref{liebr} and~\ref{prjct}.
\end{remark}
\begin{remark}\label{tglvs}For an in\-te\-gra\-ble distribution $\,\zy\,$ on a 
Riemannian manifold $\,(\mf\nh,g)$, the following two conditions are 
equivalent.
\begin{enumerate}
  \def\theenumi{{\rm\alph{enumi}}}
\item[{\rm(i)}] $d_w\w[\hs g(\hn v,v)]=0\,$ for all local sections $\,v\,$ of 
$\,\zy\,$ and $\,w\,$ of $\,\zy^\perp$ such that $\,w\,$ is nonzero, 
$\,\zy$-pro\-ject\-a\-ble, and $\,[v,w]=0$.
\item[{\rm(ii)}] Every leaf (maximal integral manifold) of $\,\zy\,$ is 
totally geodesic in $\,(\mf\nh,g)$.
\end{enumerate}
In fact, let $\,b\,$ be the second fundamental form of the leaves of $\,\zy$, 
with $\,b(\hn v,v)\,$ equal to the $\,\zy^\perp$ component of 
$\,\nabla\hskip-3pt_{v}\w v$. If $\,v\,$ and $\,w\,$ commute, 
$\,d_w\w[\hs g(\hn v,v)]=2\hh g(\nabla\hskip-3pt_{w}\w v,v)
=2\hh g(\nabla\hskip-3pt_{v}\w w,v)=-\nnh2\hh g(\nabla\hskip-3pt_{v}\w v,w)
=-\nnh2\hh g(b(\hn v,v),w)$, while $\,b\,$ is symmetric and $\,v,w\,$ as in 
(i) realize, at any $\,x\in\mf\nh$, any given elements of $\,\zy_x\w$ 
and $\,\zy^\perp_x$ (see Remark~\ref{cmmut}).
\end{remark}
\begin{remark}\label{frthr}Clearly, (ii) in Remark~\ref{tglvs} also follows 
when $\,d_w\w[\hs g(\hn v,v)]=0\,$ for all $\,v,w\,$ satisfying specific 
further conditions besides (i), as long as the last line of 
Remark~\ref{tglvs} still applies.
\end{remark}
\begin{lemma}\label{intgr}{\medit 
For two in\-te\-gra\-ble distributions\/ $\,\ez^\pm$ on a manifold\/ 
$\,\mf\,$ such that the span\/ $\,\ez\,$ of\/ 
$\,\ez^+\nnh$ and\/ $\,\ez^-\nh$ has constant dimension, the 
following conditions are all equivalent.
\begin{enumerate}
  \def\theenumi{{\rm\alph{enumi}}}
\item[{\rm(a)}] $\ez\,\,$ is in\-te\-gra\-ble.
\item[{\rm(b)}] $\ez^+$ is pro\-ject\-able along\/ $\,\ez^-\nnh$.
\item[{\rm(c)}] $\ez^-$ is pro\-ject\-able along\/ $\,\ez^+\nnh$.
\end{enumerate}
If\/ {\rm(a)} -- {\rm(c)} hold, the distributions that\/ $\,\ez^\pm$ 
locally project onto are in\-te\-gra\-ble as well.
}
\end{lemma}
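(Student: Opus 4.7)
My plan is to recast (b) and (c) as Lie-bracket conditions and then apply the Frobenius theorem, first for the equivalence among (a)--(c) and second for the integrability of the locally projected distributions.

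Work on an open set $U\subseteq\mf$ on which $\ez^-=\ker d\pi$ for a bundle projection $\pi:U\to\sb$, and pick adapted coordinates $(x,y)$ with $\pi(x,y)=x$. Projectability of $\ez^+$ in the sense of \eqref{prj} amounts to $\ez^++\ez^-$ coinciding with $\pi^*\mathcal{H}+\ez^-$ for some subbundle $\mathcal{H}\subseteq T\sb$, and hence to $\ez^++\ez^-$ being invariant under the local flows of sections of $\ez^-$. By Remarks~\ref{liebr} and~\ref{prjct}, this invariance is equivalent to $[v,w']$ being a local section of $\ez^++\ez^-=\ez$ whenever $v$ and $w'$ are local sections of $\ez^-$ and of $\ez^++\ez^-$ respectively; the subcase in which $w'$ is itself a section of $\ez^-$ is automatic from integrability of $\ez^-$, so the condition reduces to $[v,w]$ being a local section of $\ez$ for all local sections $v$ of $\ez^-$ and $w$ of $\ez^+$. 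Interchanging the roles of $\pm$ in this characterization yields the same bracket condition up to sign, so antisymmetry of the Lie bracket makes (b) and (c) equivalent.

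For (a)$\Leftrightarrow$(b), the constant-rank hypothesis lets us, locally, pick a spanning family of $\ez$ by concatenating a local frame of $\ez^+$ with one of $\ez^-$. By the Leibniz identity $[fX,gY]=fg[X,Y]+f(Xg)Y-g(Yf)X$, involutivity of $\ez$ need be verified only on such a spanning family. The brackets of two sections of $\ez^+$ land in $\ez^+\subseteq\ez$ by integrability of $\ez^+$, and similarly for $\ez^-$, whereas the mixed brackets are precisely the content of (b). Hence the Frobenius theorem gives (a)$\Leftrightarrow$(b).

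Finally, assume (a)--(c) and let $\mathcal{H}^+$ be the distribution on $\sb$ onto which $\ez^+$ projects. Every local section $u$ of $\mathcal{H}^+$ admits, in the coordinates $(x,y)$, the tautological horizontal lift $\tilde u$ on $U$ whose coefficients are those of $u$ viewed as $y$-independent. By construction $\tilde u$ is $\pi$-projectable to $u$, and the identity $\ez=\pi^*\mathcal{H}^++\ez^-$ obtained in the first paragraph places $\tilde u$ among the local sections of $\ez$. For two such lifts $\tilde u_1,\tilde u_2$ of $u_1,u_2$, integrability of $\ez$ gives $[\tilde u_1,\tilde u_2]$ as a local section of $\ez$, while $\pi$-projectability forces $d\pi[\tilde u_1,\tilde u_2]=[u_1,u_2]$; hence $[u_1,u_2]$ is a local section of $d\pi(\ez)=\mathcal{H}^+$. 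The Frobenius theorem then yields integrability of $\mathcal{H}^+$, and the companion distribution is handled by interchanging $\ez^+$ with $\ez^-$. The one point requiring genuine care is the distinction between projectability of a vector field and of a distribution: the latter does not demand individual local sections to be $\pi$-projectable, only that their sum with the vertical distribution descend to a subbundle of the base. Once this is made explicit, the remaining steps are immediate consequences of the Frobenius theorem and the bracket Leibniz rule.
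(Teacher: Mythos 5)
Your proof is correct, and it reaches the conclusion by a genuinely different route from the paper's. The paper likewise reduces to a local bundle projection $\pi$ with one of the two distributions, say $\ez^+\nh$, equal to $\mathrm{Ker}\hskip2.3ptd\pi$ (fibres connected), but it then argues entirely with leaves rather than brackets: for (a)$\Rightarrow$(c), every leaf of $\ez$ contains the fibres it meets and hence projects onto a leaf of a foliation of the base tangent to some $\mathcal{H}$; for (c)$\Rightarrow$(a), the distribution $\mathcal{H}=d\pi(\ez^-)$ is integrable because its leaves are the $\pi$-images of the leaves of $\ez^-\nh$, and $\ez$ is integrable because its leaves are the $\pi$-preimages of those of $\mathcal{H}$; condition (b) then enters only through the $\pm$ symmetry of (a), and the final integrability claim is obtained en route from the same leaf description. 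You instead convert (b) and (c) into the single condition that $[v,w]$ be a section of $\ez$ for all local sections $v$ of $\ez^-$ and $w$ of $\ez^+\nh$, which makes (b)$\Leftrightarrow$(c) immediate from antisymmetry of the bracket -- a directness the paper does not have -- and then obtain (a)$\Leftrightarrow$(b) by testing involutivity on a spanning family and the last claim by a lift-and-project bracket computation. Your argument is purely infinitesimal and needs neither connectedness of fibres nor the global structure of leaves; the paper's argument, in exchange, exhibits the leaves of $\ez$ and of the projected distributions explicitly as preimages and images, which is the form in which the lemma is actually exploited later (e.g.\ in Lemma~\ref{intpr}). Your closing caveat about the difference between projectability of a vector field and of a distribution is exactly the point at which the reduction to a bracket condition requires the care you give it.
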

\begin{proof}We may assume that $\,\ez^+$ is the vertical distribution 
of a bundle projection $\,\pi:\mf\to\sb\,$ with connected fibres. First, let 
$\,\ez\,\,$ be in\-te\-gra\-ble. Since $\,\ez\,$ contains 
$\,\ez^+\nnh=\,\mathrm{Ker}\hskip2.3ptd\pi$, its leaves are unions of 
fibres and so their $\,\pi$-im\-ages form a foliation of $\,\sb$, tangent to a 
distribution $\,\mathcal{H}\,$ satisfying (\ref{prj}), which proves 
pro\-ject\-abil\-i\-ty of $\,\ez\nh$, and hence of 
$\,\ez^-\nh$, along $\,\ez^+\nh$. In other words, (a) implies 
(c). Conversely, assuming (c), we obtain 
$\,d\pi\nnh_x\w(\ez\nnh_x\w)=d\pi\nnh_x\w(\ez\nnh_x^{\hs-})
=\mathcal{H}_{\pi(x)}\w$ for all $\,x\in\mf\,$ and some distribution 
$\,\mathcal{H}$ on $\,\sb$, which is necessarily in\-te\-gra\-ble: its leaves 
are $\,\pi$-im\-ages of the leaves of $\,\ez^-\nh$. In\-te\-gra\-bil\-i\-ty of 
$\,\ez\,$ now follows, as its leaves are the $\,\pi$-pre\-im\-ages of 
those of $\,\mathcal{H}$.

Finally, as (a) involves $\,\ez^+$ and $\,\ez^-$ symmetrically, it is also 
equivalent to (b).
\end{proof}

\section{K\"ah\-ler manifolds\done}\label{km}
\setcounter{equation}{0}
For K\"ah\-ler manifolds we use symbols such as $\,(\mf\nh,g)$, where $\,\mf\,$ 
stands for the underlying complex manifold. Generally, in complex manifolds,
\begin{equation}\label{jcs}
J\,\mathrm{\ always\ denotes\ the\ com\-plex}\hyp\mathrm{struc\-ture\ tensor.} 
\end{equation}
Let $\,v\,$ be a vector field on a K\"ah\-ler manifold $\,(\mf\nh,g)$. Since 
$\,\nabla\nnh J=0$, one has
\begin{equation}\label{ajs}
\du\,=\,J\dv\,\,\mathrm{\ if\ one\ sets\ }\,\dv=\nabla\nh v\,\mathrm{\ and\ 
}\,\du=\nabla\nh u\hh,\mathrm{\ for\ }\,u=J\nh v\hh,
\end{equation}
$J,S,A\,$ being viewed as bundle morphisms $\,T\nnh\mf\to T\nnh\mf\nh$, cf.\ 
(\ref{nwt}). For the curvature tensor $\,R\,$ of a K\"ah\-ler 
manifold $\,(\mf\nh,g)\,$ and any vector fields $\,u,v\,$ on $\,\mf\nh$,
\begin{equation}\label{rcm}
R(\hn u,v)=R(Ju,J\nh v):T\nnh\mf\to T\nnh\mf\hskip8pt\mathrm{and}\hskip8ptJ:T\nnh\mf
\to T\nnh\mf\hskip8pt\mathrm{commute.}
\end{equation}
In fact, the condition $\,\nabla\nnh J=0\,$ turns $\,\nabla\,$ into a 
connection in $\,T\nnh\mf\,$ treated as a {\medit complex\/} vector bundle, 
$\,g\,$ being the real part of a $\,\nabla\nnh$-par\-al\-lel Her\-mit\-i\-an 
fibre metric, that is,
\begin{equation}\label{gjw}
g(J\nh w,J\nh w\hh'\hh)\,=\,g(\hn w,w\hh'\hh)
\end{equation}
for all vector fields $\,w,w\hh'$ on $\,\mf\nh$, and so the curvature 
operators $\,R(\hn u,v)\,$ are all com\-plex-lin\-e\-ar and 
skew-Her\-mit\-i\-an. The former property now amounts to commutation in 
(\ref{rcm}), the latter to the equality 
$\,g(R(\hn u,v)\hh w,w\hh'\hh)
=g(R(\hn w,w\hh'\hh)\hh u,v)=g(R(\hn w,w\hh'\hh)\hh Ju,J\nh v)
=g(R(Ju,J\nh v)\hh w,w\hh'\hh)$, with any vector fields $\,w,w\hh'\nh$.

Real-hol\-o\-mor\-phic vector fields $\,v\,$ on K\"ah\-ler manifolds will 
always be briefly referred to as {\medit hol\-o\-mor\-phic}. Since they are 
characterized by $\,\lie\hskip-.5pt_v\w J=0$, formula (\ref{lie}) for 
$\,\bl=J\,$ implies that, given a vector field $\,v\,$ on a K\"ah\-ler 
manifold $\,(\mf\nh,g)$,
\begin{equation}\label{hol}
v\,\mathrm{\ is\ holomorphic\ if\ and\ only\ if\ 
}\,\hs\dv=\nabla\nh v\,\mathrm{\ commutes\ with\ }\,J\hh,
\end{equation}
where $\,J,\dv:T\nnh\mf\to T\nnh\mf\,$ as in (\ref{nwt}). For any 
hol\-o\-mor\-phic vector field $\,v$,
\begin{equation}\label{kil}
\begin{array}{l}
J\nh v\hs\mathrm{\ must\ be\ hol\-o\-mor\-phic\ as\ well,\ while\ 
}\hh v\hh\mathrm{\ is\ locally}\\
\mathrm{a\ gradient\ if\ and\ only\ if\ }\,\,u\hs=J\nh v\,\,\mathrm{\ is\ a\ 
Kil\-ling\ field.}
\end{array}
\end{equation}
In fact, for $\,\dv=\nabla\nh v\,$ and $\,\du=\nabla\nh u$, (\ref{ajs}) -- 
(\ref{hol}) give $\,\du=JS=SJ$, and so $\,\du+\du\nh^*\nh=J(S-S^*)$, while the 
lo\-cal-gra\-di\-ent property of $\,v\,$ amounts to $\,S-S^*\nh=0$, and the 
Kil\-ling condition for $\,u\,$ reads $\,\du+\nnh\du\nh^*\nh=0$.
\begin{remark}\label{kilxp}As shown by Kobayashi \cite{kobayashi-fp}, if 
$\,u\,$ is a Kil\-ling vector field on a Riemannian manifold $\,(\mf\nh,g)$, 
the connected components of the zero set of $\,u\,$ are mutually isolated 
totally geodesic sub\-man\-i\-folds of even co\-di\-men\-sions. 
\end{remark}
\begin{lemma}\label{nontr}{\medit 
If a complex manifold\/ $\,\mf\,$ admits a 
K\"ah\-ler metric\/ $\,g$, with the K\"ah\-ler form\/ 
$\,\omega=g(J\,\cdot\,,\,\cdot\,)$, and\/ $\,\mg:\bbCP^k\nh\to\mf\,$ is a 
nonconstant hol\-o\-mor\-phic mapping, then\/ $\,\mg\sk\omega$ represents a 
nonzero de Rham co\-ho\-mol\-o\-gy class in\/ 
$\,H^{2\nh}(\hn\bbCP^k\nnh,\bbR)$.

Whether a hol\-o\-mor\-phic mapping\/ $\,\mg:\bbCP^k\nh\to\mf\,$ is constant, 
or not, the same is the case for all hol\-o\-mor\-phic mappings\/ 
$\,\bbCP^k\nh\to\mf\,$ sufficiently close to\/ $\,\mg\,$ in the\/ 
$\,C^0$ topology.
}
\end{lemma}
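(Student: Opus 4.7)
My plan is to establish the first assertion by a positivity argument, then derive the second via homotopy invariance of de Rham pullback (combined with the maximum principle in the constant case). The single non-routine ingredient is a pointwise linear-algebra observation; everything else is standard.

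For the first assertion, I would exploit the fact that hol\-o\-mor\-phy of $\,\mg\,$ makes $\,d\mg\,$ commute with the complex structures. Combined with (\ref{gjw}), this gives
\[
(\mg\sk\omega)(\xi,J\xi)\,=\,\omega(d\mg\,\xi,J\,d\mg\,\xi)\,=\,g(d\mg\,\xi,d\mg\,\xi)\,\ge\,0
\]
for every real tangent vector $\,\xi\,$ to $\,\bbCP^k\nnh$, with equality precisely when $\,d\mg\,\xi=0$. Thus $\,\mg\sk\omega\,$ is a closed semi-positive real $\,(1,1)$-form, nonzero at every $\,z\,$ where $\,d\mg_z\nh\ne0$. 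Simultaneous pointwise diagonalization of $\,\mg\sk\omega\,$ with respect to the Fu\-bi\-ni-Stu\-dy form $\,\omega_{FS}\,$ then shows $\,\mg\sk\omega\wedge\omega_{FS}^{k-1}\,$ to be a non-negative multiple of the Fu\-bi\-ni-Stu\-dy volume form, strictly positive wherever $\,\mg\sk\omega\ne0\,$ (this is the one linear-algebra step: after diagonalization the wedge reads as $\,(k-1)!\,(\sum_i\lambda_i)\,$ times the volume form, with $\,\lambda_i\ge0\,$ not all zero). Since nonconstancy of $\,\mg\,$ forces $\,d\mg\,$ to be nonzero on a nonempty open subset of $\,\bbCP^k\nnh$,
\[
\int_{\bbCP^k}\mg\sk\omega\wedge\omega_{FS}^{k-1}\,>\,0\hh,
\]
and as this integral depends only on $\,[\mg\sk\omega]\in H^{2\nh}(\bbCP^k\nnh,\bbR)\,$ paired with $\,[\omega_{FS}]^{k-1}\nnh$, we conclude $\,[\mg\sk\omega]\ne0$.

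For the second assertion I would split into cases. If $\,\mg\,$ is nonconstant, any hol\-o\-mor\-phic $\,\mg':\bbCP^k\to\mf\,$ sufficiently $\,C^0$-close to $\,\mg\,$ is homotopic to $\,\mg$: since the injectivity radius of $\,(\mf,g)\,$ has a positive lower bound on a neighborhood of the compact image $\,\mg(\bbCP^k)$, for small enough $\,C^0$ distance the unique minimizing geodesic joining $\,\mg(z)\,$ to $\,\mg'(z)\,$ depends smoothly on $\,z$, producing the desired homotopy. Homotopy invariance of de Rham pullback yields $\,[\mg'\sk\omega]=[\mg\sk\omega]\ne0$, so $\,\mg'\,$ cannot be constant by the first assertion applied contrapositively. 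If instead $\,\mg\,$ is constantly equal to some $\,p\in\mf$, I fix a hol\-o\-mor\-phic coordinate chart $\,U\,$ around $\,p\,$ biholomorphic to an open subset of $\,\bbC^{\hh N}$, with $\,N=\dimc\mf$; any hol\-o\-mor\-phic $\,\mg'\,$ sufficiently $\,C^0$-close to $\,\mg\,$ has image in $\,U\nnh$, whereupon its $\,N\,$ coordinate functions are hol\-o\-mor\-phic on the compact complex manifold $\,\bbCP^k\nnh$, hence constant; so $\,\mg'\,$ itself is constant.
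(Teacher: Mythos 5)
Your proof is correct, and both of your main ingredients (semi\-pos\-i\-tiv\-i\-ty of $\,\mg\sk\omega\,$ plus an integral pairing for the first claim, homotopy invariance for the second) match the paper's strategy in spirit; the implementation differs in two places. For the first assertion the paper restricts $\,\mg\,$ to a suitably chosen projective line $\,\bbCP^1\nh\subseteq\bbCP^k$ on which it stays nonconstant and simply integrates $\,\mg\sk\omega\,$ over that line, exploiting the fact that $\,H^{2}(\bbCP^k\nnh,\bbR)\,$ is detected by lines; you instead stay on all of $\,\bbCP^k$ and pair $\,[\hh\mg\sk\omega\hh]\,$ against $\,[\hh\omega_{FS}]^{k-1}$, which trades the choice of a line for the pointwise trace identity $\,\alpha\wedge\omega_{FS}^{k-1}=(k-1)!\hs(\sum_i\lambda_i)\,$ times the volume form. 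Both are standard and sound; the paper's reduction to $\,k=1\,$ is marginally more elementary (no wedge-product linear algebra), while yours avoids having to argue that a line with nonconstant restriction exists. For the second assertion the paper disposes of both directions at once by noting that $\,C^0$-close maps are homotopic, so the vanishing or nonvanishing of $\,[\hh\mg\sk\omega\hh]\,$ -- equivalent, by the first part, to constancy or nonconstancy -- is locally constant in $\,\mg$; your separate treatment of the constant case via holomorphic functions on a compact complex manifold is valid but unnecessary, since the contrapositive of the first assertion already covers it.
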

\begin{proof}Clearly, $\,\mg\,$ remains nonconstant (and hol\-o\-mor\-phic) 
when restricted to a suitable projective line $\,\bbCP^1\nh\subseteq\bbCP^k$. 
In addition to being positive sem\-i\-def\-i\-nite everywhere, the restriction 
$\,h\,$ of $\,\mg\sk g\,$ to $\,\bbCP^1$ must also be positive definite 
somewhere (or else $\,h$, being Her\-mit\-i\-an, would vanish identically, 
making $\,\mg\,$ constant on $\,\bbCP^1$). The integral of $\,\mg\sk\omega\,$ 
over $\,\bbCP^1$ is thus positive, proving our first claim. The second one 
follows since nearby continuous mappings are, obviously, hom\-o\-top\-ic to 
$\,\mg$. 
\end{proof}
\begin{remark}\label{fibra}We need the following well-known fact, valid 
both in the $\,C^\infty$ and complex (hol\-o\-mor\-phic) categories: any 
in\-te\-gra\-ble distribution with compact simply connected leaves constitutes 
the vertical distribution of a bundle projection.

The required local trivializations are provided by the -- necessarily trivial 
-- holonomy of the underlying foliation; see, for instance, 
\cite[p.\ 71]{camacho-lins-neto}.
\end{remark}
\begin{remark}\label{known}We need two more well-known facts; cf.\ 
\cite[Example 1 of Sect.\ 2.2]{shokurov}.
\begin{enumerate}
  \def\theenumi{{\rm\alph{enumi}}}
\item[{\rm(a)}] A continuous function $\,\,U\to\bbC\,$ on an open set 
$\,\,U\subseteq\bbC$, hol\-o\-mor\-phic on $\,\,U\nh\smallsetminus\varLambda$, 
where $\,\varLambda\subseteq\hs U\,$ is discrete, is necessarily 
hol\-o\-mor\-phic everywhere in $\,\,U\nh$.
\item[{\rm(b)}] The only injective hol\-o\-mor\-phic mappings 
$\,\bbCP^1\nnh\to\bbCP^1$ are bi\-hol\-o\-mor\-phisms.
\end{enumerate}
\end{remark}
\begin{remark}\label{hlext}If $\,\vps:\varPi\to\mf\,$ is a continuous 
mapping between complex manifolds, and a co\-di\-men\-sion-one complex 
sub\-man\-i\-fold $\,\varLambda\,$ of $\,\varPi\nh$, closed as a subset of 
$\,\varPi\nh$, has the property that the restrictions of $\,\vps\,$ to 
$\,\varPi\,$ and to the complement $\,\varPi\nh\smallsetminus\varLambda\,$ are 
both hol\-o\-mor\-phic, then $\,\vps\,$ is hol\-o\-mor\-phic on 
$\,\varPi\nh$.

In fact, let $\,p=\dimc\varPi\nh$. When $\,p=1$, our claim is obvious from 
Remark~\ref{known}(a). Generally, in local hol\-o\-mor\-phic coordinates 
$\,z^1\nh,\dots,z^p$ for $\,\varPi\,$ such that $\,z^2\nh=\ldots=z^p\nh=0\,$ 
on the intersection of $\,\varLambda\,$ with the coordinate domain, the 
complex partial derivatives of the components of $\,\vps\,$ (relative to any 
local hol\-o\-mor\-phic coordinates in $\,\mf$) all clearly exist: for 
$\,\partial/\partial z^1$ this follows from the case $\,p=1$. 
\end{remark}
\begin{remark}\label{posit}As usual, we call a differential $\,2$-form 
$\,\omega\hh$ on a complex manifold {\medit positive\/} if it equals the 
K\"ah\-ler form $\,g(J\,\cdot\,,\,\cdot\,)\,$ of some K\"ah\-ler metric 
$\,g$. This amounts to requiring closedness of $\,\omega\hh$ along with 
symmetry and positive definiteness of the twice-co\-var\-i\-ant tensor field 
$\,-\hs\omega\hh(J\,\cdot\,,\,\cdot\,)$.

In any complex manifold, $\,d\hh\omega=0\,$ and 
$\,\omega\hh(J\,\cdot\,,\,\cdot\,)\,$ symmetric whenever 
$\,\omega=i\hs\partial\hskip1.7pt\cro\hskip-1.5ptf$ or, equivalently, 
$\,2\hs\omega=-\hs d\hskip1pt[\hn J^*\nnh d\hskip-1.2ptf]\,$ for a 
real-val\-ued function $\,f\nh$, with the $\,1$-form 
$\,J^*\nnh d\hskip-1.2ptf\nh$, also denoted by $\,(d\hskip-1.2ptf)\hn J$, 
which sends any tangent vector field $\,v\,$ to $\,d\nnh_{J\nh v}\w f\nnh$. 
Clearly,
\begin{equation}\label{idd}
2\hh i\hs\partial\hskip1.7pt\cro\hskip-1.5ptf\,
=\,\hs2\hh i\nh f'\hn\partial\hskip1.7pt\cro\hskip-1.5pt\chi\,
-\,f''\nh d\chi\nh\wedge\nh J^*\nnh d\chi\hh,\hskip12pt\mathrm{with}
\hskip6ptf'\hs=\,d\hskip-1.2ptf/d\chi\hh,
\end{equation}
if $\,f\,$ is a $\,C^\infty$ function of a function $\,\chi\,$ on the same 
manifold (cf.\ Remark~\ref{cifot}). The ex\-te\-ri\-or-de\-riv\-a\-tive and 
ex\-te\-ri\-or-prod\-uct conventions used here, for any $\,1$-forms 
$\,\iota,\kappa\,$ and vector fields $\,u,v$, are 
$\,(d\hh\kappa)(u,v)=d_u[\kappa(v)]-d_v[\kappa(u)]-\kappa([u,v])\,$ and 
$\,(\iota\wedge\kappa)(u,v)=\iota(u)\kappa(v)-\iota(v)\kappa(u)$. When, in 
addition, $\,v\,$ is real-hol\-o\-mor\-phic, one has
\begin{equation}\label{ojv}
2\hh\omega\hh(J\nh v,\,\cdot\,)\,=\,-\hh d(d_v\w f)\,
-\,J^*\nnh[d(d\nh_{J\hn v}\w f)\nh]\hskip10pt\mathrm{for}\hskip7pt\omega
=i\hs\partial\hskip1.7pt\cro\hskip-1.5ptf.
\end{equation}
See \cite[Lemma 2]{derdzinski-bc}; the K\"ah\-ler metric used in 
\cite{derdzinski-bc} always exists locally.
\end{remark}
\begin{remark}\label{ddnsq}For the real part $\,\lr\,$ of a Her\-mit\-i\-an 
inner product in a fi\-\hbox{nite\hh-}\hskip0ptdi\-men\-sion\-al complex 
vector space $\,\on\nnh$, let $\,\hr:\on\to[\hs0,\infty)\,$ and 
$\,\mathcal{V}\,$ be the {\medit norm function\/} and {\medit complex 
radial distribution\/} on $\,\on\smallsetminus\{0\}$, so that 
$\,\hr(\xi)=\langle \xi,\xi\rangle^{\hs1\nh/2}$ and 
$\,\mathcal{V}\nnh_\xi\w\nh=\mathrm{Span}_\bbC\w(\xi)$.
\begin{enumerate}
  \def\theenumi{{\rm\alph{enumi}}}
\item[{\rm(a)}] $d\nh\hr^2$ is obviously given by 
$\,\xi\mapsto2\langle\xi,\,\cdot\,\rangle$.
\item[{\rm(b)}] $i\hs\partial\hskip1.7pt\cro\hskip-.5pt\hr^2$ coincides with 
twice the K\"ah\-ler form $\,\langle J\,\cdot\,,\,\cdot\,\rangle\,$ of the 
constant metric $\,\lr$.
\item[{\rm(c)}] $d\nh\hr^2\wedge J^*\nnh d\nh\hr^2\nh$, on 
$\,\on\smallsetminus\{0\}$, equals $\,-4\hr^2$ times the restriction of 
$\,\langle J\,\cdot\,,\,\cdot\,\rangle\,$ to $\,\mathcal{V}\nh$.
\end{enumerate}
In fact, (b) -- (c) are immediate from (a) and Remark~\ref{posit}.
\end{remark}
\begin{remark}\label{cptfb}Let $\,\pi:\mf\to\sb\,$ be a surjective submersion 
between manifolds.
\begin{enumerate}
  \def\theenumi{{\rm\alph{enumi}}}
\item[{\rm(a)}] If the pre\-im\-a\-ges $\,\pi^{-\nnh1}(y)$, $\,y\in\sb$, are 
all compact, then $\,\pi\,$ can be factored as $\,\mf\to\varPi\to\sb$, with 
a bundle projection $\,\mf\to\varPi\,$ having compact (connected) fibres, and 
a finite covering projection $\,\varPi\to\sb$.
\item[{\rm(b)}] In the case where $\,\dim\sb=\dim\varPi\,$ and $\,\mf\,$ is 
compact, $\,\pi\,$ must necessarily be a (finite) covering projection.
\end{enumerate}
Namely, (a) is a well-known fact, easily verified using parallel transports 
corresponding to a fixed vector sub\-bun\-dle $\,\mathcal{H}\,$ of 
$\,T\nnh\mf\,$ for which $\,T\nnh\mf=\mathcal{V}\nh\oplus\nnh\mathcal{H}\,$ 
cf.\ \cite[Remark 1.1]{derdzinski-roter-08}, or derived as in 
Remark~\ref{fibra}, since the foliation with the leaves $\,\pi^{-\nnh1}(y)\,$ 
has trivial holonomy. Part (b) -- in which the dimension equality means 
that $\,\pi\,$ is lo\-cal\-ly-dif\-feo\-mor\-phic -- easily follows from (a).
\end{remark}
\begin{remark}\label{ftcvr}For a K\"ah\-ler manifold $\,(\varPi\nh,h)\,$ with 
$\,\dimc\varPi=l$, any hol\-o\-mor\-phic mapping 
$\,F:\bbCP\hh^l\nh\to\varPi\,$ such that $\,F^*\nh h\,$ is a positive constant 
multiple of the Fu\-bi\-ni-Stu\-dy metric on $\,\bbCP\hh^l$ (cf.\ 
Remark~\ref{fbstm}) must be a bi\-hol\-o\-mor\-phism.

In fact, $\,F\,$ is then a covering projection (Remark~\ref{cptfb}(b)) and our 
claim follows since, due to a result of Kobayashi \cite{kobayashi-ck}, 
$\,\varPi\,$ has to be simply connected.
\end{remark}

\section{Ge\-o\-des\-\hbox{ic\hs-}\hskip0ptgra\-di\-ent K\"ah\-ler 
triples\done}\label{gk}
\setcounter{equation}{0}
Given a manifold $\,\mf\,$ endowed with a fixed connection $\,\nabla\nnh$, we 
refer to a vector field $\,v\,$ on $\,\mf\,$ as {\medit geodesic\/} if the 
integral curves of $\,v\,$ are re\-pa\-ram\-e\-trized 
$\,\nabla\nh$-ge\-o\-des\-ics. Equivalently, for some function $\,\psi\,$ on 
the open set $\,\mf'\nh\subseteq\mf\,$ on which $\,v\ne0$,
\begin{equation}\label{nvv}
\nabla\hskip-3pt_{v}\w v\,=\,\psi\hskip.4ptv\qquad\mathrm{everywhere\ 
in}\hskip7pt\mf'.
\end{equation}
A function $\,\vt\,$ on a Riemannian manifold $\,(\mf\nh,g)\,$ is said to 
{\medit have a geodesic gradient\/} if its gradient $\,v\,$ is a geodesic 
vector field relative to the Le\-vi-Ci\-vi\-ta connection $\,\nabla\nnh$.

Functions with geodesic gradients are also called {\it trans\-nor\-mal\/} 
\cite{wang,miyaoka,bolton}.
\begin{lemma}\label{ggqft}{\medit 
For a function\/ $\,\vt\,$ on a Riemannian manifold\/ $\,(\mf\nh,g)$, the 
gradient of\/ $\,\vt\,$ is a geodesic vector field if and only if\/ 
$\,Q=g(\navp,\navp)\,$ is, locally in\/ $\,\mf'\nnh$, a function of\/ $\,\vt$.
}
\end{lemma}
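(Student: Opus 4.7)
The plan is to reduce the entire statement to a direct reading of identity~(\ref{tnd}) combined with the local normal form for submersions. Set $v=\navp$, so that by~(\ref{tnd}) one has $\nabla Q=2\hh\nabla\hskip-3pt_v\w v$ on all of $\mf\nh$. Consequently, the defining condition~(\ref{nvv}) for $v$ to be a geodesic vector field on $\mf'\nh$, namely $\nabla\hskip-3pt_v\w v=\psi\hh v\,$ for some function $\psi$, is equivalent to the pointwise proportionality $\nabla Q=2\hh\psi\navp$ on $\mf'\nh$; in other words, $v$ is geodesic on $\mf'$ if and only if $\nabla Q\,$ is everywhere a (smooth) multiple of $\navp\,$ throughout $\mf'$.

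The lemma will therefore follow once one shows, on the open set $\mf'\nh$ where $\navp\ne0$, that $\nabla Q$ is pointwise proportional to $\navp\,$ if and only if $Q\,$ is locally a $C^\infty$ function of $\vt\,$ in the sense of Remark~\ref{cifot}. The ``if'' direction is immediate by the chain rule: $Q=\chi(\vt)\,$ gives $\nabla Q=\chi'(\vt)\navp\nh$, after which one sets $\psi=\chi'(\vt)/2\,$ to obtain~(\ref{nvv}).

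For the converse I plan to argue as follows. Assume $\nabla Q=2\hh\psi\navp\,$ on $\mf'\nh$. Then any vector $w\,$ tangent to a level set of $\vt\,$ at a point of $\mf'\nh$, characterized by $g(w,\navp)=0$, satisfies $d_w Q=g(w,\nabla Q)=2\hh\psi\hh g(w,\navp)=0$. Thus $Q\,$ is constant on every connected component of each level set $\vt^{-1}(c)\cap\mf'\nh$. Since $\navp\ne 0$ on $\mf'\nh$, the function $\vt\,$ is a submersion there, so around each point of $\mf'\,$ one can choose local coordinates in which $\vt\,$ is one of the coordinate functions; the constancy of $Q\,$ along level sets then forces $Q\,$ to depend in those coordinates only on $\vt\nh$, and smoothness of the resulting one-variable function $\chi\,$ follows automatically from the fact that $d\vt\,$ is nowhere zero on $\mf'\nh$.

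There is no real obstacle here: the statement is essentially a repackaging of~(\ref{tnd}) together with the elementary observation that a smooth function whose gradient is everywhere collinear with a nonzero gradient $\navp\,$ descends, locally, through $\vt\nh$. The only point that warrants a line of justification is the passage from ``$Q\,$ constant on local level sets of $\vt\hh$'' to ``$Q\,$ is a smooth function of $\vt\hh$'', which is handled by the submersion normal form.
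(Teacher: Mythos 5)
Your proposal is correct and follows essentially the same route as the paper, whose entire proof is the observation that, by (\ref{tnd}), condition (\ref{nvv}) is equivalent to $\,dQ\wedge d\vt=0\,$ on $\,\mf'\nh$; you have simply spelled out the routine details (the chain rule in one direction, the submersion normal form in the other) that the paper leaves implicit.
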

\begin{proof}By (\ref{tnd}), condition (\ref{nvv}) is equivalent to 
$\,dQ\wedge\hs d\vt=0$.
\end{proof}
\begin{definition}\label{ggktr}A {\medit ge\-o\-des\-ic-gra\-di\-ent 
K\"ah\-ler triple\/} $\,(\mf\nh,g,\vt)\,$ consists of any K\"ah\-ler manifold 
$\,(\mf\nh,g)\,$ and a nonconstant real-val\-ued function $\,\vt\,$ on $\,\mf\,$ 
such that the $\,g$-gra\-di\-ent $\,v=\navp\,$ is both geodesic and 
real-hol\-o\-mor\-phic.

Speaking of {\medit compactness\/} of $\,(\mf\nh,g,\vt)$, or its {\medit 
dimension}, we always mean those of the underlying complex manifold 
$\,\mf\nh$, and we call two such triples 
$\,(\mf\nh,g,\vt),\,(\hat\mf,\hg,\hat\vt)$ {\medit iso\-mor\-phic\/} if 
$\,\vt=\hat\vt\circ\varPhi\,$ and $\,g=\varPhi^*\nnh\hg\,$ for some 
bi\-hol\-o\-mor\-phism $\,\varPhi:\mf\hn\to\hat\mf\nh$. 
\end{definition}
For $\,(\mf\nh,g,\vt)\,$ as above, whenever the extrema of $\,\vt\,$ exist, we 
will also consider
\begin{equation}\label{tmm}
\mathrm{the\ }\,\vt\hyp\mathrm{pre\-im\-ages\ }\,\sd^+\nh\mathrm{\ and\ 
}\,\sd^-\nh\mathrm{\ of\ \ }\,\tp\nh=\hs\mathrm{max}\hskip2.7pt\vt\,
\mathrm{\ \ and\ \ }\,\tm\nh=\hs\mathrm{min}\hskip2.7pt\vt\hh.
\end{equation}
\begin{remark}\label{trivl}A ge\-o\-des\-\hbox{ic\hs-}\hskip0ptgra\-di\-ent 
K\"ah\-ler triple $\,(\mf\nh,g,\vt)\,$ can be {\medit trivially modified\/} to 
yield $\,(\mf\nh,g,p\vt+q)$, with any real constants $\,p\ne0\,$ and $\,q$. 
(Clearly, $\,\sd^\pm$ in (\ref{tmm}) then become switched if $\,p<0$.) Any 
such $\,(\mf\nh,g,\vt)\,$ and any complex sub\-man\-i\-fold $\,\varPi$ of 
$\,\mf\nh$, tangent to $\,v=\navp\,$ (that is, forming a union of integral 
curves of $\,v$), and not contained in a single level set of $\,\vt$, 
give rise (cf.\ Remark~\ref{gpgrd}) to the new 
ge\-o\-des\-\hbox{ic\hs-}\hskip0ptgra\-di\-ent K\"ah\-ler triple 
$\,(\varPi,g'\nh,\vt')$, where $\,g'\nh,\vt'$ are the restrictions of $\,g\,$ 
and $\,\vt\,$ to $\,\varPi\nh$.
\end{remark}
As shown next, ge\-o\-des\-\hbox{ic\hs-}\hskip0ptgra\-di\-ent K\"ah\-ler triples 
naturally arise from suitable co\-ho\-mo\-ge\-ne\-i\-ty-one isometry groups. 
\begin{lemma}\label{chone}{\medit 
Let a connected Lie group\/ $\,G\hs$ acting by hol\-o\-mor\-phic isometries on 
a K\"ah\-ler manifold\/ $\,(\mf\nh,g)$, and having some orbits of real 
co\-di\-men\-sion\/ $\,1$, preserve a nontrivial hol\-o\-mor\-phic Kil\-ling 
field\/ $\,u\,$ with zeros. If\/ $\,H^1\nh(\mf,\bbR)=\{0\}$, then\/ 
$\,(\mf\nh,g,\vt)\,$ is a ge\-o\-des\-ic-gra\-di\-ent K\"ah\-ler triple and\/ 
$\,u=J(\navp)\,\,$ for some\/ $\,G$-in\-var\-i\-ant function\/ $\,\vt\,$ on\/ 
$\,\mf\nh$.
}
\end{lemma}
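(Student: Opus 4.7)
The plan is to construct $\vt$ explicitly and then verify the three required properties in turn. First, I would set $v=-Ju$; by (\ref{kil}), $v$ is real-hol\-o\-mor\-phic (since $u=Jv$ is so) and locally a gradient (since $u$ is Killing), so the $1$-form $g(v,\cdot)$ is closed. The hypothesis $H^1(\mf,\bbR)=\{0\}$ then makes it globally exact: $g(v,\cdot)=d\vt$ for some smooth $\vt:\mf\to\bbR$. Thus $v=\navp$, giving $u=Jv=J(\navp)$, and nontriviality of $u$ forces $\vt$ to be nonconstant.

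For $G$-invariance of $\vt$, I would use that $G$ acts by hol\-o\-mor\-phic isometries preserving $u$, hence preserving $v=-Ju$ and $d\vt=g(v,\cdot)$; consequently $\vt\circ\phi-\vt=:c(\phi)$ is a real constant for every $\phi\in G$. To force $c(\phi)\equiv0$, I would pick a zero $x_0$ of $u$ and let $Z_0$ be the component through $x_0$ of the zero set $Z$ of $u$; since $v=\navp$ vanishes on $Z_0$, the function $\vt$ is constant there and may be normalized so that $\vt|_{Z_0}=0$. By Remark~\ref{kilxp} the components of $Z$ are mutually isolated, and $Z$ is $G$-invariant. Since $G$ is connected, the $G$-orbit of $x_0$ is a connected subset of $Z$, so it lies in a single component, namely $Z_0$; consequently $\phi(x_0)\in Z_0$ and $c(\phi)=\vt(\phi(x_0))-\vt(x_0)=0$ for every $\phi\in G$.

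For the geodesic-gra\-di\-ent property, by Lemma~\ref{ggqft} it suffices to show $dQ\wedge d\vt=0$ on $\mf'=\{v\neq0\}$, where $Q=g(v,v)$. The key observation is that both $\vt$ and $Q$ are $G$-invariant, so $d\vt_x$ and $dQ_x$ annihilate the orbit tangent space $T_x(G\nh\cdot\nh x)$ at every $x\in\mf$. At a point $x\in\mf'$ where the orbit $G\nh\cdot\nh x$ has real codimension $1$, this orbit tangent space exhausts the hyperplane $\ker d\vt_x\subseteq T_x\mf$, and therefore $\ker d\vt_x\subseteq\ker dQ_x$, i.e.\ $dQ_x\wedge d\vt_x=0$. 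The set $U\subseteq\mf$ of such points is open, by lower semi-continuity of orbit dimension, and nonempty by hypothesis.

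The main obstacle is propagating the vanishing from $U\cap\mf'$ to the remainder of $\mf'$, which I would handle by showing that $U$ is dense in $\mf$. Elements of the Lie algebra of $G$ are hol\-o\-mor\-phic vector fields on $\mf$, hence real-analytic with respect to the real-analytic structure underlying the complex atlas. The rank of the evaluation map from this Lie algebra to $T_x\mf$ is therefore controlled by real-analytic minors in $x$, so the complement $\mf\smallsetminus U$, on which this rank drops below $\dimr\mf-1$, is a proper real-analytic subvariety of the connected manifold $\mf$ and accordingly has empty interior by the identity principle. Thus $U\cap\mf'$ is dense in $\mf'$, and continuity of the smooth $2$-form $dQ\wedge d\vt$ extends its vanishing to all of $\mf'$, which by Lemma~\ref{ggqft} yields the required geodesic-gra\-di\-ent property.
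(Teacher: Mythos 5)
Your proof is correct and follows essentially the same route as the paper's: obtain $\vt$ from $H^1(\mf,\bbR)=\{0\}$ and (\ref{kil}), kill the additive constants using a connected component of the zero set of $u$ via Remark~\ref{kilxp}, and deduce functional dependence of $Q$ on $\vt$ from their constancy along the dense union of co\-di\-men\-sion-one orbits, so that Lemma~\ref{ggqft} applies. The only difference is that you actually justify the density of that union (via real-an\-a\-lyt\-ic\-i\-ty of the hol\-o\-mor\-phic Kil\-ling fields generating the action), a point the paper's proof asserts only parenthetically.
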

\begin{proof}Since $\,H^1\nh(\mf,\bbR)=\{0\}$, (\ref{kil}) implies both the 
existence of a function $\,\vt$ with $\,u=J(\navp)$, and the fact that its 
gradient $\,v=\navp=-\nh Ju\,$ is hol\-o\-mor\-phic. Thus, elements of $\,G\,$ 
preserve $\,\vt\,$ up to additive constants. Let $\,\bs\,$ now be a fixed 
connected component of the zero set of $\,u$, so that $\,G$, being connected, 
leaves $\,\bs\,$ invariant, while $\,\vt\,$ is constant on $\,\bs\,$ (cf.\ 
Remark~\ref{kilxp}). The additive constants just mentioned are therefore equal 
to $\,0$. Due to their $\,G$-in\-var\-i\-ance, the functions $\,\vt\,$ and 
$\,Q=g(\navp,\navp)$ are constant along co\-di\-men\-sion-one orbits of 
$\,G\,$ and, consequently, functionally dependent. (Note that the union of 
such orbits is dense in $\,\mf\nh$.) Consequently, by Lemma~\ref{ggqft}, the 
gradient $\,v=\navp\,$ is a geodesic vector field.
\end{proof}
\begin{remark}\label{reddt}The assumptions about triviality of 
$\,H^1\nh(\mf,\bbR)\,$ and hol\-o\-mor\-phic\-i\-ty of $\,u\,$ in 
Lemma~\ref{chone} are well-known to be redundant when $\,\mf\,$ is compact 
\cite[p.\ 95, Corollary 4.5]{kobayashi-tg}; see also \cite[formula (A.2c) and 
Theorem A.1]{derdzinski-kp}.
\end{remark}
The following fact will be used in the proof of Theorem~\ref{jacob}.
\begin{lemma}\label{jcbfl}{\medit 
If a vector field\/ $\,w\,$ on a Riemannian manifold\/ $\,(\mf\nh,g)\,$ is 
orthogonal to a geodesic gradient\/ $\,v\,$ and commutes with\/ $\,v$, then\/ 
$\,w\,$ is a Ja\-co\-bi field along every integral curve of\/ $\,v/|v|\,$ in 
the set\/ $\,\mf'$ where\/ $\,v\ne0$.
}
\end{lemma}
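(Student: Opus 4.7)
The plan is to realize $w|_\gamma$, for any integral curve $\gamma$ of $u=v/|v|$ in $\mf'$, as the variation field of a smooth one\hyp parameter family of unit\hyp speed geodesics produced by flowing $\gamma$ along $w$; the Jacobi property then follows from the classical variational characterization, mirroring the argument for Kil\-ling fields in Remark~\ref{kiljc}.

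The key step is to check that the local flow $\phi_s$ of $w$ preserves the unit vector field $u=v/|v|$ on $\mf'$. Invariance of $v$ itself, i.e.\ $(\phi_s)_*v=v$, is exactly the hypothesis $[v,w]=0$. For the $\phi_s$\hyp invariance of $|v|$, combine (\ref{nvv}) (i.e.\ $\nabla_vv=\psi v$ for some $\psi$ on $\mf'$) with (\ref{tnd}) (i.e.\ $\nabla Q=2\nabla_vv$ where $Q=g(v,v)$) to obtain $\nabla Q=2\psi v$; the hypothesis $g(w,v)=0$ then gives $d_wQ=g(w,\nabla Q)=2\psi g(w,v)=0$, so $Q$, and therefore $|v|$, is $\phi_s$\hyp invariant. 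Combining both invariances yields $(\phi_s)_*u=u$ on $\mf'$.

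Consequently, if $\gamma(\sigma)$ is a unit\hyp speed integral curve of $u$ in $\mf'$, then for $s$ near $0$ the curves $\beta_s(\sigma)=\phi_s(\gamma(\sigma))$ remain in $\mf'$ and are again integral curves of $u$, hence unit\hyp speed geodesics. The smooth variation $(\sigma,s)\mapsto\beta_s(\sigma)$ of $\gamma$ through geodesics has variation field $\sigma\mapsto w(\gamma(\sigma))$ at $s=0$, which must therefore be a Jacobi field along $\gamma$, proving the lemma. No substantial obstacle arises; the only point worth emphasizing is that the hypothesis $w\perp v$ enters precisely to secure $\phi_s$\hyp invariance of $|v|$ -- without which the variation would consist of merely reparametrized geodesics rather than of geodesics in arc\hyp length parametrization.
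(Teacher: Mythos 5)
Your proposal is correct and follows essentially the same route as the paper: both arguments show that the local flow of $w$ preserves $v/|v|$ (using $[v,w]=0$ for $v$ and the combination of (\ref{tnd}), (\ref{nvv}) and $g(w,v)=0$ to get $d_w\hh Q=0$), and then realize $w$ as the variation field of the resulting family of unit-speed geodesics. The only cosmetic difference is that you compute $d_w\hh Q=g(w,\nabla Q)=2\psi\hh g(w,v)=0$ directly, whereas the paper phrases the same fact by noting that $|v|$ is locally a function of $\vt$ and that $d_w\vt=0$.
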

\begin{proof}Fix $\,\vt:\mf\to\bbR\,$ with $\,v=\navp$. For 
$\,Q=g(\hn v,v):\mf\to\bbR$, (\ref{tnd}) and (\ref{nvv}) give 
$\,dQ\wedge\hs d\vt=0$, so that $\,|v|\,$ is, locally in $\,\mf'\nnh$, a 
$\,C^\infty$ function of $\,\vt\,$ (cf.\ Remark~\ref{cifot}). As 
$\,\lie_w\w\vt=0\,$ due to the orthogonality assumption, and 
$\,\lie_w\w v=0$, we now have $\,\lie_w\w(\hn v/|v|)=0\,$ on $\,\mf'\nnh$. The 
local flow of $\,w$, applied to any integral curve of $\,v/|v|$, thus yields a 
variation of unit-speed geodesics, and our claim follows.
\end{proof}
\begin{remark}\label{dmone}A compact ge\-o\-des\-\hbox{ic\hs-}\hskip0ptgra\-di\-ent K\"ah\-ler 
triple of complex dimension $\,1\,$ is essentially, up to isomorphisms, 
nothing else than the sphere $\,S^2$ with a rotationally invariant metric. In 
fact, necessity of rotational invariance is due to (\ref{kil}), while its 
sufficiency follows from Lemma~\ref{chone} with $\,G=S^1\nh$, for the sphere 
treated as $\,\bbCP^1$ with a K\"ah\-ler metric. (Once the $\,S^1$ action is 
chosen, the required function $\,\vt\,$ becomes unique up to trivial 
modifications, cf.\ Remark~\ref{trivl}.)
\end{remark}

\section{Examples: Grass\-mann\-i\-an and CP triples\done}\label{eg}
\setcounter{equation}{0}
In this section {\medit vector spaces\/} are complex (except for 
Remark~\ref{dfrtl}) and fi\-\hbox{nite\hh-}\hskip0ptdi\-men\-sion\-al. By 
$\,k${\it-planes\/} in a vector space $\,\vs\,$ we mean 
\hbox{$\,k$-}\hskip0ptdi\-men\-sion\-al vector sub\-spaces of $\,\vs\nh$. 
When $\,k=1$, they will also be called {\medit lines\/} in $\,\vs\nh$.

Given a vector space $\,\vs\,$ and $\,k\in\{0,1,\dots,\dimc\nnh\vs\}$, the 
Grass\-mann\-i\-an $\,\mathrm{Gr}\nh_k\w\nnh\vs\,$ is the set of all 
$\,k$-planes in $\,\vs\nh$. Each $\,\mathrm{Gr}\nh_k\w\nnh\vs\,$ naturally 
forms a compact complex manifold (see Remark~\ref{grass}), and 
$\,\mathrm{P}\vs=\mathrm{G}\hn_1\w\nnh\vs\,$ is the projective space of 
$\,\vs\nh$, provided that $\,\dimc\nnh\vs>0$. We will use the standard 
identification
\begin{equation}\label{inc}
\mathrm{P}(\hn\bbC\times\vs)\hs\,=\,\hs\vs\,\,\cup\,\hs\mathrm{P}\vs\hh,
\end{equation}
of $\,\mathrm{P}(\hn\bbC\times\vs)\,$ with the disjoint union of an open 
subset bi\-hol\-o\-mor\-phic to $\,\vs\,$ and a complex sub\-man\-i\-fold 
bi\-hol\-o\-mor\-phic to $\,\mathrm{P}\vs\,$ via the bi\-hol\-o\-mor\-phism 
sending $\,v\in\vs$, or the line $\,\bbC v\,$ spanned by 
$\,v\in\vs\smallsetminus\{0\}$, to the line $\,\bbC(1,v)\,$ or, respectively, 
$\,\bbC(0,v)$. The {\medit projectivization\/} of a hol\-o\-mor\-phic vector 
bundle $\,\nr\,$ over a complex manifold $\,\sd$ is, as usual, the 
hol\-o\-mor\-phic bundle $\,\mathrm{P}\nnh\nr\,$ of complex projective spaces 
over $\,\sd\,$ with
\begin{equation}\label{prz}
\mathrm{the\ fibres}\hskip7pt
[\mathrm{P}\nnh\nr]_y\w\,=\,\hs\mathrm{P}\vs\hskip7pt\mathrm{for}\hskip7pt\vs\,
=\,\nr\hskip-2.4pt_y\w\hh,\hskip7pt\mathrm{whenever}\hskip7pty\in\sd\hh.
\end{equation}
For a sub\-space $\,\ls\,$ of a vector space $\,\vs\,$ such that 
$\,\dimc\nnh\vs\ge2$, let $\,G\,$ be the group of all 
com\-plex-lin\-e\-ar au\-to\-mor\-phisms of $\,\vs\,$ preserving both 
$\,\ls\,$ and a fixed Her\-mit\-i\-an inner product in $\,\vs\nh$. We now 
define a compact complex manifold $\,\mf\,$ by
\begin{equation}\label{dta}
\begin{array}{rl}
\mathrm{i)}&\mf=\mathrm{Gr}\nh_k\w\nnh\vs\mathrm{,\ where\ 
}\,0<k<\dimc\nnh\vs\,\mathrm{\ and\ }\,\dimc\nh\ls=1\mathrm{,\ or}\\
\mathrm{ii)}&\mf=\mathrm{P}\vs\mathrm{,\ allowing\ 
}\,\dimc\nh\ls\in\{1,\dots,\dimc\nnh\vs-1\}\,\mathrm{\ to\ be\ arbitrary.}
\end{array}
\end{equation}
Then the hypotheses, and hence conclusions, of Lemma~\ref{chone} are 
satisfied by these $\,\mf\nh,G$, any $\,G$-in\-var\-i\-ant 
K\"ah\-ler metric $\,g\,$ on $\,\mf\nh$, and some $\,u$. Specifically, 
$\,u\,$ is a vector field arising from the central circle subgroup $\,S^1$ of 
$\,G\,$ formed by all unimodular elements of $\,G\,$ acting in both 
$\,\ls,\ls\nnh^\perp$ as multiples of $\,\mathrm{Id}$. See the remarks below.

The triples $\,(\mf\nh,g,\vt)\,$ arising via Lemma~\ref{chone} in cases 
(\ref{dta}.i) and (\ref{dta}.ii) will from now on be called {\medit 
Grass\-mann\-i\-an triples\/} and, respectively, {\medit CP triples}.

Since $\,G\,$ as above contains all unit complex multiples of $\,\mathrm{Id}$, 
its action on $\,\mf\,$ is not effective. Lemma~\ref{chone} 
does not require effectiveness of the action.
\begin{remark}\label{grcpt}The co\-ho\-mo\-ge\-ne\-i\-ty-one assumption of 
Lemma~\ref{chone} follows here from the fact that the orbits of $\,G\,$ 
coincide with the levels of a nonconstant real-an\-a\-lyt\-ic function 
$\,f:\mf\to\bbR$. Specifically, in case (\ref{dta}.i), 
$\,f(\ws)=|\hs\mathrm{pr}\hh(X,\ws)|^2\nh$, where
\begin{equation}\label{prx}
\mathrm{pr}\hh(X,\ws)\,\mathrm{\ denotes\ the\ orthogonal\ projection\ of\ 
}\,X\,\mathrm{\ onto\ }\,\ws,
\end{equation}
and $\,X\,$ is some\hs$/$any unit vector spanning $\,\ls$, which yields 
$\,G$-in\-var\-i\-ance of $\,f\nnh$. Conversely, if 
$\,\ws,\tilde\ws\in\mf\,$ and $\,f(\ws)=f(\tilde\ws)$, an element of $\,G\,$ 
sending $\,\ws\,$ to $\,\tilde\ws\,$ is provided by any linear isometry 
mapping the quadruple 
$\,\ws,\ws^\perp\nnh,\mathrm{pr}\hh(X,\ws),\mathrm{pr}\hh(X,\ws^\perp)$ 
onto $\,\tilde\ws,\tilde\ws^\perp\nnh,\mathrm{pr}\hh(X,\tilde\ws),
\mathrm{pr}\hh(X,\tilde\ws^\perp\nnh)$. (Such an isometry will preserve 
$\,X=\mathrm{pr}\hh(X,\ws)+\mathrm{pr}\hh(X,\ws^\perp\nnh)$.) In case 
(\ref{dta}.ii) we may use $\,f\,$ given by 
$\,f(\ws)=|\hs\mathrm{pr}\hh(Y\hskip-3pt_\ws\w,\ls)|^2\nh$, with 
$\,Y\hskip-3pt_\ws\w$ standing for some\hs$/$\hn any unit vector that spans 
the line $\,\ws\nh$.
\end{remark}
\begin{remark}\label{zeros}For a Grass\-mann\-i\-an or CP triple 
$\,(\mf\nh,g,\vt)$, critical points of $\,\vt$, that is, the zeros of $\,u\,$ 
or, equivalently, the fixed points of the central circle subgroup $\,S^1$ 
mentioned above, form the disjoint union of two (connected) compact complex 
sub\-man\-i\-folds, which -- since $\,\vt\,$ is clearly constant on either of 
them -- must be the same as $\,\sd^\pm$ in (\ref{tmm}). With $\,\approx\,$ 
denoting bi\-hol\-o\-mor\-phic equivalence, these $\,\sd^\pm$ are
\begin{equation}\label{spm}
\begin{array}{rlll}
\mathrm{a)}&\{\ws\in\mf:\ls\subseteq\ws\}\nh
\approx\nh\mathrm{Gr}\nh_{k-\nh1}\w[\hh\vs\hskip-2pt/\ls]\hh,\hskip9pt
&\mathrm{b)}&\{\ws\in\mf:\ws\subseteq\ls\hskip-2.5pt^\perp\}\nh
\approx\nh\mathrm{Gr}\nh_k\w\ls\hskip-2.5pt^\perp\nnh,\\
\mathrm{c)}&\{\ws\in\mf:\ws\subseteq\ls\}\nh
\approx\nh\mathrm{P}\ls\hh,
&\mathrm{d)}&\{\ws\in\mf:\ws\subseteq\ls\hskip-2.5pt^\perp\}\nh
\approx\nh\mathrm{P}\ls\hskip-2.5pt^\perp\nnh,
\end{array}
\end{equation}
where (\ref{spm}.a) -- (\ref{spm}.b) correspond to (\ref{dta}.i), and 
(\ref{spm}.c) -- (\ref{spm}.d) to (\ref{dta}.ii). In fact, each of the four 
sets clearly consists of fixed points of $\,S^1\nnh$. Conversely, suppose that 
$\,\ws\in\mf\,$ does not lie in the union of the sets (\ref{spm}.a) -- 
(\ref{spm}.b) (or, (\ref{spm}.c) -- (\ref{spm}.d)), and 
$\,\varXi\in S^1\nh\smallsetminus\{\mathrm{Id},-\nh\mathrm{Id}\}$. Then 
$\,\varXi(\ws)\ne\ws\nh$. Namely, if $\,\varXi\,$ preserved $\,\ws\nh$, the 
equalities $\,\varXi(\ls)=\ls\,$ and 
$\,\varXi(\ls\hskip-2.5pt^\perp\nnh)=\ls\hskip-2.5pt^\perp\nnh$, along with 
$\,\varXi(\ws)=\ws\,$ and $\,\varXi(\ws^\perp\nnh)=\ws^\perp\nnh$, would imply 
analogous equalities for the lines spanned by 
$\,\mathrm{pr}\hh(X,\ws),\,\mathrm{pr}\hh(X,\ws^\perp)\,$ (case 
(\ref{dta}.i)), or by 
$\,\mathrm{pr}\hh(X,\ls),\,\mathrm{pr}\hh(X,\ls\hskip-2.5pt^\perp\nnh)\,$ 
(case (\ref{dta}.ii)), cf.\ (\ref{prx}), $\,X\,$ being any unit vector in the 
line $\,\ls$ (or, respectively, in the line $\,\ws$). In either case, the 
plane $\,\ks\,$ spanned by the two $\,\varXi\nh$-in\-var\-i\-ant lines would 
contain a third such line, in the form of $\,\ls\,$ (or, respectively, 
$\,\ws$). Thus, $\,\varXi\,$ restricted to $\,\ks\,$ would be a multiple of 
$\,\mathrm{Id}$, leading to a contradiction: in both cases, $\,\ks\,$ contains 
nonzero vectors from $\,\ls\,$ and from $\,\ls\hskip-2.5pt^\perp\nnh$, which 
are eigen\-vec\-tors of $\,\varXi\,$ for two distinct eigen\-values.
\end{remark}
\begin{remark}\label{cpone}All compact ge\-o\-des\-\hbox{ic\hs-}\hskip0ptgra\-di\-ent 
K\"ah\-ler triples of complex dimension $\,1$ are obviously isomorphic to CP 
triples constructed from the data (\ref{dta}.ii) with $\,m=2$ and 
$\,\dimc\nh\ls=1$. See Remark~\ref{dmone}.
\end{remark}
\begin{remark}\label{fbstm}Given the real part $\,\lr\,$ of a Her\-mit\-i\-an 
inner product in a vector space $\,\vs\nh$, the {\medit Fu\-bi\-ni-Stu\-dy 
metric\/} on $\,\mathrm{P}\vs\,$ associated with $\,\lr\,$ is, as usual, 
uniquely characterized by requiring that the restriction of the projection 
$\,\xi\mapsto\bbC\xi\,$ to the unit sphere of $\,\lr\,$ be a Riemannian 
submersion. Another such real part $\,\lr\hn'$ yields the same 
Fu\-bi\-ni-Stu\-dy metric as $\,\lr\,$ only if $\,\lr\hn'$ is a constant 
multiple of $\,\lr$. In fact, a $\,\bbC$-lin\-e\-ar auto\-mor\-phism of 
$\,\vs\,$ taking $\,\lr\,$ to $\,\lr\hn'$ descends to an isometry 
$\,\mathrm{P}\vs\to\mathrm{P}\vs$ and hence equals a 
$\,\lr$-u\-ni\-tar\-y auto\-mor\-phism of $\,\vs\,$ followed by 
$\,r\hh e^{i\theta}$ times $\,\mathrm{Id}\,$ for some $\,r,\theta\in\bbR$, 
which gives $\,\lr=r\lr\hn'\nh$.
\end{remark}
\begin{remark}\label{grass}Given a vector space $\,\vs\,$ and 
$\,k\in\{1,\dots,\dimc\nnh\vs\}$, we denote by $\,\mathrm{St}\hn_k\w\nnh\vs$ 
and $\,\pi:\mathrm{St}\hn_k\w\nnh\vs\to\mathrm{Gr}\nh_k\w\nnh\vs\,$ 
the Stie\-fel manifold of all linearly independent ordered $\,k$-tuples of 
vectors in $\,\vs\,$ (forming an open sub\-man\-i\-folds of the 
$\,k\hs$th Cartesian power of $\,\vs$) and, respectively, the projection 
mapping sending each $\,\mathbf{e}\in\mathrm{St}\hn_k\w\nnh\vs\,$ to 
$\,\pi(\mathbf{e})=\mathrm{Span}\hh(\mathbf{e})$. Then 
$\,\mathrm{Gr}\nh_k\w\nnh\vs\,$ has a unique structure of a compact complex 
manifold of complex dimension $\,(n-k)k$, where $\,n=\dimc\nnh\vs\nh$, such 
that $\,\pi\,$ is a hol\-o\-mor\-phic submersion.
\end{remark}
\begin{remark}\label{tgvec}We will use the canonical 
isomorphic identification
\begin{equation}\label{twg}
T\hskip-2.2pt_\ws\w\nh[\hs\mathrm{Gr}\nh_k\w\nnh\vs\hs]\,\,
=\,\,\mathrm{Hom}\hs(\ws,\vs\hskip-1.8pt/\hh\ws)\hskip12pt\mathrm{whenever}
\hskip7ptk\in\{0,1,\dots,\dimc\nnh\vs\}\hh,
\end{equation}
for the tangent space of the complex manifold $\,\mathrm{Gr}\nh_k\w\nnh\vs\,$ 
at any $\,k$-plane $\,\ws\nh$, where $\,\mathrm{Hom}$ means `the space of 
linear operators' and $\,\vs\,$ is any vector space.

Namely, let $\,\mathrm{St}\hn_k\w\nnh\vs\,$ and 
$\,\pi:\mathrm{St}\hn_k\w\nnh\vs\to\mathrm{Gr}\nh_k\w\nnh\vs\,$ be as in 
Remark~\ref{grass}. Under the identification (\ref{twg}), 
$\,\vx\in\mathrm{Hom}\hs(\ws,\vs\hskip-1.8pt/\hh\ws)\,$ corresponds to 
$\,d\pi_{\mathbf{e}}\w(\tilde\vx\nh\mathbf{e})
\in T\hskip-3pt_\ws\w\nh[\hs\mathrm{Gr}\nh_k\w\nnh\vs\hs]\,$ for any linear 
lift $\,\tilde\vx:\ws\to\vs\,$ of $\,\vx\,$ and any basis 
$\,\mathbf{e}\,$ of $\,\ws\,$ which, clearly, does not depend on how such 
$\,\tilde\vx\,$ and $\,\mathbf{e}\,$ were chosen.
\end{remark}
\begin{remark}\label{dfrtl}Given a real (or, complex) manifold $\,\,U\hs$ and 
real (or, complex) vector spaces $\,\tw\nnh,\oy\nnh$, let 
$\,F:U\to\mathrm{Hom}\hs(\tw\nnh,\oy)\,$ be a $\,C^\infty$ (or, 
hol\-o\-mor\-phic) mapping giving rise to a {\medit constant\/} function 
$\,U\ni\xi\mapsto\mathrm{rank}\hskip1.7ptF(\xi)\,$ or, equivalently, 
leading to the same value of $\,k=\dim\hs\mathrm{Ker}\hskip1.4ptF\nh(\xi)\,$ 
for all $\,\xi\in U\nh$. Then the mapping
\begin{equation}\label{ker}
U\ni\xi\,
\mapsto\,\mathrm{Ker}\hskip1.4ptF\nh(\xi)\in\mathrm{Gr}\nh_k\w\nh\tw
\end{equation}
is of class $\,C^\infty$ (or, hol\-o\-mor\-phic) and its differential 
$\,T\hskip-2.3pt_\xi\w U\to\,\mathrm{Hom}\hs(\ws,\tw\hskip-1.8pt/\hh\ws)\,$ 
at any $\,\xi\in U\nh$, with $\,\ws=\mathrm{Ker}\hskip1.4ptF\nh(\xi)$, cf.\ 
(\ref{twg}), sends $\,\eta\in T\hskip-2.3pt_\xi\w U\,$ to the unique 
$\,\vx:\ws\to\tw\hskip-1.8pt/\hh\ws$ having a linear lift 
$\,\tilde\vx:\ws\to\tw\,$ such that
\begin{equation}\label{fxh}
F\nh(\xi)\circ\nh\tilde\vx\hskip7pt\mathrm{equals\ the\ restriction\ 
of}\hskip7pt-\hskip-3ptd\hskip-.8ptF\hskip-3pt_\xi\w\eta\hskip7pt\mathrm{to}
\hskip7pt\ws.
\end{equation}
Here $\,d\hskip-.8ptF\hskip-3pt_\xi\w:T\hskip-2.3pt_\xi\w U
\to\,\mathrm{Hom}\hs(\tw\nnh,\oy)$, so that $\,F\nh(\xi)\,$ and 
$\,d\hskip-.8ptF\hskip-3pt_\xi\w\eta\,$ are linear operators 
$\,\tw\nh\to\oy\nnh$.

In fact, let `regular' mean $\,C^\infty$ or hol\-o\-mor\-phic. Given 
$\,\xi\in U\nh$, we may select a subspace $\,\vs\,$ of $\,\tw\,$ so that 
$\,\tw\nh=\vs\oplus\ws\nh$, where 
$\,\ws\nh=\mathrm{Ker}\hskip1.4ptF\nh(\xi)$. For all $\,\eta\,$ near $\,\xi\,$ 
in $\,U\nh$, the restriction of $\,F(\eta)\,$ to $\,\vs\,$ is clearly an 
isomorphism onto the image of $\,F(\eta)$. Denoting by 
$\,F^{-\nnh1}_{\hskip-2.2pt\eta}$ its inverse isomorphism, we see that 
$\,F^{-\nnh1}_{\hskip-2.2pt\eta}\circ F(\eta)\,$ and $\,\mathrm{pr}\hn_\eta\w
=\mathrm{Id}-F^{-\nnh1}_{\hskip-2.2pt\eta}\circ F(\eta)\,$ coincide with the 
di\-rect-sum projections of 
$\,\tw\nh=\vs\oplus\hs\mathrm{Ker}\hskip1.4ptF\nh(\eta)$ onto $\,\vs\,$ and 
$\,\mathrm{Ker}\hskip1.4ptF\nh(\eta)$. A fixed basis $\,e_1\w,\dots,e_k\w$ of 
$\,\ws\,$ thus gives rise to the basis 
$\,\mathrm{pr}\hn_\eta\w\hh e_1\w,\dots,\mathrm{pr}\hn_\eta\w\hh e_k\w$ of any 
such $\,\mathrm{Ker}\hskip1.4ptF\nh(\eta)$, depending regularly on $\,\eta$, 
which constitutes a regular local lift of (\ref{ker}) valued in the Stie\-fel 
manifold $\,\mathrm{St}\hn_k\w\nnh\tw\,$ (see Remark~\ref{grass}), proving 
regularity of (\ref{ker}).

Replacing our $\,\xi\,$ with $\,\xi(0)\,$ for a curve 
$\,t\mapsto\xi(t)\in U\nh$, and letting $\,e_1\w(0),\dots,e_k\w(0)$ be a fixed 
basis of $\,\ws\nh=\mathrm{Ker}\hskip1.4ptF\nh(\xi(0))$, we set 
$\,e_j\w(t)=\mathrm{pr}\hn_{\xi(t)}\w\hh[\hh e_j\w(0)]\,$ and 
$\,\eta(t)=\dot\xi(t)$. Suppressing from the notation the dependence on $\,t$, 
one thus gets $\,[F(\xi)]\hh e_j\w\nh=0\,$ and, by differentiation, 
$\,[\hs d\hskip-.8ptF\hskip-3pt_\xi\w\eta\hh]\hh e_j\w\nh
+[F(\xi)]\hh\dot e_j\w\nh=0$. The operators 
$\,\tilde\pv=\tilde\pv(t):\mathrm{Ker}\hskip1.4ptF\nh(\xi)\to\vs$ defined by 
$\,\tilde\pv\nh e_j\w\nnh=\dot e_j\w$, $\,j=1,\dots,k$, yield (\ref{fxh}) at 
$\,t=0\,$ for $\,\tilde\pv\,$ instead of $\,\tilde\vx$. On the other hand, 
$\,\mathbf{e}=\mathbf{e}(t)\,$ with 
$\,\mathbf{e}=(e_1\w,\dots,e_k\w)\,$ is a regular lift of 
$\,\mathrm{Ker}\hskip1.4ptF\nh(\xi(t))\,$ to $\,\mathrm{St}\hn_k\w\nnh\tw$, 
showing that $\,d\pi_{\mathbf{e}}\w(\tilde\pv\mathbf{e})
=d\pi_{\mathbf{e}}\w\dot{\mathbf{e}}\,$ is the image of $\,\eta=\dot\xi\,$ 
under the differential of (\ref{ker}) at $\,\xi=\xi(t)$. This equality, at 
$\,t=0$, uniquely determines $\,\pv:\ws\to\tw\hskip-1.8pt/\hh\ws\,$ for which 
our $\,\tilde\pv:\ws\to\tw\,$ is a linear lift realizing 
the image just mentioned as in the final paragraph of Remark~\ref{tgvec}), 
so that $\,\pv\hh=\vx$, and our claim follows.
\end{remark}

\section{Some relevant types of data\done}\label{td}
\setcounter{equation}{0}
We will repeatedly consider quadruples $\,\tm,\tp,a,Q\,$ formed by
\begin{equation}\label{pbd}
\begin{array}{l}
\mathrm{a\ nontrivial\ closed\ interval\ }\,\,[\hh\tm,\tp]\hs\mathrm{,\ a\ 
constant\ }\,\,a\in(0,\infty)\mathrm{,}\\
\mathrm{and\ a\ }\,C^\infty\mathrm{\ function\ }\,Q\,\mathrm{\ of\ the\ 
variable\ }\,\vt\in[\hh\tm,\tp]\mathrm{,\nh\ positive}\\
\mathrm{on\ }\,(\tm,\tp)\mathrm{,\nh\ such\ that\ }\nnh\,Q=0\,\mathrm{\ and\ 
}\hn\,dQ\hh/\nh d\vt=\mp2a\hn\,\mathrm{\ at\ }\hn\,\vt=\tpm\hh,
\end{array}
\end{equation}
$\mp\,$ being the opposite sign of $\,\pm\hh$. As explained below, we may then 
also choose
\begin{equation}\label{sgn}
\begin{array}{l}
\mathrm{a\ sign\ }\hs\pm\,(\mathrm{equal\ to\ }\hs+\hs\mathrm{\ or\ 
}\hs-),\mathrm{\nh\ and\ a\ }C^\infty\hskip-2.5pt\mathrm{\ 
dif\-feo\-mor\-phism}\\
(\tm,\tp)\ni\vt\,\,\mapsto\,\,\hr\in(0,\infty)\,\,\mathrm{\ such\ that\ 
}\,\,d\nh\hr\hs/d\vt\,=\,\mp\hh a\hr/Q\hh,
\end{array}
\end{equation}
a function $\,(0,\infty)\ni\hr\mapsto f\in\bbR$, unique up to an additive 
constant, with
\begin{equation}\label{dfr}
a\hr\hs\,d\hskip-1.2ptf\nnh/\nh d\nh\hr\,=\,2\hs|\hs\vt-\hn\tpm|\hh,
\end{equation}
and the unique increasing dif\-feo\-mor\-phism 
$\,(0,\infty)\ni\hr\mapsto\sigma\in(0,\delta)\,$ such that
\begin{equation}\label{dsr}
a\hr\,d\sigma\nh/\hn d\nh\hr\,=\,Q^{\hs1\nh/2}\hskip10pt\mathrm{and}\hskip7pt
\sigma\nh\to0\hskip6pt\mathrm{as}\hskip6pt\hr\to0\hh,
\end{equation}
the inverse of $\,\vt\mapsto\hr\,$ in (\ref{sgn}) being used to treat 
$\,\vt,Q\,$ as functions of $\,\hr$, for
\begin{equation}\label{int}
\delta\in(0,\infty)\,\mathrm{\ equal\ to\ the\ integral\ of\ 
}\,Q^{-\nnh1\nh/2}\hs d\vt\,\mathrm{\ over\ }\,(\tm,\tp)\hh.
\end{equation}
In fact, one easily verifies that $\,\vt\mapsto\hr\,$ and 
$\,\hr\mapsto\sigma\,$ with the stated properties exist, while $\,\delta\,$ is 
finite. See \cite[Remark 5.1]{derdzinski-maschler-06}, 
\cite[Theorem~10.2(iii)]{derdzinski-kp}, \cite[p.\ 1661]{derdzinski-kp}.
\begin{lemma}\label{xtens}{\medit 
Any\/ $\,f\,$ satisfying\/ {\rm(\ref{dfr})} has a\/ $\,C^\infty$ extension 
to\/ $\,[\hs0,\infty)$, which is also a\/ $\,C^\infty$ function of\/ 
$\,\hr^2\nh\in[\hs0,\infty)\,$ in a sense analogous to 
Remark\/~{\rm\ref{cifot}}. The first and second derivatives of 
$\,f\,$ with respect to\/ $\,\hr^2$ obviously are\/ 
$\,|\hs\vt-\hn\tpm|/(a\hr^2)\,$ and\/ 
$\,(Q-2a\hh|\hs\vt-\hn\tpm|)/(2\hh a^2\hskip-2pt\hr^4)$.
}
\end{lemma}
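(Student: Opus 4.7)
On $(0,\infty)$, smoothness of $f$ in $\hr$ is immediate from (\ref{dfr}) and the diffeomorphism of (\ref{sgn}), so the whole point of the lemma is behavior at $\hr=0$. Set $s=\hr^2$ and $u=|\hs\vt-\hn\tpm|$. Granting, for the moment, that $u$ extends to a $C^\infty$ function of $s$ on $[\hs0,\ve)$, vanishing to first order with nonzero derivative at $s=0$, (\ref{dfr}) becomes $df/ds=u/(as)$; the right-hand side is then a smooth function of $s$ near $s=0$, so integration produces a $C^\infty$ extension of $f$ to $[\hs0,\infty)$ which is a $C^\infty$ function of $\hr^2$, and the stated formulas for $df/ds$ and $d^{\hs2}\nh f/ds^2$ follow from a routine chain-rule computation using (\ref{sgn}), rewritten as $d\vt/ds=\mp Q/(2as)$.

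The crux is therefore the following ODE claim: $u=|\hs\vt-\hn\tpm|$, viewed as a function of $s=\hr^2$, extends smoothly to $s=0$, vanishing to first order there. Taylor-expanding $Q$ at $\vt=\tpm$ and using the normalizations of (\ref{pbd}) gives $Q=2au\hs\tilde G(u)$ for some $C^\infty$ function $\tilde G$ with $\tilde G(0)=1$. Substituting into (\ref{sgn}) and using $d\vt/du=\mp1$ (where $\mp$ matches the sign of (\ref{sgn})), the two occurrences of $\mp$ cancel and one obtains
\begin{equation*}
\frac{d\log s}{du}\,\,=\,\,\frac{1}{u\tilde G(u)}\,\,=\,\,\frac{1}{u}\,+\,\tilde h(u)\hh,
\end{equation*}
with $\tilde h$ smooth near $0\in\bbR$. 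Integration yields $s=Cu\hh e^{H(u)}$ for a positive constant $C$ and a smooth function $H$, so $s$ depends smoothly on $u$ and vanishes to first order at $u=0$; the inverse function theorem then provides the desired smooth inverse $u=u(s)$.

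The only genuine difficulty is the sign bookkeeping in the ODE just displayed, which must be verified uniformly in both choices of the sign in (\ref{sgn}). Once this is done, the explicit second-derivative formula $(Q-2au)/(2a^2\hs s^2)$ is manifestly smooth in $s$: from $\tilde G(0)=1$ one has $Q-2au=2au\hh[\tilde G(u)-1]$, which vanishes to order $u^2$ at $u=0$, while $u^2\nh/s^2$ extends smoothly and positively to $s=0$ by the first step, so their quotient is smooth in $s$. This completes the plan.
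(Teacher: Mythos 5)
Your argument is correct, and it differs from the paper's proof mainly in being self-contained where the paper delegates. The paper reduces the lemma to showing that $\,\hr^2\mapsto(\vt-\hn\tpm)/\hr^2$ extends to a $\,C^\infty$ function on $\,[\hs0,\infty)\,$ with nonzero value at $\,0$, and then simply cites \cite{derdzinski-maschler-06} for the two key extensibility facts (smoothness of $\,(\vt-\hn\tpm)/\hr^2$ in $\,\vt\,$ and of $\,Q/\hr^2$ in $\,\hr^2$, both nonvanishing at the relevant endpoint), finishing with the observation that $\,2\hh a\,d\vt/\hn d\nh\hr^2\nh=\mp\hh Q/\hr^2$ makes $\,\vt\,$ and $\,\hr^2$ depend dif\-feo\-mor\-phic\-al\-ly on each other. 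You instead prove the needed fact from scratch: writing $\,Q=2au\hs\tilde G(u)\,$ with $\,\tilde G(0)=1\,$ (Remark~\ref{smdiv} applied at $\,\vt=\tpm$, using (\ref{pbd})), integrating $\,d\log s/du=1/u+\tilde h(u)\,$ to get $\,s=Cu\hh e^{H(u)}\nh$, and inverting. This is precisely the mechanism of Remark~\ref{adtet} of the paper (the $\,\theta(t)=t\hs e^{\alpha(t)}$ construction) and, in substance, of the results the paper imports, so the underlying mathematics is the same; what your route buys is independence from the external reference, at the cost of redoing the sign bookkeeping in (\ref{sgn}) by hand -- which you do correctly, since the two occurrences of $\,\mp\,$ in $\,ds/d\vt=\mp\hh2as/Q\,$ and $\,d\vt/du=\mp1\,$ indeed cancel for either choice of sign. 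Your closing verification that $\,Q-2au=O(u^2)\,$ makes the second-derivative formula manifestly smooth is a nice consistency check, though, as you note, smoothness of $\,f\,$ already follows from integrating the smooth first derivative.
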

\begin{proof}Since 
$\,a\hr^2d\hskip-1.2ptf\nnh/\hn d\nh\hr^2\nh=|\hs\vt-\hn\tpm|$, it clearly 
suffices to establish $\,C^\infty\nnh$-ex\-ten\-si\-bil\-i\-ty of 
$\,\hr^2\nh\mapsto(\vt-\hn\tpm)/\hr^2$ to $\,[\hs0,\infty)$. To this end, 
note that, according to 
\cite[top line on p.\ 83 and Remark 4.3(ii)]{derdzinski-maschler-06}, 
$\,(\vt-\hn\tpm)/\hr^2$ (or, $\,Q/\hr^2$) may be extended to a $\,C^\infty$ 
function of $\,\vt\in[\hh\tm,\tp]\smallsetminus\{\tmp\}\,$ (or, of 
$\,\hr^2\nh\in[\hs0,\infty)$) with a nonzero value at $\,\tpm$ (or, 
respectively, at $\,0$). However, by (\ref{sgn}), 
$\,2\hh a\,d\vt/\hn d\nh\hr^2\nh=\mp\hh Q/\hr^2\nh$, so that the variables 
$\,\vt\,$ and $\,\hr^2$ just mentioned depend dif\-feo\-mor\-phic\-al\-ly on 
each other.
\end{proof}
\begin{remark}\label{uniqe}It is the limit condition in (\ref{dsr}) that makes 
$\,\sigma\hs$ unique; by contrast, $\,\hr$ with (\ref{sgn}) is only unique up 
to a positive constant factor.
\end{remark}
\begin{remark}\label{tends}In (\ref{sgn}), the increasing function 
$\,\mp\hr\,$ of the variable $\,\vt\,$ clearly tends to $\,0\,$ as 
$\,\vt\to\tpm$, and  to $\,\infty\,$ as $\,\vt\to\tmp$.
\end{remark}
\begin{remark}\label{compo}The composite 
$\,(0,\delta)\ni\sigma\mapsto\hr\mapsto\vt\in(\tm,\tp)\,$ of the inverses of 
the above two dif\-feo\-mor\-phisms is the unique solution of the autonomous 
equation $\,d\vt/d\sigma=\mp\hh Q^{\hs1\nh/2}\nh$, with the sign $\,\pm\,$ 
fixed as in (\ref{sgn}), such that $\,\vt\to\tpm$ as $\,\sigma\to0$. (We say 
`autonomous' since (\ref{pbd}) makes $\,Q\,$ a function of $\,\vt$.) In fact, 
any two solutions of the equivalent equation 
$\,d\sigma/d\vt=\mp\hh Q^{-\nnh1\nh/2}$ differ by a constant.
\end{remark}
\begin{remark}\label{iddbf}With (\ref{pbd}) -- (\ref{dfr}) fixed as above, 
suppose that $\,\hr\,$ simultaneously denotes a positive function on a complex 
manifold $\,\mf'\nh$, which also turns $\,f\,$ into a function 
$\,\mf'\nh\to\bbR$. Now (\ref{idd}) for $\,\chi=\hr^2$ and the formulae in 
Lemma~\ref{xtens} give
\begin{equation}\label{ddf}
4\hh i\hh a^2\hskip-2pt\hr^4\partial\hskip1.7pt\cro\hskip-1.5ptf\,
=\,\hs4\hh i\hh a\hh\hr^2|\hs\vt-\hn\tpm|\,
\partial\hskip1.7pt\cro\hskip-.5pt\hr^2\hs
+\,(2a\hh|\hs\vt-\hn\tpm|-Q)\,d\nh\hr^2\wedge J^*\nnh d\nh\hr^2.
\end{equation}
\end{remark}
\begin{remark}\label{smdiv}If a $\,C^\infty$ function 
$\,\vt\mapsto\zeta(\vt)\,$ on an interval $\,I\hs$ having an endpoint $\,c$ 
vanishes at $\,c$, then $\,\zeta(\vt)=(\vt-c)\chi(\vt)\,$ for a $\,C^\infty$ 
function $\,\chi\,$ on $\,I\hs$ equal, at $\,c$, to the derivative 
$\,\zeta\hh'$ of $\,\zeta$. This is the Taylor formula, with 
$\,\chi(\vt)=\int_0^1\zeta\hh'(c+s(\vt-c))\,ds$.
\end{remark}
\begin{remark}\label{adtet}Given a $\,C^\infty$ function 
$\,t\mapsto\gamma(t)\,$ on an interval $\,I\hs$ having the endpoint $\,0\,$ 
such that $\,(\gamma(0),\dot{\hskip-1.8pt\gamma\hskip0pt}(0))=(0,1)$, where  
$\,(\hskip2.3pt)\nnh\dot{\phantom o}\nh=\,d/dt$, and $\,\gamma>0\,$ on 
$\,t\in I\nh\smallsetminus\{0\}$, there exists a $\,C^\infty$ 
function $\,\theta:I\nh\to\bbR$, unique up to multiplications by positive 
constants, for which $\,\dot\theta>0\,$ and 
$\,\gamma\hs\dot\theta=\hh\theta\,$ everywhere in $\,I\nh$, while 
$\,\theta(0)=0$. Namely, Remark~\ref{smdiv} implies that some $\,C^\infty$ 
functions $\,\beta\,$ (without zeros) and $\,\alpha\,$ on $\,I\hs$ satisfy the 
conditions $\,\gamma(t)=t/\beta(t)\,$ and $\,\beta(t)=1+t\dot\alpha(t)$. Thus, 
$\,1/\gamma(t)=\dot\alpha(t)+1/t\,$ has the antiderivative 
$\,t\mapsto\alpha(t)+\log|\hh t\hskip-.2pt|\,$ on 
$\,t\in I\nh\smallsetminus\{0\}$, and we may set 
$\,\theta(t)=t\hs e^{\alpha(t)}\nh$.
\end{remark}

\section{The Chern connection\done}\label{ck}
\setcounter{equation}{0}
Let $\,\lr\,$ be the real part of a Her\-mit\-i\-an 
fibre metric in a hol\-o\-mor\-phic complex vector bundle $\,\nr\,$ over a 
complex manifold $\,\sd$. The {\medit Chern connection\/} of $\,\lr$, also 
called its {\medit Her\-mit\-i\-an connection}, is the unique connection 
$\,\mathrm{D}\,$ in $\,\nr\,$ which makes $\,\lr$ parallel and satisfies the 
condition $\,\mathrm{D}^{0,1}\nnh=\,\cro$, meaning that, for any 
section $\,\xi\,$ of $\,\nr\nnh$, the com\-plex-anti\-lin\-e\-ar part of the 
real vec\-tor-bun\-dle morphism $\,\mathrm{D}\hh\xi:T\nnh\sd\to\nr$ equals 
$\,\hs\cro\xi$, the image of $\,\xi\hs$ under the Cau\-chy-Rie\-mann operator.
Cf.\ \cite[Sect.\ 1.4]{kobayashi-dg}.

The following five properties of the Chern connection $\,\mathrm{D}\,$ are 
well known -- (e) is obvious; for (a) -- (d) see \cite[p.\ 32]{ballmann}, 
\cite[Propositions 1.3.5, 1.7.19 and 1.4.18]{kobayashi-dg}.
\begin{enumerate}
  \def\theenumi{{\rm\alph{enumi}}}
\item[{\rm(a)}] $\mathrm{D}\,$ depends on $\,\nr\,$ and $\,\lr\,$ functorially 
with respect to all natural operations, including $\,\mathrm{Hom}$, direct 
sums, and pull\-backs under hol\-o\-mor\-phic mappings.
\item[{\rm(b)}] $R^{\mathrm{D}}\nh(J\nh w,J\nh w\hh'\hh)
=R^{\mathrm{D}}\nh(\hn w,w\hh'\hh)$, with the notation of (\ref{nwt}) and 
(\ref{jcs}), where $\,w,w\hh'\nh,R^{\mathrm{D}}$ are any vector fields on 
$\,\sd\,$ and, respectively, the curvature tensor of $\,\mathrm{D}$.
\item[{\rm(c)}] $\mathrm{D}\,$ is the Le\-vi-Ci\-vi\-ta connection of 
$\,\lr\,$ if $\,\nr=T\nnh\sd\,$ and $\,\lr\,$ is a K\"ah\-ler metric.
\item[{\rm(d)}] $\mathrm{D}\,$ coincides with the normal connection in the 
normal bundle $\,\nr\hskip-2.3pt\sd\,$ for any totally geodesic complex 
sub\-man\-i\-fold $\,\sd\,$ a K\"ah\-ler manifold $\,(\mf\nh,g)\,$ and the 
Riemannian fibre metric $\,\lr\,$ in $\,\nr\,$ induced by $\,g$. (In addition, 
it follows then that $\,\nr\,$ must be a hol\-o\-mor\-phic sub\-bun\-dle of 
$\,T\nnh\mf\nh$.)
\item[{\rm(e)}] $\mathrm{D}\hh$-par\-al\-lel sections of $\,\nr\,$ are 
hol\-o\-mor\-phic.
\end{enumerate}
Any given local hol\-o\-mor\-phic coordinates $\,z^\lambda$ in $\,\sd\,$ and 
local hol\-o\-mor\-phic trivializing sections $\,e_b\w$ for $\,\nr\nnh$, 
on the same domain, associate with the Her\-mit\-i\-an fibre metric 
$\,(\hskip2.3pt,\hskip1.3pt)$ having the real part $\,\lr$, sections 
$\,\xi\,$ of $\,\nr\nnh$, and any connection $\,\mathrm{D}$, their component 
functions $\,\gamma_{b\bar c}\w\nh
=(\nh e_b\w,e\nh_{c\phantom{b}}\w\hskip-3.2pt)\,$ and 
$\,\xi^b\nh,\varOmega_{\hskip-1.2pt\mu b}^{\,c},
\varOmega_{\hskip-1.2pt\bar\mu b}^{\,c}$, the latter characterized by 
$\,\xi=\xi^be_b\w$, as well as $\,\mathrm{D}\nnh_\lambda\w e_b\w
=\varOmega_{\hskip-1.2pt\lambda b}^{\hh c}e\nh_{c\phantom{b}}\w\hskip-2.3pt$ 
and $\,\mathrm{D}\nh_{\bar\mu}\w e_b\w
=\varOmega_{\hskip-1.2pt\bar\mu b}^{\hh c}e\nh_{c\phantom{b}}\w\hskip-2.3pt$. 
Here $\,\mathrm{D}\nnh_\lambda\w$ and $\,\mathrm{D}\nh_{\bar\mu}\w$ denote the 
$\,\mathrm{D}\hh$-co\-var\-i\-ant differentiations in the direction od the 
complexified coordinate vector fields 
$\,\partial_\lambda\w=\partial/\partial z^\lambda$ and 
$\,\partial_{\bar\mu}\w=\partial/\partial\bar z^{\bar\mu}\nh$, repeated 
indices are summed over and, with 
$\,\bar z^{\bar\mu}\nh,\bar\gamma_{\bar c\hh b}\w$ and 
$\,\bar\eta^{\bar c}$ standing for the complex conjugates of 
$\,z^\mu\nh,\gamma_{c\bar b}\w$ and $\,\eta^c\nh$, Her\-mit\-i\-an symmetry of 
$\,(\hskip2.3pt,\hskip1.3pt)$ amounts to 
$\,\gamma_{b\bar c}\w\nh=\gamma_{\bar c\hh b}\w$, while  
$\,(\xi,\eta)=\gamma_{b\bar c}\w\,\xi^b\nh\eta^{\bar c}$ whenever 
$\,\xi,\eta\,$ are sections of $\,\nr\nnh$.

The real coordinate vector fields corresponding to the real coordinates 
$\,\mathrm{Re}\,z^\mu,\,\mathrm{Im}\,z^\mu$ then are 
$\,\partial_\mu\w\nh+\hs\partial_{\bar\mu}^{\phantom{_{j_j}}}\hskip-3pt,
\,i\hs(\partial_\mu\w\nh
-\hs\partial_{\bar\mu}^{\phantom{_{j_j}}}\hskip-3pt)$, and so, given a 
complexified vector field $\,v=v^\mu\partial_\mu\w\nh
+\hh v^{\bar\mu}\partial_{\bar\mu}^{\phantom{_{j_j}}}\hskip-3pt$,
\begin{equation}\label{rvf}
v\,\mathrm{\ is\ real\ if\ and\ only\ if\ each\ } v^{\bar\mu}\mathrm{\ equals\ 
the\ complex\ conjugate\ of\ }\,v^\mu\nh.
\end{equation}
For a connection $\,\mathrm{D}\,$ to make $\,(\hskip2.3pt,\hskip1.3pt)\,$ 
parallel it is clearly necessary and sufficient that 
$\,\partial_\mu\w\gamma_{b\bar c}\w
=\varOmega_{\hskip-1.2pt\mu b}^{\,e}\gamma_{e\bar c}\w
+\varOmega_{\hskip-1.2pt\mu\bar c}^{\,\bar e}\gamma_{b\bar e}\w$ 
and $\,\partial_{\bar\mu}\w\gamma_{b\bar c}\w
=\varOmega_{\hskip-1.2pt\bar\mu b}^{\,e}\gamma_{e\bar c}\w
+\varOmega_{\hskip-1.2pt\bar\mu\bar c}^{\,\bar e}\gamma_{b\bar e}\w$, where 
$\,\varOmega_{\hskip-1.2pt\mu\bar c}^{\,\bar e}$ and 
$\,\varOmega_{\hskip-1.2pt\bar\mu\bar c}^{\,\bar e}$ are defined to be the 
complex conjugates of $\,\varOmega_{\hskip-1.2pt\bar\mu c}^{\,e}$ and 
$\,\varOmega_{\hskip-1.2pt\mu c}^{\,e}$. On the other hand, the 
condition $\,\mathrm{D}^{0,1}\nnh=\,\cro\,$ is obviously equivalent to 
$\,\mathrm{D}\nh_{\bar\mu}\w e_b\w=0$, that is, 
$\,\varOmega_{\hskip-1.2pt\bar\mu b}^{\hh c}=0$. Existence and uniqueness of 
the Chern connection $\,\mathrm{D}$ now follow, its component functions being
\begin{equation}\label{chc}
\varOmega_{\hskip-1.2pt\bar\mu b}^{\hh c}\hs=\,0\hskip10pt
\mathrm{and}\hskip10pt\varOmega_{\hskip-1.2pt\lambda b}^{\,d}\hskip7pt
\mathrm{\ characterized\ 
by}\hskip7pt\varOmega_{\hskip-1.2pt\lambda b}^{\,d}\gamma_{d\bar c}\w\hs
=\,\partial_\lambda\w\gamma_{b\bar c}\w\hh.
\end{equation}
Consequently, the Chern connection $\,\mathrm{D}\,$ has the curvature 
components
\begin{equation}\label{ccd}
R_{\lambda\bar\mu b\bar c}\w=-R_{\bar\mu\lambda b\bar c}\w
=\partial_\lambda\w\partial_{\bar\mu}\w\gamma_{b\bar c}\w
-\varOmega_{\hskip-1.2pt\lambda b}^{\,d}
\partial_{\bar\mu}\w\gamma_{d\bar c}\w\hh,\hskip14pt
R_{\lambda\mu b\bar c}\w=R_{\bar\lambda\bar\mu b\bar c}\w=0
\end{equation}
(which implies (b) above). Here $\,R_{\lambda\bar\mu b\bar c}\w
=(R^{\mathrm{D}}\nh(\partial_\lambda\w,\partial_{\bar\mu}\w)\hh 
e_b\w,e\nh_{c\phantom{b}}\w\hskip-3.2pt)$, and analogously for the other three 
pairs $\,\bar\mu\lambda,\hh\lambda\mu,\hh\bar\lambda\bar\mu\,$ of indices. We 
obtain (\ref{ccd}) from (\ref{cur}) via differentiation by parts, noting that 
$\,[\hh\partial_\lambda\w,\partial_\mu\w]
=[\hh\partial_\lambda\w,\partial_{\bar\mu}\w]=0\,$ while, by (\ref{chc}), 
$\,\mathrm{D}\nh_{\bar\mu}\w e_b\w=0\,$
and 
$\,(\mathrm{D}\nnh_\lambda\w e_b\w,e\nh_{c\phantom{b}}\w\hskip-3.2pt)
=\partial_\lambda\w\gamma_{b\bar c}\w$. For instance, 
$\,R_{\lambda\bar\mu b\bar c}\w
=(\mathrm{D}\nh_{\bar\mu}\w
\mathrm{D}\nnh_\lambda\w e_b\w,e\nh_{c\phantom{b}}\w\hskip-3.2pt)
=\partial_{\bar\mu}\w(
\mathrm{D}\nnh_\lambda\w e_b\w,e\nh_{c\phantom{b}}\w\hskip-3.2pt)
-(\mathrm{D}\nnh_\lambda\w e_b\w,
\mathrm{D}\nnh_\mu\w e\nh_{c\phantom{b}}\w\hskip-3.2pt)
=\partial_{\bar\mu}\w\partial_\lambda\w\gamma_{b\bar c}\w
-\varOmega_{\hskip-1.2pt\lambda b}^{\,d}
\partial_{\bar\mu}\w\gamma_{\bar cd}\w$. Similarly, 
$\,(\mathrm{D}\nnh_\mu\w
\mathrm{D}\nnh_\lambda\w e_b\w,e\nh_{c\phantom{b}}\w\hskip-3.2pt)
=\partial_\mu\w(
\mathrm{D}\nnh_\lambda\w e_b\w,e\nh_{c\phantom{b}}\w\hskip-3.2pt)
-(\mathrm{D}\nnh_\lambda\w e_b\w,
\mathrm{D}\nh_{\bar\mu}\w e\nh_{c\phantom{b}}\w\hskip-3.2pt)\,$ equals 
$\,\partial_\mu\w(
\mathrm{D}\nnh_\lambda\w e_b\w,e\nh_{c\phantom{b}}\w\hskip-3.2pt)
=\partial_\mu\w\partial_\lambda\w\gamma_{b\bar c}\w$, which is symmetric in 
$\,\lambda,\mu\,$ (and so $\,R_{\lambda\mu b\bar c}\w=0$).

Let $\,f:\sd\to\bbR$. With the notational conventions of Remark~\ref{posit},
\begin{equation}\label{ddb}
i\hs\partial\hskip1.7pt\cro\hskip-1.5ptf\,\,
=\,\,[\hh\partial_\lambda\w\partial_{\bar\mu}\w f\hh]\,
dz^\lambda\nh\wedge\hh d\bar z^{\bar\mu}\hh,
\end{equation}
as $\,d\hskip-1.2ptf=[\hh\partial_\lambda\w f\hh]\,dz^\lambda\nh
+[\hh\partial_{\bar\mu}\w f\hh]\,d\bar z^{\bar\mu}$ and, in the case where 
$\,f=z^\lambda$ (or, $\,f=\bar z^{\bar\mu}$), the com\-plex-val\-ued 
$\,1$-form $\,J^*\nnh d\hskip-1.2ptf=(d\hskip-1.2ptf)\hn J\,$ equals 
$\,i\hs dz^\lambda$ or, respectively, $\,-i\hs d\bar z^{\bar\mu}\nh$. 
Equivalently,
\begin{equation}\label{lmz}
(i\hs\partial\hskip1.7pt\cro\hskip-1.5ptf)
(\partial_\lambda\w,\partial_{\bar\mu}\w)\,\,
=\,\,\partial_\lambda\w\partial_{\bar\mu}\w f,\hskip12pt
(i\hs\partial\hskip1.7pt\cro\hskip-1.5ptf)
(\partial_\lambda\w,\partial_\mu\w)\,\,
=\,\,(i\hs\partial\hskip1.7pt\cro\hskip-1.5ptf)
(\partial\hh_{\bar\lambda},\partial_{\bar\mu}\w)\,\,=\,\,0\hh.
\end{equation}
Thus, by (\ref{ddb}) and (\ref{rvf}), for 
$\,\omega=i\hs\partial\hskip1.7pt\cro\hskip-1.5ptf\,$ and any real vector 
fields $\,v,u\,$ on $\,\sd$,
\begin{equation}\label{ovu}
\omega\hh(\hn v,u)\,\,=\,\,2\hs\mathrm{Im}\hskip2pt
(v^\lambda u^{\bar\mu}\partial_\lambda\w\partial_{\bar\mu}\w f)\hh.
\end{equation}
\begin{lemma}\label{ddrsq}{\medit 
With\/ $\,\pi:\nr\nh\to\sd\,$ and\/ $\,\hr:\nr\nh\to[\hs0,\infty)\,$ denoting 
the bundle projection and the norm function of\/ $\,\lr$, \,for\/ 
$\,\nr\nh,\sd,\lr\,$ as above, the Chern connection\/ $\,\mathrm{D}\,$ of\/ 
$\,\lr\,$ and the $\,2$-form\/ 
$\,\omega=i\hs\partial\hskip1.7pt\cro\hskip-.5pt\hr^2$ satisfy the following 
conditions.
\begin{enumerate}
  \def\theenumi{{\rm\roman{enumi}}}
\item[{\rm(i)}] The horizontal distribution of\/ $\,\mathrm{D}\,$ constitutes 
a complex vector sub\-bun\-dle of\/ $\,T\nnh\nr\nnh$, and is\/ 
$\,\omega$-or\-thog\-o\-nal, in an obvious sense, to the vertical 
distribution\/ $\,\mathrm{Ker}\hskip2.3ptd\pi$.
\item[{\rm(ii)}] Part\/ {\rm(b)} of Remark\/~{\rm\ref{ddnsq}} describes\/ 
$\,\omega\hs$ restricted to any fibre\/ $\hs\nr\hskip-2.4pt_y\w$ \,of\/ 
$\hs\nr\nh$, where\/ $\,y\in\sd$.
\item[{\rm(iii)}] Whenever\/ $\,x=(y,\xi)\in\nr\nnh$, cf.\ 
Remark\/~{\rm\ref{tlspc}}, the restriction of\/ $\,\omega\nh_x\w$ to the 
horizontal space of\/ $\,\mathrm{D}\,$ at\/ $\,x\,$ equals the\/ 
$\,d\pi\nnh_x\w$-pull\-back of the\/ $\,2$-form\/ 
$\,\la\nh R_y^{\mathrm{D}}\nh(\,\cdot\,,\hs\cdot\,)\hh\xi,i\hh\xi\ra\,$ at\/ 
$\,y\in\sd$.
\item[{\rm(iv)}] The Chern connection\/ $\,\hat{\mathrm{D}}\,$ of\/ 
$\,e^\theta\nnh\lr$, \,for any function\/ $\,\theta:\sd\to\bbR$, is related 
to\/ $\,\mathrm{D}\,$ by\/ $\,\hat{\mathrm{D}}=\mathrm{D}
+(\partial\theta)\nnh\otimes\nh\mathrm{Id}\hh$, so that\/ 
$\,\hat\varOmega_{\hskip-1.2pt\bar\mu b}^{\hh c}\nh=0\,$ and\/ 
$\,\hat\varOmega_{\hskip-1.2pt\lambda b}^{\,c}\nh
=\varOmega_{\hskip-1.2pt\lambda b}^{\,c}\nh
+\hs\delta_{\nh b}^c\partial\nnh_\lambda\w\nh\theta$. Also, 
$\,\widehat v_x\w\nh\in\mathrm{Span}_\bbC\w(\widetilde v_x\w,\xi)$, 
where\/ $\,\widetilde v_x\w$ \,and\/ $\,\widehat v_x\w$ \,denote the\/ 
$\,\mathrm{D}\hh$-hor\-i\-zon\-tal and\/ 
$\,\hat{\mathrm{D}}\hh$-hor\-i\-zon\-tal lifts of any\/ $\,v\in\tyb\,$ to\/ 
$\,x=(y,\xi)\in\nr\nh$.\phantom{$1^{1^1}$}
\end{enumerate}
}
\end{lemma}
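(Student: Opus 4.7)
The entire lemma is checked by direct computation in local holomorphic coordinates $z^\lambda$ on $\sd\,$ together with a local holomorphic frame $e_b\,$ of $\nr\nh$, yielding coordinates $(z^\lambda\nh,\xi^b)\,$ on the total space with $\hr^2(y,\xi)=\gamma_{b\bar c}\w(y)\xi^b\bar\xi^{\bar c}$. The ingredients are the component formulae (\ref{chc}) and (\ref{ccd}) for $\mathrm{D}\,$ and its curvature, the defining relation $\varOmega_{\lambda b}^{\,d}\gamma_{d\bar c}\w=\partial_\lambda\w\gamma_{b\bar c}\w$, and (\ref{lmz})--(\ref{ovu}) for $\omega=i\hs\partial\hskip1.7pt\cro\hskip-.5pt\hr^2$.

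The first step, which already yields (i) and prepares (iii), identifies horizontal lifts in coordinates. Parallel transport along a curve $s\mapsto y(s)\in\sd\,$ of $\xi\in\nr\hskip-2.4pt_{y(0)}$ is governed by $\dot\sigma^b+\dot z^\lambda\varOmega^b_{\lambda c}\sigma^c=0$, since $\varOmega^b_{\bar\lambda c}=0$. Differentiating $s\mapsto(y(s),\sigma(s))\,$ in $\,\nr\,$ shows that the $\mathrm{D}\hh$-horizontal lift of $\partial_\lambda\w$ at $(y,\xi)\,$ is $\tilde\partial_\lambda\w=\partial_\lambda\w-\xi^c\varOmega^b_{\lambda c}\partial/\partial\xi^b$, while that of $\partial_{\bar\mu}\w$ is its complex conjugate. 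As $\tilde\partial_\lambda\w$ is of type $(1,0)\,$ and $\tilde\partial_{\bar\mu}\w$ of type $(0,1)$, the real horizontal distribution is $J$-in\-var\-i\-ant, proving the complex-sub\-bun\-dle claim in (i). The $\omega$-or\-thog\-o\-nal\-i\-ty in (i) then reduces to the elementary check
$$\omega(\tilde\partial_\lambda\w,\partial/\partial\bar\xi^{\bar c})\,=\,(\partial_\lambda\w\gamma_{b\bar c}\w)\xi^b-\xi^e\varOmega^b_{\lambda e}\gamma_{b\bar c}\w\,=\,0,$$
obtained from (\ref{lmz}) and the defining relation in (\ref{chc}); the remaining pairings vanish for type reasons or by conjugation.

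Part (ii) is immediate upon freezing $z^\lambda$: the function $\hr^2$ reduces to the squared norm of $\xi\,$ with respect to the constant Hermitian form $\gamma_{b\bar c}\w(y)$, and Remark~\ref{ddnsq}(b) applies verbatim. For (iii), the analogous coordinate computation gives
$$\omega(\tilde\partial_\lambda\w,\tilde\partial_{\bar\mu}\w)\,=\,(\partial_\lambda\w\partial_{\bar\mu}\w\gamma_{b\bar c}\w-\varOmega^d_{\lambda b}\partial_{\bar\mu}\w\gamma_{d\bar c}\w)\xi^b\bar\xi^{\bar c}\,=\,R_{\lambda\bar\mu b\bar c}\w\xi^b\bar\xi^{\bar c}$$
by (\ref{ccd}). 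Evaluating both sides on a pair of real vectors $v,u\in\tyb\,$ via (\ref{ovu}), and comparing with $\la R_y^{\mathrm{D}}(v,u)\xi,i\xi\ra$, which equals $\mathrm{Im}\,(R^{\mathrm{D}}_y(v,u)\xi,\xi)$ because $R^{\mathrm{D}}\nh(v,u)\,$ is skew-Her\-mit\-i\-an and hence $(R^{\mathrm{D}}_y(v,u)\xi,\xi)\,$ is purely imaginary, identifies the horizontal pullback of $\omega\nh_x\w$ with the stated curvature $2$-form on $\sd$.

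Finally, (iv) follows from uniqueness of the Chern connection: the connection $\mathrm{D}+(\partial\theta)\otimes\mathrm{Id}\,$ still satisfies $\mathrm{D}^{0,1}\nh=\cro\,$ (as $\partial\theta\,$ is of type $(1,0)$) and makes $e^\theta\lr$ parallel, hence coincides with $\hat{\mathrm{D}}$; the component formulae $\hat\varOmega_{\bar\mu b}^{\hh c}\nh=0\,$ and $\hat\varOmega_{\lambda b}^{\,c}\nh=\varOmega_{\lambda b}^{\,c}\nh+\delta_{\nh b}^c\partial_\lambda\w\theta\,$ then drop out. For the horizontal-lift claim, the difference $\hat v_x\w-\widetilde v_x\w$ of the two lifts of $v=v^\lambda\partial_\lambda\w+$ conjugate is, in coordinates, $-v^\lambda(\partial_\lambda\w\theta)\xi^b\partial/\partial\xi^b\,$ plus its conjugate, a vertical vector corresponding, under the canonical identification of the vertical tangent space at $x\,$ with $\nr\hskip-2.4pt_y\w$, to the complex multiple $-(v^\lambda\partial_\lambda\w\theta)\xi\,$ of $\xi$, and therefore lying in $\mathrm{Span}_{\bbC}(\xi)$. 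The only real obstacle is the careful bookkeeping of types and conjugations in step (iii); once the $(1,0)$-type form of $\tilde\partial_\lambda\w$ is in hand, each part of the lemma reduces to a short index calculation.
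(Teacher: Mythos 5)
Your proof is correct and follows essentially the same route as the paper's: identify the $\mathrm{D}$-horizontal lifts in the coordinates $(z^\lambda\nh,\xi^b)$, observe their $J$-invariance, and evaluate the type-$(1,1)$ form $\omega$ on the relevant pairs using (\ref{chc}), (\ref{ccd}), (\ref{lmz}) and (\ref{ovu}). The only (harmless) differences are that you derive the horizontal-lift formula from the parallel-transport equation rather than quoting it, and that you make explicit the uniqueness argument behind part (iv), which the paper leaves as "(\ref{chc}) and (\ref{hlo}) imply (iv)".
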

\begin{proof}In terms of complexified coordinate vector fields 
$\,\partial_\lambda\w,\,\partial_{\bar\mu}\w$ in $\,\sd\,$ and their analogs 
$\,\partial_\lambda\w,\,\partial_{\bar\mu}\w,\,\partial_b\w,
\,\partial_{\bar c}\w$ corresponding to the local hol\-o\-mor\-phic 
coordinates $\,z^\lambda\nh,\xi^b$  in $\,\nr\nh$, the 
$\,\mathrm{D}\hh$-hor\-i\-zon\-tal lifts 
$\,\widetilde{\partial}_\lambda\w,\,\widetilde{\partial}_{\bar\mu}\w$ of the 
former are given by
\begin{equation}\label{hlo}
\widetilde{\partial}_\mu\w\hs=\,\partial_\mu\w\hs
-\,\varOmega_{\hskip-1.2pt\mu b}^{\hh e}\xi^b\partial_e\w\hs
-\,\varOmega_{\hskip-1.2pt\bar\mu\bar e}^{\hh\bar c}\xi^{\bar e}
\partial_{\bar c}\w\hh,\hskip16pt\widetilde{\partial}_{\bar\mu}\w\hs
=\,\partial_{\bar\mu}\w\hh.
\end{equation}
Since $\,J\partial_\lambda\w=i\hs\partial_\lambda\w$ and 
$\,J\partial_{\bar\mu}\w=-i\hs\partial_{\bar\mu}\w$, (\ref{hlo}) implies  
com\-plex-lin\-e\-ar\-i\-ty of the $\,\mathrm{D}\hh$-hor\-i\-zon\-tal lift 
operation relative to the com\-plex-stru\-cture tensors $\,J$, proving the 
first part of (i). Assertion (ii) is in turn obvious from naturality of the 
operator $\,i\hs\partial\hskip1.7pt\cro$.

Next, applying (\ref{lmz}) to $\,f=\hr^2$ and the coordinates 
$\,z^\lambda\nh,\xi^b$ rather than just $\,z^\lambda\nh$, we obtain 
$\,\omega\hh(\partial_\lambda\w,\partial_{\bar\mu}\w)
=\xi^b\xi^{\bar c}\partial_\lambda\w\partial_{\bar\mu}\w\gamma_{b\bar c}\w$,  
$\,\omega\hh(\partial_e\w,\partial_{\bar\mu}\w)
=\xi^{\bar c}\partial_{\bar\mu}\w\gamma_{e\bar c}\w$, 
$\,\omega\hh(\partial_{\bar c}\w,\partial_{\bar\mu}\w)
=\omega\hh(\partial_{\bar c}\w,\partial_{\bar e}\w)=0$, 
$\,\omega\hh(\partial_\lambda\w,\partial_{\bar c}\w)
=\xi^b\partial_\lambda\w\gamma_{b\bar c}\w$, and 
$\,\omega\hh(\partial_b\w,\partial_{\bar c}\w)=\gamma_{b\bar c}\w$. Thus, 
$\,\omega\hh(\widetilde{\partial}_\lambda\w,\partial_{\bar c}\w)=0\,$ by 
(\ref{chc}), which amounts to the remaining claim in (i); similarly, 
(\ref{hlo}) and (\ref{ccd}) yield 
$\,\omega\hh(\widetilde{\partial}_\lambda\w,\widetilde{\partial}_{\bar\mu}\w)
=\xi^b\xi^{\bar c}(\partial_\lambda\w\partial_{\bar\mu}\w\gamma_{b\bar c}\w
-\varOmega_{\hskip-1.2pt\lambda b}^{\,e}\partial_{\bar\mu}\w\gamma_{e\bar c}\w)
=\xi^b\xi^{\bar c}R_{\lambda\bar\mu b\bar c}\w$. Now (iii) follows: for the 
$\,\mathrm{D}\hh$-hor\-i\-zon\-tal lifts $\,\widetilde v,\widetilde u\,$ of 
any real vectors $\,v,u\in\tyb$, (\ref{lmz}) -- (\ref{ovu}) and the last 
equality give 
$\,\omega\nh_x\w(\widetilde v,\widetilde u)=2\hs\mathrm{Im}\hskip2pt
(v^\lambda u^{\bar\mu}R_{\lambda\bar\mu b\bar c}\w\xi^b\xi^{\bar c})$, while 
at the same time $\,(R_y^{\mathrm{D}}\nh(\hn v,u)\hh\xi,i\hh\xi)\,$ is 
imaginary, and so 
$\,\la\nh R_y^{\mathrm{D}}\nh(\hn v,u)\hh\xi,i\hh\xi\ra
=\mathrm{Im}\hskip2pt(R_y^{\mathrm{D}}\nh(\hn v,u)\hh\xi,i\hh\xi)
=(R_y^{\mathrm{D}}\nh(\hn v,u)\hh\xi,i\hh\xi)
=\xi^b\xi^{\bar c}
(R_y^{\mathrm{D}}\nh(\hn v,u)\hh e_b\w,e\nh_{c\phantom{b}}\w\hskip-3.2pt)\,$ 
which, analogously, equals $\,2\hs\mathrm{Im}\hskip2pt
(v^\lambda u^{\bar\mu}R_{\lambda\bar\mu b\bar c}\w\xi^b\xi^{\bar c})$. 
Finally, (\ref{chc}) and (\ref{hlo}) imply (iv).
\end{proof}

\section{Examples: Vector bundles\done}\label{ev}
\setcounter{equation}{0}
The ge\-o\-des\-\hbox{ic\hs-}\hskip0ptgra\-di\-ent K\"ah\-ler triples 
constructed in this section are all noncompact. What makes them relevant is 
the fact that some of them serve as universal building blocks for compact 
ge\-o\-des\-\hbox{ic\hs-}\hskip0ptgra\-di\-ent K\"ah\-ler triples. (See 
Theorem~\ref{first}.)

We begin with data $\,\sd,h,\nr\nh,\lr,\tm,\tp,a,Q,\pm\hs,\vt\mapsto\hr\,$ and 
$\,\hr\mapsto f\,$ consisting of
\begin{enumerate}
  \def\theenumi{{\rm\roman{enumi}}}
\item[{\rm(i)}] the real part $\hs\lr\hs$ of a Her\-mit\-i\-an fibre metric 
in a hol\-o\-mor\-phic complex vector bundle $\,\nr\hs$ of positive fibre 
dimension over a K\"ah\-ler manifold $\,(\sd,h)$,
\item[{\rm(ii)}] some objects $\,\tm,\tp,a,Q,\pm\hs,\vt\mapsto\hr\,$ and 
$\,\hr\mapsto f\,$ satisfying (\ref{pbd}) -- (\ref{dfr}).
\end{enumerate}
Letting $\,\pi:\nr\to\sd\,$ stand for the bundle projection, $\,\mathrm{D}\,$ 
for the Chern connection of $\,\lr\,$ (see Section~\ref{ck}), and $\,\hr\,$ 
both for the variable in (ii) and for the norm function 
$\,\nr\to[\hs0,\infty)$, we use the inverse mapping of $\,\vt\mapsto\hr$, 
cf.\ (\ref{sgn}), to
\begin{equation}\label{trt}
\mathrm{treat\ }\,\vt,Q\,\mathrm{\ and\ }\,f\,\mathrm{\ as\ functions\ 
}\,\nr\to\bbR\mathrm{,\ denoted\ here\ by\ }\hat\vt,\hat Q\,\mathrm{\ and\ 
}\,\hat f.
\end{equation}
Denoting by $\,\hat J\,$ (rather than $\,J$) the com\-plex-stru\-cture tensor 
of $\nr\nnh$, we define a K\"ah\-ler metric $\,\,\hg\,$ on $\,\nr\,$ by 
requiring the K\"ah\-ler forms $\,\hat\omega=\hg(\hat J\,\cdot\,,\,\cdot\,)\,$ 
and $\,\omega^h\nh=h(J\,\cdot\,,\,\cdot\,)$ to be related by 
$\,\hat\omega\,=\,\hs\pi\sk\omega^h\hs
+\,i\hs\partial\hskip1.7pt\cro\hskip-1.5pt\hat f\nh$, which amounts to
\begin{equation}\label{hoh}
\hg\,\,=\,\,\pi\sk h\,\,-\,\,(i\hs\partial\hskip1.7pt\cro\hskip-1.5pt\hat f)
(\hat J\,\cdot\,,\,\cdot\,)\hh.
\end{equation}
As $\,\hat\omega\,$ should be positive (Remark~\ref{posit}), it is necessary 
to assume here that
\begin{equation}\label{thd}
\pi\sk h\,\,-\,\,(i\hs\partial\hskip1.7pt\cro\hskip-1.5pt\hat f)
(\hat J\,\cdot\,,\,\cdot\,)\,\,
\mathrm{\ is\ positive}\hyp\mathrm{definite\ at\ every\ point\ of\ }\,\nr.
\end{equation}
The above construction uses the objects (i) -- (ii) with (\ref{thd}), and 
leads to what is shown below (Theorem~\ref{ehggk}) to be a 
ge\-o\-des\-\hbox{ic\hs-}\hskip0ptgra\-di\-ent K\"ah\-ler triple 
$\,(\nr\nh,\hg,\hat\vt)$. 

It is convenient, however, to provide the following equivalent, though less 
concise, description of $\,\hg\,$ and $\,\hat J\,$ restricted to the 
complement $\,\nr'\nh=\nr\nh\smallsetminus\sd$ of the zero section in 
$\,\nr\nnh$. It uses the complex di\-rect-sum decomposition 
\begin{equation}\label{tnp}
T\nnh\nr'\,=\,\hs\hat{\mathcal{V}}\hskip1.2pt\oplus\hs\hat{\mathcal{H}}^\mp\nnh
\oplus\hs\hat{\mathcal{H}}^\bullet\nh, 
\end{equation}
in which $\,\hat{\mathcal{H}}^\bullet$ is the horizontal distribution of 
$\,\mathrm{D}\,$ and 
$\,\hat{\mathcal{V}}\hskip1.2pt\oplus\hs\hat{\mathcal{H}}^\mp\nh
=\hs\mathrm{Ker}\hskip2.3ptd\pi\,$ equals the vertical distribution, with the 
summands $\,\hat{\mathcal{V}}\,$ and $\,\hat{\mathcal{H}}^\mp$ forming, on 
each punctured fibre $\,\nr\hskip-2.4pt_y\w\smallsetminus\{0\}$, the complex 
radial distribution (Remark~\ref{ddnsq}) and, respectively, its 
$\,\lr$-or\-thog\-o\-nal complement in 
$\,\nr\hskip-2.4pt_y\w\smallsetminus\{0\}$. (The word `complex' preceding 
(\ref{tnp}) is justified by Lemma~\ref{ddrsq}(i).) To describe $\,\hg\,$ 
and $\,\hat J$, we declare that the three summands of (\ref{tnp}) are 
$\,\hat J$-in\-var\-i\-ant and mutually $\,\hg$-or\-thog\-o\-nal, that 
$\,\hat J\,$ restricted to $\,\hat{\mathcal{V}}\,$ agrees, along each 
punctured fibre $\,\nr\hskip-2.4pt_y\w\smallsetminus\{0\}$, with its standard 
com\-plex-stru\-cture tensor of the complex vector space 
$\,\nr\hskip-2.4pt_y\w$, that the differential of $\,\pi\,$ at every 
$\,(y,\xi)\in\nr\hskip-2.4pt_y\w\smallsetminus\{0\}$, cf.\ Remark~\ref{tlspc},
maps $\,\mathcal{H}_{(y,\,\xi)}\w$ com\-plex-lin\-e\-ar\-ly onto $\,\tyb\,$ 
and, with the constant $\,a\in(0,\infty)\,$ and function $\,\hat\vt\,$ 
appearing in (i) and (\ref{trt}),
\begin{equation}\label{hgv}
\begin{array}{rl}
\mathrm{a)}&
\hat{\mathcal{V}}\nh,\,\hat{\mathcal{H}}^\mp\mathrm{\ and\ 
}\,\hat{\mathcal{H}}^\bullet\mathrm{\ in\ (\ref{tnp})\ are\ mutually\ 
}\,\hg\hyp\mathrm{or\-thog\-o\-nal,}\phantom{\displaystyle\frac1{2_{1_1}}}\\
\mathrm{b)}&
a^2\nh\hr^2\hg\hs=\,\hat Q\,\lr\,\mathrm{\ on\ }\,\hat{\mathcal{V}}\nh,
\hskip16pta\hh\hr^2\hg\hs=\,2\hs|\hs\hat \vt-\hn\tpm|\,\lr\,\mathrm{\ on\ 
}\,\hat{\mathcal{H}}^\pm\nh,\\
\mathrm{c)}&
\hg\nnh_x\w(\nh w_x\w,w_x\w\hskip-5pt'\hskip1.7pt)=h_y\w(\hn w,w\hh'\hh)-
\displaystyle{\frac{|\hh\hat\vt(x)-\hn\tpm|}{a\hr^2}}\,
\la\nh R_y^{\mathrm{D}}\nh(\hn w,J\hskip-2pt_y\w w\hh'\hh)\hh\xi,i\hh\xi\ra
\hskip8pt\mathrm{with}\hskip5pt\hr=|\hh\xi|\hh,
\end{array}
\end{equation}
at any $\,x=(y,\xi)\in\nr\hskip-2.4pt_y\w\smallsetminus\{0\}$, where 
$\,w,w\hh'$ are any two vectors in $\,\tyb$, and $\,w_x\w,w_x\w\hskip-5pt'$ 
denote their $\,\mathrm{D}\hh$-\hn hor\-i\-zon\-tal lifts to $\,x$. The 
vertical vector fields $\,\hat v,\hat u\,$ with
\begin{equation}\label{lvf}
\hat v_{(y,\,\xi)}\w\,=\,\mp\hh a\hh\xi\hh,\hskip16pt
\hat u_{(y,\,\xi)}\w\,=\,\mp\hh ai\hh\xi\hh,
\end{equation}
allow us to characterize the restrictions of $\,\hg\,$ and $\,\hat J\,$ to 
$\,\hat{\mathcal{V}}=\mathrm{Span}\hs(\hn\hat v,\hat u)\,$ by
\begin{equation}\label{ana}
\hg(\hat v,\hat v)\,=\,\hg(\hat u,\hat u)\,
=\,\hat Q\hh,\hskip12pt\hg(\hat v,\hat u)\,=\,0\hh,\hskip12pt\hat u\,
=\,\hat J\hskip-1.1pt\hat v\hh.
\end{equation}
Note that the symmetry of 
$\,\hg\nnh_x\w(\nh w_x\w,w_x\w\hskip-5pt'\hskip1.7pt)\,$ in 
$\,w_x\w,w_x\w\hskip-5pt'$ reflects (b) in Section~\ref{ck}.

Lemma~\ref{ddrsq} easily implies that the definition (\ref{hgv}) of $\,\hg\,$ 
is actually equivalent to (\ref{hoh}), while condition (\ref{thd}) is nothing 
else than positivity of the right-hand side in (\ref{hgv}.c) whenever 
$\,w=w\hh'\nh\ne0$.
\begin{theorem}\label{ehggk}{\medit 
For any data\/ {\rm(i)} -- {\rm(ii)} with\/ {\rm(\ref{thd})}, let us define\/ 
$\,\hg,\hat\vt\,$ by\/ {\rm(\ref{trt})} -- {\rm(\ref{hoh})}.
\begin{enumerate}
  \def\theenumi{{\rm\alph{enumi}}}
\item[{\rm(a)}] $(\nr\nh,g,\vt)\,$ is a ge\-o\-des\-ic-gra\-di\-ent K\"ah\-ler 
triple.
\item[{\rm(b)}] The fibres $\,\nr\hskip-2.4pt_y\w=\pi^{-\nnh1}(y)$, 
$y\in\sd$, \,are totally geodesic complex \hbox{sub\-man\-i\-folds of 
$\,(\nr\nh,g)$.}
\item[{\rm(c)}] The zero section\/ $\,\sd\subseteq\nr\,$ coincides with\/ 
$\,\sd^\pm\nnh$, \,the\/ $\,\tpm\hs$ level set of\/ $\,\vt$.
\item[{\rm(d)}] The\/ $\,\hg$-gra\-di\-ent\/ 
$\,\hat v=\hat\nabla\hskip-.5pt\hat\vt\,$ and\/ 
$\,\hat{\dv}=\hat\nabla\nh\hat v\,$ satisfy\/ {\rm(\ref{lvf})} -- 
{\rm(\ref{ana})} and the equality
\begin{equation}\label{gsw}
2\hh\hg\nnh_x\w(\hat{\dv}\nh_x\w w_x\w,w_x\w\hskip-5pt'\hskip1.7pt)\,=
\,\pm\,\frac{\hat Q(x)}{a\hr^2}\,
\la\nh R_y^{\mathrm{D}}\nh(\hn w,J\hskip-2pt_y\w w\hh'\hh)\hh\xi,i\hh\xi\ra\hh,
\hskip7pt\mathrm{where}\hskip5pt\hr=|\hh\xi|>0 \hh,
\end{equation}
the assumptions being the same as in\/ {\rm(\ref{hgv}.c)}.
\end{enumerate}
}
\end{theorem}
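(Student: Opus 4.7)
The plan is to proceed in four stages. First, I would verify the equivalence of the two descriptions of $\hg\,$ -- the concise (\ref{hoh}) and the explicit (\ref{hgv}) -- on $\nr'\nh$, by applying identity (\ref{ddf}) of Remark~\ref{iddbf} to $\hat f\,$ and evaluating $i\partial\cro\hat f\,$ on each of the three summands of (\ref{tnp}) separately. Parts (b)--(c) of Remark~\ref{ddnsq} furnish the fi\-bre-re\-stric\-tions of $i\partial\cro\hr^2$ and $d\hr^2\wedge J^*\nnh d\hr^2$, Lemma~\ref{ddrsq}(iii) handles the $\mathrm{D}$-hor\-i\-zon\-tal block (noting that $d\hr^2$ annihilates $\mathrm{D}$-hor\-i\-zon\-tal vectors), and the coefficients combine through the values $f'\hs=|\hs\vt-\hn\tpm|/(a\hr^2)\,$ and $f''\,$ from Lemma~\ref{xtens} to reproduce (\ref{hgv}.b)--(\ref{hgv}.c). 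Pos\-i\-tive-def\-in\-ite\-ness of $\hg\,$ is thereby equivalent to (\ref{thd}), and since $\hat\omega=\pi\sk\omega^h+i\partial\cro\hat f\,$ is manifestly a closed $(1,1)$-form, $\hg\,$ is K\"ahler.

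Assertion (c) together with the formulas (\ref{lvf})--(\ref{ana}) are then quick consequences of the setup: Remark~\ref{tends} identifies the zero section with the $\tpm$-lev\-el set of $\hat\vt$; the vertical radial field $\hat v\,$ of (\ref{lvf}) satisfies, via (\ref{hgv}.b) and the chain rule $d\vt/d\hr=\mp Q/(a\hr)\,$ of (\ref{sgn}), both $\hg(\hat v,\hat v)=\hat Q\,$ and $d\hat\vt(\hat v)=\hat Q$, identifying $\hat v\,$ as the $\hg$-gra\-di\-ent of $\hat\vt\,$ and yielding (\ref{ana}) by $\hat J$-in\-var\-i\-ance of $\hat{\mathcal{V}}$. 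For the remainder of (a), observe that $\hat u=\hat J\hat v\,$ generates the $\,S^1\nh$-ac\-tion $(y,\xi)\mapsto(y,e^{\mp ait}\xi)\,$ of scalar multiplication by unimodular complex numbers in the fibres, which preserves $\pi\sk\omega^h\,$ trivially and preserves $\hat f\,$ because $\hr^2$ is $S^1\nh$-in\-var\-i\-ant, hence preserves $\hat\omega\,$ and $\hg\,$; therefore $\hat u\,$ is Kil\-ling and, by (\ref{kil}), $\hat v\,$ is real-hol\-o\-mor\-phic. Geodesicity of $\hat v\,$ then follows from Lemma~\ref{ggqft}, since $\hat Q=\hg(\hat v,\hat v)\,$ is by construction a function of $\hat\vt$.

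For (\ref{gsw}), I would exploit that the flow of $\hat v$, namely $(y,\xi)\mapsto(y,e^{\mp at}\xi)$, preserves the $\mathrm{D}$-hor\-i\-zon\-tal distribution -- since $\mathrm{D}\,$ is a linear connection in $\nr\,$ -- and actually fixes every hor\-i\-zon\-tal lift $\tilde w\,$ pointwise; consequently $[\hat v,\tilde w]=0$. Remark~\ref{dvwwp} then gives $d_{\hat v}[\hg(\tilde w,\tilde w\hh'\hh)]=2\hh\hg(\hat{\dv}\tilde w,\tilde w\hh'\hh)$, while the left side is computable from (\ref{hgv}.c): the term $h(w,w\hh'\hh)\,$ is $\xi$-in\-de\-pen\-dent, and although $\hr^2$ and $\la R^{\mathrm{D}}_y(\hn w,Jw\hh'\hh)\xi,i\xi\ra\,$ each rescale by $e^{\mp 2at}$ along the flow, their ratio is flow-in\-var\-i\-ant; only $|\hat\vt-\tpm|\,$ truly varies, with $\hat v$-de\-riv\-a\-tive $\mp\hat Q\,$ (because $d_{\hat v}\hat\vt=\hat Q\,$ and $|\hat\vt-\tpm|=\mp(\hat\vt-\tpm)$). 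This yields (\ref{gsw}) directly.

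Finally, (b) is handled via Remark~\ref{tglvs} applied to the vertical distribution $\mathcal{V}=\,\mathrm{Ker}\hskip2.3ptd\pi$, whose $\hg$-or\-thog\-o\-nal complement is $\hat{\mathcal{H}}^\bullet\nh$. Given $p=(y_0,\xi_0)$, a hor\-i\-zon\-tal lift $\tilde z\,$ of some $z\in\tyb$, and $V\in\mathcal{V}|_p$, I would extend $V\,$ to a ``con\-stant vertical'' vector field $V_{(y,\xi)}\w\nh=\tilde V(y)\in\nr_y$, for a local section $\tilde V\,$ of $\nr\,$ with $\tilde V(y_0)=V\,$ and $\mathrm{D}_z\tilde V(y_0)=0$; a coordinate computation based on (\ref{hlo}) then gives $[V,\tilde z]=0\,$ at $p$. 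The remaining verification that $d_{\tilde z}[\hg(V,V)]=0\,$ at $p$ reduces, via (\ref{hgv}.b) and the decomposition of $V\,$ into its complex radial and orthogonal components, to showing that $\hr$, $|\la V,\xi\ra|$, and $\la V,V\ra\,$ all have vanishing $\tilde z$-de\-riv\-a\-tive at $p$; this follows from $\mathrm{D}$-par\-al\-lel transport of $\xi\,$ along $\tilde z$, par\-al\-lel\-ism of the Her\-mit\-i\-an structure under $\mathrm{D}$, and $\mathrm{D}_z\tilde V(y_0)=0$. The main technical hurdle is Step~1, where the three-way decomposition (\ref{tnp}) and the precise cancellations among the coefficients in (\ref{ddf}) must be tracked carefully to reproduce (\ref{hgv}.b)--(\ref{hgv}.c) exactly.
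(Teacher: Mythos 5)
Your proposal is correct and follows the paper's overall architecture: the equivalence of (\ref{hoh}) with (\ref{hgv}) via Lemma~\ref{ddrsq}, part (c) from Remark~\ref{tends}, geodesicity from Lemma~\ref{ggqft}, and the derivation of (\ref{gsw}) by combining $\,[\hat v,\tilde w]=0\,$ with Remark~\ref{dvwwp} and differentiating the right-hand side of (\ref{hgv}.c) along $\,\hat v\,$ (where indeed only the factor $\,-\hh|\hh\hat\vt-\hn\tpm|\,$ contributes) is exactly the paper's computation. You diverge in two places. For hol\-o\-mor\-phic\-i\-ty of $\,\hat v\,$ you route through the $\,S^1$-ac\-tion and (\ref{kil}); the paper reads it off directly from (\ref{lvf}), but your detour is sound. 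For (b) the paper argues conceptually: by its items (f)--(g), the local flows of pro\-ject\-a\-ble horizontal lifts act between fibres as $\,\mathrm{D}\hh$-par\-al\-lel transports, hence as linear isometries for the fibre metrics (\ref{hgv}.a)--(\ref{hgv}.b), and Remark~\ref{tglvs} applies at once; you instead build, for each vertical vector, a ``constant'' extension that is $\,\mathrm{D}\hh$-par\-al\-lel at the base point and check $\,d_{\tilde z}[\hg(V,V)]=0\,$ by hand. Both work; the paper's version avoids the pointwise bracket computation and makes the invariance of all the ingredients of (\ref{hgv}.b) visible in one stroke. One spot where your write-up is compressed: from $\,\hg(\hat v,\hat v)=\hat Q=d\hat\vt(\hat v)\,$ alone one cannot yet conclude $\,\hat v=\hat\nabla\hskip-.5pt\hat\vt\,$; you also need that $\,\hg(\hat v,\,\cdot\,)\,$ and $\,d\hat\vt\,$ both vanish on $\,\hat u$, on $\,\hat{\mathcal{H}}^\mp$ and on $\,\hat{\mathcal{H}}^\bullet$ (the former by (\ref{hgv}.a) and (\ref{ana}), the latter because $\,\hat\vt\,$ is a function of $\,\hr\,$ and $\,\hr\,$ has zero derivative in those directions). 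These facts are all at your disposal, so this is an omission of routine detail rather than a gap in the argument.
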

\begin{proof}The $\,\hat v$-di\-rec\-tion\-al derivatives of the norm squared 
$\,\hr^2$ and of $\,\hr\,$ are, obviously, $\,\mp2\hh a\hr^2$ and 
$\,\mp\hh a\hr$. As $\,d\vt/d\nh\hr=\mp\hh Q/(a\hr)\,$ in (\ref{sgn}), we see  using (\ref{trt}) and (\ref{ana}) that
\begin{enumerate}
  \def\theenumi{{\rm\alph{enumi}}}
\item[{\rm(e)}] $\hat Q=\hg(\hat v,\hat v)\,$ equals the 
$\,\hat v$-di\-rec\-tion\-al derivative 
$\,\hg(\hat v,\hat\nabla\hskip-.5pt\hat\vt)\,$ of $\,\hat\vt$.
\end{enumerate}
Furthermore, $\,\mathrm{D}\hh$-par\-al\-lel transports preserve the real 
fibre metric $\,\lr$. Therefore,
\begin{enumerate}
  \def\theenumi{{\rm\alph{enumi}}}
\item[{\rm(f)}] $\hr,\hat\vt\,$ and $\,\hat Q\,$ are constant along every 
$\,\mathrm{D}\hh$-hor\-i\-zon\-tal curve in $\,\nr\nnh$,
\end{enumerate}
due to (ii) and (\ref{trt}). The equality 
$\,\hat v=\hat\nabla\hskip-.5pt\hat\vt\,$ now follows: 
$\,\hat{\mathcal{V}}=\mathrm{Span}\hs(\hn\hat v,\hat u)$, and $\,\hat\vt\,$ 
is a function of the norm $\,\hr$, so that 
$\,\hat v-\hat\nabla\hskip-.5pt\hat\vt\,$ is $\,\hg$-or\-thog\-o\-nal to 
$\,\hat{\mathcal{H}}^\bullet\nh,\,\hat{\mathcal{H}}^\pm\nh,\hat u\,$ and 
$\,\hat v\,$  by (f), (\ref{hgv}.a), (\ref{ana}), and (e). Also, (\ref{lvf}) 
clearly gives hol\-o\-mor\-phic\-i\-ty of $\,\hat v$, while closedness and 
positivity of the form 
$\,\hg(\hat J\,\cdot\,,\,\cdot\,)=\hs\pi\sk\omega^h\nh
+i\hs\partial\hskip1.7pt\cro\hskip-1.5pt\hat f\,$ imply that $\,\hg\,$ is a 
K\"ah\-ler metric, and $\,\hat\vt\,$ has a geodesic $\,\hg$-gra\-di\-ent 
$\,\hat v$, its $\,\hg$-norm squared $\,\hat Q\,$ being a function of 
$\,\hat\vt$, cf.\ (\ref{trt}) and Lemma~\ref{ggqft}. We have thus proved (a). 
Next, Remark~\ref{tends} yields (c).

The $\,\pi$-pro\-ject\-a\-ble local sections of 
$\,\hat{\mathcal{H}}^\bullet$ are precisely the same as the 
$\,\mathrm{D}\hh$-hor\-i\-zon\-tal lifts of local vector fields tangent to 
$\,\sd$, and their local flows act as $\,\mathrm{D}\hh$-par\-al\-lel 
transports between the fibres. As the the sub\-man\-i\-fold metrics of the 
fibres are defined by (\ref{hgv}.a) -- (\ref{hgv}.b), this last action 
consists -- by (f) -- of isometries which, being linear, also preserve the 
vertical vector field $\,\hat v\,$ with (\ref{lvf}). Hence 
\begin{enumerate}
  \def\theenumi{{\rm\alph{enumi}}}
\item[{\rm(g)}] $\hat v\,$ commutes with all local 
$\,\mathrm{D}\hh$-hor\-i\-zon\-tal lifts $\,w$,
\end{enumerate}
and, at the same time, applying Remark~\ref{tglvs} to any such $\,w\ne0\,$ we 
obtain (b).

Finally, by (g) and Remark~\ref{dvwwp}, the left-hand side of (\ref{gsw}) 
equals the $\,\hat v$-di\-rec\-tion\-al derivative of the right-hand side in 
(\ref{hgv}.c). To evaluate the latter, note that only the factor 
$\,-\hh|\hh\hat\vt(x)-\hn\tpm|=\pm\hh(\hh\hat\vt(x)-\hn\tpm)\,$ in the second term 
needs to be differentiated, as the first term and the remaining factor of the 
second one are constant along $\,\hat v\,$ (due to constancy along 
$\,\hat v\,$ of $\,\xi/\nnh\hr=\xi/\hn|\hh\xi|$, obvious from (\ref{lvf})). 
Now (e) implies (\ref{gsw}), completing the proof.
\end{proof}
A {\medit special K\"ah\-ler-Ric\-ci potential\/} 
\cite{derdzinski-maschler-06} on a K\"ah\-ler manifold $\,(\mf\nh,g)\,$ is any 
nonconstant function $\,\vt:\mf\to\bbR\,$ such that $\,v=\navp\,$ is 
real-hol\-o\-mor\-phic, while, at points where $\,v\ne0$, all nonzero vectors 
orthogonal to $\,v\,$ and $\,J\nh v\,$ are eigen\-vec\-tors of both 
$\,\nabla\nh v$ and the Ric\-ci tensor, with 
$\,\nabla\nh v:T\nnh\mf\to T\nnh\mf\,$ as in (\ref{nwt}). We then call 
$\,(\mf\nh,g,\vt)\,$ an {\medit SKRP triple}. All SKRP triples $\,(\mf\nh,g,\vt)\,$ 
are ge\-o\-des\-\hbox{ic\hs-}\hskip0ptgra\-di\-ent K\"ah\-ler triples, due to 
their eas\-i\-ly-ver\-i\-fied property 
\cite[Remark~7.1]{derdzinski-maschler-03} that $\,v=\navp$, wherever nonzero, 
is an eigenvector of $\,\nabla\nh v$. Cf.\ (\ref{nvv}).

Compact SKRP triples $\,(\mf\nh,g,\vt)\,$ have been classified in 
\cite[Theorem 16.3]{derdzinski-maschler-06}. They are divided into Class 1, in 
which $\,\mf\,$ is the total space of a hol\-o\-mor\-phic $\,\bbCP^1$ bundle, 
and Class 2, with $\,\mf\,$ bi\-hol\-o\-mor\-phic to $\,\bbCP^m$ for 
$\,m=\dimc\nh\mf\nh$.
\begin{lemma}\label{cltwo}{\medit 
Up to isomorphisms, in the sense of Definition\/~{\rm\ref{ggktr}}, compact 
SKRP triples of Class 2 are the same as CP triples constructed using\/ 
{\rm(\ref{dta}.ii)} with\/ $\,\dimc\nh\ls=1\hh$.
}
\end{lemma}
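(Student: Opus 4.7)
The plan is to verify both inclusions of the asserted equivalence.

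Forward direction: I show that every CP triple $(\mf,g,\vt)$ constructed from data (\ref{dta}.ii) with $\dimc\nh\ls=1\hh$ is a compact SKRP triple of Class 2. Compactness and the biholomorphism $\mf=\mathrm{P}\vs\approx\bbCP^m$ for $m=\dimc\nnh\vs-1$ are immediate, and $\mf\approx\bbCP^m$ places the triple in Class 2 once SKRP is verified. The SKRP condition follows by an isotropy-equivariance argument: at any generic point $\ws\in\mathrm{P}\vs$ (one with $\ws\ne\ls$ and $\ws\not\subseteq\ls\hskip-2.5pt^\perp$), the stabilizer of $\ws$ in $G=U(\ls)\times U(\ls\hskip-2.5pt^\perp)$ contains a factor isomorphic to $U(m-1)$ that acts, under the identification (\ref{twg}), as a unitary group on the subspace of $\,T_\ws\w\nh\mf\,$ perpendicular to $\mathrm{Span}\hs(v,J\nh v)$, transitively on its unit sphere. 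Since $\nabla\nh v$ and the Ricci tensor are $G$-equivariant, they must restrict to scalar operators on that subspace, which is exactly the SKRP condition.

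Reverse direction: Given a compact SKRP triple $(\mf,g,\vt)$ of Class 2, I invoke the explicit classification \cite[Theorem 16.3]{derdzinski-maschler-06}, which presents $(\mf,g)$ as $\bbCP^m$ endowed with a holomorphic isometric action of $U(1)\times U(m)$ of cohomogeneity one, having $\sd^+$ and $\sd^-$ in (\ref{tmm}) as singular orbits, respectively biholomorphic to a single point and to $\bbCP^{m-1}$. I fix a Hermitian $(m+1)$-dimensional space $\vs$ and a biholomorphism $\mf\to\mathrm{P}\vs$ carrying $\sd^+$ to a line $\ls\subset\vs$ and $\sd^-$ to $\mathrm{P}\ls\hskip-2.5pt^\perp$, which precisely reproduces (\ref{spm}.c) -- (\ref{spm}.d) with $\dimc\nh\ls=1\hh$. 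Under this identification, the $U(1)\times U(m)$ action becomes the group $G=U(\ls)\times U(\ls\hskip-2.5pt^\perp)$ of the CP construction, and Lemma~\ref{chone} applied to the central circle subgroup returns the function $\vt$, uniquely determined up to a trivial modification (Remark~\ref{trivl}) by the chosen $G$-action. Thus $(\mf,g,\vt)$ is isomorphic, in the sense of Definition~\ref{ggktr}, to the CP triple built from $(\vs,\ls)$.

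The main obstacle is the reverse direction, which rests entirely on the classification \cite[Theorem 16.3]{derdzinski-maschler-06}: what one needs is that a Class 2 SKRP structure automatically carries the full $U(1)\times U(m)$ symmetry, with $\sd^+$ an isolated fixed point and $\sd^-$ a totally geodesic hyperplane, rather than only the canonical $S^1$-action generated by $u=J(\navp)$. Once that structural information is granted, the identification with the $\dimc\nh\ls=1$ CP triple is forced by the description of the level sets in (\ref{spm}.c) -- (\ref{spm}.d) and the uniqueness clause of Lemma~\ref{chone}. The forward direction is, by contrast, a routine check once the correct $U(m-1)$-factor of the generic isotropy is exhibited; note that it would fail for $1<\dimc\nh\ls<\dimc\nnh\vs-1$, as the isotropy would then split the orthocomplement of $\{v,J\nh v\}$ into two independent unitary factors and no longer act transitively on its unit sphere.
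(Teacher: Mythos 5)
Your proposal is correct, but it does more work than the paper, which disposes of the lemma in one line by citing \cite[Remark 6.2]{derdzinski-maschler-06} for both inclusions (adding only the observation that the extreme value of $\,\dimc\nh\ls\,$ at the other end of the allowed range yields the same isomorphism type). Your reverse direction is therefore essentially the paper's argument with a different pointer into the same reference: everything hinges on the external classification of Class 2 SKRP triples as $\,\mathrm{P}(\hn\bbC\times\vs')\,$ with its manifest $\,U(1)\times U(\vs')\,$ symmetry, and you correctly identify this as the load-bearing step; once that symmetry and the description of the two critical manifolds (a point and a hyperplane) are granted, matching with (\ref{spm}.c)--(\ref{spm}.d) and the uniqueness clause of Lemma~\ref{chone} (up to the trivial modifications of Remark~\ref{trivl}) does force the isomorphism. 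Your forward direction, by contrast, is genuinely new relative to the paper: the isotropy computation at a generic line $\,\ws=\bbC(\xi+\eta)$, exhibiting a $\,U(m-1)\,$ in the stabilizer acting transitively on the unit sphere of $\,\mathcal{V}^\perp_\ws\,$ and thereby forcing the $\,G$-equivariant self-adjoint operators $\,\nabla\nh v\,$ and $\,\mathrm{Ric}\,$ to be scalar there, is a clean self-contained verification of the SKRP condition that the paper never writes out (it is implicitly subsumed in the cited remark). It also explains structurally why the statement is sharp in $\,\dimc\nh\ls$: for $\,1<\dimc\nh\ls<\dimc\nnh\vs-1\,$ the generic isotropy representation on $\,\mathcal{V}^\perp$ splits into the two factors $\,\mathcal{H}^+\nnh\oplus\mathcal{H}^-$ of (\ref{tme}) with distinct eigenvalues of $\,\nabla\nh v$, cf.\ (\ref{hvk}), so transitivity -- and with it the SKRP property -- is lost. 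What your approach buys is transparency and independence from the fine print of \cite{derdzinski-maschler-06} in one of the two directions; what the paper's citation buys is brevity and the avoidance of any curvature or isotropy computation. I see no gap.
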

\begin{proof}See \cite[Remark 6.2]{derdzinski-maschler-06}. (Note that the 
case $\,\dimc\nh\ls=m-1\,$ in (\ref{dta}.ii) obviously leads to the same 
isomorphism type.)
\end{proof}
In (i) above, $\,\dimc\nnh\sd\ge0$, which allows the possibility of a 
one\hs-\nh point base manifold $\,\sd=\{y\}$, so that, as a complex manifold, 
$\,\nr\,$ is a complex vector space, namely, the fibre 
$\,\nr\hskip-2.4pt_y\w$. According to 
\cite[pp.\ 85-86]{derdzinski-maschler-06}, under the standard identification 
(\ref{inc}) for $\,\vs=\nr\hskip-2.4pt_y\w$, both $\,\hg\,$ and $\,\hat\vt\,$ 
then can be extended to the projective space 
$\,\mathrm{P}(\hn\bbC\times\nnh\nr\hskip-2.4pt_y\w)$, giving rise to a Class 2 
SKRP triple $\,(\mf,\hg,\hat\vt)$, where 
$\,\mf=\mathrm{P}(\hn\bbC\times\nnh\nr\hskip-2.4pt_y\w)$.
\begin{lemma}\label{xplic}{\medit 
The SKRP triples\/ $\,(\mf,\gp,\vt)\,$ just mentioned, with\/ 
$\,\mf=\mathrm{P}(\hn\bbC\times\nnh\nr\hskip-2.4pt_y\w)$, represent all 
isomorphism types of compact SKRP triples of Class 2. Such types include all 
compact ge\-o\-des\-\hbox{ic\hs-}\hskip0ptgra\-di\-ent K\"ah\-ler triples of 
complex dimension $\,1$.
}
\end{lemma}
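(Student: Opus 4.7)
The plan is to combine Lemma~\ref{cltwo} with an explicit identification of each Class~2 CP triple as a one-point-base instance of Theorem~\ref{ehggk}. By Lemma~\ref{cltwo}, every compact SKRP triple of Class~2 is isomorphic to a CP triple built from the data (\ref{dta}.ii) with $\dimc\hh\ls=1$, so I would reduce the first claim to showing that any such CP triple $(\mf,g,\vt)$ on $\mf=\mathrm{P}\vs$ -- with distinguished line $\ls\subset\vs$ -- arises, up to isomorphism, from Theorem~\ref{ehggk} applied to the one-point base $\sd=\{y\}$ and $\nr\hskip-2.4pt_y\w=\ls\hskip-2.5pt^\perp$, extended across the projective hyperplane $\mathrm{P}(\ls\hskip-2.5pt^\perp)$ via the identification (\ref{inc}).

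To set up the reduction, I would fix a unit vector $X\in\ls$ and consider the map $\Phi:\ls\hskip-2.5pt^\perp\to\mf$ given by $\Phi(\xi)=\bbC(X+\xi)$, which is an open-dense biholomorphism onto $\mf\smallsetminus\mathrm{P}(\ls\hskip-2.5pt^\perp)$, sending $0$ to the point $\ls\in\mathrm{P}\vs$, and which realizes the inclusion $\ls\hskip-2.5pt^\perp\hookrightarrow\mathrm{P}(\bbC\times\ls\hskip-2.5pt^\perp)$ of (\ref{inc}). Equip the fibre $\nr\hskip-2.4pt_y\w=\ls\hskip-2.5pt^\perp$ with the Hermitian inner product inherited from $\vs$, providing $\lr$. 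The remaining data $(\tm,\tp,a,Q)$ of (\ref{pbd}) would be recovered from $(\mf,g,\vt)$ itself: $\tpm$ as the extrema of $\vt$; the constant $a$ as the speed of the central circle Killing field (Lemma~\ref{chone}, Remark~\ref{kiljc}) at the fixed locus $\sd^\pm$; and $Q$ as the squared gradient norm viewed as a function of $\vt$ via Lemma~\ref{ggqft}.

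The main obstacle will be verifying that $\Phi^*g$ coincides with the K\"ah\-ler metric produced by Theorem~\ref{ehggk} from $(\lr,\tm,\tp,a,Q)$, and similarly that $\Phi^*\vt$ matches the corresponding $\hat\vt$. I would resolve this through a uniqueness argument: both metrics are K\"ah\-ler, invariant under the isotropy subgroup of $X$ acting unitarily on $\ls\hskip-2.5pt^\perp$, of cohomogeneity one, and share the common squared-gradient function $Q$. The dependence of $\vt$ on the Hermitian norm $\hr$ is then forced by the autonomous ODE of Remark~\ref{compo}, while the K\"ah\-ler form is in turn determined by $\vt$ through (\ref{ddf}) combined with Lemma~\ref{ddrsq}. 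This identification is essentially the content of \cite[pp.\ 85-86]{derdzinski-maschler-06}, which also supplies the $C^\infty$ extension across $\mathrm{P}(\ls\hskip-2.5pt^\perp)$.

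For the second assertion, Remark~\ref{cpone} states that every compact geodesic-gradient K\"ah\-ler triple of complex dimension one is a CP triple from (\ref{dta}.ii) with $m=2$ and $\dimc\hh\ls=1$. Such a triple is automatically SKRP -- the defining condition on vectors $g$-orthogonal to $v$ and $J\nh v$ is vacuous in real dimension two -- and it lies in Class~2 because $\mathrm{P}\vs\cong\bbCP^1$. Hence the first assertion applies to it, yielding the second claim.
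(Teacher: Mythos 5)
Your proof is correct and follows essentially the same route as the paper's: the substance of the first claim rests, in both cases, on the classification in \cite{derdzinski-maschler-06} (the paper cites \cite[Remark 6.2]{derdzinski-maschler-06} directly, while you reach the same point through Lemma~\ref{cltwo} together with \cite[pp.\ 85-86]{derdzinski-maschler-06}), and the final clause is handled exactly as in the paper via Remark~\ref{cpone} and the Class-2 identification of Lemma~\ref{cltwo}. Your added sketch of the cohomogeneity-one uniqueness argument is a reasonable expansion of what the cited remark supplies rather than a genuinely different method.
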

\begin{proof}For the first part, see 
\cite[Remark 6.2]{derdzinski-maschler-06}. The final clause is in turn 
immediate from Remark~\ref{cpone} and Lemma~\ref{cltwo}.
\end{proof}
\begin{remark}\label{fbstu}As a consequence of the second part of 
Remark~\ref{trivl}, for $\,(\nr\nh,g,\vt)\,$ as in Theorem~\ref{ehggk}, every 
fibre $\,\nr\hskip-2.4pt_y\w$ is the underlying complex manifold of a 
ge\-o\-des\-ic-gra\-di\-ent K\"ah\-ler triple, realizing a special case of 
Theorem~\ref{ehggk}: that of a one\hs-\nh point base manifold $\,\{y\}$. Its 
projective compactification 
$\,\mathrm{P}(\hn\bbC\times\nnh\nr\hskip-2.4pt_y\w)\,$ constitutes, for 
reasons mentioned above, the underlying complex manifold of an SKRP triple of 
Class 2. The resulting sub\-man\-i\-fold metric on the complement of 
$\,\nr\hskip-2.4pt_y\w$ in 
$\,\mathrm{P}(\hn\bbC\times\nnh\nr\hskip-2.4pt_y\w)\,$ (that is, on the 
projective hyper\-plane at infinity, identified via (\ref{inc}) with 
$\,\mathrm{P}\nnh\nr\hskip-2.4pt_y\w$) equals 
$\,2(\tp\hskip-2.2pt-\nh\tm)/\hn a$ times the Fu\-bi\-ni-Stu\-dy metric 
associated -- as in Remark~\ref{fbstm} -- with $\,\lr$.

Namely, let $\,\xi,\eta\in\nr\hskip-2.4pt_y\w$ have $\,\la\xi,\xi\ra=1\,$ and 
$\,\la\xi,\eta\ra=\la i\xi,\eta\ra=0$. The curve $\,t\mapsto t\eta\,$ of 
vectors $\,t\eta\,$ tangent to $\,\nr\hskip-2.4pt_y\w$ at the points 
$\,t\hh\xi$, satisfies, in view of 
(\ref{hgv}.b) and Remark~\ref{tends}, the limit relation 
$\,\hg\nnh_{(y,\,t\hh\xi)}\w(t\eta,t\eta)
\to2(\tp\hskip-2.2pt-\nh\tm)\la\eta,\eta\ra/\hn a\,$ as $\,t\to\infty$. At the 
same time, $\,t\hh\xi\,$ (or, the tangent vector $\,t\eta$) tends, as 
$\,t\to\infty$, to the point $\,\bbC(0,\xi)\,$ of 
$\,\mathrm{P}(\hn\bbC\times\nnh\nr\hskip-2.4pt_y\w)
\smallsetminus\nr\hskip-2.4pt_y\w$, 
identified with $\,\bbC\xi\in\mathrm{P}\nnh\nr\hskip-2.4pt_y\w$ or, 
respectively, to the vector tangent to $\,\mathrm{P}\nnh\nr\hskip-2.4pt_y\w$ 
at $\,\bbC\xi\,$ which is the image of $\,\eta\,$ under
\begin{enumerate}
  \def\theenumi{{\rm\alph{enumi}}}
\item[{\rm($*$)}] the differential of the projection $\,\xi\mapsto\bbC\xi\,$ 
restricted to the unit sphere of $\,\lr$.
\end{enumerate}
The claim about the tangent vectors, which clearly implies our assertion, can 
be justified as follows. The vector $\,t\eta\,$ equals $\,x_s\w(t,0)\,$ 
(notation preceding Remark~\ref{jcobi}) with 
$\,x(t,s)=t\hh(\xi+s\eta)\in\nr\hskip-2.4pt_y\w$, so that 
$\,|\hs x(t,s)|^2\nh=t^2(1+|s\eta|^2)\,$ and, setting 
$\,\zeta(t,s)
=[1+t^2(1+|s\eta|^2)]^{-\nnh1\nh/2}(1,x(t,s))
\in\bbC\times\nnh\nr\hskip-2.4pt_y\w$, we get $\,|\hh\zeta(t,s)|=1\,$ for the 
di\-rect-sum Euclidean norm. Identifying $\,x(t,s)\,$ with 
$\,\bbC(1,x(t,s))=\bbC\hh\zeta(t,s)
\in\mathrm{P}(\hn\bbC\times\nnh\nr\hskip-2.4pt_y\w)$, we see that $\,t\eta$, 
treated as tangent to 
$\,\mathrm{P}(\hn\bbC\times\nnh\nr\hskip-2.4pt_y\w)\,$ at $\,\bbC(1,t\hh\xi)$, 
is the image, under the analog of ($*$) for 
$\,\bbC\times\nnh\nr\hskip-2.4pt_y\w$, of the vector 
$\,\zeta_s\w(t,0)=(1+t^2)^{-\nnh1\nh/2}(0,x_s\w(t,0))
=(0,t\hh(1+t^2)^{-\nnh1\nh/2}\eta)$ tangent to the unit sphere of 
$\,\bbC\times\nnh\nr\hskip-2.4pt_y\w$ at the point 
$\,\zeta(t,0)=(1+t^2)^{-\nnh1\nh/2}(1,t\hh\xi)\,$ and having the required 
limit $\,(0,\eta)\,$ as $\,t\to\infty$, which we identify with $\,\eta$.
\end{remark}
\begin{remark}\label{unnec}The construction summarized in Theorem~\ref{ehggk} 
has an obvious generalization, arising when, in (\ref{pbd}), 
$\,\vt\mapsto Q\,$ is assumed to be only defined on the half-open interval 
$\,[\hh\tm,\tp]\smallsetminus\{\tmp\}$, and $\,dQ\hh/\nh d\vt=\mp2a\,$ at 
$\,\vt=\tpm$ just for one fixed sign $\,\pm\hs$. Our discussion focuses on a 
narrower case since this is the case relevant to the study of {\medit 
compact\/} ge\-o\-des\-ic-gra\-di\-ent K\"ah\-ler triples.
\end{remark}

\section{Local properties\done}\label{lp}
\setcounter{equation}{0}
Throughout this section $\,(\mf\nh,g,\vt)\,$ is a fixed 
ge\-o\-des\-\hbox{ic\hs-}\hskip0ptgra\-di\-ent K\"ah\-ler triple (Definition~\ref{ggktr}). We 
use the symbols
\begin{equation}\label{sym}
J,\,\,v,\,\,u,\,\,\mf'\nh,\,\,\psi,\,\,Q,\,\,\mathcal{V},\,\,\mathcal{V}^\perp\nh,
\,\,\dv,\,\,\du
\end{equation}
for the com\-plex-struc\-ture tensor $\,J:T\nnh\mf\to T\nnh\mf\,$ of the 
underlying complex manifold $\,\mf\nh$, the gradient $\,v=\navp$, its 
$\,J$-im\-age $\,u=J\nh v$, the open set $\,\mf'$ where $\,v\ne0$, the 
function $\,\psi\,$ on $\,\mf'$ with (\ref{nvv}), the function 
$\,Q=g(\hn v,v)\,$ on $\,\mf\nh$, the distribution 
$\,\mathcal{V}=\mathrm{Span}\hs(\hn v,u)\,$ on $\,\mf'\nnh$, its orthogonal 
complement, as well as the en\-do\-mor\-phisms $\,\dv=\nabla\nh v\,$ and 
$\,\du=\nabla\nh u\,$ of $\,T\nnh\mf\nh$, cf.\ (\ref{nwt}). Under the above 
hypotheses,
\begin{equation}\label{loc}
\begin{array}{rl}
\mathrm{a)}&v,u\,\mathrm{\ are\ both\ hol\-o\-mor\-phic,}\,|v|=|u|
=Q^{\hs1\nh/2}\nh,\mathrm{\ and\ }\,\du\hs=\hs J\dv\hs=\hs\dv\nh J\hh,\\
\mathrm{b)}&u=J\nh v\,\mathrm{\ is\ a\ Kil\-ling\ field\ commuting\ 
with\ }\,v\hh,\mathrm{\ and\ orthogonal\ to\ }\,v\hh,\\
\mathrm{c)}&\nabla\hskip-3pt_{w}\w\du\hs=\hs R(\hn u,w)\,\mathrm{\ and\ 
}\,\nabla\hskip-3pt_{w}\w\dv\hs=\hs-\nh J[R(\hn u,w)]\,\mathrm{\ for\ any\ 
vector\ field\ }\,w\hh,\\
\mathrm{d)}&\dv\,\mathrm{\ is\ self}\hyp\mathrm{ad\-joint\ and\ 
}\,J,\du\,\mathrm{\ are\ skew}\hyp\mathrm{ad\-joint\ at\ every\ point\ of\ 
}\,\mf\hh,\\
\mathrm{e)}&
g([w,w\hh'],u)=-\nh2\hh g(\du w,w\hh'\hh)\,\mathrm{\ for\ any\ local\ 
sections\ }\,w,w\hh'\mathrm{\ of\ }\,\mathcal{V}^\perp\hh,\\
\mathrm{f)}&\nabla\hskip-3pt_{v}\w v=\psi\hskip.4ptv
=-\nabla\hskip-3pt_u\w u\,\mathrm{\ 
and\ }\,\nabla\hskip-3pt_u\w v=\nabla\hskip-3pt_{v}\w u
=\psi\hskip.4ptu\,\mathrm{\ everywhere\ in\ }\,\mf',\\
\mathrm{g)}&Q\,\mathrm{\ is,\ locally\ in\ }\,\mf'\nh,\mathrm{\ 
a\ function\ of\ }\,\vt\hh,\mathrm{\ and\ }\,2\psi=\hs d\hh Q/d\vt\hh,\\
\mathrm{h)}&J,\hs\dv,\hs\du\,\mathrm{\ and\ the\ local\ flows\ of\ 
}\,u\,\mathrm{\ and\ }\,v\,\mathrm{\ leave\ }\,\mathcal{V}\,\mathrm{\ and\ 
}\,\mathcal{V}^\perp\mathrm{\ invariant.} 
\end{array}
\end{equation}
In (\ref{loc}.c), $\,R\,$ denotes the curvature tensor of $\,g$, and the 
notation of (\ref{nwt}) is used.

In fact, hol\-o\-mor\-phic\-i\-ty of $\,v\,$ (cf.\ Definition~\ref{ggktr}) 
combined with (\ref{ajs}) -- (\ref{hol}) gives (\ref{loc}.a), $\,u\,$ being 
hol\-o\-mor\-phic due to (\ref{hol}), as $\,\du\hs=\hs J\dv\hs=\hs\dv\nh J\,$ 
commutes with $\,J$. Next, (\ref{loc}.b) follows from (\ref{kil}) and the 
Lie-brack\-et equality 
$\,[u,v]=\nabla\hskip-3pt_u\w v-\nabla\hskip-3pt_{v}\w u=\dv\nh u-\du v=\dv\nh u-\dv\nh J\nh v=0$, 
obvious in view of (\ref{loc}.a), while (\ref{loc}.c) (or, (\ref{loc}.d)) is a 
direct consequence of (\ref{scd}) and (\ref{loc}.a) or, respectively, of 
of (\ref{loc}.b) combined with the fact that $\,v\,$ is a gradient. We now 
obtain (\ref{loc}.e) from (\ref{loc}.d), noting that 
$\,g(\nabla\hskip-3pt_{w}\w w\hh'\nh,u)
=-\hh g(\hn w\hh'\nh,\nabla\hskip-3pt_{w}\w u)
=-\hh g(\hn w\hh'\nh,\du w)$. On the other hand, (\ref{loc}.b), (\ref{loc}.a) and 
(\ref{nvv}) yield 
$\,\nabla\hskip-3pt_u\w v=\nabla\hskip-3pt_{v}\w u
=\nabla\hskip-3pt_{v}\w(J\nh v)=J\nabla\hskip-3pt_{v}\w v
=\psi\hskip.4ptJ\nh v=\psi\hskip.4ptu\,$ and so 
$\,\nabla\hskip-3pt_u\w u=\nabla\hskip-3pt_u\w(J\nh v)=J\nabla\hskip-3pt_u\w v
=\psi\hskip.4ptJu=-\hh\psi\hskip.4ptv$, establishing (\ref{loc}.f), while 
Lemma~\ref{ggqft}, (\ref{tnd}) and (\ref{loc}.f) imply (\ref{loc}.g). That 
$\,J,\dv,\du\,$ all leave $\,\mathcal{V}=\mathrm{Span}\hs(\hn v,u)\,$ 
invariant is 
clear as $\,J\nh v=u$ and $\,Ju=-\hh v\,$ while, by (\ref{loc}.f), 
$\,\dv\nh v,\dv\nh u,\du v,\du u\,$ are sections of $\,\mathcal{V}\nnh$. The 
same conclusion for $\,\mathcal{V}^\perp$ is now immediate from (\ref{loc}.d). 
By (\ref{loc}.b), the local flows of $\,v\,$ and $\,u\,$ preserve $\,v,u\,$ 
and $\,\mathcal{V}=\mathrm{Span}\hs(\hn v,u)$. The $\,u$-in\-var\-i\-ance of 
$\,\mathcal{V}^\perp$ now follows from (\ref{loc}.b). Finally, let $\,w\,$ be 
a section of $\,\mathcal{V}^\perp\nnh$. Writing $\,\lr\,$ for $\,g$, we get 
$\,\la[v,w],v\ra=\la\nabla\hskip-3pt_{v}\w w-\nabla\hskip-3pt_{w}\w v,v\ra
=-\hh\la w,\nabla\hskip-3pt_{v}\w v\ra-\la\dv\nh w,v\ra=-\la\dv\nh w,v\ra
=-\la w,\dv\nh v\ra=0$, cf.\ (\ref{loc}.d) and (\ref{loc}.f). Similarly, 
$\,\la[v,w],u\ra=\la\nabla\hskip-3pt_{v}\w w-\nabla\hskip-3pt_{w}\w v,u\ra
=-\hh\la w,\nabla\hskip-3pt_{v}\w u\ra-\la\dv\nh w,u\ra=-\la\dv\nh w,u\ra
=-\la w,\dv\nh u\ra=0$. Thus, $\,[v,w]\,$ is a section of 
$\,\mathcal{V}^\perp$ as well. In view of Remark~\ref{prjct}, this completes 
the proof of (\ref{loc}.h). For easy reference, note that, by (\ref{loc}.a) -- 
(\ref{loc}.b),
\begin{equation}\label{gvv}
g(v,v)\,=\,g(u,u)\,
=\,Q\hh,\hskip12ptg(v,u)\,=\,0\hh,\hskip12ptu\,
=\,J\hskip-1.1ptv\hh.
\end{equation}
\begin{lemma}\label{dvgww}{\medit 
Under the assumptions preceding\/ {\rm(\ref{loc})}, on\/ $\,\mf'\nh$,
\begin{enumerate}
  \def\theenumi{{\rm\alph{enumi}}}
\item[{\rm(a)}] the distribution\/ $\,\mathcal{V}=\mathrm{Span}\hs(\hn v,u)\,$ 
is in\-te\-gra\-ble and has totally geodesic leaves,
\item[{\rm(b)}] a local section of\/ $\hs\mathcal{V}^\perp\nnh$ is 
pro\-ject\-able along\/ $\mathcal{V}\hs$ if and only if it commutes with\/ 
$\hs u\hs$ and\/ $\hs v$,
\item[{\rm(c)}] if local sections\/ $\,w\,$ and\/ $\,w\hh'$ of\/ 
$\,\mathcal{V}^\perp\,$ commute with\/ $\,u\,$ and\/ $\,v$, then
\begin{equation}\label{dvg}
\begin{array}{rlrl}
\mathrm{i)}&d_v\w[\hs g(\hn w,w\hh'\hh)]\,=\,2\hh g(\dv\nh w,w\hh'\hh)\hh,
&\mathrm{ii)}&d_v\w[\hs g(\dv\nh w,w\hh'\hh)]\,=\,2\psi\hh g(\dv\nh w,w\hh'\hh)\hh,\\
\mathrm{iii)}&d_v\w[Q^{-\nnh1}g(\dv\nh w,w\hh'\hh)]\,=\,0\hh,&
\end{array}
\end{equation}
\item[{\rm(d)}] $d_v\w Q\nnh=\nh 2\psi Q\hh\,$ and\/ 
$\hs d_u\w[\hs g(\hn w,w\hh'\hh)]\nh=\nh d_u\w[\hs g(\dv\nh w,w\hh'\hh)]\nh
=\nh d_u\w Q\nnh=0\,$ for any\/ $\,w,w\hh'$ as in\/ {\rm(c)},
\item[{\rm(e)}] $[\nabla\hskip-3pt_{v}\w\dv]\hh w\,=\,2(\psi\hs-\hs\dv)\dv\hh w\hs\,$ 
whenever\/ $\,w\,$ is a local section of\/ $\,\mathcal{V}^\perp\nh$.
\end{enumerate}
}
\end{lemma}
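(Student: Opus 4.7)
The plan is to deduce (a) and (b) directly from the first-order structural identities (\ref{loc}.b), (\ref{loc}.f) and~(\ref{loc}.h); then to establish the key second-order identity~(e) by a ``double computation'' of $\,R(w,v)v\,$ for $\,w\in\mathcal{V}^\perp\nnh$; and finally to derive (c) and~(d) by routine Leib\-niz manipulations, using~(e) for~(c)(ii).

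For~(a), integrability of $\,\mathcal{V}=\mathrm{Span}\hs(\hn v,u)\,$ will be immediate from $\,[v,u]=0\,$ in~(\ref{loc}.b); and (\ref{loc}.f) places $\,\nabla_vv,\nabla_vu,\nabla_uv,\nabla_uu\,$ all in $\,\mathcal{V}$, so the leaves are totally geodesic. For~(b), Remarks~\ref{liebr} and~\ref{pralg} show that $\,\mathcal{V}$-pro\-ject\-a\-bil\-i\-ty of a section $\,w\,$ of $\,\mathcal{V}^\perp$ amounts to $\,[v,w],[u,w]\in\mathcal{V}$; combining this with $\,[v,w],[u,w]\in\mathcal{V}^\perp$, which comes from~(\ref{loc}.h) via Remark~\ref{prjct}, will force both brackets to vanish. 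The converse is immediate from $\,\bbR$-bilinearity of the Lie bracket: $\,[\alpha v+\beta u,w]=-(d_w\alpha)v-(d_w\beta)u\in\mathcal{V}\,$ whenever $\,[v,w]=[u,w]=0$.

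The core of the lemma, and the main obstacle, is~(e), which I plan to establish by computing $\,R(w,v)v\,$ in two different ways for $\,w\in\mathcal{V}^\perp\nnh$. First, the curvature definition~(\ref{cur}) together with $\,[v,w]=0$, $\,\nabla_wv=\nabla_vw=\dv w$, $\,\nabla_vv=\psi v$, and the vanishing of $\,d_w\psi$ (since (\ref{loc}.g) makes $\,\psi\,$ locally a function of $\,\vt$, while $\,d_w\vt=g(w,v)=0$) gives the Jacobi-type relation
\[
R(w,v)v\,=\,(\nabla_v\dv)w\,+\,\dv^2w\,-\,\psi\dv w\hh.
\]
Second, the K\"ah\-ler identity~(\ref{rcm}) yields $\,R(w,v)=R(Jw,Jv)=R(Jw,u)$, while~(\ref{loc}.c) reads $\,R(u,z)=\nabla_z\du\,$ for every vector field $\,z$. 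Taking $\,z=Jw\,$ and expanding $\,-(\nabla_{Jw}\du)v\,$ by Leib\-niz, using $\,\du v=\psi u\,$ from~(\ref{loc}.f), $\,\nabla_{Jw}u=\du(Jw)$, and $\,d_{Jw}\psi=0\,$ (since $\,Jw\in\mathcal{V}^\perp$ by~(\ref{loc}.h)), followed by the elementary simplifications $\,\dv J=J\dv$, $\,\du=J\dv$, and consequently $\,\du J=-\dv$, I expect to obtain
\[
R(w,v)v\,=\,\du J\,(\dv-\psi)w\,=\,-\dv(\dv-\psi)w\,=\,\psi\dv w\,-\,\dv^2w\hh.
\]
Equating the two expressions will then give $\,(\nabla_v\dv)w=2(\psi-\dv)\dv w$, which is~(e).

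The remaining parts are short. Part~(c)(i) is Remark~\ref{dvwwp} applied to $\,v$. Expanding $\,d_v[\hs g(\dv w,w')]=g(\nabla_v(\dv w),w')+g(\dv w,\nabla_vw')\,$ with $\,\nabla_vw=\dv w$, $\,\nabla_vw'=\dv w'$, using self-adjointness of~$\,\dv\,$ from~(\ref{loc}.d), and substituting~(e) will yield~(c)(ii). For~(d): $\,d_vQ=2\psi Q\,$ follows from $\,\nabla_vv=\psi v$; $\,d_uQ=0\,$ comes from~(\ref{loc}.f) and $\,g(u,v)=0$; $\,d_u[\hs g(w,w')]=0\,$ is Remark~\ref{dvwwp} applied to the Kil\-ling field~$\,u$; and $\,d_u[\hs g(\dv w,w')]=0\,$ follows once one notes that $\,\mathcal{L}_u\dv=\nabla_u\dv+[\dv,\du]=0\,$ (by~(\ref{lie}), the commutation $\,[\dv,\du]=0\,$ from~(\ref{loc}.a), and $\,\nabla_u\dv=-J\nh R(u,u)=0\,$ from~(\ref{loc}.c)). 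Finally (c)(iii) combines~(c)(ii) with $\,d_v(Q^{-1})=-2\psi/Q\,$ from~(d).
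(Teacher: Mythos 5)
Your proposal is correct, and every ingredient you invoke ((\ref{cur}), (\ref{rcm}), (\ref{loc}.a)--(\ref{loc}.h), Remarks~\ref{liebr}, \ref{prjct}, \ref{dvwwp}) is established before the lemma, so there is no circularity. Parts (a), (b), (c)(i), (c)(iii) and (d) are handled essentially as in the paper. Where you genuinely diverge is in the treatment of the pair (c)(ii)/(e): the paper first proves (c)(ii) by rewriting $\,g(\dv\nh w,w\hh'\hh)\,$ as $\,-\hh g([w,J\nh w\hh'],u)/2\,$ via (\ref{loc}.e) and then differentiating with the Lie-derivative identity (\ref{lvg}), and only afterwards deduces (e) from (c)(iii) by an invariance-plus-orthogonality argument; you instead prove (e) first, by computing $\,R(\hn w,v)\hh v\,$ once from the Jacobi-type expansion and once from $\,R(\hn w,v)=R(J\nh w,u)=-\nabla\hskip-3pt_{J\nh w}\w\du$, and then obtain (c)(ii) by direct differentiation. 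Your two curvature computations are in substance the second and last equalities of Theorem~\ref{jacob}(vii), which the paper proves later \emph{using} Lemma~\ref{dvgww}(e); your argument shows that they can be obtained independently, which is a clean and arguably more transparent route, at the cost of bringing the full curvature tensor into a lemma the paper keeps at the level of first derivatives of $\,\dv$. One small point to make explicit: in your derivation of (e) you assume $\,[v,w]=0$, whereas (e) is asserted for arbitrary local sections of $\,\mathcal{V}^\perp\nh$; since both sides of (e) are tensorial (pointwise) in $\,w\,$ and, by (b) together with Remark~\ref{cmmut}, commuting sections realize every value of $\,\mathcal{V}^\perp\nh$, this is harmless, but it should be said.
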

\begin{proof}Assertions (a) -- (b) are obvious from (\ref{loc}.b) and, 
respectively, Remark~\ref{liebr} combined with (\ref{loc}.h). Next, let  
$\,\lie\hskip-.5pt_v\w w=\lie\hskip-.5pt_v\w w\hh'\nh=\lie\hskip-.5pt_u\w w
=\lie\hskip-.5pt_v\w w\hh'\nh=0$. Since $\,\lie\hskip-.5pt_v\w$ and 
$\,\lie\hskip-.5pt_u\w$ act on functions as $\,d_v\w$ and $\,d_u\w$, 
(\ref{lvg}) implies (\ref{dvg}.i), and $\,d_u\w[\hs g(\hn w,w\hh'\hh)]=0\,$ as 
$\,\lie\hskip-.5pt_u\w g=0\,$ by (\ref{loc}.b). For similar reasons, 
$\,d_u\w[\hs g(\dv\nh w,w\hh'\hh)]
=\lie\hskip-.5pt_u\w[\hs g(\dv\nh w,w\hh'\hh)]=0$. (Namely, 
(\ref{loc}.c) gives $\,\nabla\hskip-3pt_u\w\dv=0$, so that (\ref{loc}.a) and 
(\ref{lie}), with $\,u,\dv\,$ rather than $\,v,\bl$, yield 
$\,\lie\hskip-.5pt_u\w\dv=0$.) On the other hand, by (\ref{gvv}), 
$\,g(\hn v,v)=Q$. Now (\ref{tnd}), (\ref{loc}.f) and (\ref{loc}.b) imply that 
$\,d_u\w\vt=d_u\w Q=0\,$ and $\,d_v\w Q=2\psi Q$, establishing (d).

Using (\ref{loc}.a) we get $\,g(\dv\nh w,w\hh'\hh)=g(J\dv\nh w,J\nh w\hh'\hh)
=g(\du w,J\nh w\hh'\hh)\,$ which, 
by (\ref{loc}.e), is nothing else than $\,-\hh g([w,J\nh w\hh'],u)/2$. Hence 
$\,2\hs d_v\w[\hs g(\dv\nh w,w\hh'\hh)]=2\lie\hskip-.5pt_v\w[\hs g(\dv\nh w,w\hh'\hh)]
=-\lie\hskip-.5pt_v\w[\hs g(\hn u,[w,J\nh w\hh']))]
=-\hh[\nh\lie\hskip-.5pt_v\w g](\hn u,[w,J\nh w\hh']))$. (Our assumption that 
$\,\lie\hskip-.5pt_v\w w=\lie\hskip-.5pt_v\w w\hh'\nh=0$ gives 
$\,\lie\hskip-.5pt_v\w(J\nh w\hh'\hh)=0$, as $\,v\,$ is hol\-o\-mor\-phic, 
which in turn yields $\,\lie\hskip-.5pt_v\w[w,J\nh w\hh']=0$, while 
$\,\lie\hskip-.5pt_v\w u=0$, cf.\ (\ref{loc}.b).) From (\ref{lvg}), 
(\ref{loc}.f) and (\ref{loc}.a) we now obtain 
$\,d_v\w[\hs g(\dv\nh w,w\hh'\hh)]=-\hh[\nh\lie\hskip-.5pt_v\w g](\hn u,[w,J\nh w\hh']))/2
=-\hh g(\dv\nh u,[w,J\nh w\hh'])=-\nh2g(\psi u,[w,J\nh w\hh'])
=2\psi\hh g(\du w,\nh J\nh w\hh'\hh)\nh=\nh-\nnh2\psi\hh g(J\nnh\du w,w\hh'\hh)\nh
=2\psi\hh g(\dv\nh w,w\hh'\hh)$, that is, (\ref{dvg}.ii), which, since 
$\,d_v\w Q=2\psi Q\,$ by (d), also proves (\ref{dvg}.iii).

Finally, (\ref{loc}.h) and the equality 
$\,\nabla\hskip-3pt_{v}\w\dv\hs=\hs-\nh J[R(\hn u,v)]$, cf.\ (\ref{loc}.c), combined with 
(a), imply that $\,\nabla\hskip-3pt_{v}\w\dv-(2\psi\hs-\hs\dv)\dv\,$ leaves 
$\,\mathcal{V}^\perp$ invariant. To obtain (e), it now suffices to show that 
$\,[\nabla\hskip-3pt_{v}\w\dv]\hh w-(2\psi\hs-\hs\dv)\dv\hh w\,$ is orthogonal to 
$\,w\hh'$ for any local sections $\,w,w\hh'$ of $\,\mathcal{V}^\perp\nh$. We are 
free to assume here that $\,w=w\hh'$ (due to self-ad\-joint\-ness of 
$\,\dv=\nabla\nh v$) and that $\,w\,$ commutes with $\,u\,$ and $\,v\,$ (see 
(b)). Differentiation by parts gives, by (\ref{dvg}.iii) and (\ref{loc}.d), 
$\,g([\nabla\hskip-3pt_{v}\w\dv]\hh w,w)=d_v\w[\hs g(\dv\nh w,w)]
-g(\dv\nabla\hskip-3pt_{v}\w w,w)-g(\dv\nh w,\nabla\hskip-3pt_{v}\w w)
=2\psi\hh g(\dv\nh w,w)-2g(\dv\nh w,\dv\nh w)$, as required, with 
$\,\nabla\hskip-3pt_{v}\w w=\dv\nh w\,$ since $\,[v,w]=0$.
\end{proof}

\section{Horizontal Ja\-co\-bi fields\done}\label{hj}
\setcounter{equation}{0}
In addition to using the assumptions and notations of Section~\ref{lp}, we now 
let $\,\ic$ stand for the underlying 
\hbox{one\hh-}\hskip0ptdi\-men\-sion\-al manifold of a fixed maximal integral 
curve of $\,v\,$ in $\,\mf'\nh$. We restrict the objects in (\ref{sym}) to 
$\,\ic\,$ without changing the notation, and select a unit-speed 
parametrization $\,t\mapsto x(t)\,$ of the geodesic $\,\ic$ such that
\begin{equation}\label{dxe}
\dot x\,=\,v/|v|\,=\,Q^{-\nnh1\nh/2}v\hskip10pt\mathrm{along}\hskip6pt\ic,
\hskip6pt\mathrm{where}\hskip6ptv=\navp\hh.
\end{equation}
As an obvious consequence of (\ref{dxe}), (\ref{dvt}) and Lemma~\ref{dvgww}(d),
\begin{equation}\label{dtq}
\dot\vt\,=\,Q^{\hs1\nh/2},\hskip20pt\dot Q\,=\,2\psi Q^{\hs1\nh/2},
\hskip20pt\mathrm{with}\hskip8pt(\hskip2.3pt)\nnh\dot{\phantom o}\nh=\,d/dt\,
=\,d_{\dot x}\w\hh.
\end{equation}
Any constant $\,\zx\in[\bbR\smallsetminus\vt(\ic)]\cup\{\infty\}$, where 
$\,\vt(\ic)\,$ is the range of $\,\vt\,$ on $\,\ic$, gives rise to the 
function $\,\lambda_\zx\w:\ic\to\bbR\,$ defined by
\begin{equation}\label{lgm}
\lambda_\zx\w\,=\,\hs Q/[2(\vt-\zx)]\hh,
\end{equation}
the convention being that $\,\lambda_\zx\w$ is identically zero when 
$\,\zx=\infty$. We denote by $\,\mathcal{W}\,$ the set of all 
$\,\mathcal{V}^\perp\nh$-val\-ued vector fields 
$\,t\mapsto w(t)\in\mathcal{V}^\perp_{\hskip-1ptx(t)}$ along $\,\ic\,$ 
satisfying the equation
\begin{equation}\label{nxw}
\nabla\hskip-3pt_{\dot x}\w w\,=\,Q^{-\nnh1\nh/2}\dv\nh w\hh.
\end{equation}
Of particular interest to us are $\,\zx\,$ such that
\begin{equation}\label{wze}
\begin{array}{rl}
\mathrm{a)}&
\zx\in[\bbR\smallsetminus\vt(\ic)]\cup\{\infty\}\,\,\mathrm{\ and\ 
}\,\,\mathcal{W}[\zx]\ne\{0\}\hh,\mathrm{\ \ where}\\
\mathrm{b)}&
\mathcal{W}[\zx]\,=\,\{w\in\mathcal{W}:\dv\nh w=\lambda_\zx\w\nh w\}\hh.
\end{array}
\end{equation}
About pro\-ject\-abil\-i\-ty along $\,\mathcal{V}\,$ in (i) below, see
Remark~\ref{pralg} and Lemma~\ref{dvgww}(a).
\begin{theorem}\label{jacob}{\medit 
Under the above hypotheses, the following conclusions hold.
\begin{enumerate}
  \def\theenumi{{\rm\roman{enumi}}}
\item[{\rm(i)}] $\mathcal{V}^\perp\nh$-val\-ued solutions\/ $\,w\,$ to\/ 
{\rm(\ref{nxw})} are nothing else than restrictions to\/ $\,\ic\hs$ of those 
local sections of\/ $\hs\mathcal{V}^\perp\nnh$ with domains containing\/ 
$\,\ic\hs$ which are pro\-ject\-able along\/ $\,\mathcal{V}\nh$.
\item[{\rm(ii)}] All\/ $\,w\,$ as in\/ {\rm(i}), that is, all elements of\/ 
$\,\mathcal{W}\nh$, are Ja\-co\-bi fields along\/ $\,\ic\nh$.
\item[{\rm(iii)}] Every vector in\ $\,\mathcal{V}^\perp_{\hskip-1ptx(t)}$ 
equals\/ $\,w(t)\,$ for some unique\/ $\,w\in\mathcal{W}\nh$.
\item[{\rm(iv)}] $\mathcal{W}\,$ is a complex vector space of complex 
dimension\/ $\,\dimc\nh\mf-1$, and the direct sum of all\/ 
$\,\hs\mathcal{W}[\zx]\,$ for\/ $\hs\zx\,$ in\/ {\rm(\ref{wze}.a)}, with\/ 
$\,w\mapsto J\nh w\,$ serving as the multiplication by\/ $\,i\in\bbC$.
\item[{\rm(v)}] A function\/ $\,t\mapsto\lambda(t)\,$ on the parameter 
interval of\/ $\,t\mapsto x(t)\,$ satisfies the equation\/ 
$\,d\lambda/dt=2(\psi-\lambda)\lambda\hh Q^{-\nnh1\nh/2}$, with\/ 
$\,\psi,Q\,$ evaluated at\/ $\,x(t)$, if and only if\/ 
$\,\lambda(t)=\lambda_\zx\w(x(t))$, cf.\ {\rm(\ref{lgm})}, for some\/ 
$\,\zx\in[\bbR\smallsetminus\vt(\ic)]\cup\{\infty\}\,$ and all\/ $\,t$.
\item[{\rm(vi)}] At any\/ $\,x=x(t)\in\ic\nh$, the eigen\-values of 
$\,\dv\nnh_x\w:\mathcal{V}^\perp_{\hskip-1ptx}\nh
\to\mathcal{V}^\perp_{\hskip-1ptx}$, cf.\ {\rm(\ref{loc}.h)}, are precisely 
the values\/ $\,\lambda_\zx\w(x)\,$ for all\/ 
$\,\zx\,$ in\/ {\rm(\ref{wze}.a)}. The eigen\-space of\/ 
$\,\dv\nnh_x\w:\mathcal{V}^\perp_{\hskip-1ptx}\nh
\to\mathcal{V}^\perp_{\hskip-1ptx}$ corresponding to\/ 
$\,\lambda_\zx\w(x)\,$ is\/ $\,\{w(t):w\in\mathcal{W}[\zx]\}$.
\item[{\rm(vii)}] $R(\hn w,u)\hh u\nnh=\nh\nnh R(\hn w,v)\hh v\nnh
=\nnh(\psi-\dv)\dv\hh w\nnh=\nh\nnh R(\hn v,u)\hh J\nh w\nh/2\,$ on\/ $\hs\mf'\nh$ 
for sections\/ $\hs w\hh$ of\/ $\,\mathcal{V}^\perp\nnh$.
\item[{\rm(viii)}] If\/ $\,\vt(\ic)=(\tm,\tp)\,$ is bounded, then\/ 
$\,Q/(\vt-\hn\tp)\le\,2\dv\hh\le Q/(\vt-\hn\tm)\,$ on\/ 
$\,\mathcal{V}^\perp\nnh$.
\end{enumerate}
}
\end{theorem}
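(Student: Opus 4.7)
The proof splits naturally into three groups: the ODE foundation (i)--(iii), the eigenvalue ODE in (v), and the consequences (iv), (vi), (vii), (viii).

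First, for (i) I would use Lemma~\ref{dvgww}(b) to identify $\mathcal{V}$-projectable local sections of $\mathcal{V}^\perp$ with sections that commute with both $v$ and $u$; for such $w$, $\nabla_v w = \dv w$ along $\ic$, and dividing by $|v|=Q^{1/2}$ gives (\ref{nxw}). Conversely, I would extend a $\mathcal{V}^\perp$-valued solution of (\ref{nxw}) along $\ic$ by applying the commuting local flows of $v$ and $u$ (commutation is (\ref{loc}.b)) to a local transversal; (\ref{loc}.h) ensures the result stays in $\mathcal{V}^\perp$ and commutes with $v$ and $u$, hence is $\mathcal{V}$-projectable. Then (ii) is immediate from Lemma~\ref{jcbfl} applied to such an extension, and (iii) follows because (\ref{nxw}) is a linear first-order ODE whose coefficient $Q^{-1/2}\dv$ preserves $\mathcal{V}^\perp$ by (\ref{loc}.h).

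For (v), I would let $\lambda(t)$ be a smooth eigenvalue branch of $\dv$ realized by $w\in\mathcal{W}$, differentiate $\dv w = \lambda w$ along $\dot x$, and substitute (\ref{nxw}) together with Lemma~\ref{dvgww}(e) to obtain
\begin{equation*}
\dot\lambda\,=\,2(\psi-\lambda)\lambda\,Q^{-1/2}.
\end{equation*}
Writing $\mu=1/\lambda$ (valid since, by ODE uniqueness, $\lambda$ either vanishes identically or is nowhere zero) and invoking $dQ/d\vt=2\psi$ from Lemma~\ref{dvgww}(g) together with $\dot\vt=Q^{1/2}$ from (\ref{dtq}), one checks $d(\mu Q)/d\vt = 2$, so $\mu Q=2(\vt-\zx)$ for some constant $\zx$, giving $\lambda = \lambda_\zx$ with $\zx\notin\vt(\ic)$ (to keep $\mu\ne 0$), while $\lambda\equiv 0$ corresponds to $\zx=\infty$. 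Conversely, the $\lambda_\zx$ obviously solve the ODE.

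For (iv) and (vi) I would use that $\dv|_{\mathcal{V}^\perp}$ is self-adjoint (by (\ref{loc}.d)) and commutes with $J$ (by (\ref{loc}.a)), hence its eigenspaces are $J$-invariant and mutually orthogonal. Smooth eigenvalue branches along $\ic$ must be of the form $\lambda_\zx$ by (v), and since distinct $\zx$ yield nowhere coincident branches on $\ic$ (as $Q>0$ there), multiplicities stay constant and eigenspaces form smooth $J$-invariant subbundles; integrating (\ref{nxw}) within each then produces $\mathcal{W}[\zx]$ of the expected dimension, and (iii) delivers $\mathcal{W} = \bigoplus_\zx \mathcal{W}[\zx]$ of complex dimension $\dimc\mf-1$, with $J$-action giving multiplication by $i$. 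For (vii), Lemma~\ref{dvgww}(e) together with (\ref{loc}.c) gives $2(\psi-\dv)\dv w = [\nabla_v \dv] w = JR(v,u)w = R(v,u)Jw$ via (\ref{rcm}); expanding $R(w,v)v$ from (\ref{cur}) using (\ref{loc}.f), Lemma~\ref{dvgww}(e), and the observation that $d_w\psi=0$ for $w\in\mathcal{V}^\perp$ (since $\psi$ is a function of $\vt$ and $d_w\vt = g(w,v)=0$) yields $R(w,v)v=(\psi-\dv)\dv w$; then (\ref{rcm}) gives $R(w,u)u=-JR(Jw,v)v=-J(\psi-\dv)\dv(Jw)=(\psi-\dv)\dv w$. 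Statement (viii) is immediate from (vi): the eigenvalues of $2\dv$ on $\mathcal{V}^\perp$ are the values $Q/(\vt-\zx)$ for admissible $\zx$, all confined to $[Q/(\vt-\tp),\,Q/(\vt-\tm)]$, so the claimed operator inequality holds.

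The main obstacle is the eigenvalue analysis underlying (v)--(vi): one must confirm that eigenvalue branches are globally given by the explicit functions $\lambda_\zx$, that distinct $\zx$ never produce colliding branches (so multiplicities are constant and eigenspaces vary smoothly), and that the direct-sum decomposition of $\mathcal{W}$ aligns with the pointwise eigenspace decomposition of $\dv|_{\mathcal{V}^\perp}$. Once this is in place, Lemma~\ref{dvgww}(e) and the K\"ahler commutation from (\ref{rcm}) reduce (vii) and (viii) to essentially mechanical verifications.
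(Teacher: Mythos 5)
Your plan for (i)--(iii), (v), (vii) and (viii) is essentially the paper's own argument: the same identification of $\,\mathcal{W}\,$ with restrictions of $\,\mathcal{V}$-pro\-ject\-a\-ble sections via Lemma~\ref{dvgww}(b) and ODE uniqueness, the same integration $\,2\zx=2\vt-Q/\lambda\,$ for (v), and the same deduction of (viii) from (vi) and self-ad\-joint\-ness. In (vii) you even improve slightly on the paper: instead of invoking the first Bian\-chi identity, you get the first equality from $\,R(\hn w,u)\hh u=R(\hn w,J\nh v)\hh J\nh v=-\hh J\hh[R(J\nh w,v)\hh v]\,$ (by (\ref{rcm})) together with $\,R(J\nh w,v)\hh v=(\psi-\dv)\dv\hh J\nh w\,$ and $\,J$-com\-mu\-ta\-tion; that chain is valid.

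The genuine gap is in (vi), and it propagates to the di\-rect-sum claim of (iv). Your route is: smooth eigen\-value branches of $\,\dv\,$ on $\,\mathcal{V}^\perp$ along $\,\ic\,$ satisfy the ODE of (v) -- obtained by differentiating $\,\dv\nh w=\lambda w\,$ for $\,w\in\mathcal{W}\,$ ``realizing'' the branch -- hence equal some $\,\lambda_\zx\w$; distinct branches never collide; so the eigen\-spaces form smooth sub\-bun\-dles inside which one integrates (\ref{nxw}). But the step ``each branch satisfies the ODE'' presupposes the existence of a $\,w\in\mathcal{W}\,$ that remains an eigen\-vec\-tor along all of $\,\ic\,$, i.e.\ that the evolution (\ref{nxw}) preserves the eigen\-spaces of $\,\dv$ -- which is precisely the content of (vi); and the no-col\-li\-sion claim is only available once the branches are already known to be of the form $\,\lambda_\zx\w$. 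You flag this as ``the main obstacle'' but do not supply the idea that breaks the circle. The paper's resolution is short: fix $\,t$, take any eigen\-val\-ue--eigen\-vec\-tor pair of $\,\dv\,$ at $\,x(t)$, write it as $\,\lambda_\zx\w(x(t))\,$ and $\,w(t)\,$ with $\,\zx\in[\bbR\smallsetminus\vt(\ic)]\cup\{\infty\}\,$ and $\,w\in\mathcal{W}\,$ (using (iii)), and set $\,\hat w=\dv\nh w-\lambda_\zx\w w\,$ along $\,\ic$. By (\ref{nxw}), Lemma~\ref{dvgww}(e) and (v), $\,\hat w\,$ satisfies the linear homogeneous equation $\,\nabla\hskip-3pt_{\dot x}\w\hat w=Q^{-1/2}(2\psi-2\lambda_\zx\w-\dv)\hh\hat w$; vanishing at $\,t$, it vanishes identically, so $\,w\in\mathcal{W}[\zx]$. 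This single computation yields both assertions of (vi), with no eigen\-value-branch tracking, and is the missing ingredient in your argument.
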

\begin{proof}Any $\,w\,$ as in the second line of (i), restricted to 
$\,\ic\nh$, becomes both a Ja\-co\-bi field (by Lemmas~\ref{jcbfl} 
and~\ref{dvgww}(b)) and a $\,\mathcal{V}^\perp\nh$-val\-ued solution to 
(\ref{nxw}) (since $\,\dv=\nabla\nh v$, so that (\ref{dxe}) and 
Lemma~\ref{dvgww}(b) give 
$\,\nabla\hskip-3pt_{\dot x}\w w=Q^{-\nnh1\nh/2}\nabla\hskip-3pt_{v}\w w
=Q^{-\nnh1\nh/2}\nabla\hskip-3pt_{w}\w v=Q^{-\nnh1\nh/2}\dv\nh w$). With 
$\,\ic\,$ replaced by suitable shorter sub\-geodesics covering all points of 
$\,\ic\nh$, the inclusion just established between the two vector spaces 
appearing in (i) is actually an equality: in either class, the vector field in 
question is uniquely determined by its initial value at any given point 
$\,x\in\ic\nh$. This proves (i) -- (ii) as well as (iii) -- (iv), the latter 
in view of the fact that $\,J\dv=\dv\nh J$, cf.\ (\ref{loc}.a).

For a $\,C^1$ function $\,\lambda\,$ defined on the parameter interval of 
$\,t\mapsto x(t)$, one has
\begin{equation}\label{dlt}
\dot\lambda\,=\,2(\psi-\lambda)\lambda\hh Q^{-\nnh1\nh/2}\hskip12pt
\mathrm{with}\hskip7pt(\hskip2.3pt)\dot{\phantom o}\nh=d/dt
\end{equation}
if and only if either $\,\lambda=0\,$ identically, or $\,\lambda\ne0\,$ 
everywhere and the function $\,\zx$ characterized by 
$\,2\zx=2\vt-Q/\lambda\,$ is constant. (In fact, the ei\-ther-or claim about 
vanishing of $\,\lambda\,$ is due to uniqueness of solutions of 
in\-i\-tial-val\-ue problems, while (\ref{dtq}) yields 
$\,2\dot\zx
=Q\lambda^{-\nnh2}[\dot\lambda-2(\psi-\lambda)\lambda\hh Q^{-\nnh1\nh/2}\hh]$.) 
Now (v) easily follows, all nonzero initial conditions for (\ref{dlt}) at 
fixed $\,t\,$ being realized by suitably chosen constants 
$\,\zx\in\bbR\smallsetminus\vt(\ic)\,$ (and $\,\lambda=0\,$ satisfying (v) 
with $\,\zx=\infty$). 

On the other hand, from (\ref{dxe}) and Lemma~\ref{dvgww}(e), 
\begin{equation}\label{nds}
[\nabla\hskip-3pt_{\dot x}\w\dv]\hh w\,
=\,2Q^{-\nnh1\nh/2}(\psi\hs-\hs\dv)\dv\hh w\hh,
\hskip9pt\mathrm{if\ }\,w\,\mathrm{\ is\ a\ 
}\,\mathcal{V}^\perp\nh\hyp\mathrm{val\-ued\ vector\ field\ along\ }\,\ic.
\end{equation}
Next, we fix $\,x=x(t)\in\ic\,$ and express any prescribed 
ei\-gen\-val\-ue-ei\-gen\-vec\-tor pair for 
$\,\dv\nnh_x\w:\mathcal{V}^\perp_{\hskip-1ptx}\nh
\to\mathcal{V}^\perp_{\hskip-1ptx}$ as $\,\lambda_\zx\w(x)\,$ and $\,w(t)$, 
with some unique $\,\zx\in[\bbR\smallsetminus\vt(\ic)]\cup\{\infty\}$ and 
$\,w\in\mathcal{W}\nh$. By (v), $\,\lambda=\lambda_\zx\w$ satisfies 
(\ref{dlt}), so that, in view of (\ref{nxw}) and (\ref{nds}), the vector field 
$\,\hat w=\dv\nh w-\lambda w\,$ is a solution of the linear homogeneous 
differential equation 
$\,\nabla\hskip-3pt_{\dot x}\w \hat w=Q^{-\nnh1\nh/2}(2\psi-2\lambda-\dv)\hh\hat w$. 
Since $\,\hat w\,$ vanishes at $\,x=x(t)$, it must vanish identically, which 
establishes (vi).

Now let $\,w\in\mathcal{W}\nh$. As $\,\dot Q=2\psi Q^{\hs1\nh/2}$ (see 
the lines following (\ref{dlt})), the Ja\-co\-bi equation and (\ref{nxw}) 
give, by (ii) and (\ref{nds}), 
$\,R(\hn w,\dot x)\hh\dot x=\nabla\hskip-3pt_{\dot x}\w
\nabla\hskip-3pt_{\dot x}\w w
=\nabla\hskip-3pt_{\dot x}\w[Q^{-\nnh1\nh/2}\dv\nh w]
=Q^{-\nnh1}(\psi\hs-\hs\dv)\dv\hh w$, 
that is, $\,R(\hn w,v)\hh v=(\psi\hs-\hs\dv)\dv\hh w$, the second equality in 
(vii). Also, Lemma~\ref{dvgww}(e), (\ref{loc}.c) and (\ref{rcm}) yield 
$\,2(\psi\hs-\hs\dv)\dv\hh w=[\nabla\hskip-3pt_{v}\w\dv]\hh w
=-\nh J[R(\hn u,v)\hh w]=-\hh R(\hn u,v)\hh J\nh w=R(\hn v,u)\hh J\nh w=R(\hn v,J\nh v)\hh J\nh w$, the last 
equality in (vii). Combining the two relations, and repeatedly using 
(\ref{rcm}), we get $\,2R(\hn w,v)\hh v=R(\hn v,J\nh v)\hh J\nh w$, that is, 
$\,R(\hn w,v)\hh v=R(\hn v,w)\hh v+R(\hn v,J\nh v)\hh J\nh w=R(J\nh v,J\nh w)\hh v+R(\hn v,J\nh v)\hh J\nh w$. Thus, 
from the Bian\-chi identity, 
$\,R(\hn w,v)\hh v=R(\hn v,J\nh w)\hh J\nh v=R(J\nh v,JJ\nh w)\hh J\nh v=R(\hn w,u)\hh u$, which proves (vii). 
Finally, (viii) is an easy consequence of (vi) and (\ref{loc}.d).
\end{proof}

\section{Consequences of compactness\done}\label{cc}
\setcounter{equation}{0}
Let $\,(\mf\nh,g,\vt)\,$ be a fixed ge\-o\-des\-\hbox{ic\hs-}\hskip0ptgra\-di\-ent K\"ah\-ler 
triple (Definition~\ref{ggktr}). We use the notation of (\ref{sym}), 
(\ref{tmm}) and -- in (i) below -- the terminology of Remark~\ref{cifot}.
\begin{remark}\label{ascdt}According to 
\cite[Lemmas 11.1, 11.2 and Remark 2.1]{derdzinski-kp}, the following holds 
for any $\,(\mf\nh,g,\vt)\,$ as above with compact $\,\mf\nh$, the objects 
(\ref{tmm}), and $\,v=\navp$.
\begin{enumerate}
  \def\theenumi{{\rm\roman{enumi}}}
\item[{\rm(i)}] $Q=g(\hn v,v)\,$ is a $\,C^\infty$ function of $\,\vt$, 
leading to data $\,\tm,\tp,a,Q\,$ with (\ref{pbd}).
\item[{\rm(ii)}] The flow of the Kil\-ling vector field $\,u=J\nh v\,$ 
is periodic.
\item[{\rm(iii)}] $\sd^\pm$ are (connected) totally geodesic compact complex 
sub\-man\-i\-folds of $\,\mf\nh$.
\item[{\rm(iv)}] $\sd^+\nnh\cup\sd^-$ is the zero set of $\,v$, that is, the 
set of critical points of $\,\vt$.
\end{enumerate}
(Conclusion (iv) is a special case of a result due to Wang 
\cite[Lemma 3]{wang}.) Furthermore, restricting $\,\vt\mapsto Q\,$ in (i) to 
the open interval $\,(\tm,\tp)\,$ we have
\begin{equation}\label{dqt}
dQ/d\vt\,=\,2\psi\hh,\hskip12pt\mathrm{and\ so}\hskip7pt\psi\to\mp a\hskip7pt
\mathrm{as}\hskip7pt\vt\to\tpm\hh,
\end{equation}
$\psi\,$ being the function with (\ref{nvv}) on the open set 
$\,\mf'$ on which $\,v\ne0\,$ (so that $\,\psi\,$ is also a $\,C^\infty$ 
function of $\,\vt$). This is clear as (\ref{tnd}) and (\ref{nvv}) give 
$\,dQ=2\psi\,d\vt\,$ on $\,\mf\nh'\nh$. Finally, by \cite[Lemma 1]{wang} (see 
also \cite[Example 8.1 and Lemma 8.4(iv)]{derdzinski-kp}),
\begin{equation}\label{vtg}
v\,\mathrm{\ is\ tangent\ to\ every\ geodesic\ normal\ to\ }\,\sd^\pm\nh.
\end{equation}
\end{remark}
\begin{remark}\label{prcrm}Under the assumptions of Remark~\ref{ascdt}, for 
$\,a\,$ as in Remark~\ref{ascdt}(i), $\,\mp\hh a\,$ is the unique nonzero 
eigen\-val\-ue of the Hess\-i\-an of $\,\vt\,$ (that is, of 
$\,\dv=\nabla\nh v$) at any critical point $\,y\in\sd^\pm\nh$. The 
$\,\mp\hh a$-eigen\-space of $\,\dv\nnh_y\w$ is the normal space 
$\,\nr\hskip-2.4pt_y\w\sd^\pm\nh$, and 
$\,\mathrm{Ker}\hskip2.7pt\dv\nnh_y\w=\tyb^\pm$ (which thus constitutes the 
$\,0$-eigen\-space of $\,\dv\nnh_y\w$ unless $\,\sd^\pm\nh=\{y\}$).

In fact, as $\,\vt\,$ is a Morse-Bott function 
\cite[Example 8.1]{derdzinski-kp}, applying 
\cite[Lemma 8.4(i)]{derdzinski-kp} we see that $\,\nr\hskip-2.4pt_y\w\sd^\pm$ 
is the eigen\-space of $\,\dv\nnh_y\w$ for its unique nonzero eigen\-val\-ue, 
and so $\,\mathrm{Ker}\hskip2.7pt\dv\nnh_y\w=\tyb^\pm$ in view of 
self-ad\-joint\-ness of $\,\dv\nnh_y\w$. That the nonzero eigen\-val\-ue 
equals $\,\mp\hh a\,$ is obvious from (\ref{dqt}), since (\ref{nvv}) amounts 
to $\,\dv\nh v=\psi\hskip.4ptv$. Cf.\ \cite[Theorem 1.3]{miyaoka}.
\end{remark}
Still assuming compactness of a ge\-o\-des\-\hbox{ic\hs-}\hskip0ptgra\-di\-ent K\"ah\-ler 
triple $\,(\mf\nh,g,\vt)$, let $\,\nr^\delta\nnh\sd^\pm$ be the bundle of radius 
$\,\delta\,$ normal open disks around the zero section in the normal bundle 
$\,\nr\hskip-2.3pt\sd^\pm\nh$, with $\,\delta\,$ characterized by (\ref{int}). 
According to \cite[Lemma 10.3]{derdzinski-kp}, $\,\delta\,$ is then the 
distance between $\,\sd^+$ and $\,\sd^-\nh$, while, with 
$\,\mathrm{Exp}^\perp$ as in Remark~\ref{nexpm},
\begin{equation}\label{nex}
\begin{array}{l}
\mathrm{the\ restriction\ to\ }\nr^\delta\nnh\sd^\pm\mathrm{\,of\ the\ 
normal\ exponential\ mapping}\\
\mathrm{Exp}^\perp\nnh:\nr\hskip-2.3pt\sd^\pm\hs\to\,\mf\,\mathrm{\ is\ a\ 
dif\-feo\-mor\-phism\ 
}\,\nr^\delta\nnh\sd^\pm\to\,\mf\nnh\smallsetminus\nh\sd^\mp.
\end{array}
\end{equation}
Cf.\ \cite{bolton}, \cite[Lemma 2]{wang}, \cite[Theorem 1.1]{miyaoka}. Its 
inverse $\,\mf\nnh\smallsetminus\nh\sd^\mp\nh\to\nr^\delta\nnh\sd^\pm\nh$, 
composed with the projection $\,\nr^\delta\nnh\sd^\pm\nh\to\sd^\pm\nh$, yields 
a new disk-bundle projection
\begin{equation}\label{dbp}
\pi^\pm\nnh:\mf\nnh\smallsetminus\nh\sd^\mp\nh\to\sd^\pm.
\end{equation}
\begin{remark}\label{ptnrs}Clearly, $\,\pi^\pm\nnh\circ\mathrm{Exp}^\perp$ 
is the nor\-mal-bun\-dle projection 
$\,\nr\hskip-2.3pt\sd^\pm\nnh\to\sd^\pm\nnh$. Also, according to the lines 
preceding (\ref{dbp}),
\begin{equation}\label{img}
\begin{array}{l}
\mathrm{the\ image\ }\,\,\pi^\pm(x)\,\,\mathrm{\ of\ any\ 
}\,\,x\in\mf'\,\mathrm{\ is\ the\ unique\ }\,\,y\in\sd^\pm\,\mathrm{\ that\ 
can}\\
\mathrm{be\ joined\ to\ }\,\,x\,\,\mathrm{\ by\ a\ (necessarily\ unique)\ 
geodesic\ segment\ }\,\,\ic\hskip-3.3pt_x\w\,\mathrm{\ of}\\
\mathrm{length\ less\ than\ }\hs\delta\hs\mathrm{\ emanating\ from\ 
}\,y\,\mathrm{\ in\ a\ direction\ normal\ to\ }\hs\sd^\pm\nnh,
\end{array}
\end{equation}
which implies \cite[Remark 4.6, Example 8.1 and 
Theorem 10.2(iii)\hs--\hs(vi)]{derdzinski-kp} that $\,\pi^\pm$ sends every 
$\,x\in\mf\nnh\smallsetminus\nh\sd^\mp$ to the unique point nearest $\,x\,$ in 
$\,\sd^\pm\nh$.
\end{remark}
In the next lemma, by a {\medit leaf\hs} we mean -- as usual -- a maximal 
integral manifold.
\begin{lemma}\label{vsbkr}{\medit 
Under the above hypotheses, 
$\,\mathcal{V}\hs\subseteq\,\mathrm{Ker}\hskip2.3ptd\pi^\pm$ for the 
in\-te\-gra\-ble distribution\/ $\,\mathcal{V}=\mathrm{Span}\hs(\hn v,u)\,$ 
on\/ $\,\mf'\nh=\mf\smallsetminus(\sd^+\nnh\cup\sd^-)$, cf.\ 
Lemma\/~{\rm\ref{dvgww}(a)} and Remark\/~{\rm\ref{ascdt}(iv)}. If\/ $\,\xi$ 
is a unit vector normal to\/ $\,\sd^\pm$ at a point\/ $\,y$, then, with\/ 
$\,\delta\,$ as in\/ {\rm(\ref{nex})},
\begin{enumerate}
  \def\theenumi{{\rm\alph{enumi}}}
\item[{\rm(a)}] the punctured radius\/ $\,\delta\,$ disk\/ 
$\,\{z\xi:z\in\bbC\hskip6pt\mathrm{and}\hskip6pt0<|z|<\delta\}\,$ in\/ 
$\,\nr\hskip-2.4pt_y\w\sd^\pm$ is mapped by\/ $\,\exp_y\w\,$ 
dif\-feo\-mor\-phic\-al\-ly onto a leaf\/ $\,\lf\,\,$ of\/ $\,\mathcal{V}\nh$.
\end{enumerate}
Furthermore, every leaf\/ $\,\lf\hs\subseteq\mf'$ of\/ $\,\mathcal{V}\,$ has 
the following properties.
\begin{enumerate}
  \def\theenumi{{\rm\alph{enumi}}}
\item[{\rm(b)}] The closure of\/ $\,\lf\,\,$ in\/ $\,\mf\,$ is a totally 
geodesic complex sub\-man\-i\-fold, bi\-hol\-o\-mor\-phic to\/ $\,\bbCP^1$ and 
equal to\/ $\,\lf\hs\cup\{y_+\w,y_-\w\}$, where\/ $\,y_\pm\w\nh\in\sd^\pm$ 
are such that\/ $\,\{y_\pm\w\}=\pi^\pm\nh(\lf)$.
\item[{\rm(c)}] The leaf\/ $\,\lf\,\,$ arises from\/ {\rm(a)} \,for\/ some\/ 
unit normal vector\/ $\,\xi\,$ at the point\/ $\,y=y_\pm\w$ corresponding to\/ 
$\,\lf\hs\,$ as in\/ {\rm(b)}, and then
\begin{equation}\label{pex}
\pi^\mp\nh(\mathrm{Exp}^\perp\nh(y_\pm\w,z\xi))\,
=\,y_\mp\w\hskip12pt\mathrm{whenever}\hskip6ptz\in\bbC\hskip6pt\mathrm{and}
\hskip6pt0<|z|\le\delta\hh,
\end{equation}
$\mathrm{Exp}^\perp$ being the normal exponential mapping\/ 
$\,\nr\hskip-2.3pt\sd\to\mf\nh$.
\end{enumerate}
}
\end{lemma}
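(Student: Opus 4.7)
The plan is to first establish the inclusion $\mathcal{V}\subseteq\mathrm{Ker}\hskip2.3ptd\pi^\pm$, then prove (a) by a direct parametrization of $\Lambda_0=\exp_y(\{z\xi:0<|z|<\delta\})$, and finally deduce (b)--(c) by reducing to (a) at the foot-point of the normal geodesic through any point of the leaf. The inclusion $v\in\mathrm{Ker}\hskip2.3ptd\pi^\pm$ is immediate from (\ref{vtg}): $v$ is tangent to each geodesic normal to $\sd^\pm$, and $\pi^\pm$ is by construction constant along it. For $u=J\nh v$: since $v$ vanishes exactly on $\sd^+\nnh\cup\sd^-$ (Remark~\ref{ascdt}(iv)), so does $u$, and its flow $\phi_s^u$ fixes each $\sd^\pm$ pointwise. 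Being an isometry fixing $y=\pi^\pm(x)$, $\phi_s^u$ sends the unique normal geodesic from $y$ through $x$ to a normal geodesic from $y$ through $\phi_s^u(x)$; uniqueness of the latter forces $\pi^\pm(\phi_s^u(x))=y$, so $u\in\mathrm{Ker}\hskip2.3ptd\pi^\pm$.

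For (a), consider the smooth map $F:D_\delta^*\to\mf'$, $F(z)=\exp_y(z\xi)$, where $D_\delta^*=\{z\in\bbC:0<|z|<\delta\}$. Its radial derivative at each point is the velocity of a geodesic normal to $\sd^\pm$, hence proportional to $v(F(z))$ by (\ref{vtg}). For the angular direction, the key tool is the $\exp_y$-equivariance at the fixed point $y$: by Remark~\ref{prcrm}, $\dv|_y$ acts on the normal space $N_y\sd^\pm$ as $\mp\hh a\cdot\mathrm{Id}$, so $\du=J\dv$ gives $\du|_y=\mp\hh a J$ there, and $d\phi_s^u|_y$ acts on $N_y\sd^\pm$ as $e^{\mp saJ}$, i.e., as multiplication by $e^{\mp sai}$ in complex notation. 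Hence $F(e^{\mp sai}z)=\phi_s^u(F(z))$, and differentiation at $s=0$ identifies the angular derivative of $F$ with a scalar multiple of $u(F(z))$. Thus $F$ is an immersion whose image is tangent to $\mathcal{V}=\mathrm{Span}(v,u)$, giving a $2$-dimensional integral submanifold of the $2$-dimensional distribution $\mathcal{V}$. To see that the image equals a full leaf, note that any leaf through $F(z)$ is contained in the single fiber $(\pi^\pm)^{-1}(y)$, identified via $\exp_y$ with a punctured open disk in $N_y\sd^\pm$; the pulled-back leaf is a connected $2$-dimensional integral submanifold of this punctured disk, invariant under both the radial flow of $v$ and the $U(1)$-flow of $u$, forcing it to be the entire punctured complex line $\{z\xi:0<|z|<\delta\}$.

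For (b)--(c), given any leaf $\Lambda\subseteq\mf'$ and $x\in\Lambda$, set $y_+=\pi^+(x)$ and use (\ref{img}) to write $x=\exp_{y_+}(r\xi)$ for some $r\in(0,\delta)$ and unit $\xi\in N_{y_+}\sd^+$; the leaf produced by (a) at $(y_+,\xi)$ passes through $x$, hence coincides with $\Lambda$, giving the parametrization in (c) and $\pi^+(\Lambda)=\{y_+\}$. The symmetric argument provides $y_-\in\sd^-$ with $\pi^-(\Lambda)=\{y_-\}$. The smooth map $z\mapsto\exp_{y_+}(z\xi)$ extends to $\bar D_\delta$; on the boundary circle $|z|=\delta$ the same equivariance yields $\exp_{y_+}(\delta e^{i\theta}\xi)=\phi_{-\theta/a}^u(\exp_{y_+}(\delta\xi))$, and since $\exp_{y_+}(\delta\xi)\in\sd^-$ is fixed by $\phi^u$, the entire boundary collapses to a single point, necessarily equal to $y_-$ because $\pi^-$ is constant on $\Lambda$. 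This proves $\bar\Lambda=\Lambda\cup\{y_+,y_-\}$ and (\ref{pex}) at once.

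The main obstacle is showing $\bar\Lambda$ is a totally geodesic complex submanifold biholomorphic to $\bbCP^1$. That $\Lambda$ is a complex submanifold of complex dimension $1$ follows from $J$-invariance of $\mathcal{V}$ (since $Ju=-v$), and total geodesy from Lemma~\ref{dvgww}(a); both are closed conditions, hence extend to $\bar\Lambda$. The delicate point is smoothness of $\bar\Lambda$ as a complex submanifold at $y_\pm$, since the parametrization $F$ is not holomorphic in general (its radial speed is determined by $Q^{1/2}$ rather than by a linear complex scaling). I plan to argue via a Remmert\hyp Stein\hyp type extension theorem: $\Lambda$ is a $1$-dimensional complex analytic subset of $\mf\smallsetminus\{y_+,y_-\}$ whose local area near each $y_\pm$ is finite (as $\bar\Lambda$ is compact), hence its closure is analytic across these points. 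The tangent cone at $y_+$ is the single complex line $\bbC\xi\subseteq T_{y_+}\mf$ (visible from the linear part of $F$), so the analytic germ is smooth, and likewise at $y_-$. Being topologically a closed disk with boundary collapsed, $\bar\Lambda$ is $S^2$, so the resulting compact smooth complex curve is of genus zero, hence biholomorphic to $\bbCP^1$.
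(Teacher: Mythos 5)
Your route is, for most of the lemma, the same as the paper's: you get $\mathcal{V}\subseteq\mathrm{Ker}\hskip2.3ptd\pi^\pm$ from (\ref{vtg}) together with the fact that the flow of $u$ is a family of isometries fixing $\sd^\pm$ pointwise; you obtain the leaf in (a) from the radial field $v$ combined with the equivariance $\,\exp_y\nh(e^{\mp sai}z\xi)=$ (flow of $u$ at time $s$ applied to $\exp_y\nh(z\xi)$), which rests on Remark~\ref{prcrm} exactly as in the paper; and you collapse the boundary circle $|z|=\delta$ to the single point $y_\mp\w$ by the same equivariance. The one place you diverge is the smoothness of $\,\overline{\lf}\,$ at $\,y_\pm\w$, and there your argument contains a genuine error: the inference ``the tangent cone at $y_+\w$ is the single complex line $\bbC\xi$, so the analytic germ is smooth'' is false for analytic curves. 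A cuspidal germ such as $\{(z^2\nh,z^3):z\in\bbC\}$ has set-theoretic tangent cone equal to a single complex line and is nevertheless singular; to deduce smoothness from the tangent cone one needs the cone to be a single line \emph{with multiplicity one} (Lelong number $1$ at the point), and this is not supplied by ``the linear part of $F$,'' since your parametrization $F(z)=\exp_y\nh(z\xi)$ is merely smooth, not holomorphic. (The preceding links in your chain -- that $\lf$ is a closed one-di\-men\-sion\-al analytic subset of $\mf\smallsetminus\{y_+\w,y_-\w\}$ and that its closure is analytic across the two points, by Rem\-mert--Stein, the exceptional set being $0$-di\-men\-sion\-al -- are fine; only the last link breaks.)

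The repair is immediate and makes the analytic-continuation detour unnecessary; it is also precisely what the paper does. By your own parts (a) and (c), near $y_\pm\w$ the closure $\,\overline{\lf}\,$ coincides with $\,\exp_{y_\pm\w}\nh(\{z\xi:|z|<\epsilon\})$, the image of a \emph{full}, unpunctured disk in the complex line $\,\bbC\xi\,$ of the normal space of $\,\sd^\pm$ at $\,y_\pm\w$ under the local dif\-feo\-mor\-phism $\,\exp_{y_\pm\w}$; this is an embedded $\,C^\infty$ real surface through $\,y_\pm\w$ whose tangent planes are $\,J$-in\-var\-i\-ant (equal to $\,\mathcal{V}\,$ off $\,y_\pm\w$ and, by continuity, to $\,\bbC\xi\,$ at $\,y_\pm\w$), hence a complex sub\-man\-i\-fold; vanishing of the second fundamental form and the identification with $\,\bbCP^1$ via genus then extend and conclude exactly as you indicate. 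With that substitution your proof is complete and agrees in substance with the paper's.
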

\begin{proof}Let us fix $\,x,y\,$ and $\,\ic\hskip-3.3pt_x\w$ as in 
(\ref{img}). Due to Remark~\ref{ascdt}(iv), the Kil\-ling field 
$\,u=J\nh v\,$ vanishes along $\,\sd^\pm\nh$, so that its 
infinitesimal flow at $\,y\,$ preserves both $\,\tyb^\pm$ and 
$\,\nr\hskip-2.4pt_y\w\sd^\pm\nh$. The images of $\,\ic\hskip-3.3pt_x\w$ under 
the flow transformations of $\,u\,$ thus are geodesic segments normal to 
$\,\sd^\pm$ emanating from $\,y\,$ and, as a consequence of (\ref{img}), 
$\,\pi^\pm$ maps them all onto $\,\{y\}$. In other words, the union of such 
segments, with the point $\,y$ removed, is simultaneously a subset of the 
$\,\pi^\pm\nh$-pre\-im\-age of $\,y\,$ as well as -- according to (\ref{vtg}), 
(\ref{gvv}) and parts (ii), (iv) of Remark~\ref{ascdt} -- a surface embedded 
in $\,\mf'\nnh$. This surface is, due to its very definition and (\ref{vtg}), 
tangent to both $\,u\,$ and $\,v\,$ which, in view of (\ref{nex}), yields (a); 
note that, by (\ref{loc}.a) and Remark~\ref{prcrm}, the orbit of $\,\xi\,$ 
under the flow of $\,\du=\nabla\nh u\,$ at $\,y\,$ consists of all unit 
complex multiples of $\,\xi$.

What we just observed about the orbit of $\,\xi\,$ clearly ensures smoothness 
of the closure of the leaf at $\,y$. By (\ref{nex}) and (\ref{vtg}), the 
union of $\,\ic\hskip-3.3pt_x\w$ and its analog for the same point $\,x\,$ 
and the {\medit other\/} projection $\,\pi^\mp$ is a length $\,\delta\,$ 
geodesic segment joining $\,y\in\sd^\pm$ to its other endpoint 
$\,y_\mp\w\in\sd^\mp\nh$. The above discussion of the images of such a 
segment under the flow of $\,u\,$ applies equally well to $\,y_\mp\w$, so that 
(b) -- (c) follow from Lemma~\ref{dvgww}(a) and the fact 
that $\,x\in\mf'$ was arbitrary.
\end{proof}
\begin{remark}\label{ppmgc}Let $\,(\mf\nh,g,\vt)\,$ be a Grass\-mann\-i\-an or CP 
triple, constructed as in Section~\ref{eg} from some data (\ref{dta}.i) or 
(\ref{dta}.ii). We use the notation of (\ref{sym}) and (\ref{dbp}).
\begin{enumerate}
  \def\theenumi{{\rm\alph{enumi}}}
\item[{\rm(a)}] We already know that the critical manifolds $\,\sd^\pm$ of 
$\,\vt\,$ are given by (\ref{spm}).
\item[{\rm(b)}] In the case of (\ref{spm}.c) (or, (\ref{spm}.b) and 
(\ref{spm}.d)), $\,\pi^\pm$ acts on $\,\ws\,$ as the orthogonal projection 
into $\,\ls\,$ (or, respectively, into $\,\ls\hskip-2.5pt^\perp$)
\item[{\rm(c)}] When $\,\sd^\pm$ has the form (\ref{spm}.a), $\,\pi^\pm$ 
sends $\,\ws\,$ to $\,\ls\oplus(\ws\nh\cap\ls\hskip-2.5pt^\perp)$.
\item[{\rm(d)}] The leaf of $\,\mathcal{V}\,$ through any $\,\ws\in\mf'$ 
consists 
\begin{enumerate}
  \def\theenumi{{\rm\alph{enumi}}}
\item[{\rm(d1)}] for (\ref{dta}.i) -- of all 
$\,\ls\hskip-2.5pt'\oplus\ws'\nh$, where 
$\,\ws'\nh=\ws\nh\cap\ls\hskip-2.5pt^\perp$ and $\,\ls\hskip-2.5pt'$ is any 
line in the plane $\,\ls\oplus(\ws'\nh\cap\ws^\perp)\,$ other than the lines 
$\,\ls\,$ and $\,\ws'\nh\cap\ws^\perp$ themselves,
\item[{\rm(d2)}] for (\ref{dta}.ii) -- of all lines other than $\,\ws'$ and 
$\,\ws''$ in the plane $\,\ws'\nh\oplus\ws''\nnh$, where $\,\ws'$ and 
$\,\ws''$ denote the orthogonal projections of $\,\ws\,$ into $\,\ls\,$ 
and $\,\ls\hskip-2.5pt^\perp\nnh$.
\end{enumerate}
\end{enumerate}
Namely, in both cases, let $\,G'$ be the complex Lie group of all 
com\-plex-lin\-e\-ar au\-to\-mor\-phisms of $\,\vs\,$ preserving both 
$\,\ls\,$ and $\,\ls\hskip-2.5pt^\perp\nnh$. The obvious action of $\,G'$ on 
the Stie\-fel manifold $\,\mathrm{St}\hn_k\w\nnh\tw\,$ (see 
Remark~\ref{grass}) descends to a hol\-o\-mor\-phic action on 
$\,\mathrm{Gr}\nh_k\w\nnh\vs\nh$, which becomes one on 
$\,\mathrm{P}\vs=\mathrm{G}\hn_1\w\nnh\vs\,$ when $\,k=1$. The elements of 
the center of $\,G'\nh$, restricted to both sub\-spaces $\,\ls\,$ and 
$\,\ls\hskip-2.5pt^\perp\nnh$, are complex multiples of $\,\mathrm{Id}$, and 
the action of the center on $\,\mathrm{Gr}\nh_k\w\nnh\vs\,$ includes 
the circle subgroup $\,S^1$ of $\,G\,$ generated by the Kil\-ling field $\,u$, 
mentioned in the lines following (\ref{dta}). Hol\-o\-mor\-phic\-i\-ty of the 
action implies that the flow of the gradient $\,v=\navp$, related to $\,u\,$ 
via $\,u=J\nh v$, also consists of transformations of 
$\,\mathrm{Gr}\nh_k\w\nnh\vs\,$ arising from the action of the center, and -- 
for dimensional reasons -- the orbits of the center coincide with the leaves 
of $\,\mathcal{V}=\mathrm{Span}\hs(\hn v,u)$. This easily gives (d). Now (b) 
-- (c) follow: by Lemma~\ref{vsbkr}(b), the two 
$\,\pi^\mp\hskip-2pt$-im\-a\-ges of any leaf of $\,\mathcal{V}\,$ are the two 
points that, added to the leaf, yield its closure.
\end{remark}
As $\,\mathcal{V}\hs\subseteq\,\mathrm{Ker}\hskip2.3ptd\pi^\pm$ 
(Lemma~\ref{vsbkr}), we may define vector sub\-bun\-dles 
$\,\mathcal{H}^\pm\nnh$ of $\,T\nnh\mf'$ by
\begin{equation}\label{hpm}
\mathcal{H}^\pm\hs
=\,\,\mathcal{V}^\perp\nh\cap\,\mathrm{Ker}\hskip2.3ptd\pi^\mp,\hskip22pt
\mathrm{so\ that}\hskip12pt\mathrm{Ker}\hskip2.3ptd\pi^\pm\hs
=\,\hs\mathcal{V}\hs\oplus\mathcal{H}^\mp.
\end{equation}
\begin{theorem}\label{dcomp}{\medit 
Given a ge\-o\-des\-ic-gra\-di\-ent K\"ah\-ler triple\/ $\,(\mf\nh,g,\vt)\,$ 
with compact\/ $\,\mf\nh$, 
$\,v,\mf'\nh,Q,\mathcal{V}\nh,\dv,\sd^\pm\nh,\tpm,\pi^\pm\nh,\mathcal{H}^\pm$ 
be defined as in\/ {\rm(\ref{sym})}, {\rm(\ref{tmm})} and\/ {\rm(\ref{hpm})}. 
Then the bundle en\-do\-mor\-phism\/ $\,2(\vt-\hn\tpm)\dv\hs-\hs Q\,$ of\/ 
$\,T\nnh\mf\nh$, restricted to\/ $\,\mathcal{V}^\perp\nh$, has constant rank 
on\/ $\,\mf'\nnh$, while
\begin{equation}\label{hvk}
\mathcal{H}^\mp\,
=\,\,\mathcal{V}^\perp\nh\cap\hs\mathrm{Ker}\hskip2pt
[2(\vt-\hn\tpm)\dv\hs-\hs Q]\hh,
\end{equation}
and some sub\-bun\-dle\/ $\,\mathcal{H}\,$ of\/ $\,T\nnh\mf'$ yields an\/ 
$\,\dv$-in\-var\-i\-ant complex orthogonal decomposition
\begin{equation}\label{tme}
T\nnh\mf'\hs=\,\,\mathcal{V}\hskip1.2pt\oplus\hs\mathcal{H}^+\nnh
\oplus\hs\mathcal{H}^-\nnh\oplus\hs\mathcal{H}\hh.
\end{equation}
Furthermore, for any\/ $\,\ic\nh\subseteq\mf'$ chosen as at the beginning of 
Section\/~{\rm\ref{hj}},the closure of\/ $\,\ic$ in\/ $\,\mf\,$ admits a 
unit-speed\/ $\,C^\infty\nnh$ parametrization\/ 
$\,[\hs t_-\w,t_+\w]\ni t\mapsto x(t)\,$ which, restricted 
to\/ $\,(t_-\w,t_+\w)$, is a parametrization of\/ $\,\ic\hs$ satisfying\/ 
{\rm(\ref{dxe})} along with the following conditions.
\begin{enumerate}
  \def\theenumi{{\rm\alph{enumi}}}
\item[{\rm(a)}] The endpoint\/ $\,y_\pm\w\nh=x(t_\pm\w)\,$ lies in\/ 
$\,\sd^\pm\nh$, and\/ $\,\dot x(t_\pm\w)\,$ is normal to\/ $\,\sd^\pm$ at\/ 
$\,y_\pm\w$.
\item[{\rm(b)}] Every solution\/ 
$\,(t_-\w,t_+\w)\ni t\mapsto w(t)\in\mathcal{V}^\perp_{\hskip-1ptx(t)}$ of\/ 
{\rm(\ref{nxw})} along\/ $\,\ic\hs$ has a\/ $\,C^\infty\nnh$ extension to\/ 
$\,[\hs t_-\w,t_+\w]\,$ such that\/ $\,d\pi^\pm_{x(t)}[w(t)]=w(t_\pm\w)\,$ 
whenever\/ $\,t\in(t_-\w,t_+\w)$.
\item[{\rm(c)}] The bundle projection\/ 
$\,\pi^\pm\nnh:\mf\nnh\smallsetminus\nh\sd^\mp\nh\to\sd^\pm$ is 
hol\-o\-mor\-phic.
\item[{\rm(d)}] If\/ $\,w\in\mathcal{W}[\tpm]$, cf.\ {\rm(\ref{wze})}, then, 
in\/ {\rm(b)}, 
$\,w(t_\pm\w)=0$, and\/ $\,[\nabla\hskip-3pt_{\dot x}\w w](t_\pm\w)\,$ 
is normal to\/ $\,\sd^\pm$ at\/ $\,y_\pm\w\nh=x(t_\pm\w)\,$ as well as 
orthogonal to\/ $\,\dot x(t_\pm\w)\,$ and\/ $\,J\dot x(t_\pm\w)$.
\item[{\rm(e)}] If\/ $\,w\,$ lies in the direct sum of spaces\/ 
$\,\mathcal{W}[\zx]\ne\{0\}\,$ with\/ $\,\zx\ne\tpm\hh$, for a fixed sign\/ 
$\,\pm\hs$, then\/ $\,w(t_\pm\w)\,$ is tangent to\/ $\,\sd^\pm$ at\/ 
$\,y_\pm\w\nh=x(t_\pm\w)$, and\/ 
$\,[\nabla\hskip-3pt_{\dot x}\w w](t_\pm\w)=0$.
\item[{\rm(f)}] Whenever\/ $\,t\in(t_-\w,t_+\w)\,$ and\/ $\,x=x(t)$, the 
assignment\/ 
$\,w(t)\mapsto(\hn w(t_\pm\w),\,[\nabla\hskip-3pt_{\dot x}\w w](t_\pm\w))$, with\/ 
$\,w\,$ as in\/ {\rm(b)}, is a\/ $\,\bbC$-lin\-e\-ar isomorphism\/ 
$\,\mathcal{V}^\perp_{\hskip-1ptx}\to\tyb^\pm\hskip-2.3pt\times\nh\nr'_y\hs$, 
where\/ $\,y=y_\pm\w$ and\/ $\,\nr'_y\,$ denotes the orthogonal complement 
of\/ $\,\mathrm{Span}\hs(\dot x(t_\pm\w),\hs J\dot x(t_\pm\w))\,$ in\/ 
$\,\nr\hskip-2.4pt_y\w\sd^\pm\nh$. At the same time, $\,w(t)\,$ then equals 
the image, under the differential 
of the normal exponential mapping\/ 
$\,\mathrm{Exp}^\perp\nnh:\nr\hskip-2.3pt\sd^\pm\nnh\to\mf\,$ at\/ 
$\,(y,\xi)\in\nr\hskip-2.3pt\sd^\pm$ given by\/ $\,y=x(t_\pm\w)\,$ and\/ 
$\,\xi=(t-t_\pm\w)\hh\dot x(t_\pm\w)$, of the vector tangent to\/ 
$\,\nr\hskip-2.3pt\sd^\pm$ at\/ $\,(y,\xi)\,$ which equals the sum of the 
vertical vector\/ 
$\,\eta=(t-t_\pm\w)[\nabla\hskip-3pt_{\dot x}\w w](t_\pm\w)\,$ and 
the\/ $\,\mathrm{D}\hn$-\hn hor\-i\-zon\-tal lift of\/ $\,w(t_\pm\w)\,$ to\/ 
$\,(y,\xi)$, for the normal connection\/ $\,\hs\mathrm{D}\,$ in\/ 
$\,\nr\hskip-2.3pt\sd^\pm\nnh$. Similarly, $\,u_{x(t)}\w$, for 
$\,u=J\nh v$, is the image, under the differential of\/ 
$\,\mathrm{Exp}^\perp$ at\/ $\,(y,\xi)$, of the vertical vector\/ 
$\,\eta=\mp\hh ai\hh\xi$.
\item[{\rm(g)}] For any\/ $\,w,w\hh'\nh\in\mathcal{W}\nh$, the function\/ 
$\,Q^{-\nnh1}g(\dv\nh w,w\hh'\hh)\,$ is constant on\/ $\,\ic\hs$ and the 
restriction of\/ $\,g(\hn w,w\hh'\hh)\,$ to\/ $\,\ic\hs$ is an af\-fine 
function of\/ $\,\vt:\ic\nh\to\bbR\,$ with the derivative\/ 
$\,d\hs[\hs g(\hn w,w\hh'\hh)]/\nh d\vt=2\hh Q^{-\nnh1}g(\dv\nh w,w\hh'\hh)$.
\item[{\rm(h)}] Explicitly, in\/ {\rm(g)}, with\/ $\,a\,$ as in 
Remark\/~{\rm\ref{ascdt}(i)}, either sign\/ $\,\pm\hs$, and\/ 
$\,y=y_\pm\w\nh=x(t_\pm\w)$,
\begin{enumerate}
  \def\theenumi{{\rm\alph{enumi}}}
\item[{\rm(h1)}] $g(\hn w,w\hh'\hh)
=(\tp\hskip-2.2pt-\nh\tm)^{-\nnh1}|\hh\vt-\hn\tmp|\hs g_y\w(\hn w_\pm\w,w_\pm'\hh)\,$ if\/ 
$\,w\in\mathcal{W}[\tmp]\,$ and\/ $\,w\hh'\nh\in\mathcal{W}$,
\item[{\rm(h2)}] $g(\hn w,w\hh'\hh)=g_y\w(\hn w_\pm\w,w_\pm'\hh)
-a^{-\nnh1}|\hh\vt-\hn\tpm|\,g_y\w(R_y\w(\hn w_\pm\w,J\hskip-2pt_y\w w_\pm'\hh)\hh
\dot x\nh_\pm\w,J\hskip-2pt_y\w\dot x\nh_\pm\w)\,$ if\/ $\,w,w\hh'$ both satisfy the 
assumption of\/ {\rm(e)},
\item[{\rm(h3)}] $g(\hn w,w\hh'\hh)
=2\hh a^{-\nnh1}|\hh\vt-\hn\tpm|\,g_y\w([\nabla\hskip-3pt_{\dot x}\w w]_\pm\w,
[\nabla\hskip-3pt_{\dot x}\w w\hh'\hh]_\pm\w)\,$ if\/ 
$\,w,w\hh'\nh\in\mathcal{W}[\tpm]$,
\end{enumerate}
\end{enumerate}
where the subscript\/ $\,\pm\,$ next to\/ 
$\,w,w\hh'\nh,\nabla\hskip-3pt_{\dot x}\w w,\nabla\hskip-3pt_{\dot x}\w w\hh'$ 
or\/ $\,\dot x\,$ represents their evaluation at\/ $\,t_\pm\w$.
}
\end{theorem}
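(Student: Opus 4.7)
My plan is to deduce everything from the Jacobi-field analysis of Theorem~\ref{jacob} combined with the normal-exponential description in Remark~\ref{nexpm}. By (\ref{nex}), (\ref{vtg}) and Remark~\ref{ascdt}(iv), the maximal integral curve $\,\ic\,$ of $\,v/|v|\,$ extends uniquely to a unit-speed geodesic $\,x\colon[\hs t_-\w,t_+\w]\to\mf\,$ with $\,x(t_\pm)=y_\pm\in\sd^\pm\,$ and $\,\dot x(t_\pm)\,$ normal to $\,\sd^\pm$, whose length equals the $\,\delta\,$ of (\ref{int}); this is~(a). Every $\,w\in\mathcal{W}\,$ is a Jacobi field along $\,\ic\,$ by Theorem~\ref{jacob}(ii), and hence extends smoothly across the endpoints. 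Writing $\,x(t)=\mathrm{Exp}^\perp(y_\pm,\,(t-t_\pm)\dot x(t_\pm))\,$ and applying (\ref{dxp}) with $\,r=1\,$ to the reparametrization $\,s\mapsto t_\pm+s(t-t_\pm)\,$ of the corresponding normal geodesic, one identifies $\,\hat w(0)=w(t_\pm)\,$ and $\,\nabla\hat w(0)=(t-t_\pm)\nabla\hskip-3pt_{\dot x}\w w(t_\pm)$. Since $\,\pi^\pm\nh\circ\mathrm{Exp}^\perp\,$ is the normal-bundle projection (Remark~\ref{ptnrs}), this gives both $\,d\pi^\pm_{x(t)}[w(t)]=w(t_\pm)\,$ and the explicit normal-exponential formula of~(f); applying the same device to $\,u=J\nh v\,$ with $\,\nabla\hskip-3pt_{\dot x}\w u(t_\pm)=\mp aJ\dot x(t_\pm)\,$ (computed from $\,\nabla\hskip-3pt_{\dot x}\w u=Q^{-\nnh1\nh/2}\psi u\,$ and (\ref{dqt})) produces the $\,\eta=\mp ai\hh\xi\,$ description.

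The endpoint behaviour of each eigenspace $\,\mathcal{W}[\zx]\,$ is extracted from (\ref{nxw}) rewritten as $\,\nabla\hskip-3pt_{\dot x}\w w=[Q^{\hs1\nh/2}\nh/(2(\vt-\zx))]\hh w\,$, combined with $\,Q\sim\mp2a(\vt-\tpm)\,$ from (\ref{dqt}). For $\,\zx\ne\tpm\,$ the coefficient tends to zero at $\,t_\pm$, giving $\,\nabla\hskip-3pt_{\dot x}\w w(t_\pm)=0$; the relation $\,\dv\hh w=\lambda_\zx\w w\,$ with $\,\lambda_\zx\w(t_\pm)=0\,$ then forces $\,w(t_\pm)\in\mathrm{Ker}\hskip2pt\dv_{y_\pm}\nh=T_{y_\pm}\nh\sd^\pm\,$ by Remark~\ref{prcrm}, which is~(e). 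For $\,\zx=\tpm\,$ the coefficient blows up, forcing $\,w(t_\pm)=0$; differentiating $\,\dv\hh w=\lambda_\tpm w\,$ at $\,t_\pm\,$ gives $\,\dv\hh\nabla\hskip-3pt_{\dot x}\w w(t_\pm)=\mp a\nabla\hskip-3pt_{\dot x}\w w(t_\pm)$, placing $\,\nabla\hskip-3pt_{\dot x}\w w(t_\pm)\,$ in $\,\nr\hskip-2.4pt_{y_\pm}\w\sd^\pm\,$ by Remark~\ref{prcrm}, while orthogonality to $\,\dot x(t_\pm)\,$ and $\,J\dot x(t_\pm)\,$ follows from $\,g(w,v)=g(w,u)=0\,$ by differentiation and passage to the limit; this proves~(d). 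Jacobi-field uniqueness, together with the complex-dimension count $\,d_\pm+(m-1-d_\pm)=m-1$, then makes $\,w(t)\mapsto(w(t_\pm),\nabla\hskip-3pt_{\dot x}\w w(t_\pm))\,$ the $\,\bbC$-linear isomorphism of~(f); $\,\bbC$-linearity uses $\,J\dv=\dv J\,$ from (\ref{loc}.a), which ensures $\,Jw\in\mathcal{W}\,$ for every $\,w\in\mathcal{W}$.

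The algebraic claims now follow quickly. For $\,w\in\mathcal{W}[\zx]\,$ one has $\,[2(\vt-\tpm)\dv-Q]\hs w=[Q(\zx-\tpm)/(\vt-\zx)]\hs w$, vanishing precisely when $\,\zx=\tpm$. Hence $\,\mathcal{V}^\perp\nh\cap\mathrm{Ker}\hskip2pt[2(\vt-\tpm)\dv-Q]\,$ is the point-wise span of $\,\mathcal{W}[\tpm]\,$, which by the first paragraph equals $\,\mathrm{Ker}\hskip2.3ptd\pi^\pm\nh\cap\mathcal{V}^\perp\nh=\mathcal{H}^\mp\nh$; this is (\ref{hvk}), and constancy of $\,\dim\mathcal{W}[\tpm]\,$ on $\,\mf'\,$ (by connectedness) gives the constant-rank statement. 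Taking $\,\mathcal{H}\,$ to be the point-wise span of all remaining $\,\mathcal{W}[\zx]\,$, equivalently the $\,\dv\hs$- and $\,J$-invariant orthogonal complement of $\,\mathcal{V}\oplus\mathcal{H}^+\nh\oplus\mathcal{H}^-\nh$, yields (\ref{tme}). For~(c), the real-holomorphic $\,v\,$ generates, together with its imaginary-time partner (the periodic Killing flow of $\,u=J\nh v$), a holomorphic $\,\bbC^*\nh$-action on $\,\mf\,$ whose fixed locus is $\,\sd^+\nnh\cup\sd^-$; its gradient-flow limit-point projection onto $\,\sd^\pm\,$ coincides with $\,\pi^\pm\,$, which is therefore holomorphic. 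Finally, (g) is immediate from Lemma~\ref{dvgww}(c)--(d), and (h1)--(h3) come from integrating (g) along $\,\ic\,$ using the endpoint data from (d) and~(e): Theorem~\ref{jacob}(vii) supplies the curvature term in (h2), while a second-order Jacobi-field Taylor expansion at $\,t_\pm\,$ supplies the coefficient in (h3).

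The chief technical hurdle is the endpoint analysis in the second paragraph — one must match the vanishing/blow-up dichotomy of $\,Q^{\hs1\nh/2}\nh/(\vt-\zx)\,$ with the Hessian eigenspace structure on $\,\sd^\pm\,$ from Remark~\ref{prcrm}, and verify the requisite smooth extensibility of Jacobi fields through the zero locus of $\,v$. Once this coordination is in place, the remaining claims reduce either to elementary computations or to direct invocations of tools from earlier sections.
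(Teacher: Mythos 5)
Your overall strategy is the same as the paper's: parametrize the closure of $\,\ic\,$ via (\ref{dxe}), read off the endpoint behaviour of each $\,\mathcal{W}[\zx]\,$ from the asymptotics of $\,Q/[2(\vt-\zx)]\,$ together with Remark~\ref{prcrm}, build the isomorphism in (f) from (\ref{dxp}), and then obtain (\ref{hvk}), (\ref{tme}) and (g)--(h) from Theorem~\ref{jacob} and the affine dependence of $\,g(\hn w,w\hh'\hh)\,$ on $\,\vt$. The compressed parts do check out: pairing Theorem~\ref{jacob}(vii) with the eigenvalue relation $\,\dv\nh w=\lambda_\zx\w w\,$ and letting $\,t\to t_\pm\w$ reproduces, after the symmetries (\ref{rcm}), precisely the derivative values listed in Remark~\ref{bthex}, and the quadratic vanishing of $\,|\hh\vt-\hn\tpm|\,$ in $\,t-t_\pm\w$ yields the coefficient in (h3); the paper reaches the same constants by l'H\^opital instead. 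One caveat: ``constancy of $\,\dim\mathcal{W}[\tpm]\,$ by connectedness'' is not by itself an argument (local constancy is exactly what is at issue), but your identification of the pointwise span of $\,\mathcal{W}[\tpm]\,$ with $\,\mathcal{H}^\mp\nh$, which has constant rank by its very definition (\ref{hpm}), does deliver the constant-rank claim, so this is only a wording defect.

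The one genuine gap is your treatment of (c). You assert that the limit-point projection of the holomorphic $\,\bbC^*$ action generated by $\,v\,$ and $\,u\,$ coincides with $\,\pi^\pm$ ``and is therefore holomorphic,'' but holomorphicity of the time-infinity limit map of a gradient flow is not automatic: it is the content of a Bia\l ynicki-Birula type decomposition theorem, which you neither prove nor cite, and even the locally uniform convergence needed to pass holomorphicity to the limit would have to be established here. The paper avoids this entirely, and you already possess both ingredients of its argument: by (b) the differential of $\,\pi^\pm$ on $\,\mathcal{V}^\perp_{\hskip-1ptx}\,$ is the evaluation $\,w(t)\mapsto w(t_\pm\w)$, which commutes with $\,J\,$ because $\,\mathcal{W}\,$ is a complex vector space with $\,w\mapsto J\nh w\,$ as multiplication by $\,i\,$ (Theorem~\ref{jacob}(iv)), while $\,d\pi^\pm$ annihilates the $\,J$-invariant $\,\mathcal{V}\nh$; hence $\,d\pi^\pm$ is $\,\bbC$-linear on $\,\mf'\nh$, so $\,\pi^\pm$ is holomorphic there, and then on all of $\,\mf\nnh\smallsetminus\nh\sd^\mp$ since the continuous tensor $\,d\pi^\pm\nnh\circ J-J\circ d\pi^\pm$ vanishes on a dense open set. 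You should substitute this for the $\,\bbC^*$ argument; with that change the proposal matches the paper's proof in both substance and detail.
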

\begin{remark}\label{bthex}Since $\,|\hh\vt-\hn\tpm|=\mp\hh(\vt-\hn\tpm)\,$ 
and $\,\pm\hh(\tp\hskip-2.2pt-\nh\tm)=\tpm\nh-\tmp$, applying $\,d/\nh d\vt\,$ to the 
right-hand side in (h1), or (h2), or (h3), we get the three values
\[
(\tpm\nh-\tmp)^{-\nnh1}\nh g_y\w(\hn w_\pm\w,\nh w_\pm'\hh),\hskip3.6pt
\pm\hh a^{-\nnh1}\nh g_y\w(R_y\w(\hn w_\pm\w,\nh J\hskip-2pt_y\w w_\pm'\hh)\hh
\dot x\nh_\pm\w,\nh J\hskip-2pt_y\w\dot x\nh_\pm\w)\hh,\hskip3.5pt
\mp2\hh a^{-\nnh1}\nh g_y\w([\nabla\hskip-3pt_{\dot x}\w w]_\pm\w,
\nh[\nabla\hskip-3pt_{\dot x}\w w\hh'\hh]_\pm\w).
\]
As a consequence of parts (g) -- (h) of Theorem~\ref{dcomp}, this triple 
provides the three expressions for 
$\,2\hh Q^{-\nnh1}g(\dv\nh w,w\hh'\hh)\,$ in the cases (h1), (h2) and (h3), 
respectively.

Note that the three different formulae for $\,g(\hn w,w\hh'\hh)\,$ in (h1), 
(h2) and -- with the reversed sign -- in (h3), are all simultaneously valid 
when $\,w,w\hh'\nh\in\mathcal{W}[\tmp]$.
\end{remark}
\begin{remark}\label{inequ}Under the assumptions of Theorem~\ref{dcomp},
\begin{enumerate}
  \def\theenumi{{\rm\roman{enumi}}}
\item[{\rm(i)}] the relation $\,\xi=(t-t_\pm\w)\hh\dot x(t_\pm\w)\,$ in (f) 
clearly gives $\,\dot x\nh_\pm\w=\mp\hs\xi/\hn|\hh\xi|\,$ in (h2),
\item[{\rm(ii)}] by (d) -- (f), the images under the differential of 
$\,\mathrm{Exp}^\perp$ of vertical (or, horizontal) vectors 
tangent to $\,\nr\hskip-2.3pt\sd^\pm$ at the point $\,(y,\xi)\,$ appearing in 
(f) have the form
\begin{enumerate}
  \def\theenumi{{\rm\roman{enumi}}}
\item[{\rm($*$)}] $w(t)\,$ for $\,w\,$ satisfying the hypothesis of (d) (or, 
respectively, of (e)),
\end{enumerate}
\item[{\rm(iii)}] the differential of $\,\pi^{\hs\pm}$ at any $\,x\in\mf'$ 
maps the summands $\,\mathcal{H}_x^\pm$ and $\,\mathcal{H}_x\w$ in (\ref{tme}) 
iso\-mor\-phic\-al\-ly onto the images 
$\,d\pi\nh_x^{\hs\pm}\nh(\mathcal{H}_x^\pm\nh)$ and 
$\,d\pi\nh_x^{\hs\pm}\nh(\mathcal{H}_x\w)$, orthogonal to each other in 
$\,\tyb^\pm$ for $\,y=\pi^\pm\nh(x)$,
\item[{\rm(iv)}] one has $\,(\tp\hskip-2.2pt-\nh\tm)\hs g_x\w(\hn w,w\hh'\hh)
=|\hh\vt(x)-\hn\tmp|\hs 
g_y\w(d\pi\nh_x^{\hs\pm}w,d\pi\nh_x^{\hs\pm}w\hh'\hh)\,$ whenever 
$\,w\in\mathcal{H}_x^\pm$ and $\,w\hh'\nh\in\mathcal{V}^\perp_{\hskip-1ptx}$ 
at any $\,x\in\mf'\nh$, while $\,y=\pi^\pm\nh(x)$,
\item[{\rm(v)}] (\ref{tmm}) and (a) imply the inequality of 
Theorem~\ref{jacob}(viii) everywhere in $\,\mf'\nh$.
\end{enumerate}
Only (iii) and (iv) require further explanations. For (iii), 
$\,d\pi\nh_x^{\hs\pm}$ is injective on the space 
$\,\mathcal{H}_x^\pm\nh\oplus\mathcal{H}_x$, orthogonal, by (\ref{hpm}) and 
(\ref{tme}), to its kernel 
$\,\mathcal{V}_{\nh x}\w\oplus\mathcal{H}_x^\mp\nh$. Or\-thog\-o\-nal\-i\-ty 
in (\ref{hpm}) also shows, via (\ref{hvk}), (\ref{wze}.b) and (\ref{loc}.d), 
that vectors in $\,\mathcal{H}_x^\pm$ (or, in $\,\mathcal{H}_x$) have the form 
($*$) with $\,x=x(t)$, cf.\ (f), and the former remain orhogonal to the latter 
as $\,t\,$ varies, leading to (iii) as a consequence of the final clause of 
(b). Assertion (iv) is nothing else than (h1) for $\,w=w(t)\,$ at $\,x=x(t)$, 
cf.\ (f), where $\,y=\pi^\pm\nh(x)\,$ and $\,d\pi\nh_x^{\hs\pm}w=w_\pm\w$ by 
(\ref{img}) and (b).
\end{remark}
\begin{remark}\label{holom}As another immediate consequence of 
Theorem~\ref{dcomp}, the assignment 
$\,x\mapsto d\pi\nh_x^{\hs\pm}\nh(\mathcal{H}_x^\pm\nh)
=d\pi\nh_x^{\hs\pm}\nh(\mathcal{V}_{\nh x}\w\oplus\mathcal{H}_x^\pm\nh)\,$ 
defines a hol\-o\-mor\-phic section of the bundle over $\,\mf'$ arising via 
the pull\-back under $\,\pi^\pm$ from 
$\,\mathrm{Gr}\nh_k\w\hn(T\nnh\sd^\pm\nh)$, for a suitable integer 
$\,k=k_\pm\w$. Here 
$\,\mathrm{Gr}\nh_k\w\hn(T\nnh\sd^\pm\nh)\,$ is the Grass\-mann\-i\-an bundle 
with the fibres $\,\mathrm{Gr}\nh_k\w\nh(\tyb^\pm\nh)$, $\,y\in\sd^\pm$ (cf.\ 
Section~\ref{eg}), hol\-o\-mor\-phic\-i\-ty and the equality 
$\,d\pi\nh_x^{\hs\pm}\nh(\mathcal{H}_x^\pm\nh)
=d\pi\nh_x^{\hs\pm}\nh(\mathcal{V}_{\nh x}\w\oplus\mathcal{H}_x^\pm\nh)\,$ are 
clear from Theorem~\ref{dcomp}(c) (which also implies, due to (\ref{hpm}), 
that $\,\mathcal{V}\hs\oplus\mathcal{H}^\pm$ is a hol\-o\-mor\-phic 
sub\-bun\-dle of $\,T\nnh\mf'$) and (\ref{tme}) (which, combined with 
(\ref{hpm}), ensures constancy of the dimension $\,k=k_\pm\w$ of the spaces 
$\,d\pi\nh_x^{\hs\pm}\nh(\mathcal{H}_x^\pm\nh)$).
\end{remark}

\section{Proof of Theorem~\ref{dcomp}\done}\label{pt}
\setcounter{equation}{0}
We begin by establishing (a) - (f) under the stated assumptions about 
$\,\ic\nh$.

Let $\,(t_-\w,t_+\w)\mapsto x(t)\,$ be a parametrization of $\,\ic\hs$ with 
(\ref{dxe}). As $\,\vt\,$ then is clearly an increasing function of $\,t$, it 
has some limits $\,\htpm$ as $\,t\to t_\pm\w$, finite due to boundedness of 
$\,\vt$. The length of $\,\ic\hs$ obviously equals the integral of 
$\,Q^{-\nnh1\nh/2}$ over $\,(\htm,\htp)\subseteq(\tm,\tp)$, and so it is finite in 
view of (\ref{int}). This implies the existence of limits $\,x(t_\pm\w)\,$ of 
$\,x(t)\,$ as $\,t\to t_\pm\w$. Furthermore, each $\,x(t_\pm\w)\,$ lies in 
$\,\sd^\pm$ since, if one $\,x(t_\pm\w)\,$ did not, Remark~\ref{ascdt}(iv) 
would yield $\,v\ne0\,$ at $\,x(t_\pm\w)$, contradicting maximality of 
$\,\ic\nh$. Thus, $\,[\hs t_-\w,t_+\w]\mapsto x(t)\,$ parametrizes the closure 
of $\,\ic\hs$. Next, $\,\mf\smallsetminus(\sd^+\nnh\cup\sd^-)\,$ is, by 
(\ref{nex}) and (\ref{vtg}), a disjoint union of maximal integral curves of 
$\,v$, each of which has two limit points, one in $\,\sd^-$ and one in 
$\,\sd^+\nh$, and the corresponding limit directions of the curve are normal 
to $\,\sd^-$ and $\,\sd^+\nh$. Since $\,\ic\hs$ is one of these curves, (a) 
follows.

In (b), a $\,C^\infty$ extension to $\,[\hs t_-\w,t_+\w]\,$ must exist as 
$\,w\,$ is a Ja\-co\-bi field; see Theorem~\ref{jacob}(ii). To obtain (d) -- 
(e), we fix $\,w\in\mathcal{W}[\zx]$, so that, from (\ref{lgm}) -- (\ref{wze}),
\begin{equation}\label{swe}
\mathrm{i)}\hskip6pt\dv\nh w\,=\,Qw/[2(\vt-\zx)]\hh,\hskip22pt
\mathrm{ii)}\hskip6pt\nabla\hskip-3pt_{\dot x}\w w\,=\,Q^{\hs1\nh/2}\hs w/[2(\vt-\zx)]
\hh.
\end{equation}
Let $\,y=x(t_\pm\w)\,$ and $\,w_y\w=w(t_\pm\w)$. By 
Remark~\ref{ascdt}(iv), $\,Q=\vt-\hn\tpm=0\,$ on $\,\sd^\pm$ while, in view of 
(a) and (\ref{dqt}), $\,Q/[2(\vt-\hn\tpm)]\,$ evaluated at $\,x(t)\,$ tends to 
$\,\mp a\ne0\,$ as $\,t\to t_\pm\w$. If $\,\zx=\tpm$, (\ref{swe}.ii) 
multiplied by $\,Q^{\hs1\nh/2}$ thus yields $\,w_y\w=0$, and the relation 
$\,\dv\nh w\hh'\nh=Qw\hh'\nh/[2(\vt-\zx)]\,$ for 
$\,w\hh'\nh=\nabla\hskip-3pt_{\dot x}\w w$, obvious from (\ref{swe}), implies 
that $\,[\nabla\hskip-3pt_{\dot x}\w w](t_\pm\w)$ lies in the 
$\,\mp\hh a$-eigen\-space of $\,\dv\nnh_y\w$. When $\,\zx\ne\tpm$, (\ref{swe}.i) 
and (\ref{swe}.ii) give, respectively, $\,\dv\nnh_y\w w_y\w=0\,$ and 
$\,[\nabla\hskip-3pt_{\dot x}\w w](t_\pm\w)=0$. Due to Remark~\ref{prcrm}, 
this  proves (d) and (e): or\-thog\-o\-nal\-i\-ty in (d) follows since $\,w\,$ 
and $\,\nabla\hskip-3pt_{\dot x}\w w\,$ take values in 
$\,\mathcal{V}^\perp\nh$, for $\,\mathcal{V}=\mathrm{Span}\hs(\hn v,u)$ (so 
that $\,g(\hn w,v)=g(\hn w,u)=0$), while $\,\dot x=v/|v|\,$ by (\ref{dxe}), 
and $\,u=J\nh v$.

Furthermore, the assignment in (f) is well-defined, injective, 
com\-plex-lin\-e\-ar and $\,(\tyb^\pm\hskip-2.3pt\times\nh\nr'_y)$-val\-ued 
due to parts (iii), (ii), (iv) of Theorem~\ref{jacob} and, respectively, (d) 
-- (e). The first claim of (f) thus follows since both spaces have the same 
dimension. The second (or, third) one is in turn immediate from (\ref{dxp}) 
applied, at $\,r=1$, to any $\,w\in\mathcal{W}\nh$, cf.\ 
Theorem~\ref{jacob}(ii) (or, to $\,w=u$), with $\,y,\xi,\eta\,$ as in (f), and 
$\,\hat w\,$ defined by $\,\hat w(\hn r\nh)=w(rt+(1-r)\hh t_\pm\w)$. (That 
$\,r\mapsto\hat w(\hn r\nh)\,$ then is a Ja\-cobi field along the geodesic 
$\,r\mapsto x(rt+(1-r)\hh t_\pm\w)\,$ follows from Theorem~\ref{jacob}(ii) or, 
respectively, (\ref{loc}.b) and Remark~\ref{kiljc}, while, in the latter case, 
due to (\ref{loc}.a) along with Remarks~\ref{ascdt}(iv) and~\ref{prcrm}, 
$\,w=u\,$ realizes the initial conditions 
$\,(\hn u,\hs\nabla\hskip-3pt_{dx\hn/\hn dr}\w u)=(0,\mp\hh ai\hh\xi)\,$ at 
$\,r=0$.)

The remaining equality $\,d\pi^\pm_{x(t)}[w(t)]=w(t_\pm\w)\,$ in (b) now 
becomes an obvious consequence of the second part of (f) combined with the 
first line of Remark~\ref{ptnrs}. This proves (b) and, combined with 
Theorem~\ref{jacob}(iv), implies (c).

Next, for $\,t\mapsto x(t)\,$ as in (a) -- (f), any $\,t\in(t_-\w,t_+\w)$, a 
fixed sign $\,\pm\hh$, and $\,x=x(t)$, Theorem~\ref{jacob}(iii), (\ref{hpm}) 
and (b) give 
$\,\mathcal{H}^\mp_x
=\{w(t):w\in\mathcal{W}\hs\,\,\mathrm{and}\,\,w(t_\pm\w)=0\}$. Writing any 
$\,w\in\mathcal{W}\,$ as $\,w=w\hh'\nh+w\hh''\nh$, where 
$\,w\hh'\nh\in\mathcal{W}[\tpm]\,$ and $\,w\hh''$ lies in the direct sum of 
the spaces $\,\mathcal{W}[\zx]\ne\{0\}\,$ with\/ $\,\zx\ne\tpm\hh$, 
cf.\ Theorem~\ref{jacob}(iv), we see that, by (d) -- (e), the isomorphism in 
(f) sends $\,w\hh'\nh(t)\,$ and $\,w\hh''\nh(t)$, respectively, to pairs of 
the form $\,(0,\,\cdot\,)\,$ and $\,(\,\cdot\,,0)$. Thus, 
$\,w(t)\in\mathcal{H}^\mp_x$ if and only if $\,w\hh''\nh=0$, that is, 
$\,w\in\mathcal{W}[\tpm]$. Combining Theorem~\ref{jacob}(vi) with (\ref{lgm}) 
and (\ref{wze}.b), one now obtains (\ref{hvk}), so that (\ref{hpm}) implies 
the con\-stant-rank assertion preceding (\ref{hvk}). On the other hand, 
$\,\mathcal{H}^+_x$ and $\,\mathcal{H}^-_x$ are mutually orthogonal at every 
$\,x\in\mf'\nh$, being, by (\ref{hvk}), contained in eigen\-spaces 
corresponding to different eigen\-values of the self-ad\-joint operator 
$\,\dv\nnh_x\w$, cf.\ (\ref{loc}.d), so that (\ref{tme}) follows.

Let $\,w,w\hh'\nh\in\mathcal{W}\nh$. Constancy of 
$\,Q^{-\nnh1}g(\dv\nh w,w\hh'\hh)\,$ along $\,\ic\hs$ trivially follows from 
(\ref{dvg}.iii) and (\ref{vtg}), cf.\ Lemma~\ref{dvgww}(b) and  parts (i) -- 
(ii) of Theorem~\ref{jacob}. The operators $\,d/\nh d\vt\,$ and $\,d_v\w$ 
acting on functions $\,\ic\nh\to\bbR\,$ are in turn related by 
$\,d_v\w\nh=Q\,d/\nh d\vt$, since (\ref{dxe}) gives 
$\,d_v\w\nh=Q^{\hs1\nh/2}d_{\dot x}\w\nh=Q^{\hs1\nh/2}d/\nh dt$, while 
$\,d/\nh dt=Q^{\hs1\nh/2}d/\nh d\vt\,$ due to (\ref{dtq}). Now (g) is 
immediate from (\ref{dvg}.ii).

In (h), all three right-hand sides are af\-fine functions of $\,\vt\,$ with 
the correct values at $\,t=t_\pm\w$ (that is, limits at the endpoint 
$\,y_\pm\w\nh=x(t_\pm\w)$). Proving (h) is thus reduced by (g) to showing 
that, in each case, $\,\chi=2\hh Q^{-\nnh1}g(\dv\nh w,w\hh'\hh)\,$ coincides 
with the derivative of the right-hand side provided by Remark~\ref{bthex}, 
which -- even though $\,\chi\,$ is constant on $\,\ic\nh$, cf.\ (g) -- will be 
achieved via evaluating the limit of $\,\chi\,$ at $\,y_\pm\w\nh\in\ic\hs$ or, 
equivalently, at $\,t_\pm\w\nh\in[\hs t_-\w,t_+\w]$. When 
$\,w\in\mathcal{W}[\tmp]$, (\ref{wze}.b) and (\ref{lgm}) imply that 
$\,2\hh Q^{-\nnh1}\dv\nh w=(\vt-\hn\tmp)^{-\nnh1}w\,$ and, consequently, 
$\,\chi=(\vt-\hn\tmp)^{-\nnh1}g(\hn w,w\hh'\hh)\,$ has the value (and limit) 
$\,\pm\hh(\tp\hskip-2.2pt-\nh\tm)^{-\nnh1}g_y\w(\hn w_\pm\w,w_\pm'\hh)\,$ at 
$\,y=y_\pm\w$, as required in (h1).

Let $\,w,w\hh'$ now satisfy the hypotheses of (e). Consequently, along 
$\,\ic\nh\smallsetminus\{y_\pm\w\}$,
\begin{equation}\label{ttz}
Q\hh,\hskip9pt\dv\nh w\hh',\hskip9ptQ^{-\nnh1}g(\dv\nh w,\dv\nh w\hh'\hh)\hskip13pt
\mathrm{all\ tend\ to}\hskip6pt0\hskip6pt\mathrm{at}\hskip6pty\hh,\hskip6pt
\mathrm{where}\hskip6pty=y_\pm\w\hh.
\end{equation}
In fact, $\,Q(y)=0\,$ by (a). Next, $\,Q^{-\nnh1}\dv\nh w\,$ is bounded near the 
endpoint $\,y\,$ of $\,\ic\nh\smallsetminus\{y\}$ (and similarly for 
$\,w\hh'\hh$); to see this, we may assume that 
$\,w\in\mathcal{W}[\zx]\,$ with $\,\zx\ne\tpm$, cf.\ (e), and then 
(\ref{wze}.b) and (\ref{lgm}) give 
$\,2\hh Q^{-\nnh1}\dv\nh w=(\vt-\zx)^{-\nnh1}w$, which is bounded as 
$\,\vt\to\tpm$ since, due to (b), $\,w\,$ has a limit at $\,t=t_\pm\w$. Now 
(\ref{ttz}) follows.

In view of (\ref{ttz}) and (b), we may now evaluate the limit of 
$\,\chi=2\hh Q^{-\nnh1}g(\dv\nh w,w\hh'\hh)$ as $\,t\to t_\pm\w$ using 
l'H\^opital's rule: it coincides with the limit of 
$\,2\hs d_{\dot x}\w[\hs g(\dv\nh w,w\hh'\hh)]/\dot Q$. By (\ref{loc}.c) for 
$\,\dot x\,$ rather than of $\,w$, (\ref{nxw}), (\ref{loc}.d) and (\ref{dtq}), 
this last expression is the sum of two terms, 
$\,\psi^{-\nnh1}Q^{-\nnh1\nh/2}g(R(\hn u,\dot x)\hh w,J\nh w\hh'\hh)\,$ and 
$\,2\psi^{-\nnh1}Q^{-\nnh1}g(\dv\nh w,\dv\nh w\hh'\hh)$. According to 
(\ref{ttz}) and (\ref{dqt}), only the first term contributes to the limit and, 
as it equals $\,\psi^{-\nnh1}g(R(J\dot x,\dot x)\hh w,J\nh w\hh'\hh)$, cf.\ 
(\ref{gvv}) and (\ref{vtg}), relation (\ref{dqt}) yields (h2).

Finally, suppose that $\,w,w\hh'\nh\in\mathcal{W}[\tpm]$. It follows that
\begin{equation}\label{qoh}
Q^{-\nnh1\nh/2}w\,
\to\,\mp\hh a^{-\nnh1}[\nabla\hskip-3pt_{\dot x}\w w]_\pm\w\hskip13pt
\mathrm{at}\hskip6pty\hh,\hskip6pt\mathrm{where}\hskip6pty=y_\pm\w\hh,
\end{equation}
and analogously for $\,w\hh'\nh$. Namely, $\,Q\,$ and $\,w\,$ vanish at 
$\,y\,$ (see (a), (d)), while 
$\,(Q^{\hs1\nh/2})\nnh\dot{\phantom o}\nnh=\psi\,$ by (\ref{dtq}), and so 
$\,[\nabla\hskip-3pt_{\dot x}\w w]/(Q^{\hs1\nh/2})\nnh\dot{\phantom o}\nnh
=\psi^{-\nnh1}\nabla\hskip-3pt_{\dot x}\w w$. L'H\^opital's rule and (\ref{dqt}) now 
imply (\ref{qoh}). Since 
$\,\dv\nnh_y\w[\nabla\hskip-3pt_{\dot x}\w w]_\pm\w\nh
=\mp\hh a\hh[\nabla\hskip-3pt_{\dot x}\w w]_\pm\w$ by (d) and 
Remark~\ref{prcrm}, assertion (h3) is obvious from (\ref{qoh}), completing the 
proof of Theorem~\ref{dcomp}.
\begin{remark}\label{dppdm}With the same notations and assumptions as in 
Theorem~\ref{dcomp}, denoting by $\,k_\pm\w$ and $\,q\,$ the complex fibre 
dimensions of the sub\-bundles $\,\mathcal{H}^\pm$ and $\,\mathcal{H}\,$ of 
$\,T\nnh\mf'\nh$, we have, for $\,m=\dimc\nh\mf\,$ and 
$\,d_\pm\w\nh=\dimc\nh\sd^\pm\nh$, 
\begin{equation}\label{dmq}
d_+\w+\hs\,d_-\w\hs=\,\hs m\,-\,1\,+\,q\hh,
\end{equation}
as one sees adding the equalities $\,d_\pm\w\nh=m-1-k_\pm\w$ and 
$\,m=1+k_+\w\nnh+k_-\w\nnh+q$ (the former due to (\ref{dbp}) and (\ref{hpm}), 
the latter to (\ref{tme})). Consequently,
\begin{equation}\label{dpp}
d_+\w+\hs\,d_-\w\hs\,\ge\,\,m\,-\,1\hh,
\end{equation}
with equality if and only if the distribution $\,\mathcal{H}\,$ in (\ref{tme}) 
is \hbox{$\,0$-}\hskip0ptdi\-men\-sion\-al, that is, if
\begin{equation}\label{tmp}
T\nnh\mf'\,=\,\,\mathcal{V}\hskip1.2pt\oplus\hs\mathcal{H}^+\nnh
\oplus\hs\mathcal{H}^-\nnh.
\end{equation}
The explicit descriptions of $\,\sd^\pm$ in (\ref{spm}.c) -- (\ref{spm}.d) 
clearly show that
\begin{equation}\label{dem}
d_+\w+\hs\,d_-\w\hs=\,\hs m\,-\,1\,\,\mathrm{\ \ for\ every\ CP\ triple\ 
}\,(\mf\nh,g,\vt)\hh.
\end{equation}
\end{remark}

\section{Examples: Nontrivial modifications\done}\label{nm}
\setcounter{equation}{0}
\begin{remark}\label{realz}For any two functions $\,\vt\mapsto Q\,$ and 
$\,\hat\vt\mapsto\hat Q\,$ having the properties listed in (\ref{pbd}), with 
the same $\,\tpm$ and $\,a$, there must exist an increasing $\,C^\infty\nnh$ 
dif\-feo\-mor\-phism $\,[\hh\tm,\tp]\ni\vt\mapsto\hat\vt\in[\hh\tm,\tp]\,$ 
which realizes
\begin{equation}\label{qdt}
\begin{array}{l}
\mathrm{the\ equality\ }\hs\hat Q\,d\nh/\nnh d\hat\vt\hs
=\hh Q\,d\nh/\nnh d\vt\,\mathrm{\ of\ vector\ fields\ on\ 
}\,[\hh\tm,\tp]\,\mathrm{\ expressed}\\
\mathrm{in\ terms\ of\ the\ two\ 
dif\-feo\-mor\-phic\-al\-ly}\hyp\mathrm{re\-lat\-ed\ coordinates\ }
\,\hat\vt\,\mathrm{\ and\ }\,\vt.
\end{array}
\end{equation}
Such a dif\-feo\-mor\-phism is unique up to compositions from the left (or, 
right) with transformations forming the flow of the first (or, second) vector 
field in (\ref{qdt}).

To see this, apply Remark~\ref{adtet} to $\,t=\vt-\hn\tpm$ and 
$\,\gamma=\mp\hh Q/(2a)$ (or, $\,t=\hat\vt-\hn\tpm$ and 
$\,\gamma=\mp\hh\hat Q/(2a)$), obtaining a function $\,\theta\,$ (or, 
$\,\hat\theta$), unique up to a positive constant factor and vanishing at 
$\,\vt=\hn\tpm$ (or, $\,\hat\vt=\hn\tpm$), with 
$\,d\hh\theta\hn/\nh d\vt=\theta/Q\,$ (or, 
$\,d\hh\hat\theta\hn/\nh d\hat\vt=\hat\theta/\hat Q$), the derivative being 
positive everywhere in $\,[\hh\tm,\tp]$. Adjusting the constant factor, we may 
require $\,\vt\mapsto\theta\,$ and $\,\hat\vt\mapsto\hat\theta\,$ to be 
increasing dif\-feo\-mor\-phisms of $\,[\hh\tm,\tp]$ onto {\medit the 
same\/} interval having the endpoint $\,0$, and then define 
$\,\vt\mapsto\hat\vt\,$ by declaring $\,\theta$ ``equal to'' 
$\,\hat\vt\mapsto\hat\theta$, that is, letting $\,\vt\mapsto\hat\vt\,$ be 
$\,\vt\mapsto\theta\,$ followed by the inverse of 
$\,\hat\vt\mapsto\hat\theta$. Consequently, 
$\,d\hat\vt\hn/\nh d\vt=\hat Q/Q\,$ on $\,(\tm,\tp)$, which amounts to 
(\ref{qdt}).

The uniqueness clause is obvious: the only self-dif\-feo\-mor\-phisms 
$\,\zeta\,$ of $\,(\tm,\tp)$ preserving a given vector field without zeros 
are its flow transformations, since $\,\zeta$ acts on an integral curve as a 
shift of the parameter.
\end{remark}
\begin{theorem}\label{ntrmd}{\medit 
For the data\/ $\,\tpm$ and\/ $\,\vt\mapsto Q\,$ related via 
Remark\/~{\rm\ref{ascdt}(i)} to a given compact ge\-o\-des\-ic-gra\-di\-ent 
K\"ah\-ler triple\/ $\,(\mf\nh,g,\vt)$, and any increasing\/ $\,C^\infty\nnh$ 
dif\-feo\-mor\-phism\/ $\,[\hh\tm,\tp]\ni\vt\mapsto\hat\vt\in[\hh\tm,\tp]$, 
there exists a\/ $\,C^\infty\nnh$ function\/ 
$\,[\hh\tm,\tp]\ni\vt\mapsto\phi\in\bbR$, unique up to additive constants, 
such that\/ $\,\,\hat\vt\,=\,\vt\hs+\hh Q\,d\phi\hn/\nh d\vt$.

With\/ $\,\hat\vt,\phi\,$ treated, due to their dependence on\/ $\,\vt$, as 
functions on the complex manifold\/ $\,\mf\nh$, the formula\/ $\,\hg=g
-2(i\hs\partial\hskip1.7pt\cro\hskip0pt\phi)(J\,\cdot\,,\,\cdot\,)\,$ 
then defines another K\"ah\-ler metric on\/ $\,\mf\nh$, and\/ 
\begin{enumerate}
  \def\theenumi{{\rm\alph{enumi}}}
\item[{\rm(a)}] $(\mf,\hg,\hat\vt)\,$ is a new ge\-o\-des\-ic-gra\-di\-ent 
K\"ah\-ler triple.
\end{enumerate}
In addition, denoting by\/ $\,\hat\vt\mapsto\hat Q\,$ the analog of\/ 
$\,\vt\mapsto Q$ arising when Remark\/~{\rm\ref{ascdt}(i)} is applied to\/ 
$\,(\mf,\hg,\hat\vt)$, and by\/ $\,\hat\nabla\hskip-.5pt\hat\vt\,$ the\/ 
$\,\hg$-gra\-di\-ent of\/ $\,\hat\vt$, \,one has\/ {\rm(\ref{qdt})} and\/ 
$\,\hat\nabla\hskip-.5pt\hat\vt=\nh\navp$.
}
\end{theorem}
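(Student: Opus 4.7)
The plan is to establish the assertions in five closely linked steps.

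\textbf{Step 1: existence and uniqueness of $\phi$.} Because $\vt\mapsto\hat\vt$ is an increasing diffeomorphism $[\tm,\tp]\to[\tm,\tp]$, it fixes the endpoints, so $\hat\vt-\vt$ vanishes at $\tpm\hh$. By Remark~\ref{smdiv} one may write $\hat\vt-\vt=(\vt-\tpm)\chi_\pm(\vt)$ with $\chi_\pm$ smooth; similarly, since $Q(\tpm)=0$ and $dQ/d\vt|_{\tpm}=\mp2a\ne0$ by (\ref{pbd}) and Remark~\ref{ascdt}(i), one has $Q=(\vt-\tpm)\kappa_\pm(\vt)$ with $\kappa_\pm$ smooth and $\kappa_\pm(\tpm)=\mp2a\ne0$. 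Hence $(\hat\vt-\vt)/Q=\chi_\pm/\kappa_\pm$ is smooth through $\tpm\hh$, and is smooth on $(\tm,\tp)$ since $Q>0$ there. Setting $d\phi/d\vt=(\hat\vt-\vt)/Q$ and integrating yields a $C^\infty$ function $\phi$ on $[\tm,\tp]$, unique up to additive constants, with $\hat\vt=\vt+Q\,d\phi/d\vt$.

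\textbf{Step 2: the 1-form identity $\hg(v,\,\cdot\,)=d\hat\vt$.} With $v=\navp$, apply (\ref{ojv}) to the real-holomorphic field $v$ and to $f=\phi$ (regarded as $\phi\circ\vt$ on $\mf$): since $\phi$ depends on $\vt$, (\ref{dvt}) and (\ref{loc}.b) give $d_v\phi=\phi'(\vt)\hh Q$ and $d_{J\nh v}\phi=\phi'(\vt)\hh d_u\vt=0$, whence
\[
2(i\hs\partial\hskip1.7pt\cro\hskip0pt\phi)(J\nh v,\,\cdot\,)\,=\,-d(Q\phi')\hh.
\]
Consequently $\hg(v,\,\cdot\,)=g(v,\,\cdot\,)-2(i\hs\partial\hskip1.7pt\cro\hskip0pt\phi)(J\nh v,\,\cdot\,)=d\vt+d(Q\phi')=d\hat\vt$. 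Once Step~4 shows that $\hg$ is a Riemannian metric, this gives $\hat\nabla\hskip-.5pt\hat\vt=v$.

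\textbf{Steps 3 and 5: equation (\ref{qdt}) and the triple property.} From Step~2, $\hat Q=\hg(v,v)=d_v\hat\vt=(d\hat\vt/d\vt)\hh Q$, which is a function of $\vt$ and, via the diffeomorphism $\vt\mapsto\hat\vt$, of $\hat\vt$. The identity $\hat Q\,d/d\hat\vt=Q\,d/d\vt$ in (\ref{qdt}) is then immediate. Lemma~\ref{ggqft} makes the $\hg$-gradient of $\hat\vt$ geodesic, and real-holomorphicity of $\hat\nabla\hskip-.5pt\hat\vt=v$ is immediate from that of $v$; this proves (a).

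\textbf{Step 4: $\hg$ is a K\"ahler metric.} Closedness of $\hat\omega=\omega-2\hh i\hs\partial\hskip1.7pt\cro\hskip0pt\phi$ is automatic. For positivity, use (\ref{idd}) with $\chi=\vt$ and $f=\phi$, together with the K\"ahler identity $-(i\hs\partial\hskip1.7pt\cro\hskip0pt\vt)(J\,\cdot\,,\,\cdot\,)=g(\dv\,\cdot\,,\,\cdot\,)$, valid because $\dv=\nabla v$ commutes with $J$ by (\ref{loc}.a), to rewrite
\[
\hg\,=\,g\,+\,2\phi'\hh g(\dv\,\cdot\,,\,\cdot\,)\,+\,\phi''\hh(d\vt\wedge J^*\nh d\vt)(J\,\cdot\,,\,\cdot\,)\hh.
\]
A direct computation gives $(d\vt\wedge J^*\nh d\vt)(JX,Y)=g(v,X)\hh g(v,Y)+g(u,X)\hh g(u,Y)$, so the last summand vanishes on $\mathcal{V}^\perp$ and equals $\phi''Q\hh g$ on $\mathcal{V}$. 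Combined with the $\dv$-invariant orthogonal splitting (\ref{tme}) and Theorem~\ref{dcomp}(vi), this shows that $\mathcal{V}\,$ and $\,\mathcal{V}^\perp$ are $\hg$-orthogonal, that $\hg=(d\hat\vt/d\vt)\hh g$ on $\mathcal{V}$, and that on any $\dv$-eigenspace of eigenvalue $\lambda_\zx=Q/[2(\vt-\zx)]$ in $\mathcal{V}^\perp$, cf.\ (\ref{lgm}), $\hg$ is $g$ multiplied by
\[
1+2\phi'\lambda_\zx\,=\,1+\frac{Q\phi'}{\vt-\zx}\,=\,\frac{\hat\vt-\zx}{\vt-\zx}\hh,
\]
which is strictly positive on $(\tm,\tp)$: since $\zx\in[\bbR\smallsetminus(\tm,\tp)]\cup\{\infty\}$, both $\vt-\zx$ and $\hat\vt-\zx$ carry the same nonzero sign there. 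At points of $\sd^\pm\hh$, $d\vt$ and $J^*\nh d\vt$ vanish, and Remark~\ref{prcrm} gives $\dv$-eigenvalues $0$ on $T\sd^\pm$ and $\mp a$ on $\nr\sd^\pm\hh$; hence $\mathrm{Id}+2\phi'\dv$ has eigenvalues $1$ and $1\mp2a\phi'(\tpm)=(d\hat\vt/d\vt)|_{\tpm}>0$, completing positivity.

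The principal obstacle is the positivity analysis of Step~4: one must carry the $\dv$-eigenvalue computation through on every summand of (\ref{tme}) and then match it continuously to the critical submanifolds $\sd^\pm\hh$. Once that is done, Steps 1--3 and~5 follow from the direct calculations indicated above.
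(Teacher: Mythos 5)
Your proposal is correct and follows essentially the same route as the paper: the same smooth-divisibility argument for $\phi$, the same use of (\ref{ojv}) to get $\hg(v,\,\cdot\,)=d\hat\vt$, and the same eigenvalue analysis of $\,\mathrm{Id}+2\phi'\dv+\phi''[\hh g(\hn v,\,\cdot\,)\hh v+g(\hn u,\,\cdot\,)\hh u]\,$ split into the two cases $\,y\in\sd^\pm$ and $\,x\in\mf'$ (the paper writes this endomorphism explicitly as $K$ in (\ref{cei}) rather than deriving it from (\ref{idd}), but the computation is identical). The only blemish is a citation slip: the eigenvalue description $\,\lambda_\zx\w=Q/[2(\vt-\zx)]\,$ on $\,\mathcal{V}^\perp$ is Theorem~\ref{jacob}(vi), not ``Theorem~\ref{dcomp}(vi)''.
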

\begin{proof}As $\,\hat\vt=\vt\hs+\hh Q\hh\phi'\nh$, where 
$\,(\hskip2.3pt)'\nh=d\nh/\nnh d\vt$, our assumption about 
$\,\vt\mapsto\hat\vt\,$ gives $\,\hat\vt'\nh>0$ and 
$\,\tm\nh\le\hs\hat\vt\le\tp$, leading to the inequalities
\begin{equation}\label{neq}
\tm\le\,\vt\hs+\hs Q\hh\phi'\le\hs\tp\hskip7pt\mathrm{(strict\ except\ 
at}\hskip5pt\vt=\tpm\mathrm{)\ and}\hskip7pt1+Q'\phi'\nh+Q\hh\phi''\nh>0\hh.
\end{equation}
Note that $\,\phi\,$ exists since, by Remark~\ref{smdiv} and 
(\ref{pbd}), $\,\hat\vt-\nh\vt\,$ and $\,Q\,$ are smoothly divisible by 
$\,\vt-\hn\tpm$, their quotients being equal at $\,\tpm$ to the value of 
$\,\hat\vt'\nh-1\,$ and $\,\mp2a$, respectively, and so, as $\,\hat\vt'\nh>0$, 
\begin{equation}\label{fgt}
\mp2a\hh\phi'\hs>\,-\nnh1\hskip8pt\mathrm{at}\hskip7pt\vt=\tpm\hh.
\end{equation}
For the self-ad\-joint bundle en\-do\-mor\-phism $\,K\,$ of $\,T\nnh\mf\,$ 
with $\,\hg=g(K\,\cdot\,,\,\cdot\,)\,$ one has
\begin{equation}\label{cei}
K\,\hs=\,\,\mathrm{Id}\,\hs+\,2\phi'\nh\dv\,
+\,\phi''[\hh g(\hn v,\,\cdot\,)\hh v+g(\hn u,\,\cdot\,)\hh u]\hh,
\end{equation}
where $\,v,u,\dv\,$ are, as usual, given by $\,v=\navp$, $\,u=J\nh v$, and 
$\,\dv=\nabla\nh v$. 

We proceed to prove positivity of $\,K\,$ at all points of $\,\mf\nh$, 
considering two separate cases: $\,y\in\sd^\pm$ and 
$\,x\in\mf'\nh=\mf\smallsetminus(\sd^+\nnh\cup\sd^-)$, cf.\ 
Remark~\ref{ascdt}(iv).

If $\,y\,$ lies in either critical manifold $\,\sd^\pm\nh$, the relations 
$\,v_y\w=u_y\w=0\,$ and $\,\vt(y)=\tpm$ imply positivity of $\,K\nnh_y\w$ as a 
consequence of (\ref{fgt}) since, by Remark~\ref{prcrm}, any eigen\-val\-ue of 
$\,\dv\nnh_y\w$ must be equal to $\,0\,$ or $\,\mp\hh a$.

On $\,\mf'\nh$, we use the $\,\dv$-in\-var\-i\-ant decomposition 
$\,T\nnh\mf'\nh=\mathcal{V}\oplus\mathcal{V}^\perp\nnh$, cf.\ (\ref{tme}). 
In view of (\ref{loc}.f) -- (\ref{loc}.g), the restriction of 
$\,2\dv=2\nabla\nh v\,$ to $\,\mathcal{V}=\mathrm{Span}\hs(\hn v,u)\,$ equals 
$\,Q'$ times $\,\mathrm{Id}$. Using (\ref{cei}) and (\ref{gvv}) we now see 
that $\,K\,$ acts in $\,\mathcal{V}\,$ via multiplication by the function 
$\,1+Q'\phi'\nh+Q\hh\phi''\nh$, which is positive according to (\ref{neq}). 
Theorem~\ref{jacob}(vi) states in turn that the eigen\-values of 
$\,\dv\nnh_x\w:\mathcal{V}^\perp_{\hskip-1ptx}\nh
\to\mathcal{V}^\perp_{\hskip-1ptx}$, for $\,x\in\mf'\nh$, have the form 
$\,\lambda_\zx\w(x)\,$ with (\ref{lgm}) and (\ref{wze}.a). Writing 
$\,K,\dv,\vt,Q,\phi'$ instead of their values at $\,x$, we conclude from 
(\ref{cei}) that the corresponding eigen\-values of $\,(\vt-\zx)\hh K\,$ are 
$\,\vt\hs+\hs Q\hh\phi'\nh-\zx\,$ and so, due to the (strict) first inequality 
of (\ref{neq}), they all lie in the interval $\,(\tm\nh-\zx,\tp\nh-\zx)$. 
Positivity of $\,K\,$ on $\,\mathcal{V}\,$ thus easily follows both when 
$\,\zx<\hh\tm\nnh<\vt\,$ and when $\,\vt<\hh\tp\nnh<\zx$.

Consequently, $\,\hg\,$ is a K\"ah\-ler metric on $\,\mf\nh$, with 
the K\"ah\-ler form $\,\hat\omega=\hg(\hat J\,\cdot\,,\,\cdot\,)$ related to 
$\,\omega=g(\hat J\,\cdot\,,\,\cdot\,)\,$ by 
$\,\hat\omega=\omega+2i\hs\partial\hskip1.7pt\cro\hskip0pt\phi$. Applying 
(\ref{ojv}) to $\,v=\navp\,$ and $\,\phi\,$ rather than $\,f\nnh$, we obtain 
$\,\hg(\hn v,\,\cdot\,)=g(\hn v,\,\cdot\,)-2\hh\omega\hh(J\nh v,\,\cdot\,)
=d\vt+d(d_v\w\phi)$. (Note that $\,J\nh v=u$ and (\ref{gvv}) gives 
$\,d_u\w\phi=d_u\w\vt=0$, since $\,\phi\,$ is a function of $\,\vt$.) As 
$\,d_v\w\vt=Q$, cf.\ (\ref{dvt}), $\,v\,$ is thus the $\,\hg$-gra\-di\-ent 
of $\,\vt+d_v\w\phi=\vt\hs+\hh Q\hh\phi'\nh=\hs\hat\vt$. On the other hand, 
again from (\ref{dvt}), $\,\hat Q=\hg(v,v)\,$ equals 
$\,d_v\w\hat\vt=\hat\vt'd_v\w\vt=\hat\vt'Q$, which is a function of $\,\vt$, 
and of $\,\hat\vt$, proving both (a) (see Lemma~\ref{ggqft}) and (b).
\end{proof}
\begin{remark}\label{alltq}Let $\,G\,$ be the group of all auto\-mor\-phisms 
(Definition~\ref{ggktr}) of a given compact ge\-o\-des\-ic-gra\-di\-ent 
K\"ah\-ler triple $\,(\mf\nh,g,\vt)$. Then every quadruple 
$\,\tm,\tp,a$, $\hat\vt\mapsto\hat Q\,$ satisfying the analog of (\ref{pbd}) 
arises when Remark~\ref{ascdt}(i) is applied to a suitably chosen 
$\,G$-in\-var\-i\-ant ge\-o\-des\-ic-gra\-di\-ent K\"ah\-ler triple 
$\,(\mf,\hg,\hat\vt)\,$ with the same underlying complex manifold $\,\mf\nh$.

In fact, a trivial modification (see Remark~\ref{trivl}) followed by rescaling 
of the metric allows us to assume that $\,\tpm$ and $\,a\,$ are the same as 
those for $\,(\mf\nh,g,\vt)$. Our claim is now obvious from Remark~\ref{realz} 
and Theorem~\ref{ntrmd}.
\end{remark}
\begin{remark}\label{spcas}As a special case of Remark~\ref{alltq}, for the 
first triple using the Fu\-bi\-ni-Stu\-dy metric $\,g\,$ and $\,G\,$ as in the 
lines preceding (\ref{dta}), all quadruples $\,\tm,\tp,a,\hs\vt\mapsto Q$ 
with (\ref{pbd}) are realized, via Remark~\ref{ascdt}(i), by \,CP triples 
$\,(\mf\nh,g,\vt)\,$ having arbitrarily fixed values of $\,m=\dimc\nh\mf\,$ 
and $\,d_\pm\w\nh=\dimc\nh\sd^\pm$ that satisfy (\ref{dem}). 
\end{remark}
\begin{remark}\label{cnvrs}Conversely, we can apply Remark~\ref{realz} and 
Theorem~\ref{ntrmd} to canonically modify any given \,CP triple, obtaining one 
with the Fu\-bi\-ni-Stu\-dy metric and the same group $\,G$.

We will not use the eas\-i\-ly-ver\-i\-fied fact that, for such a 
Fu\-bi\-ni-Stu\-dy \,CP triple, 
$\,(\tp\hskip-2.2pt-\nh\tm)\hh Q=2a\hh(\vt-\hn\tpm)(\tp\hskip-2.3pt-\vt)\,$ 
and, in (\ref{dta}.ii), the value of $\,\vt\,$ at $\,\bbC(\xi+\eta)$, where 
$\,\xi\in\ls\,$ and $\,\eta\in\ls\hskip-2.5pt^\perp\nh$, equals 
$\,(\tpm|\hh\xi|^2\nh+\tmp|\eta|^2)/(|\hh\xi|^2\nh+|\eta|^2)\,$ for some sign 
$\,\pm\hs$.
\end{remark}

\section{The nor\-mal-ge\-o\-des\-ic bi\-hol\-o\-mor\-phisms\done}\label{ng}
\setcounter{equation}{0}
In this section $\,(\mf\nh,g,\vt)\,$ is a fixed compact 
ge\-o\-des\-\hbox{ic\hs-}\hskip0ptgra\-di\-ent K\"ah\-ler triple 
(Definition~\ref{ggktr}). We use the notation of (\ref{sym}), denote by 
$\,\tm,\tp,a,Q\,$ the data (\ref{pbd}) associated with $\,(\mf\nh,g,\vt)\,$ 
(see Remark~\ref{ascdt}(i)), and choose for them the further data (\ref{sgn}) 
-- (\ref{int}), so that a sign $\,\pm\,$ is fixed as well. We also let 
$\,\sd,\nr\nh,h,\lr\,$ and $\,\mathrm{D}\,$ stand for $\,\sd^\pm\nnh$, the 
normal bundle $\,\nr\hskip-2.3pt\sd^\pm\nnh$, the sub\-man\-i\-fold metric of 
$\,\sd$, the Riemannian fibre metric in $\,\nr\,$ induced by $\,g$, and 
the Chern connection of $\,\lr\,$ in $\,\nr\nnh$, cf.\ (d) in 
Section~\ref{ck}. We write $\,(y,\xi)\in\nr\,$ when $\,y\in\sd\,$ and 
$\,\xi\in\nr\hskip-2.4pt_y\w$, as in Remark~\ref{tlspc}.

Using the normal exponential dif\-feo\-mor\-phism 
$\,\mathrm{Exp}^\perp\nnh:\nr^\delta\nnh\sd^\pm\nh
\to\mf\nnh\smallsetminus\nh\sd^\mp$ in (\ref{nex}), we define 
$\,\varPhi=\hs\varPhi^\pm\nnh:\nr\to\mf\nnh\smallsetminus\nh\sd^\mp\nh$, 
depending on the sign $\,\pm\hs$, to be the composite
\begin{equation}\label{phe}
\varPhi\,\,=\,\,\mathrm{Exp}^\perp\nnh\circ\da\hh,
\end{equation}
where $\,\da:\nr\to\nr^\delta\nnh\sd^\pm$ is given by $\,\da(y,\xi)=y\,$ if 
$\,\xi=0\,$ and, otherwise,
\begin{equation}\label{dyx}
\begin{array}{l}
\da(y,\xi)\hs=\hs(y,t\hh\xi)\mathrm{,\ where\ 
}\,t\hs=\hs\sigma\nnh/\nnh\hr\,\mathrm{\ for\ }\,\hr\hs=\hh|\hs\xi|\mathrm{,\ 
the\ function\ }\,\hs\sigma\\\mathrm{of\ the\ variable\ 
}\,\hr\in[\hs0,\infty)\,\mathrm{\ being\ chosen\ as\ above,\ with\ 
(\ref{dsr}).}
\end{array}
\end{equation}
Note that $\,\da\,$ is a homeo\-mor\-phism and, restricted to the complement 
$\,\nr'\nh=\nr\nh\smallsetminus\sd$ of the zero section, it becomes a 
dif\-feo\-mor\-phism 
$\,\nr'\nh\to\nr^\delta\nnh\sd^\pm\nnh\smallsetminus\sd^\pm\nh$. In fact, 
$\hs t\hh\xi\hs$ with $\,t=\hs\sigma\nnh/\nnh\hr\,$ determines $\,\xi\,$ 
(smoothly if $\,\xi\ne0$), since $\,|\hh t\hh\xi|=\sigma\,$ and $\,\sigma\,$ 
determines $\,\hr$ according to the line preceding (\ref{dsr}). 
Consequently, $\,\varPhi:\nr\to\mf\nnh\smallsetminus\nh\sd^\mp$ is a 
homeo\-mor\-phism, and the restriction $\,\varPhi:\nr'\nh\to\mf'$ a 
dif\-feo\-mor\-phism. In addition,
\begin{equation}\label{pcf}
\begin{array}{l}
\pi^\pm\nnh\circ\varPhi^\pm\mathrm{\ equals\ the\ 
nor\-mal}\hyp\mathrm{bun\-dle\ projection\ 
}\,\nr\hskip-2.3pt\sd^\pm\nnh\to\sd^\pm
\end{array}
\end{equation}
due to (\ref{phe}), the fibre\hh-\hn preserving property of $\,\da$, and 
the first line of Remark~\ref{ptnrs}.
\begin{remark}\label{sends}Suppose that a vector field $\,w\,$ on $\,\nr'$ is
\begin{enumerate}
  \def\theenumi{{\rm\alph{enumi}}}
\item[{\rm(a)}] the $\,\mathrm{D}\hh$-hor\-i\-zon\-tal lift of a vector 
field on $\,\sd$, or
\item[{\rm(b)}] a vertical vector field of the form 
$\,(y,\xi)\mapsto\varTheta\xi\,$ for some com\-plex-lin\-e\-ar 
vec\-tor-bun\-dle morphism $\,\varTheta:\nr\to\nr\nnh$, skew-ad\-joint 
relative to $\,\lr\,$ at every point.
\end{enumerate}
Then $\,\da$, restricted to $\,\nr'\nh$, sends $\,w\,$ onto its restriction to 
$\,\nr'\cap\nr^\delta\nnh\sd^\pm\nh.$

In fact, let $\,r\mapsto(y(\hn r\nh),\xi(\hn r\nh))\,$ be an integral curve of 
$\,w$. Then the function $\,r\mapsto|\hh\xi(\hn r\nh)\hn|$ is constant, and 
so, by (\ref{dyx}), 
$\,\da(y(\hn r\nh),\xi(\hn r\nh))=(y(\hn r\nh),c\hs\xi(\hn r\nh))\,$ with some 
real constant $\,c$. This proves our claim since, in case (b), $\,w\,$ 
restricted to every fibre $\,\nr\hskip-2.4pt_y\w$, being a linear vector field 
on $\,\nr\hskip-2.4pt_y\w$, is invariant under multiplications by scalars.
\end{remark}
\begin{theorem}\label{first}{\medit 
For either critical manifold\/ $\,\sd^\mp$ of\/ $\,\vt\,$ in any compact 
ge\-o\-des\-ic-gra\-di\-ent K\"ah\-ler triple\/ $\,(\mf\nh,g,\vt)$, the 
triple\/ $\,(\mf\nnh\smallsetminus\nh\sd^\mp\nnh,g,\vt)\,$ is isomorphic to 
one constructed in Section\/~{\rm\ref{ev}} \hskip.8ptfrom some data\/ 
{\rm(\ref{pbd})} -- {\rm(\ref{sgn})} and\/ $\,\sd,h,\nr\nh,\lr$.

The data consist of\/ {\rm(\ref{pbd})} associated with\/ $\,(\mf\nh,g,\vt)\,$ 
as in Remark\/~{\rm\ref{ascdt}(i)}, any choice of\/ $\,\vt\mapsto\hr\,$ 
with\/ {\rm(\ref{sgn})} for\/ {\rm(\ref{pbd})} and our fixed sign\/ 
$\,\pm\hs$, the sub\-man\-i\-fold metric\/ $\,h\,$ and normal bundle\/ 
$\,\nr=\nr\hskip-2.3pt\sd^\pm\nh$ of\/ $\,\sd=\sd^\pm\nnh$, and the 
fibre metric\/ $\,\lr\,$ in\/ $\,\nr\hs$ induced by\/ $\,g$. Furthermore,
\begin{enumerate}
  \def\theenumi{{\rm\roman{enumi}}}
\item[{\rm(i)}] the required isomorphism\/ 
$\,\nr\nh\to\mf\nnh\smallsetminus\nh\sd^\mp$ is provided by the mapping\/ 
$\,\varPhi=\hs\varPhi^\pm$ with\/ {\rm(\ref{phe})}, which, in particular, must 
be bi\-hol\-o\-mor\-phic,
\item[{\rm(ii)}] $\varPhi\,$ sends the horizontal distribution of the Chern 
connection\/ $\,\mathrm{D}\,$ of\/ $\,\lr\,$ in\/ $\,\nr\nnh$, cf.\ {\rm(d)} 
of Section\/~{\rm\ref{ck}}, onto the summand\/ 
$\,\mathcal{V}\hs\oplus\mathcal{H}^\pm$ in\/ {\rm(\ref{tme})},
\item[{\rm(iii)}] the leaves of\/ $\,\hs\mathcal{V}\,$ are precisely the same 
as the\/ $\,\varPhi\hh$-im\-a\-ges of all punctured complex lines through\/ 
$\,0\,$ in the normal spaces of\/ $\,\sd$.
\end{enumerate}
In the special case where\/ 
$\,T\nnh\mf'\,=\,\,\mathcal{V}\hskip1.2pt\oplus\hs\mathcal{H}^+\nnh
\oplus\hs\mathcal{H}^-\nnh$, that is, the summand distribution\/ 
$\,\mathcal{H}$ \hbox{in\/ {\rm(\ref{tme})}} is\/ 
\hbox{$\,0$-}\hskip0ptdi\-men\-sion\-al, formula\/ {\rm(\ref{hgv}.c)} used in 
the construction of \,Section\/~{\rm\ref{ev}} may also be replaced by the 
following equality, using the simplified notation of\/ 
{\rm(\ref{hgv}.c):} 
\begin{equation}\label{alt}
\hg(\hn w,w\hh'\hh)\,=\,\frac{|\hh\vt-\hn\tmp|}{\tp\hskip-2.2pt-\nh\tm}\,h(\hn w,w\hh'\hh)\hh.
\end{equation}
}
\end{theorem}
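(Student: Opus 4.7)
The plan is to apply the Section~\ref{ev} construction using the stated data (with $\sd=\sd^\pm$, $h$ the submanifold metric of $\sd^\pm$, $\nr=\nr\hskip-2.3pt\sd^\pm$, $\lr$ induced by $g$) to obtain a ge\-o\-des\-ic-gra\-di\-ent K\"ah\-ler triple $(\nr,\hg,\hat\vt)$, and then to verify that $\varPhi=\hs\varPhi^\pm$ defined by (\ref{phe})--(\ref{dyx}) is an isomorphism: $\vt\circ\varPhi=\hat\vt$ and $\varPhi\sk g=\hg$. That $\varPhi$ is a homeomorphism, and a diffeomorphism on $\nr'$, is already established in the paragraphs preceding the theorem; smoothness across the zero section follows from the Taylor expansion $Q\sim\mp2a(\vt-\tpm)$ near $\tpm$ (from (\ref{pbd})) combined with (\ref{sgn})--(\ref{dsr}), which yield $\sigma/\hr\to$ a positive constant as $\hr\to0$. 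For the identification $\vt\circ\varPhi=\hat\vt$, note that along a unit-speed normal geodesic emanating from $y\in\sd^\pm$, the equations $\dot\vt=Q^{\hs1\nh/2}$ (from (\ref{dxe}) via (\ref{vtg})) and $\vt(y)=\tpm$ characterize $\vt$ as the solution of the ODE in Remark~\ref{compo}, while $\hat\vt$ on $\nr$ is built from the composite $\hr\mapsto\sigma\mapsto\vt$, with $\da$ converting $\hr$- to $\sigma$-scaling.

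The main step, verifying $\varPhi\sk g=\hg$ on $\nr'$, proceeds summand-by-summand on the decomposition (\ref{tnp}). On $\hat{\mathcal{V}}$: by (\ref{dsr}), $d\da$ sends $\hat v=\mp a\xi$ to a radial vector of length $Q^{\hs1\nh/2}$, and $d\mathrm{Exp}^\perp$ sends that, by (\ref{vtg}), to $v$; similarly $\hat u\mapsto u$; this matches $\hg(\hat v,\hat v)=\hg(\hat u,\hat u)=\hat Q$ from (\ref{ana}) with $g(v,v)=g(u,u)=Q$. On $\hat{\mathcal{H}}^\pm$: for $w\in\nr\hskip-2.4pt_y\w$ orthogonal to $\bbC\xi$, formula (\ref{dxp}) expresses $d\varPhi(w)$ as the Jacobi field $\hat w$ with $\hat w(0)=0$ and normal initial derivative $w/\hr$; by Theorem~\ref{dcomp}(d) this lies in $\mathcal{W}[\tpm]$, and Theorem~\ref{dcomp}(h3) gives squared length $(2\hh|\vt-\tpm|/(a\hr^2))\hs\la w,w\ra$, matching (\ref{hgv}.b). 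On $\hat{\mathcal{H}}^\bullet$: by Remark~\ref{sends}, $d\da$ maps $\mathrm{D}$-horizontal lifts to $\mathrm{D}$-horizontal lifts, and (\ref{dxp}) identifies $d\varPhi$ of such a lift with the Jacobi field $\hat w$ satisfying $\hat w(0)=w$ and $[\nabla\hskip-3pt_{\dot x}\w\hat w](0)=0$, which by Theorem~\ref{dcomp}(e) lies in $\bigoplus_{\zx\ne\tpm}\mathcal{W}[\zx]$; then Theorem~\ref{dcomp}(h2) produces a length matching (\ref{hgv}.c).

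Once $\varPhi\sk g=\hg$ is known, biholomorphicity (i) follows by checking $d\varPhi\circ\hat J=J\circ d\varPhi$ on each summand of (\ref{tnp}), using that $\hat J$ exchanges $\hat v,\hat u$ while $J$ exchanges $v,u$, and that $J$ commutes with curvature operators by (\ref{rcm}) so Jacobi fields evolve $\bbC$-linearly, together with the complex-linearity of the $\mathrm{D}$-horizontal lift (Lemma~\ref{ddrsq}(i)). Claims (ii) and (iii) then read off from the image identifications: $d\varPhi(\hat{\mathcal{V}})\subseteq\mathcal{V}$, $d\varPhi(\hat{\mathcal{H}}^\pm)\subseteq\mathcal{H}^\mp$, $d\varPhi(\hat{\mathcal{H}}^\bullet)$ equals the transverse summand in (\ref{tme}), while punctured complex lines in $\nr\hskip-2.4pt_y\w$ exponentiate via $\varPhi$ to exactly the leaves of $\mathcal{V}$ described in Lemma~\ref{vsbkr}. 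Finally, when $\mathcal{H}=0$, all Chern-horizontal Jacobi fields lie in $\mathcal{W}[\tmp]$, so Theorem~\ref{dcomp}(h1) yields the linear formula that, when matched with $\hg(w^{\mathrm{hrz}},w^{\mathrm{hrz}})$, produces (\ref{alt}).

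The principal obstacle is the Chern-horizontal identity in the previous paragraph: matching Theorem~\ref{dcomp}(h2), which involves the ambient curvature $R$, against (\ref{hgv}.c), which involves the Chern curvature $R^{\mathrm{D}}$ on $\nr\hskip-2.3pt\sd^\pm$. The reconciliation rests on part~(d) of Section~\ref{ck}, identifying $\mathrm{D}$ with the normal connection of the totally geodesic complex submanifold $\sd^\pm$, together with the Gauss equation (which trivializes because the second fundamental form vanishes), yielding $\la R_y^{\mathrm{D}}(w,J\nh w)\hh\xi,i\hh\xi\ra=g_y(R_y(w,J\nh w)\hh\xi,J\nh\xi)$, at which point (\ref{hgv}.c) and Theorem~\ref{dcomp}(h2) become the same formula after the substitutions $\dot x\nh_\pm\w=\mp\hh\xi/|\xi|$ and $\hat\vt=\vt\circ\varPhi$.
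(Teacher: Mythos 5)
Your proposal is correct and follows essentially the same route as the paper's proof: the same identification $\,\vt\circ\varPhi=\hat\vt\,$ via the autonomous ODE of Remark~\ref{compo}, the same summand-by-summand comparison of $\,g\,$ and $\,\hg\,$ along the decomposition (\ref{tnp}) using Remark~\ref{sends}, formula (\ref{dxp}) and parts (d), (e), (h1)--(h3) of Theorem~\ref{dcomp}, and the same use of (h1) for the special case (\ref{alt}). Two minor points: the identity $\,\la R_y^{\mathrm{D}}(w,Jw')\xi,i\xi\ra=g_y(R_y(w,Jw')\xi,J\xi)\,$ that you attribute to the Gauss equation is really the Ricci (normal-curvature) equation for the totally geodesic submanifold $\,\sd^\pm$ (trivialized by the vanishing second fundamental form), and the paper extends the isomorphism across the zero section not by your Taylor-expansion sketch but by citing \cite[Lemma 16.1]{derdzinski-maschler-06}.
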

\begin{proof}It suffices to prove that the restriction of $\,\varPhi\,$ to 
$\,\nr'\nh=\nr\nh\smallsetminus\sd\,$ is an isomorphism between the 
ge\-o\-des\-ic-gra\-di\-ent K\"ah\-ler triples $\,(\nr'\nnh,\hg,\hat\vt)\,$ 
and $\,(\mf'\nnh,g,\vt)$, since the analogous conclusion about $\,\varPhi\,$ 
itself then follows from \cite[Lemma 16.1]{derdzinski-maschler-06}.

We start by  establishing the equality
\begin{equation}\label{vcf}
\vt\circ\varPhi\,=\,\hat\vt\hh.
\end{equation}
Namely, $\,|\hh\hr\hh\xi|=\hr\,$ for any $\,\hr\in(0,\infty)\,$ and any 
$\,(y,\xi)\in\nr$ with $\,|\hs\xi|=1$, so that 
$\,\varPhi(y,\hr\hh\xi)=x\nh_\sigma\w$, where 
$\,x\nh_\sigma\w=\exp_y\w\sigma\xi\,$ and $\,\sigma\hs$ depends on $\,\hr\,$ 
as in (\ref{dsr}). Since $\,\sigma\mapsto x\nh_\sigma\w$ is a unit-speed 
geodesic, (\ref{vtg}) and (\ref{dtq}) give 
$\,d\hs[\vt(x\nh_\sigma\w)]/d\sigma=\mp\hs Q^{\hs1\nh/2}\nh$, the sign factor 
being due to the relation $\,d\hh(x\nh_\sigma\w)/d\sigma=\mp\hh v/|v|\,$ 
(immediate from (\ref{tmm}) with $\,v=\navp$). Here $\,Q=g(\hn v,v)\,$ depends 
on $\,\vt(x\nh_\sigma\w)\,$ as in Remark~\ref{ascdt}(i). However, according to 
Remark~\ref{compo} and the text preceding (\ref{hgv}.a) -- (\ref{hgv}.b), the 
same autonomous equation 
$\,d\hs[\hat\vt(y,\hr\hh\xi)]/d\sigma=\mp\hh Q^{\hs1\nh/2}$ holds when 
$\,\vt(x\nh_\sigma\w)\,$ is replaced by $\,\hat\vt(y,\hr\hh\xi)$, with {\medit 
the same} dependence of $\,Q\,$ on the unknown function. The uniqueness clause 
of Remark~\ref{compo} thus gives 
$\,\vt(\varPhi(y,\hr\hh\xi))=\vt(x\nh_\sigma\w)=\hat\vt(y,\hr\hh\xi)$, as 
required.

One has two complex di\-rect-sum decompositions, 
$\,T\nnh\mf'=\,\mathcal{V}\hskip1.2pt\oplus\hs\mathcal{H}^\mp\nnh
\oplus\hs\mathcal{H}^\bullet$ and 
$\,T\nnh\nr'=\,\hat{\mathcal{V}}\hskip1.2pt\oplus\hs\hat{\mathcal{H}}^\mp\nnh
\oplus\hs\hat{\mathcal{H}}^\bullet\nh$, orthogonal relative to $\,g\,$ and, 
respectively, $\,\hg$. The former arises from (\ref{tme}) if one sets 
$\,\mathcal{H}^\bullet\nh=\,\mathcal{H}^\pm\nnh\oplus\hs\mathcal{H}$. 
In the latter $\,\hat{\mathcal{V}}\nh,\hat{\mathcal{H}}^\mp$ and 
$\,\hat{\mathcal{H}}^\bullet$ are the distributions introduced in the lines 
following (\ref{tnp}). First, for $\,\hat u\,$ as in (\ref{lvf}) and our 
$\,u=J\nh v$, where $\,v=\navp$, we show that
\begin{equation}\label{prs}
\begin{array}{rl}
\mathrm{i)}&\da\,\mathrm{\ preserves\ 
}\,\hat{\mathcal{V}}\nh,\,\hat{\mathcal{H}}^\mp\nnh,
\hat{\mathcal{H}}^\bullet\mathrm{\ and\ }\,\hat u\hh,\\
\mathrm{ii)}&\mathrm{Exp}^\perp\mathrm{\ sends\ 
}\,\hat{\mathcal{V}}\nh,\,\hat{\mathcal{H}}^\mp\nnh,
\hat{\mathcal{H}}^\bullet\nh,\hs\hat u\,\mathrm{\ to\ 
}\,\mathcal{V}\nh,\,\mathcal{H}^\mp\nnh,\mathcal{H}^\bullet\nh,\hs u\hh,\\
\mathrm{iii)}&
\mathrm{both\ }\,\da\,\mathrm{\ and\ }\,\mathrm{Exp}^\perp\mathrm{\ act\ 
com\-plex}\hyp\mathrm{lin\-e\-ar\-ly\ on\ }\,\hat{\mathcal{H}}^\mp\mathrm{\ 
and\ }\,\hat{\mathcal{H}}^\bullet\nnh.
\end{array}
\end{equation}
More precisely, $\,\da\,$ (or, $\,\mathrm{Exp}^\perp$) appearing in 
(\ref{dyx}) (or, (\ref{nex})), restricted to $\,\nr'$ (or, 
$\,\nr'\cap\nr^\delta\nnh\sd^\pm$), sends 
$\,\hat{\mathcal{V}}\nh,\,\hat{\mathcal{H}}^\mp\nnh,
\hat{\mathcal{H}}^\bullet\nh,\hs\hat u\,$ onto their restrictions to 
$\,\nr'\cap\nr^\delta\nnh\sd^\pm$ (or, respectively, onto 
$\,\mathcal{V}\nh,\,\mathcal{H}^\mp\nnh,\mathcal{H}^\bullet\nh,\hs u$). The 
claims about $\,\hat{\mathcal{V}}\,$ in (\ref{prs}.i) -- (\ref{prs}.ii) follow 
as $\,\da\,$ clearly preserves each leaf of $\,\hat{\mathcal{V}}\nh$, that is, 
each punctured complex line through $\,0\,$ in the normal space 
$\,\nr\hskip-2.4pt_y\w\sd\,$ at any point $\,y\in\sd$, while, by 
Lemma~\ref{vsbkr}(a), $\,\mathrm{Exp}^\perp$ maps the leaves of 
$\,\hat{\mathcal{V}}\,$ intersected with $\,\nr'\nnh\cap\nr^\delta\nnh\sd^\pm$ 
onto leaves of $\,\mathcal{V}\nh$. This also proves (ii). Next, the 
class of vertical vector fields of Remark~\ref{sends}(b) obviously includes 
$\,\hat u\,$ and, locally, some of them span $\,\hat{\mathcal{H}}^\mp\nh$. 
Remark~\ref{sends} thus yields the remainder of (\ref{prs}.i), while 
(\ref{prs}.iii) for $\,\da\,$ follows from com\-plex-lin\-e\-ar\-i\-ty of the 
$\,\mathrm{D}\hh$-hor\-i\-zon\-tal lift operation (due to 
Lemma~\ref{ddrsq}(i)), and the fact that $\,\da\,$ acts on the vertical vector 
fields in Remark~\ref{sends}(b) as the identity operator. On the other hand, 
(\ref{prs}.ii) in the case of $\,\hat{\mathcal{H}}^\mp$ and 
$\,\hat{\mathcal{H}}^\bullet$ (or, of $\,\hat u$) is an immediate consequence 
of the second (or, third) claim in Theorem~\ref{dcomp}(f). (To be specific, 
for $\,\hat{\mathcal{H}}^\mp$ and $\,\hat{\mathcal{H}}^\bullet$ this is clear 
from Remark~\ref{inequ}(ii) combined with (\ref{hpm}) -- (\ref{tme}).) 
Finally, the com\-plex-lin\-e\-ar\-i\-ty assertion of Theorem~\ref{dcomp}(f) 
implies (\ref{prs}.iii).

By (\ref{prs}), the dif\-feo\-mor\-phism 
$\,\varPhi=\mathrm{Exp}^\perp\nnh\circ\da:\nr'\nh\to\mf'$ maps 
$\,\hat{\mathcal{V}}\nh,\hat{\mathcal{H}}^\mp$ and 
$\,\hat{\mathcal{H}}^\bullet$ onto $\,\mathcal{V}\nh,\mathcal{H}^\mp$ and 
$\,\mathcal{H}^\bullet\nnh$. Proving the theorem is thus reduced to showing 
that
\begin{equation}\label{mps}
\begin{array}{l}
\hat J\,\mathrm{\ and\ }\,\hg\mathrm{,\ on\ each\ of\ the\ three\ summands\ 
}\,\,\hat{\mathcal{V}}\nh,\,\hat{\mathcal{H}}^\mp\mathrm{\ and\ 
}\,\hat{\mathcal{H}}^\bullet\nnh\mathrm{,\ correspond}\\
\mathrm{under\ the\ differential\ }\,d\varPhi\,\mathrm{\ to\ }\,J\,\mathrm{\ 
and\ }\,g\,\mathrm{\ on\ 
}\,\mathcal{V}\nh,\,\mathcal{H}^\mp\mathrm{\ and\ 
}\,\mathcal{H}^\bullet\nnh\mathrm{,\ respectively\nh.}
\end{array}
\end{equation}
To begin with, for $\,\hat Q\,$ as in Section~\ref{ev}, $\,\hat v\,$ given by 
(\ref{lvf}), and our $\,v=\navp$,
\begin{equation}\label{qav}
\varPhi\,\,\mathrm{\ pushes\ }\,\hat Q,\hat u\,\mathrm{\ and\ 
}\,\hat v\,\mathrm{\ forward\ onto\ }\,Q,u\,\mathrm{\ and\ }v\hh.
\end{equation}
In the case of $\,\hat Q\,$ this amounts to $\,Q\circ\varPhi=\hat Q$, which is 
a trivial consequence of (\ref{vcf}) and the fact that $\,\hat Q\,$ was 
defined in Section~\ref{ev} to be the same function of $\,\hat\vt\,$ as 
$\,Q\,$ is of $\,\vt$. For $\,\hat u$, (\ref{qav}) follows from (\ref{prs}) 
and (\ref{phe}). Next, any integral curve of $\,\hat v\,$ in 
$\,\nr\hskip-2.4pt_y\w\smallsetminus\{0\}\,$ has, up to a shift of the 
parameter, the form $\,r\mapsto(y,e^{\mp\hh ar}\xi)$ with a unit vector 
$\,\xi\in\nr\hskip-2.4pt_y\w$, so that 
$\,\da(y,e^{\mp\hh ar}\xi)=(y,\sigma\hs\xi)$, where in addition to the curve 
parameter $\,r\nh$, two more real variables are used: 
$\,\hr=e^{\mp\hs ar}\nnh$, 
and $\,\sigma\,$ related to $\,\hr\,$ via (\ref{dsr}). The chain rule thus 
yields 
$\,d\sigma/dr=\mp\hh a\hr\,d\sigma\nh/\hn d\nh\hr=\mp\hh Q^{\hs1\nh/2}\nh$, 
while $\,\varPhi(y,e^{\mp\hh ar}\xi)=x(\nh\sigma\nh)\,$ for 
$\,x(\nh\sigma\nh)=\exp_y\w\sigma\xi$. Since 
$\,\sigma\mapsto x(\nh\sigma\nh)\,$ is a unit-speed geodesic, (\ref{vtg}) and 
(\ref{dxe}) give $\,d\hs[x(\nh\sigma\nh)]/d\sigma=\mp\hs Q^{\hs1\nh/2}\nh$, 
with $\,Q\,$ evaluated at $\,x(\nh\sigma\nh)$, and the sign factor arising 
from (\ref{tmm}), as $\,v=\navp$. Applying the chain rule again, we obtain 
$\,d\hs[x(\nh\sigma\nh)]/dr=v\nh_{x(\nh\sigma\nh)}\w$ and, consequently, 
(\ref{qav}).

The claim made in (\ref{mps}) about 
$\,\hat{\mathcal{V}}=\mathrm{Span}\hs(\hat v,\hat u)\,$ and 
$\,\mathcal{V}=\mathrm{Span}\hs(\hn v,u)\,$ is now obvious from (\ref{qav}) 
and (\ref{gvv}) along with (\ref{ana}). 

For the remaining two pairs of summands, (\ref{mps}) in the case of 
$\,\hat J,J\,$ (or, $\,\hg,g$)  is a direct consequence of (\ref{prs}) and 
(\ref{phe}) (or, respectively, of (i) -- (ii) in Remark~\ref{inequ} along with 
parts (h2) -- (h3) of Theorem~\ref{dcomp}, (\ref{vcf}) and (\ref{hgv})). Note 
that, by (\ref{dyx}), $\,\da\,$ leaves $\,\xi/\hn|\hh\xi|\,$ unchanged, while 
$\,\hr=|\hh\xi|\,$ in (h2).

Finally, if $\,T\nnh\mf'=\,\mathcal{V}\hskip1.2pt\oplus\hs\mathcal{H}^+\nnh
\oplus\hs\mathcal{H}^-$ in (\ref{tme}), Remark~\ref{inequ}(ii) allows us to 
use (h1) in Theorem~\ref{dcomp}, instead of (h2), obtaining (\ref{alt}).
\end{proof}
\begin{corollary}\label{totgd}{\medit 
Suppose that\/ $\,(\mf\nh,g,\vt)\,$ is a compact ge\-o\-des\-ic-gra\-di\-ent 
K\"ah\-ler triple. Then, for $\,\mathcal{V}\,$ and\/ $\,\mathcal{H}^\pm$ 
appearing in\/ {\rm(\ref{tme})}, with either sign\/ $\,\pm\hs$, the 
distribution\/ $\,\mathcal{V}\oplus\mathcal{H}^\pm$ is in\-te\-gra\-ble and 
its leaves are totally geodesic in\/ $\,(\mf'\nh,g)$.
}
\end{corollary}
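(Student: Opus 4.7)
The plan is to identify the leaves of $\mathcal{V}\oplus\mathcal{H}^\pm$ with punctured fibres of the normal bundle of $\sd^\mp$, via the biholomorphic isometry furnished by Theorem~\ref{first} applied to the sign opposite to the one in the conclusion, and then read off the total-geodesic property from Theorem~\ref{ehggk}(b).

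By the second equality of~(\ref{hpm}), the distribution $\mathcal{V}\oplus\mathcal{H}^\pm$ on $\mf'$ is the restriction of $\mathrm{Ker}\,d\pi^\mp$, where $\pi^\mp\colon\mf\smallsetminus\sd^\pm\to\sd^\mp$ is the disk-bundle projection of~(\ref{dbp}). Since $\pi^\mp$ is a submersion, its vertical distribution is integrable with leaves the connected components of its fibres; intersecting with $\mf'$ removes only the points of the zero section, so $\mathcal{V}\oplus\mathcal{H}^\pm$ is integrable and its leaves in $\mf'$ are exactly these punctured fibres. Next, apply Theorem~\ref{first} with the sign $\mp$ in place of $\pm$. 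This provides a biholomorphism $\varPhi^\mp\colon\nr\sd^\mp\to\mf\smallsetminus\sd^\pm$ realizing an isomorphism of geodesic-gradient K\"ahler triples (Definition~\ref{ggktr}) with $(\mf\smallsetminus\sd^\pm,g,\vt)$; in particular $\varPhi^\mp$ is a biholomorphic isometry, and by~(\ref{pcf}) one has $\pi^\mp\circ\varPhi^\mp=$ the normal-bundle projection $\nr\sd^\mp\to\sd^\mp$, so $\varPhi^\mp$ sends each fibre of $\nr\sd^\mp$ onto the corresponding fibre of $\pi^\mp$.

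By Theorem~\ref{ehggk}(b), the fibres of $\nr\sd^\mp$ are totally geodesic in $(\nr\sd^\mp,\hg)$ for the K\"ahler metric $\hg$ constructed in Section~\ref{ev}; since an isometry preserves totally geodesic submanifolds, their $\varPhi^\mp$-images -- the fibres of $\pi^\mp$ -- are totally geodesic in $(\mf\smallsetminus\sd^\pm,g)$, whence so are their open pieces that constitute the leaves of $\mathcal{V}\oplus\mathcal{H}^\pm$ in $\mf'$. No substantial obstacle remains once the structural Theorems~\ref{ehggk} and~\ref{first} are invoked; the only point that must be handled with care is the swap of sign, because the totally geodesic fibres available for transport are those of $\nr\sd^\mp$, whereas the distribution in the conclusion is $\mathrm{Ker}\,d\pi^\mp$, forcing an application of Theorem~\ref{first} to $\sd^\mp$ rather than $\sd^\pm$.
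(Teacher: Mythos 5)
Your proof is correct and follows essentially the same route as the paper, whose entire argument for this corollary is to invoke Theorem~\ref{first} together with Theorem~\ref{ehggk}(b), using (\ref{hpm}) for integrability. Your careful tracking of the sign swap (identifying $\mathcal{V}\oplus\mathcal{H}^\pm$ with $\mathrm{Ker}\hskip2.3ptd\pi^\mp$ and hence transporting the totally geodesic fibres of $\nr\hskip-2.3pt\sd^\mp$ via $\varPhi^\mp$) is exactly the intended reading.
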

\begin{proof}Use Theorem~\ref{first} and Theorem~\ref{ehggk}(b) (or -- for 
in\-te\-gra\-bil\-i\-ty -- (\ref{hpm})).
\end{proof}

\section{Immersions of complex projective spaces\done}\label{ic}
\setcounter{equation}{0}
In the next result the inclusions 
$\,\nr\hskip-2.4pt_y\w
\subseteq\mathrm{P}(\bbC\times\nnh\nr\hskip-2.4pt_y\w)\,$ 
and $\,\mathrm{P}\nnh\nr\hskip-2.4pt_y\w
\subseteq\mathrm{P}(\bbC\times\nnh\nr\hskip-2.4pt_y\w)\,$ 
come from the standard identification (\ref{inc}) for 
$\,\vs=\nr\hskip-2.4pt_y\w$, where $\,y\in\sd^\pm\nh$. Let us also note 
that, by (\ref{hpm}) and Corollary~\ref{totgd}, the restriction to the normal 
space 
$\,\nr\hskip-2.4pt_y\w=\nr\hskip-2.4pt_y\w\sd^\pm\nh
\subseteq\nr\hskip-2.3pt\sd^\pm$ 
of the bi\-hol\-o\-mor\-phism 
$\,\varPhi:\nr\hskip-2.3pt\sd^\pm\nnh\to\mf\nnh\smallsetminus\nh\sd^\mp$ (see 
Theorem~\ref{first}) constitutes
\begin{equation}\label{tgh}
\mathrm{a\ totally\ geodesic\ hol\-o\-mor\-phic\ embedding\ 
}\,\,\,\varPhi:\nr\hskip-2.4pt_y\w\to\mf\nnh\smallsetminus\nh\sd^\mp\nh.
\end{equation}
\begin{theorem}\label{tgimm}{\medit 
Given a compact ge\-o\-des\-ic-gra\-di\-ent K\"ah\-ler triple\/ 
$\,(\mf\nh,g,\vt)\,$ and a fixed sign\/ $\,\pm\hs$, let\/ $\,y\,$ be a point 
of the critical manifold $\,\sd^\pm\nh$. Then the following conclusions hold.
\begin{enumerate}
  \def\theenumi{{\rm\alph{enumi}}}
\item[{\rm(a)}] The embedding\/  
$\,\varPhi:\nr\hskip-2.4pt_y\w\to\mf\nnh\smallsetminus\nh\sd^\mp\,$ with\/ 
{\rm(\ref{tgh})} has an extension to a totally geodesic hol\-o\-mor\-phic 
immersion\/ $\,\vps:\mathrm{P}(\bbC\times\nnh\nr\hskip-2.4pt_y\w)\to\mf\nh$.
\item[{\rm(b)}] The mapping\/ $\,\vps\,$ in\/ {\rm(a)} restricted to the 
projective hyper\-plane\/ $\,\mathrm{P}\nnh\nr\hskip-2.4pt_y\w
\subseteq\mathrm{P}(\bbC\times\nnh\nr\hskip-2.4pt_y\w)$ at infinity is a totally 
geodesic hol\-o\-mor\-phic immersion\/ 
$\,F:\mathrm{P}\nnh\nr\hskip-2.4pt_y\w\to\sd^\mp\nh$, and the metric that it 
induces on\/ $\,\mathrm{P}\nnh\nr\hskip-2.4pt_y\w$ equals\/ 
$\,2(\tp\hskip-2.2pt-\nh\tm)/\hn a\,$ times the Fu\-bi\-ni-Stu\-dy metric, 
cf.\ Remark\/~{\rm\ref{fbstm}}, arising from the inner product\/ $\,g_y\w$ 
in\/ $\,\nr\hskip-2.4pt_y\w\hh$, \,for\/ $\,a,\tpm$ as in 
Remark\/~{\rm\ref{ascdt}(i)},
\item[{\rm(c)}] the images of the immersion\/ 
$\,F:\mathrm{P}\nnh\nr\hskip-2.4pt_y\w\to\sd^\mp$ in\/ {\rm(b)} and of its 
differential at any point\/ $\,\bbC\xi$, where\/ 
$\,(y,\xi)\in\nr\hskip-2.3pt\sd^\pm$ and\/ $\,\xi\ne0$, coincide with the\/ 
$\,\pi^\mp\hskip-2pt$-im\-a\-ge of the leaf of\/ 
$\,\mathrm{Ker}\hskip2.3ptd\pi^{\hs\pm}$ in\/ $\,\mf'\nh$  passing through\/ 
$\hs\,x=\varPhi(y,\xi)\,$ and, respectively, with the subspace\/ 
$\,d\pi\nh_x^\mp\nh(\mathcal{H}_x^\mp)
=d\pi\nh_x^\mp\nh(\mathcal{V}_{\nh x}\w\oplus\mathcal{H}_x^\mp)\,$ of\/ 
$\,\tyb^\mp\nh$. 
\end{enumerate}
}
\end{theorem}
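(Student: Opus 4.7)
The plan is to extend $\varPhi|_{\nr\hskip-2.4pt_y\w}$ to a holomorphic map $\vps$ on the projective compactification, identify the pull-back $\vps\sk g$ with the compactified SKRP metric supplied by Remark~\ref{fbstu}, and then conclude that $\vps$ is totally geodesic by a continuity argument on its second fundamental form.

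First, I would define $F\colon\mathrm{P}\nnh\nr\hskip-2.4pt_y\w\to\sd^\mp$ by $F(\bbC\xi)=\exp_y\w(\delta\xi/|\xi|)$ for $\xi\ne0$. Independence of the representative of $\bbC\xi$ follows from Lemma~\ref{vsbkr}(c), which yields $\pi^\mp\nh(\exp_y\w(z\zeta))=F(\bbC\zeta)$ for any unit $\zeta\in\nr\hskip-2.4pt_y\w$ and any $z$ with $0<|z|\le\delta$, combined with the vanishing of $u=J\nh v$ on $\sd^\mp$ (Remark~\ref{ascdt}(iv)) and the identity $\pi^\mp|_{\sd^\mp}=\mathrm{Id}$: the resulting $u$-orbit in $\sd^\mp$ collapses to a point, so $\exp_y\w(\delta w\zeta)$ is independent of $w\in S^1$. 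Using (\ref{inc}), set $\vps(v)=\varPhi(y,v)$ for $v\in\nr\hskip-2.4pt_y\w$ and $\vps(\bbC\xi)=F(\bbC\xi)$ on the hyperplane at infinity. Continuity at infinity is straightforward from Remark~\ref{fbstu}: if $\xi_n\in\nr\hskip-2.4pt_y\w$ converges to $\bbC\xi_0$ in $\mathrm{P}(\bbC\nh\times\nnh\nr\hskip-2.4pt_y\w)$, then $|\xi_n|\to\infty$ and $\xi_n/|\xi_n|\to\xi_0/|\xi_0|$, while $\sigma(|\xi_n|)\to\delta$ by (\ref{dsr})--(\ref{int}), yielding $\vps(\xi_n)=\exp_y\w(\sigma(|\xi_n|)\xi_n/|\xi_n|)\to F(\bbC\xi_0)$. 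Holomorphicity of $F$ follows by observing that the map $\xi\mapsto\pi^\mp\nh(\varPhi(y,\xi))$ on $\nr\hskip-2.4pt_y\w\smallsetminus\{0\}$ is holomorphic (Theorems~\ref{first}(i) and~\ref{dcomp}(c)) and, by the well-definedness argument, invariant under scaling by $\bbC\smallsetminus\{0\}$, hence descends to a holomorphic map on $\mathrm{P}\nnh\nr\hskip-2.4pt_y\w$ coinciding with $F$. Remark~\ref{hlext} then delivers holomorphicity of $\vps$ throughout $\mathrm{P}(\bbC\nh\times\nnh\nr\hskip-2.4pt_y\w)$.

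Next I would identify $\vps\sk g$ globally. By Theorem~\ref{first}, $\varPhi|_{\nr\hskip-2.4pt_y\w}$ is an isometric embedding relative to the fibre metric $\hg$ of Section~\ref{ev}; by Remark~\ref{fbstu}, $\hg$ extends across the hyperplane at infinity to a compactified Kähler metric $\tilde g$ on $\mathrm{P}(\bbC\nh\times\nnh\nr\hskip-2.4pt_y\w)$ whose restriction to $\mathrm{P}\nnh\nr\hskip-2.4pt_y\w$ equals $2(\tp\hskip-2.2pt-\nh\tm)/a$ times the Fubini--Study metric induced by $\lr=g_y\w$. The smooth symmetric tensors $\vps\sk g$ and $\tilde g$ agree on the dense open subset $\nr\hskip-2.4pt_y\w$, so they coincide on all of $\mathrm{P}(\bbC\nh\times\nnh\nr\hskip-2.4pt_y\w)$; in particular, $\vps$ is an isometric holomorphic immersion. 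Its second fundamental form, a smooth tensor, vanishes on $\nr\hskip-2.4pt_y\w$ by (\ref{tgh}), hence vanishes identically, proving~(a). Applying Remark~\ref{ttgim} with $\sd=\sd^\mp$ (totally geodesic by Remark~\ref{ascdt}(iii)) and $\varLambda=\mathrm{P}\nnh\nr\hskip-2.4pt_y\w$ yields~(b), the metric assertion being precisely the identification of $\tilde g|_\varLambda$ above.

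For~(c), the factorization $F\circ[\mathrm{proj}]=\pi^\mp\nh\circ\varPhi|_{\nr\hskip-2.4pt_y\w\smallsetminus\{0\}}$, together with the equality of the leaf of $\mathrm{Ker}\hskip2.3ptd\pi^{\hs\pm}$ through $x=\varPhi(y,\xi)$ with $\varPhi(\nr\hskip-2.4pt_y\w\smallsetminus\{0\})$, immediately gives the image claim. Differentiating the factorization at $\xi$ and using that $d\varPhi_{(y,\xi)}$ sends $\bbC\xi$ onto $\mathcal{V}_x$ and $\xi^\perp$ onto $\mathcal{H}^\mp_x$ (by orthogonality and (\ref{hpm})), while $d\pi^\mp_x$ annihilates $\mathcal{V}_x$ and is injective on $\mathcal{H}^\mp_x$ (Remark~\ref{inequ}(iii)), then identifies $dF_{\bbC\xi}(T_{\bbC\xi}\mathrm{P}\nnh\nr\hskip-2.4pt_y\w)=d\pi^\mp_x(\mathcal{H}^\mp_x)=d\pi^\mp_x(\mathcal{V}_{\nh x}\w\oplus\mathcal{H}^\mp_x)$, as required. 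The chief obstacle in this scheme is the passage across the hyperplane at infinity: the smoothness of $\vps$ delivered by Remark~\ref{hlext} is essential in order for $\vps\sk g$ and its second fundamental form to be defined there, and without it the argument would have to be replaced by a direct Jacobi-field computation based on Theorem~\ref{dcomp}(f), verifying both injectivity of $d\vps$ at infinity and the prescribed Fubini--Study identity.
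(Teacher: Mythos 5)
Your proposal is correct and, in most of its steps, follows the same route as the paper: $F$ is obtained by descending $\pi^\mp\nnh\circ\varPhi\,$ along the punctured complex lines through $\,0\,$ (using (\ref{pex})), $\,\vps\,$ is glued from $\,\varPhi\,$ and $\,F\nh$, continuity at infinity is checked via the limit $\,\sigma\to\delta$, hol\-o\-mor\-phic\-i\-ty comes from Remark~\ref{hlext}, total geodesy from density of $\,\nr\hskip-2.4pt_y\w$ together with (\ref{tgh}), and (b) from Remarks~\ref{fbstu}, \ref{ascdt}(iii) and~\ref{ttgim}. The one place where you genuinely diverge is the proof that $\,\vps\,$ is an immersion along the hyper\-plane at infinity. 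The paper argues geometrically: it first gets the immersion property of $\,F\,$ from a con\-stant-rank computation for $\,\pi^\mp$ restricted to the leaf $\,\varPhi(\nr\hskip-2.4pt_y\w\smallsetminus\{0\})\,$ of $\,\mathrm{Ker}\hskip2.3ptd\pi^\pm\nh$, and then restricts $\,\vps\,$ to the projective lines joining $\,\bbC(1,0)\,$ to points of $\,\mathrm{P}\nnh\nr\hskip-2.4pt_y\w$, showing via Lemma~\ref{vsbkr} and Remark~\ref{known}(b) that these restrictions are embeddings whose images meet $\,\sd^\mp$ orthogonally, which yields injectivity of $\,d\vps\,$ at infinity. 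You instead identify $\,\vps\sk g\,$ with the compactified Class~2 SKRP metric of Remark~\ref{fbstu} by continuity from the dense subset $\,\nr\hskip-2.4pt_y\w$, and read off positive definiteness of $\,\vps\sk g$, hence injectivity of $\,d\vps$, everywhere at once. Both arguments are sound; yours is shorter and also delivers the metric assertion of (b) in the same stroke, but it leans entirely on the extendability of $\,\hg\,$ across infinity behind Remark~\ref{fbstu} (cited from Derdzinski--Maschler), which the paper needs only for the metric identification in (b), its immersion argument being self-contained. Your treatment of (c) -- differentiating the factorization $\,F\circ[\mathrm{proj}]=\pi^\mp\nnh\circ\varPhi\,$ and using where $\,d\varPhi\,$ sends the radial direction and its orthogonal complement, together with Remark~\ref{inequ}(iii) -- matches what the paper extracts from its con\-stant-rank discussion.
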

\begin{proof}As a consequence of Theorems~\ref{first} and~\ref{dcomp}(c), the 
composite $\,\pi^\mp\nnh\circ\varPhi\,$ maps
$\,\nr\hskip-2.3pt\sd^\pm\nnh\smallsetminus\nh\sd^\pm$ (the complement of the
zero section in $\,\nr\hskip-2.3pt\sd^\pm$) hol\-o\-mor\-phic\-al\-ly into
$\,\sd^\mp\nh$. The restriction of $\,\pi^\mp\nnh\circ\varPhi\,$ to
$\,\nr\hskip-2.4pt_y\w\smallsetminus\{0\}
\subseteq\nr\hskip-2.3pt\sd^\pm\nnh\smallsetminus\nh\sd^\pm\nh$, being, by (\ref{pex}) 
and (\ref{phe}), constant on each punctured complex line through $\,0$, thus 
descends to
\begin{equation}\label{dsc}
\mathrm{a\ hol\-o\-mor\-phic\ immersion\ 
}\,\,\,F:\mathrm{P}\nnh\nr\hskip-2.4pt_y\w\to\sd^\mp,
\end{equation}
where the immersion property of $\,F\,$ is an immediate consequence of the 
fact, established below, that both 
$\,\pi^\mp\nnh:\varPhi\hs(\nh\nr\hskip-2.4pt_y\w\smallsetminus\{0\}\nnh)
\to\sd^\mp$ and 
$\,\pi^\mp\nnh\circ\varPhi:\nr\hskip-2.4pt_y\w\smallsetminus\{0\}\to\sd^\mp$ 
have constant (complex) rank, equal to 
$\,\dimc\nh\nr\hskip-2.4pt_y\w\hs-\hs1$. As $\,\varPhi\,$ is a 
bi\-hol\-o\-mor\-phism, it suffices to verify this last claim for the former 
mapping; we do it noting that 
$\,\varPi=\varPhi\hs(\nh\nr\hskip-2.4pt_y\w\smallsetminus\{0\}\nnh)$ 
coincides with the $\,\pi^\pm\nh$-pre\-im\-age of $\,y\,$ (due to (\ref{pcf}) 
and Remark~\ref{ptnrs}), and hence forms a leaf of 
$\,\mathrm{Ker}\hskip2.3ptd\pi^\pm\nh
=\hs\mathcal{V}\hs\oplus\mathcal{H}^\mp$ restricted to $\,\mf'\nh$, cf.\ 
(\ref{hpm}). That $\,\pi^\mp:\varPi\to\sd^\mp$ satisfies the required 
rank condition is now clear: the kernel of its differential at any point 
$\,x\,$ coincides, by (\ref{hpm}) and (\ref{tme}), with 
$\,\mathcal{V}_{\nh x}\w$, while $\,\mathcal{V}=\mathrm{Span}\hs(\hn v,u)$.

The mapping $\,\vps:\mathrm{P}(\bbC\times\nnh\nr\hskip-2.4pt_y\w)\to\mf\nh$, 
equal to $\,\varPhi\,$ on $\,\nr\hskip-2.4pt_y\w$ and to $\,F\,$ on 
$\,\mathrm{P}\nnh\nr\hskip-2.4pt_y\w$, is continuous. Namely, if it were not, 
we could pick a sequence 
$\,\xi\nh_j\w\in\nr\hskip-2.4pt_y\w$, $\,j=1,2,\dots\hs$, such that 
$\,|\hs\xi\nh_j\w|\to\infty\,$ and $\,\xi\nh_j\w/|\hs\xi\nh_j\w|\to\xi\,$ as 
$\,j\to\infty\,$ for some unit vector $\,\xi\in\nr\hskip-2.4pt_y\w$, while no 
sub\-se\-quence of the image sequence $\,\vps(\xi\nh_j\w)\,$ tends to 
$\,F(\bbC\xi)$. The resulting limit relation 
$\,\sigma\hskip-3pt_j\w\to\delta$, where $\,\sigma\hskip-3pt_j\w$ corresponds 
to $\,\hr\nnh_j\w=|\hs\xi\nh_j\w|\,$ as in the line preceding (\ref{dsr}), 
combined with (\ref{phe}), now gives 
$\,\vps(\xi\nh_j\w)=\varPhi(\xi\nh_j\w)
=\mathrm{Exp}^\perp\nh(y,
\sigma\hskip-3pt_j\w\hs\xi\nh_j\w\nh/\hskip-2.3pt\hr\nnh_j\w\nnh)$ 
which -- due to continuity of $\,\mathrm{Exp}^\perp$ and (\ref{pex}) -- 
converges to $\,\mathrm{Exp}^\perp\nh(y,\delta\xi)=y_\mp\w$, for a specific 
point $\,y_\mp\w$. However, (a) -- (b) in Lemma~\ref{vsbkr} and the definition 
of $\,F\,$ also give $\,y_\mp\w=F(\bbC\xi)$, which contradicts our choice of 
$\,\xi\nh_j\w$, proving continuity of $\,\vps$.

Hol\-o\-mor\-phic\-i\-ty of $\,\vps\,$ is now obvious from Remark~\ref{hlext} 
applied to $\,\varPi=\mathrm{P}(\bbC\times\nnh\nr\hskip-2.4pt_y\w)$ and its 
co\-di\-men\-sion-one complex sub\-man\-i\-fold
$\,\varLambda=\mathrm{P}\nnh\nr\hskip-2.4pt_y\w$. Furthermore,
\begin{equation}\label{imm}
\vps\,\,\,\mathrm{is\ an\ immersion}.
\end{equation}
To see this, first note that $\,\vps\,$ has two restrictions, $\,F\,$ to 
$\,\mathrm{P}\nnh\nr\hskip-2.4pt_y\w$ and $\,\varPhi\,$ to the dense open 
sub\-man\-i\-fold $\,\nr\hskip-2.4pt_y\w$, already known to be immersions, 
the former into $\,\sd^\mp\nnh$, cf.\ (\ref{tgh})  -- (\ref{dsc}). Next, for 
any unit vector $\,\xi\in\nr\hskip-2.4pt_y\w$, if $\,\varLambda\hn'$ denotes 
the projective line in 
$\,\mathrm{P}(\bbC\times\nnh\nr\hskip-2.4pt_y\w)\,$ joining $\,\bbC(1,0)\,$ to the 
point $\,\bbC\xi\in\mathrm{P}\nnh\nr\hskip-2.4pt_y\w$ (identified via 
(\ref{inc}) with $\,\bbC(0,\xi)$), then the restriction of $\,\vps\,$ to 
$\,\varLambda\hn'$ is an embedding with the image 
$\,\varLambda=\vps(\varLambda\hn'\hh)\,$ forming a complex sub\-man\-i\-fold 
of $\,\mf\nh$, bi\-hol\-o\-mor\-phic to $\,\bbCP^1\nh$, and intersecting each 
of $\,\sd^+$ and $\,\sd^-$ orthogonally at a single point. In fact, 
Lemma~\ref{vsbkr} yields all the claims just made except the `embedding' 
property; we obtain the latter from Remark~\ref{known}(b), which we use to 
conclude that the resulting hol\-o\-mor\-phic mapping 
$\,\vps\nnh:\varLambda\hn'\hn\to\varLambda$, being injective (since so is 
$\,\varPhi$), must be a bi\-hol\-o\-mor\-phism. Now (\ref{imm}) follows.

For obvious reasons of continuity, (\ref{tgh}) implies that the 
hol\-o\-mor\-phic immersion 
$\,\vps:\mathrm{P}(\bbC\times\nnh\nr\hskip-2.4pt_y\w)\to\mf\,$ is totally 
geodesic, which establishes (a). Finally, Remarks~\ref{fbstu}, 
\ref{ascdt}(iii), \ref{ttgim} and Theorem~\ref{first} give rise to (b), 
completing the proof.
\end{proof}
\begin{remark}\label{codim}For $\,m,d_\pm\w,k_\pm\w,q\,$ as in 
Remark~\ref{dppdm}, the co\-di\-men\-sion 
$\,\dimc\nh\sd^\mp\nnh-\hh\dimc\nh\nr\hskip-2.4pt_y\w$ of the immersion 
$\,F\,$ in Theorem~\ref{tgimm}(b) equals $\,q$. In fact, 
$\,\dimc\nh\nr\hskip-2.4pt_y\w=m-d_\pm\w\nh-1$, and so, by (\ref{dmq}), 
$\,\dimc\nh\sd^\mp\nh-\dimc\nh\nr\hskip-2.4pt_y\w=(m-d_\pm\w\nh-1)-d_\mp\w=q$.
\end{remark}
\begin{remark}\label{spmcp}Suppose that the distribution $\,\mathcal{H}\,$ in 
(\ref{tme}) is \hbox{$\,0$-}\hskip0ptdi\-men\-sion\-al or, in other words, 
$\,T\nnh\mf'\,=\,\,\mathcal{V}\hskip1.2pt\oplus\hs\mathcal{H}^+\nnh
\oplus\hs\mathcal{H}^-\nnh$. Then, for either sign $\,\pm\hs$, the critical 
manifold $\,\sd^\pm\nh$, with its sub\-man\-i\-fold metric, must be 
bi\-hol\-o\-mor\-phic\-al\-ly isometric to a complex projective space carrying 
the Fu\-bi\-ni-Stu\-dy metric multiplied by 
$\,2(\tp\hskip-2.2pt-\nh\tm)/\hn a\hh$.

In fact, the isometric immersion $\,F\,$ of Theorem~\ref{tgimm}(b), having 
co\-di\-men\-sion zero (cf.\ Remark~\ref{codim}), is necessarily a 
bi\-hol\-o\-mor\-phism (Remark~\ref{ftcvr}).
\end{remark}

\section{Consequences of condition {\rm(\ref{spn})}\done}\label{ch}
\setcounter{equation}{0}
The results stated and proved below use Definition~\ref{ggktr}, the notations 
of (\ref{sym}), (\ref{dbp}), (\ref{hpm}), and the notion of projectability 
introduced in Section~\ref{pd}.
\begin{lemma}\label{intpr}{\medit 
For a compact ge\-o\-des\-ic-gra\-di\-ent K\"ah\-ler triple\/ 
$\,(\mf\nh,g,\vt)$, the following three conditions are mutually equivalent.
\begin{enumerate}
  \def\theenumi{{\rm\alph{enumi}}}
\item[{\rm(i)}] The distribution\/ 
$\,\hs\zy=\mathcal{V}\hs\oplus\mathcal{H}^+\hskip-3pt\oplus\mathcal{H}^-$ on\/ 
$\,\mf'\hh$ is in\-te\-gra\-ble.
\item[{\rm(ii)}] $\mathrm{Ker}\hskip2.3ptd\pi^-\hs
=\,\hs\mathcal{V}\hs\oplus\mathcal{H}^+\,$ is\/ 
$\,\pi^+\hskip-2pt$-pro\-ject\-able.
\item[{\rm(iii)}] $\mathrm{Ker}\hskip2.3ptd\pi^+\hs
=\,\hs\mathcal{V}\hs\oplus\mathcal{H}^-\,$ is\/ 
$\,\pi^-\hskip-2pt$-pro\-ject\-able.
\end{enumerate}
In\/ {\rm(ii)} -- {\rm(iii)} one may also replace\/ 
$\,\mathcal{V}\hs\oplus\mathcal{H}^\pm\,$ by\/ $\,\mathcal{H}^\pm$ or\/ 
$\,\zy$. If\/ {\rm(i)} -- {\rm(iii)} hold, then\/{\rm:}
\begin{enumerate}
  \def\theenumi{{\rm\alph{enumi}}}
\item[{\rm(iv)}] The immersions of Theorem\/~{\rm\ref{tgimm}(c)} are all 
embeddings.
\item[{\rm(v)}] The\/ $\,\pi^\pm\hskip-1.2pt$-im\-a\-ges\/ $\,\zy^\pm\nnh$ of 
the in\-te\-gra\-ble distribution\/ $\,\hs\zy\,$ on\/ $\,\mf'$ are 
in\-te\-gra\-ble hol\-o\-mor\-phic distributions on\/ $\,\sd^\pm$ and have 
totally geodesic leaves bi\-hol\-o\-mor\-phic\-al\-ly isometric to complex 
projective spaces carrying\/ $\,2(\tp\hskip-2.2pt-\nh\tm)/\hn a\,$ times the 
Fu\-bi\-ni-Stu\-dy metric, cf.\ Theorem\/~{\rm\ref{tgimm}(b)}. These leaves 
coincide with the images of the embeddings in\/ {\rm(iv)}, and form the fibres 
of hol\-o\-mor\-phic bundle projections\/ 
$\,\mathrm{pr}^\pm\nnh:\sd^\pm\nnh\to\sb^\pm\,$ for some compact complex base 
manifolds\/ $\,\sb^\pm\nnh$.
\item[{\rm(vi)}] The summand\/ $\,\mathcal{H}\,$ in\/ {\rm(\ref{tme})} is\/ 
$\,\pi^\pm\hskip-1.2pt$-pro\-ject\-able and its\/ 
$\,\pi^\pm\hskip-1.2pt$-im\-a\-ge coincides with the orthogonal complement 
of\/ $\,\zy^\pm$ in\/ $\,T\nnh\sd^\pm\nh$.
\item[{\rm(vii)}] The leaf space\/ $\,\sb=\mf'\hskip-3pt/\hskip-1.8pt\zy\,$ 
admits a unique structure of a compact complex manifold such that the 
quo\-tient projection\/ $\,\mf'\nh\to\mf'\hskip-3pt/\hskip-1.8pt\zy\,$ 
constitutes a hol\-o\-mor\-phic fibration while, for either sign\/ $\,\pm\,$ 
and\/ $\,\mathrm{pr}^\pm\nnh:\sd^\pm\nnh\to\sb^\pm\,$ as in\/ {\rm(iv)}, the 
mapping\/ $\,\sb\to\sb^\pm\nnh$, sending each leaf 
of\/ $\,\zy\,$ to its image under\/ $\,\mathrm{pr}^\pm\nnh\circ\pi^\pm\nnh$, 
is a bi\-hol\-o\-mor\-phism.
\item[{\rm(viii)}] There exists a unique hol\-o\-mor\-phic bundle projection\/ 
$\,\pi:\mf\to\sb\,$ with\/ $\,\mathrm{Ker}\hskip2.3ptd\pi=\zy$ on\/ 
$\,\mf'$ such that, for both signs\/ $\,\pm\hs$, the restriction of\/ 
$\,\pi\,$ to\/ $\,\mf'$ equals\/ 
$\,\beta^\pm\nnh\circ\mathrm{pr}^\pm\nnh\circ\pi^\pm\nh$, where\/ 
$\,\beta^\pm$ is the inverse of the bi\-hol\-o\-mor\-phism\/ 
$\,\sb\to\sb^\pm$ in\/ {\rm(vii)}.
\item[{\rm(ix)}] $R^{\mathrm{D}}\nh(\hn w,w\hh'\hh)\,
=\,-ia\hh(\tp\hskip-2.2pt-\nh\tm)^{-\nnh1}h(J\nh w,w\hh'\hh):\nr\nh\to\nr\nnh$, 
with the notation of\/ {\rm(\ref{nwt})}, for the sub\-man\-i\-fold metric\/ 
$\,h\,$ of\/ $\,\sd^\pm\nh$, the normal connection\/ $\,\mathrm{D}\,$ in its 
normal bundle\/ $\,\nr=\nr\hskip-2.3pt\sd^\pm\nh$, any vector field\/ 
$\,w\hh'$ on\/ $\,\sd^\pm\nh$, and any section\/ $\,w\,$ of\/ $\,\zy^\pm\nh$, 
cf.\ {\rm(v)}.
\end{enumerate}
}
\end{lemma}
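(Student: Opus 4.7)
The plan is to treat the equivalences (i)--(iii) first, then derive the structural consequences (iv)--(viii) by repeated appeals to the foliation/bundle machinery already set up, and finally verify (ix) by a direct curvature computation.

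For (i) $\Leftrightarrow$ (ii) $\Leftrightarrow$ (iii), I would apply Lemma~\ref{intgr} to $\ez^+=\mathrm{Ker}\hskip2.3ptd\pi^-$ and $\ez^-=\mathrm{Ker}\hskip2.3ptd\pi^+$. Both are integrable, being vertical distributions of the bundle projections $\pi^\mp\nh$, their span equals $\zy=\mathcal{V}\hs\oplus\mathcal{H}^+\hs\oplus\mathcal{H}^-$ by (\ref{hpm}) and (\ref{tme}), and $\zy$ has constant dimension. Lemma~\ref{intgr} therefore gives the three equivalences immediately. The replacement statement (that $\mathcal{H}^\pm$ or $\zy$ itself may be used in (ii) and (iii)) follows because $\mathcal{V}\subseteq\mathrm{Ker}\hskip2.3ptd\pi^\pm$ by Lemma~\ref{vsbkr}, so $\mathcal{V}\,$ is trivially $\pi^\pm\hskip-1.2pt$-pro\-ject\-able (with zero image), and adjoining or removing $\mathcal{V}$ from the other summands does not affect pro\-ject\-abil\-i\-ty.

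Under (i)--(iii), I would next produce the distributions $\zy^\pm$ on $\sd^\pm$ as the $\pi^\pm\hskip-1.2pt$-im\-a\-ges of $\zy$, whose integrability and the integrability of $\zy^\pm$ come from the final clause of Lemma~\ref{intgr}, and whose hol\-o\-mor\-phic\-i\-ty comes from hol\-o\-mor\-phic\-i\-ty of $\pi^\pm$ (Theorem~\ref{dcomp}(c)). The leaf of $\zy^\mp$ through $y\in\sd^\mp$ coincides with $\pi^\mp(\varPi^\pm)$ for a leaf $\varPi^\pm$ of $\mathrm{Ker}\hskip2.3ptd\pi^\pm$ over $y$, which by Theorem~\ref{tgimm}(b)--(c) equals the image of the totally geodesic hol\-o\-mor\-phic immersion $F:\bbCP^k\to\sd^\mp$ carrying a multiple of the Fu\-bi\-ni-Stu\-dy metric. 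The embedding claim (iv) then follows from pro\-ject\-abil\-i\-ty: were $F$ to identify two distinct points $\bbC\xi_1,\bbC\xi_2$, the two pre\-im\-age fibres of $\pi^\mp$ over $F(\bbC\xi_i)$ would lie in distinct leaves of $\mathrm{Ker}\hskip2.3ptd\pi^\pm$ yet project to the same leaf of $\zy^\mp$, contradicting injectivity of $d\pi^\pm$ on $\mathcal{H}_x^\pm$ combined with Remark~\ref{ftcvr}. Since the leaves of $\zy^\pm$ are compact simply connected $\bbCP^k$'s, Remark~\ref{fibra} produces the bundle projections $\mathrm{pr}^\pm\nh$, establishing (v). For (vi), $\mathcal{H}$ is the $g$-or\-thog\-o\-nal complement of $\zy$ in $T\nnh\mf'$, and the or\-thog\-o\-nal\-i\-ty persists after $d\pi^\pm$ in view of Remark~\ref{inequ}(iii), giving pro\-ject\-abil\-i\-ty onto $(\zy^\pm)^\perp\nh$.

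For (vii)--(viii) I would apply Remark~\ref{fibra} once more, this time to the foliation of $\mf'$ by leaves of $\zy$: each such leaf is compact and simply connected (its closure in $\mf$ adds one leaf of $\zy^+$ and one of $\zy^-$, so the closed leaf is a $\bbCP^1$-bundle over $\bbCP^k\nh\times\nh\bbCP^l$, hence simply connected). This yields $\sb=\mf'\hskip-3pt/\hskip-1.8pt\zy$ as a compact complex manifold and $\pi:\mf'\nh\to\sb$ as a hol\-o\-mor\-phic fibration. The maps $\mathrm{pr}^\pm\nnh\circ\pi^\pm$ are constant on $\zy$-leaves by (v), hence descend to maps $\sb\to\sb^\pm\nh$, which are bi\-hol\-o\-mor\-phisms since the leaves are in bi\-jec\-tion with leaves of $\zy^\pm\nh$. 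The extension of $\pi$ across $\sd^\pm$ to all of $\mf$ is then forced by the very identification $\sb\cong\sb^\pm\nh$, and uniqueness is clear.

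Finally, (ix) will be obtained by matching the two descriptions of $\hg$ near $\sd^\pm\nh$ provided by Theorem~\ref{first}: the general formula (\ref{hgv}.c) and, because $\mathcal{H}^\mp\nh$ is by (\ref{hvk}) precisely the eigen\-space of $\dv$ for the $\zy^\pm$-di\-rec\-tions of the horizontal lift, the specialized formula (\ref{alt}) restricted to vectors $w,w'\hskip-3pt\in\mathcal{H}^\mp\nh$. Equating the two for such $w,w'$ at a generic $(y,\xi)$, the $\xi$-independent coefficient and the Her\-mit\-i\-an symmetry dictate that $\la R_y^{\mathrm{D}}\nh(\hn w,w'\hh)\hh\xi,i\xi\ra$ be a constant multiple of $h(J\nh w,w'\hh)|\hh\xi|^2\nh$, and tracing through the normalization with $a\,$ and $\,\tp\hskip-2.2pt-\nh\tm\,$ gives the stated formula. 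The principal obstacle will be the embedding argument in (iv) and the simple-con\-nect\-ed\-ness of the closed $\zy$-leaves used for (vii), since both require global use of compactness rather than purely local pro\-ject\-abil\-i\-ty; the matching of the bundle structures across $\sd^+$, $\sd^-$, and $\mf'$ in (viii) is also delicate and demands care.
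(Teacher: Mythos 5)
Most of your proposal tracks the paper's own argument: the equivalences (i)--(iii) via Lemma~\ref{intgr} applied to $\,\ez^\pm\nh=\mathcal{V}\oplus\mathcal{H}^\pm\nh$, assertion (v) from Theorem~\ref{tgimm}(b) and Remark~\ref{fibra}, (vi) from Remark~\ref{inequ}(iii), and (viii) from gluing the two composites over $\,\mf'$ are all as in the paper. Two smaller points: in (iv) your ``contradiction'' involving two preimage fibres is garbled (over a single point $\,F(\bbC\xi_1)=F(\bbC\xi_2)\,$ there is only one fibre of $\,\pi^\mp$); the clean argument, which you do end up citing, is simply that the image of $\,F\,$ is a leaf of $\,\zy^\mp\nh$, hence a manifold, so Remark~\ref{ftcvr} applies directly. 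In (ix) you invoke (\ref{alt}), which is stated only in the special case $\,\mathcal{H}=0$; the generally valid substitute is Theorem~\ref{dcomp}(h1) for $\,w\in\mathcal{W}[\tmp]$, or, as the paper does, (\ref{gsw}) combined with the eigenvalue relation coming from (\ref{hvk}). These are repairable.

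The genuine gap is in (vii). You apply Remark~\ref{fibra} to the foliation of $\,\mf'$ by leaves of $\,\zy$, asserting that each leaf is ``compact and simply connected.'' This is false: a leaf of $\,\zy\,$ contains, through each of its points, the leaf of $\,\mathcal{V}\,$ through that point, whose closure meets both $\,\sd^+$ and $\,\sd^-$ by Lemma~\ref{vsbkr}(b); hence the closure of a $\,\zy$-leaf in the compact manifold $\,\mf\,$ strictly contains the leaf, so the leaf is not closed and therefore not compact. Your own parenthesis (``its closure in $\,\mf\,$ adds one leaf of $\,\zy^+$ and one of $\,\zy^-$'') already concedes this, and in any case the closure is not a $\,\bbCP^1$-bundle over a product of projective spaces but a single projective space (this is the content of Theorem~\ref{cptrp}, which is proved \emph{after}, and using, the present lemma, so it cannot be invoked here anyway). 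Since Remark~\ref{fibra} is unavailable, the complex structure on $\,\sb=\mf'\hskip-3pt/\hskip-1.8pt\zy\,$ must be produced differently: the paper observes that $\,\mathrm{pr}^\pm\nh\circ\pi^\pm\nh:\mf'\nh\to\sb^\pm$ is a holomorphic bundle projection whose fibres are precisely the $\,\zy$-leaves, so the set bijection $\,\sb\to\sb^\pm$ transports the complex structure of $\,\sb^\pm$ to $\,\sb$, and then checks that the two structures obtained for the two signs agree by restricting both composites to local complex submanifolds of $\,\mf'$ transverse to $\,\zy\,$ which they map biholomorphically onto open subsets of $\,\sb^\pm\nh$. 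Your later sentence about the maps $\,\sb\to\sb^\pm$ being biholomorphisms contains the seed of this fix, but as written it presupposes a complex structure on $\,\sb\,$ that your argument has not legitimately constructed.
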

\begin{proof}Since $\,\mathcal{V}\hs\oplus\mathcal{H}^\pm$ are both 
in\-te\-gra\-ble by (\ref{hpm}), the mutual equivalence of (i), (ii), (iii) 
and the in\-te\-gra\-bil\-i\-ty claim in (v) are all immediate from 
Lemma~\ref{intgr} applied to 
$\,\ez^\pm\nnh=\hs\mathcal{V}\hs\oplus\mathcal{H}^\pm\nnh$, along with 
(\ref{hpm}) and (\ref{tme}). The immersions mentioned in 
Theorem~\ref{tgimm}(c) thus have nonsingular images, namely, the leaves 
$\,\varPi\,$ of the distribution $\,\zy^\pm$ in (v), so that (iv) 
follows from Remark~\ref{ftcvr} applied to 
$\,\mathrm{P}\nnh\nr\hskip-2.4pt_y\w$ standing for $\,\bbCP\hh^l\nh$, with 
$\,l=k_\mp\w$ defined in Remark~\ref{dppdm}, and such a leaf $\,\varPi\nh$. 
The remaining part of (v) is a direct consequence of Theorem~\ref{tgimm}(b) 
and Remark~\ref{fibra}.

At any $\,y\in\sd^\pm\nh$, the image 
$\,d\pi\nh_x^{\hs\pm}\nh(\mathcal{H}_x^\pm\nh)$ is now independent of the choice 
of $\,x\in\mf'$ with $\,\pi^\pm\nh(x)=y$, and hence so is its orthogonal 
complement $\,d\pi\nh_x^{\hs\pm}\nh(\mathcal{H}_x\w)$ in $\,\tyb^\pm$ (see 
Remark~\ref{inequ}(iii)), proving assertion (vi).

The mappings $\,\sb\to\sb^\pm$ in (vii) are obviously bijective, and lead to 
an identification $\,\sb^+\nnh=\hs\sb^-$ which is a bi\-hol\-o\-mor\-phism, as 
one sees restricting $\,\pi^\pm$ to ``local'' complex sub\-man\-i\-folds of 
$\,\mf'$ which the composite bundle projections 
$\,\mf'\nnh\to\sd^\pm\nnh\to\sb^\pm$ (with fibres provided by the leaves of 
$\,\zy$) send bi\-hol\-o\-mor\-phic\-al\-ly onto open sub\-man\-i\-folds of 
$\,\sb^\pm\nnh$. This yields (vii). For (viii), it suffices to note that the 
two composite bundle projections 
$\,\mathrm{pr}^\pm\nh\circ\pi^\pm\nnh:\mf\nnh\smallsetminus\nh\sd^\mp\nnh
\to\sb\,$ agree, by (vii), on the intersection $\,\mf'$ of their domains, cf.\ 
Remark~\ref{ascdt}(iv), while the union of their domains is $\,\mf\nh$.

For (ix), Theorem~\ref{first} allows us to identify 
$\,\mf\nnh\smallsetminus\nh\sd^\mp$ with $\,\nr\,$ so that (\ref{hgv}.c) and 
(\ref{gsw}) hold under the assumptions following (\ref{hgv}). Since $\,w\,$ 
lies in the $\,\pi^\pm\hskip-1.2pt$-im\-a\-ge $\,\zy^\pm\nnh$ of 
$\,\mathcal{H}^\pm\nnh$, cf.\ (ii), (iii), (v), formula (\ref{hvk}) gives 
$\,2\dv\nh w=Qw/(\vt-\hn\tmp)\,$ for its $\,\mathrm{D}\hh$-hor\-i\-zon\-tal 
lift, also denoted by $\,w$. Replacing 
$\,2\dv\nh w\,$ in (\ref{gsw}) with $\,Qw/(\vt-\hn\tmp)\,$ and multiplying the 
result by $\,(\vt-\hn\tmp)\hh Q^{-\nnh1}\nh$, we get an expression for 
$\,g(\hn w,w\hh'\hh)\,$ which, equated to (\ref{hgv}.c), yields 
$\,\la\nh R^{\mathrm{D}}\nh(\hn w,J\nh w\hh'\hh)\hh\xi,i\hh\xi\ra=
-\hh a\hh(\tp\hskip-2.2pt-\nh\tm)^{-\nnh1}\la\xi,\xi\ra\hs h(\hn w,w\hh'\hh)$, 
since $\,\hr^2\nh=\la\xi,\xi\ra$ while, obviously, 
$\,|\hh\vt-\hn\tpm|=\mp\hh(\vt-\hn\tpm)$. Applying the last equality to 
$\,J\nh w\,$ instead of $\,w$, and using (b) in Section~\ref{ck} along with 
Her\-mit\-i\-an symmetry of 
$\,\la\nh R^{\mathrm{D}}\nh(\hn w,w\hh'\hh)\hh\xi,i\hh\eta\ra
=-\la i\hh R^{\mathrm{D}}\nh(\hn w,w\hh'\hh)\hh\xi,\eta\ra$ in $\,\xi,\eta$, 
we obtain the required relation in (ix).
\end{proof}

Note that the above proof of (ix) in Lemma~\ref{intpr} actually uses the 
assumptions (i) -- (iii): without them, the formula 
$\,\la\nh R^{\mathrm{D}}\nh(\hn w,J\nh w\hh'\hh)\hh\xi,i\hh\xi\ra=
-\hh a\hh(\tp\hskip-2.2pt-\nh\tm)^{-\nnh1}\la\xi,\xi\ra\hs h(\hn w,w\hh'\hh)$, 
rather than being valid for any given $\,w\in\zy_y^\pm\nh$, 
$\,y\in\sd^\pm\nh$, and {\medit all\/} vectors $\,\xi\,$ normal to $\,\sd^\pm$ 
at $\,y$, would hold only when $\,w\,$ lies in some subspace of $\,\tyb^\pm$ 
depending on $\,\xi$.

Let us now fix a K\"ah\-ler manifold $\,(\hat\sd,\hat h)$, and consider 
pairs $\,\nr\nh,\lr\,$ formed by a hol\-o\-mor\-phic complex vector bundle 
$\,\nr\,$ over $\,\hat\sd\,$ and the real part $\,\lr\,$ of a Her\-mit\-i\-an 
fibre metric in $\,\nr\nnh$, the Chern connection of which -- see 
Section~\ref{ck} -- satisfies the curvature condition 
$\,R^{\mathrm{D}}\nh(\hn w,w\hh'\hh)
=2i\hh\hat h(J\nh w,w\hh'\hh):\nr\nh\to\nr$ 
for any vector fields $\,w,w\hh'$ tangent to $\,\hat\sd$, where the notation 
of (\ref{nwt}) is used.
\begin{lemma}\label{rdehj}{\medit 
Whenever\/ $\,\hat\sd\,$ is simply connected and such\/ 
$\,\nr\nh,\lr\,$ exist, they are essentially unique, in the sense that, given 
another pair\/ $\,\nr'\nh,\lr\hn'$ with the same property, some 
hol\-o\-mor\-phic vec\-tor-bun\-dle isomorphism\/ $\,\nr\nh\to\nr'$ takes\/ 
$\,\lr\,$ to\/ $\,\lr\hn'\nh$.
}
\end{lemma}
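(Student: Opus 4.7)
The plan is to pass to the hol\-o\-mor\-phic Her\-mit\-i\-an vector bundle $\mathrm{Hom}(\nr,\nr')$ over $\hat\sd$, equipped with the Chern connection $\hat\nabla$ which, by property~(a) of Section~\ref{ck}, is nothing but the connection induced by the Chern connections $\mathrm{D},\mathrm{D}'$ of $\lr,\lr'\nh$. The first step is to show that $\hat\nabla$ is flat. By Remark~\ref{crvcm}, its curvature acts on any bundle morphism $\Theta:\nr\to\nr'$ as the commutator $[R^{\mathrm{D}'}\nh(v,w)]\Theta-\Theta[R^{\mathrm{D}}(v,w)]$; the hypothesis makes both $R^{\mathrm{D}}(v,w)$ and $R^{\mathrm{D}'}\nh(v,w)$ equal to scalar multiplication by the \emph{same} pure\-ly-imag\-i\-nar\-y quantity $2i\hh\hat h(Jv,w)$, so the two terms commute with $\Theta$ and cancel.

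With flatness of $\hat\nabla$ in hand, the second step is to fix a base point $y_0\in\hat\sd$, choose any com\-plex-lin\-e\-ar isometry $\Theta_0:(\nr_{y_0},\lr)\to(\nr'_{y_0},\lr')$ -- which exists since both fibres are fi\-nite-di\-men\-sion\-al Her\-mit\-i\-an in\-ner-prod\-uct spaces of the same dimension -- and $\hat\nabla$-par\-al\-lel-transport $\Theta_0$ throughout $\hat\sd$. Because $\hat\sd$ is simply connected and $\hat\nabla$ is flat, this transport is pa\-th-in\-de\-pen\-dent and produces a global $\hat\nabla$-par\-al\-lel section $\Theta$ of $\mathrm{Hom}(\nr,\nr')$ extending $\Theta_0$. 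Property~(e) of Section~\ref{ck} then makes $\Theta$ au\-to\-mat\-i\-cal\-ly hol\-o\-mor\-phic.

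The final step is to check that $\Theta_y$ is a fibrewise Her\-mit\-i\-an isometry at every $y\in\hat\sd$, and so in particular is invertible, yielding the required hol\-o\-mor\-phic bun\-dle isomorphism taking $\lr$ to $\lr'\nh$. Here one uses that the $\mathrm{D}\hs$-- and $\mathrm{D}'$-par\-al\-lel transports $P:\nr_{y_0}\nh\to\nr_y$ and $P'\nh:\nr'_{y_0}\nh\to\nr'_y$ along any path joining $y_0$ to $y$ preserve $\lr$ and $\lr'$ (the defining Her\-mit\-i\-an metric is parallel under its Chern connection), while the definition of $\hat\nabla$ as the induced connection forces $\Theta_y=P'\nh\circ\Theta_0\circ P^{-1}\nh$, a composition of three Her\-mit\-i\-an isometries.

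The main obstacle is the curvature computation verifying flatness of $\hat\nabla\nh$; once this is observed, everything else is a standard consequence of flatness, simple con\-nec\-ted\-ness of $\hat\sd$, and the basic Chern connection properties listed in Section~\ref{ck}.
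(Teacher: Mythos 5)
Your proposal is correct and follows essentially the same route as the paper: Remark~\ref{crvcm} gives flatness of the induced connection on $\mathrm{Hom}\hs(\nr,\nr')$ (the two curvature terms being the same scalar and hence cancelling), simple connectedness yields a global parallel section extending a fibrewise isometry, and properties (a), (e) of Section~\ref{ck} supply holomorphicity. The only difference is that you spell out the flatness computation and the fibrewise-isometry check, which the paper leaves implicit in the phrase ``flat metric connection.''
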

\begin{proof}Remark~\ref{crvcm} implies that the Chern connections 
$\,\mathrm{D}\,$ and $\,\mathrm{D}'$ induce a flat metric connection in the 
bundle $\,\mathrm{Hom}\hs(\nr\nh,\nr')$. The required isomorphism is now 
provided by a global parallel section of $\,\hs\mathrm{Hom}\hs(\nr\nh,\nr')\,$ 
chosen so as to transform $\,\lr$ into $\,\lr\hn'$ at one point, and its 
hol\-o\-mor\-phic\-i\-ty follows from (e) in Section~\ref{ck}.
\end{proof}
\begin{theorem}\label{cptrp}{\medit 
For a compact ge\-o\-des\-ic-gra\-di\-ent K\"ah\-ler triple\/ $\,(\mf\nh,g,\vt)$, 
the following two conditions are equivalent. 
\begin{enumerate}
  \def\theenumi{{\rm\roman{enumi}}}
\item[{\rm(i)}] $(\mf\nh,g,\vt)\,$ is iso\-mor\-phic to a \,CP\ triple, 
defined as in Section\/~{\rm\ref{eg}}.
\item[{\rm(ii)}] $d_+\w+\hs\,d_-\w\hs=\,m\,-\,1$, where\/ $\,m=\dimc\nh\mf\,$ 
and\/ $\,d_\pm\w\nh=\dimc\nh\sd^\pm\nh$. In other words, cf.\ 
Remark\/~{\rm\ref{dppdm}}, 
$\,T\nnh\mf'\hs=\,\,\mathcal{V}\hskip1.2pt\oplus\hs\mathcal{H}^+\nnh
\oplus\hs\mathcal{H}^-\nnh$, that is, $\,\mathcal{H}\,$ in\/ {\rm(\ref{tme})} 
is\/ \hbox{$\,0$-}\hskip0ptdi\-men\-sion\-al.
\end{enumerate}
In this case, the assertion of \,Theorem\/~{\rm\ref{first}}, including\/ 
{\rm(\ref{alt})}, is satisfied by\/ 
$\,(\mf\nnh\smallsetminus\nh\sd^\mp\nnh,g,\vt)$, with either fixed sign\/ 
$\,\pm\,$ and\/ $\,(\sd,h)\,$ bi\-hol\-o\-mor\-phic\-al\-ly isometric 
to a complex projective space carrying\/ 
$\,2(\tp\hskip-2.2pt-\nh\tm)/\hn a\,$ times the Fu\-bi\-ni-Stu\-dy metric, 
$\,\nr\,$ and\/ $\,\lr\,$ being, up to a hol\-o\-mor\-phic vec\-tor-bun\-dle 
isomorphism, the normal bundle of the latter treated as a linear variety in\/ 
$\,\bbCP^m$ and its Her\-mit\-i\-an fibre metric induced by the 
Fu\-bi\-ni-Stu\-dy metric of\/ $\,\bbCP^m\nh$.

Furthermore, the isomorphism types of \,CP\ triples\/ $\,(\mf\nh,g,\vt)\,$ having 
any given values of\/ $\,d_\pm\w$ and\/ $\,m\,$ in\/ {\rm(ii)} are in a 
natural bijective correspondence, obtained by applying 
Remark\/~{\rm\ref{ascdt}(i)}, with quadruples\/ 
$\,\tm,\tp,a,\hs\vt\mapsto Q\,$ that satisfy\/ {\rm(\ref{pbd})}.
}
\end{theorem}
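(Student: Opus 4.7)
My plan for the proof is as follows. The implication (i) $\Rightarrow$ (ii) is precisely (\ref{dem}) in Remark~\ref{dppdm}, so the work lies in (ii) $\Rightarrow$ (i). First I would observe that $T\mf' = \mathcal{V} \oplus \mathcal{H}^+ \oplus \mathcal{H}^-$ makes condition (\ref{spn}) from the introduction automatic: the span of $\mathrm{Ker}\,d\pi^+$ and $\mathrm{Ker}\,d\pi^-$ equals $T\mf'$, which is trivially integrable. Hence Lemma~\ref{intpr} applies, and together with Remark~\ref{spmcp} this already yields that, for each sign $\pm$, the critical manifold $\sd^\pm$ with its submanifold metric is biholomorphically isometric to $\bbCP^{d_\pm}$ with $2(\tp - \tm)/a$ times the Fubini-Study metric. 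Fixing either sign, Theorem~\ref{first} then identifies $(\mf \setminus \sd^\mp, g, \vt)$ with the triple built by the construction of Section~\ref{ev} from $(\sd^\pm, h)$ and the normal bundle pair $(\nr, \lr)$ inherited from $g$.

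The next step is to show that $(\nr, \lr)$ is holomorphically vector-bundle isomorphic to the normal bundle of a linear embedding $\bbCP^{d_\pm} \hookrightarrow \bbCP^m$ endowed with the fibre metric induced from an appropriately scaled Fubini-Study metric on $\bbCP^m$. Lemma~\ref{intpr}(ix), applicable here since $\zy^\pm$ coincides with $T\sd^\pm$, supplies the explicit curvature identity $R^{\mathrm{D}}(w, w') = -ia(\tp - \tm)^{-1} h(Jw, w')$ for the Chern connection of $(\nr, \lr)$. A direct calculation, using that the normal bundle of a linear $\bbCP^{d_\pm}$ in $\bbCP^m$ is $(m - d_\pm)$ copies of $\mathcal{O}(1)$ and that $\mathcal{O}(1)$ carries the Fubini-Study form as curvature, shows the corresponding Fubini-Study normal bundle satisfies the same identity once the scale factor $2(\tp - \tm)/a$ on $\sd^\pm$ is propagated to $\bbCP^m$. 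Simple connectedness of $\bbCP^{d_\pm}$ then lets Lemma~\ref{rdehj} produce the required holomorphic bundle isomorphism, carrying $\lr$ to the Fubini-Study-induced fibre metric. Feeding isomorphic data into the Section~\ref{ev} construction yields isomorphic triples on $\mf \setminus \sd^\mp$, and the isomorphism extends across $\sd^\mp$ by the argument of \cite[Lemma 16.1]{derdzinski-maschler-06} already invoked in the proof of Theorem~\ref{first}.

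For the concluding bijective correspondence, surjectivity is Remark~\ref{spcas}, which produces, for every admissible quadruple $(\tm, \tp, a, Q)$ and every $(m, d_\pm)$ satisfying (\ref{dem}), a CP triple realizing them. Injectivity uses that $\tm, \tp, a, Q$ are plainly isomorphism invariants of $(\mf, g, \vt)$ (the extrema are preserved, and $Q = g(\navp, \navp)$ is determined by $g$ and $\vt$), while the argument just outlined shows that two CP triples sharing $(m, d_\pm, \tm, \tp, a, Q)$ produce isomorphic base manifolds $(\sd^\pm, h)$ and, via Lemma~\ref{rdehj}, isomorphic normal-bundle pairs, hence isomorphic triples by Theorem~\ref{first}. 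I expect the main obstacle to lie in the curvature-matching step of the middle paragraph: the computation itself is standard for $\mathcal{O}(1)$ on projective space, but making the factor $-ia(\tp - \tm)^{-1}$ come out correctly requires tracking the overall scale $2(\tp - \tm)/a$ of the Fubini-Study metric through the linear inclusion $\bbCP^{d_\pm} \subset \bbCP^m$, and verifying that the resulting ambient metric on $\bbCP^m$ is indeed the one producing the fibre metric $\lr$.
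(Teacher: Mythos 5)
Your proposal is correct and its architecture coincides with the paper's: \eqref{dem} for (i)$\Rightarrow$(ii), Remark~\ref{spcas} for surjectivity of the final correspondence, the observation that (ii) forces conditions (i)--(iii) of Lemma~\ref{intpr} (so that Remark~\ref{spmcp} and Theorem~\ref{first} reduce everything to the normal-bundle pair $\nr,\lr$), then Lemma~\ref{intpr}(ix) together with Lemma~\ref{rdehj} to pin that pair down, and finally \cite[Lemma 16.1]{derdzinski-maschler-06} to extend the isomorphism across $\sd^\mp$. The one point where you genuinely diverge is precisely the step you flag as the main obstacle: the paper never computes the Chern curvature of the Fubini--Study normal bundle of a linear $\bbCP^{d_\pm}\subset\bbCP^m$ (i.e.\ of $\mathcal{O}(1)^{\oplus(m-d_\pm)}$) and never tracks the scale factor $2(\tp\hskip-2.2pt-\nh\tm)/a$ through the ambient inclusion. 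Instead it chooses, via Remark~\ref{spcas}, a model CP triple $(\bbCP^m\nh,g'\nh,\vt')$ realizing the \emph{same} data $d_\pm\w,\tm,\tp,a,\vt\mapsto Q$, applies Theorem~\ref{first} and Lemma~\ref{intpr}(ix) to the model and to the given triple symmetrically, and lets Lemma~\ref{rdehj} match the two abstract pairs -- both satisfy the identical curvature identity, so no normalization constants ever need to be verified. Your direct computation would also work, and is in effect what justifies the explicit description of $\nr,\lr$ in the statement, but the symmetric use of Lemma~\ref{intpr}(ix) removes the delicate constant-matching entirely; if you keep your route, you do need to carry that computation out rather than leave it as ``standard''.
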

\begin{proof}First, (i) implies (ii) according to (\ref{dem}).

Assuming now (ii), let us use Remark~\ref{spcas} to select a CP triple 
$\,(\bbCP^m\nh,g'\nh,\vt')$ realizing the same data $\,d_\pm\w,\tpm,a\,$ and 
$\,\vt\mapsto Q$, in (ii) above and Remark~\ref{ascdt}(i), as our 
$\,(\mf\nh,g,\vt)\,$ (which also establishes the surjectivity part of the final 
clause). With either fixed sign $\,\pm\hs$, denoting 
$\,\sd^\mp\nh,\sd^\pm$ by $\,\sd,\varPi$, and their analogs for 
$\hs(\bbCP^m\nh,g'\nh,\vt')\hs$ by $\,\sd'\nh,\varPi'\nh$, we choose the 
isomorphisms $\,\nr\nh\to\mf\nnh\smallsetminus\nh\varPi$ and 
$\,\nr'\nh\to\bbCP^m\nh\nnh\smallsetminus\nh\varPi'$ by applying 
Theorem~\ref{first}(i) to both triples. As (i) has already been shown to yield 
(ii), we may now also apply Remark~\ref{spmcp} to both of them, identifying 
the critical manifolds $\,\sd,\sd'$ (and their sub\-man\-i\-fold metrics) with 
a complex projective space  $\,\hat\sd\,$ (and, respectively, with the 
Fu\-bi\-ni-Stu\-dy metric $\,\hat h\,$ multiplied by 
$\,2(\tp\hskip-2.2pt-\nh\tm)/\hn a$). Next, (ix) in Lemma~\ref{intpr} holds 
for both triples, so that the pairs $\,\nr\nh,\lr\,$ and $\,\nr'\nh,\lr\hn'$ 
associated with them via Theorem~\ref{first} satisfy, along with 
$\,\hat\sd=\sd=\sd'$ and $\,\hat h$, the assumptions -- as well as the 
conclusion -- of Lemma~\ref{rdehj}. Thus, some hol\-o\-mor\-phic 
vec\-tor-bun\-dle isomorphism $\,\nr\nh\to\nr'$ takes $\,\lr\,$ to 
$\,\lr\hn'$ and, since the metrics $\,\hg,\hg'$ on $\,\nr\,$ and $\,\nr'$ 
constructed in Section~\ref{ev} depend only on $\,\lr,\lr\hn'$ (aside from the 
data fixed above and shared by both triples), this isomorphism is a 
hol\-o\-mor\-phic isometry of $\,(\nr\nh,\hg)$ onto $\,(\nr'\nh,\hg')$, 
sending $\,\vt\,$ to its analog on $\,\nr'\nh$. In view of 
\cite[Lemma 16.1]{derdzinski-maschler-06}, it can be extended to an 
isomorphism between the triples $\,(\mf\nh,g,\vt)\,$ and 
$\,(\bbCP^m\nh,g'\nh,\vt')$. We thus obtain injectivity in the final clause 
and the fact that (ii) yields (i).
\end{proof}

\section{Horizontal extensions of CP triples}\label{he}
\setcounter{equation}{0}
Once again, we use the notation of (\ref{sym}), (\ref{tmm}) and (\ref{nex}), 
assuming $\,(\mf\nh,g,\vt)\,$ to be a compact ge\-o\-des\-ic-gra\-di\-ent 
K\"ah\-ler triple (Definition~\ref{ggktr}).
\begin{lemma}\label{ttgds}{\medit 
Suppose that\/ conditions\/ {\rm(i)} -- {\rm(iii)} along with the other 
assumptions of Lemma\/~{\rm\ref{intpr}} hold for a triple\/ 
$\,(\mf\nh,g,\vt)$, and\/ $\,\pi,\sb\,$ are as in 
Lemma\/~{\rm\ref{intpr}(viii)}.
\begin{enumerate}
  \def\theenumi{{\rm\alph{enumi}}}
\item[{\rm(a)}] Given a\/ $\,\pi$-pro\-ject\-a\-ble nonzero local section\/ 
$\,w\,$ of the distribution\/ $\,\mathcal{H}\,$ in\/ {\rm(\ref{tme})},
\begin{enumerate}
  \def\theenumi{{\rm\alph{enumi}}}
\item[{\rm(a1)}] $w\,$ commutes with the vector fields\/ $\,v=\navp\,$ and\/ 
$\,u=J\nh v$,
\item[{\rm(a2)}] $w\,$ is $\,\pi^\pm\hskip-1.2pt$-pro\-ject\-able for both 
signs\/ $\,\pm\hs$,
\item[{\rm(a3)}] the local flow of\/ $\,w\,$ in\/ $\,\mf'$ preserves the 
distributions\/ $\,\mathcal{V}\nh,\mathcal{H}^+$ and\/ $\,\mathcal{H}^-\nnh$.
\end{enumerate}
\item[{\rm(b)}] The leaves of the in\-te\-gra\-ble distribution\/ 
$\,\hs\zy=\mathcal{V}\hs\oplus\mathcal{H}^+\hskip-3pt\oplus\mathcal{H}^-$ on\/ 
$\,\mf'\hh$ are totally geodesic complex sub\-man\-i\-folds of\/ 
$\,\mf'\hh$ and all the local flows mentioned in\/ {\rm(a3)} act between them 
via local isometries.
\end{enumerate}
}
\end{lemma}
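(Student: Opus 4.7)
The plan is to prove (a1) by a Lie-bracket analysis in the decomposition (\ref{tme}), then to obtain (a2) from the nor\-mal-bun\-dle picture of Theorem~\ref{first}, to derive (a3) from a short commutator computation, and finally to establish (b) via Remark~\ref{tglvs} combined with the explicit metric formulas of Theorem~\ref{dcomp}(h).

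First I will note that $w\in\mathcal{H}\subseteq\mathcal{V}^\perp$ gives $d_w\vt=g(w,\navp)=0$, so $\phi_t^w$ preserves $\vt$ and each $\sd^\pm$. The hypothesis that $w$ be $\pi$-projectable, together with $v,u\in\mathcal{V}\subseteq\zy$, yields $[v,w],[u,w]\in\zy$ by Remark~\ref{liebr}. Since $\mathcal{H}\cap\zy=0$ by (\ref{tme}), (a1) reduces to showing that each commutator also lies in $\mathcal{H}$. For $u$ the isometric flow $\phi_t^u$ preserves $\vt$, hence $\sd^\pm$, and then (by Remark~\ref{ptnrs}) $\mathrm{Ker}\,d\pi^\pm$ and $\mathcal{V}^\perp$, hence $\mathcal{H}^\pm=\mathcal{V}^\perp\cap\mathrm{Ker}\,d\pi^\mp$ and their or\-thog\-o\-nal complement $\mathcal{H}$. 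For $v$ the flow $\phi_t^v$ preserves $\mathcal{V}^\perp$ by (\ref{loc}.h), while parts (i) and (v)--(vi) of Theorem~\ref{jacob} show that its pushforward preserves the $\dv$-eigen\-space decomposition of $\mathcal{V}^\perp$ along $v$-in\-te\-gral curves; since $\mathcal{H}^\pm$ are the eigen\-spaces with $\zx=\tpm$ and $\mathcal{H}$ is the sum of the remaining ones, $\phi_t^v$ preserves $\mathcal{H}$ as well. This forces $[v,w],[u,w]\in\mathcal{H}\cap\zy=0$, proving (a1).

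Next, for (a2) I would identify $\mf\setminus\sd^\mp$ with $\nr\sd^\pm$ via Theorem~\ref{first}(i), under which $\pi^\pm$ becomes the vec\-tor-bun\-dle projection and, by (\ref{lvf}), integral curves of $v$ correspond to radial lines in fibres. The relation $[v,w]=0$ from (a1) then forces $\phi_t^w$ to send $v$-in\-te\-gral curves to $v$-in\-te\-gral curves, hence radial lines to radial lines; combined with preservation of $\sd^\pm$ (where the radial lines originate) this forces com\-pat\-i\-bil\-i\-ty with the base-point projection, giving $\pi^\pm$-pro\-ject\-abil\-i\-ty. For (a3), I would combine (a1) (so that $\mathcal{V}$ is preserved) with (a2) (so that $\mathrm{Ker}\,d\pi^\pm=\mathcal{V}\oplus\mathcal{H}^\mp$ is preserved), together with the following calculation: differentiating $g(w,v)=g(w,u)=g(w',v)=g(w',u)=0$ along $w$ and $w'$ for a section $w'\in\mathcal{H}^\mp$ and using self-adjointness of $\dv$ and skew-adjointness of $\du$, one obtains $g([w,w'],v)=0$ and $g([w,w'],u)=-2g(\du w,w')$; the latter vanishes because $\du w=J\dv w\in\mathcal{H}$ (by $\dv$- and $J$-invariance of $\mathcal{H}$) is or\-thog\-o\-nal to $w'\in\mathcal{H}^\mp$. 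Hence $[w,w']\in\mathcal{V}^\perp\cap(\mathcal{V}\oplus\mathcal{H}^\mp)=\mathcal{H}^\mp$, showing $\phi_t^w$-preservation of $\mathcal{H}^\mp$; the same argument with signs reversed gives preservation of $\mathcal{H}^+$ and $\mathcal{H}^-$ individually, completing (a3).

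Finally, for (b) I would apply Remark~\ref{tglvs}: given a nonzero $\pi$-projectable $w\in\mathcal{H}$ and $v'\in\zy$ with $[v',w]=0$, decompose $v'=v_1'+v_2'+v_3'$ into $\mathcal{V},\mathcal{H}^+,\mathcal{H}^-$ components. By (a3), each $[v_i',w]$ lies in its respective summand, and linear independence forces each $[v_i',w]=0$. Or\-thog\-o\-nal\-i\-ty reduces $d_w g(v',v')=0$ to each summand separately. For $v_1'=\alpha v+\beta u\in\mathcal{V}$, commutation with $w$ combined with (a1) yields $d_w\alpha=d_w\beta=0$, so $d_w g(v_1',v_1')=d_w[(\alpha^2+\beta^2)Q]=0$ since $Q$ is a function of $\vt$ and $d_w\vt=0$. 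For $v_i'\in\mathcal{H}^\pm$, Theorem~\ref{dcomp}(h1) expresses $g(v_i',v_i')$ along each $v$-in\-te\-gral curve as $(\tp-\tm)^{-1}|\vt-\tmp|$ times the squared norm of the endpoint $v'_{i,\pm}\in T_y\sd^\pm$; the first factor is $d_w$-in\-var\-i\-ant since $\vt$ is preserved, so the claim reduces to the in\-duced flow of $w$ on $\sd^\pm$ being locally isometric. The hard part will be precisely this last step: while continuous ex\-ten\-sion of $\phi_t^w$ to $\sd^\pm$ and hol\-o\-mor\-phicity of $\pi^\pm$ from Theorem~\ref{dcomp}(c) are direct, the iso\-met\-ric character has to come from a rigidity bootstrap -- combining the normal-bundle curvature formula of Lemma~\ref{intpr}(ix) with Theorem~\ref{cptrp} applied on each $\pi$-fibre (which realises each fibre as a CP triple with data $(\tm,\tp,a,Q)$ shared across fibres), so that $\phi_t^w$ acts on every fibre by an iso\-mor\-phism of CP triples and restricts to an isometry of $\sd^\pm\cap\mf_b$. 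Total geodesicity of the leaves of $\zy$ and the isometric-action clause of (b) then follow simultaneously.
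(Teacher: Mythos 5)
Your (a1), and the bracket computation you use for (a3), are correct and in fact take a somewhat different route from the paper's (which gets $\,[v,w],[u,w]\in\mathcal{H}\,$ from total geodesicity of the leaves of $\,\mathcal{V}\oplus\mathcal{H}^\pm$ rather than from flow\hh-in\-var\-i\-ance of the $\,\dv$-eigen\-space decomposition). The first genuine gap is (a2). Knowing that the flow of $\,w\,$ maps integral curves of $\,v\,$ (equivalently, leaves of $\,\mathcal{V}$, i.e.\ punctured radial complex lines in the normal spaces) to such curves does not show that it maps fibres of $\,\pi^\pm$ to fibres of $\,\pi^\pm$: a fibre $\,(\pi^\pm)^{-\nnh1}(y)\cap\mf'$ is a union of uncountably many leaves of $\,\mathcal{V}$ -- all the radial lines emanating from the single point $\,y\,$ -- and nothing in your argument prevents the flow of $\,w\,$ from scattering these into radial lines with distinct origins. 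The appeal to ``preservation of $\,\sd^\pm$'' is not available, since $\,w\,$ is defined only on $\,\mf'$ and its flow lines stay in level sets of $\,\vt\,$ away from $\,\sd^\pm$; what you need is precisely the statement that origins stay together, and your argument assumes it. The paper's proof of (a2) uses the in\-te\-gra\-bil\-i\-ty hypothesis in an essential way: $\,\pi$-pro\-ject\-abil\-i\-ty gives that $\,d\pi\nnh_x\w w_x\w$ depends only on $\,\pi(x)$, and the factorization $\,\pi=\beta^\pm\nnh\circ\mathrm{pr}^\pm\nnh\circ\pi^\pm$ of Lemma~\ref{intpr}(viii), together with the or\-thog\-o\-nal\-i\-ty $\,d\pi\nh_x^{\hs\pm}(\mathcal{H}_x\w)\perp\zy^\pm$ from Lemma~\ref{intpr}(v)--(vi), then determines $\,d\pi\nh_x^{\hs\pm}w_x\w$ uniquely from $\,d\pi\nnh_x\w w_x\w$ and $\,y=\pi^\pm(x)$. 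Your proposal never invokes (v)--(viii) at this point, which is a warning sign, since this is exactly where condition (\ref{spn}) enters. Note also that your (a3) is derived from (a2), so the gap propagates.

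The second gap is the last step of (b). After correctly reducing matters (via Remark~\ref{tglvs}, the factor $\,|\hh\vt-\hn\tmp|\,$ and $\,d_w\w\vt=0$) to the constancy of $\,h(\hat w^\pm\nnh,\hat w^\pm)\,$ along the induced field $\,\hat w\,$ on $\,\sd^\pm$, you propose to prove that the induced flow on $\,\sd^\pm$ is isometric by applying Theorem~\ref{cptrp} to the $\,\pi$-fibres. That is circular: realizing each $\,\pi$-fibre as a CP triple requires knowing that the leaves of $\,\zy\,$ are totally geodesic complex sub\-man\-i\-folds carrying ge\-o\-des\-ic-gra\-di\-ent K\"ah\-ler structures, which is precisely the content of (b) (and is how the lemma is used later, in the proof of Theorem~\ref{cpbdl}). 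It is also stronger than necessary. The correct step is to apply the implication (ii)$\Rightarrow$(i) of Remark~\ref{tglvs} on $\,\sd^\pm$ itself to the foliation $\,\zy^\pm$, whose leaves are already known to be totally geodesic by Lemma~\ref{intpr}(v): with $\,\hat w\,$ normal to those leaves, pro\-ject\-able along $\,\zy^\pm$, and commuting with $\,\hat w^\pm$ -- which must be arranged, as the paper does in (\ref{lft}) using (\ref{cmt}) and Lemma~\ref{intpr}(ix) -- one obtains $\,d_{\hat w}\w[h(\hat w^\pm\nnh,\hat w^\pm)]=0\,$ directly, and the totally geodesic and lo\-cal-i\-som\-e\-try claims of (b) follow together.
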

\begin{proof}Any $\,w\,$ in (a) is normal to the totally geodesic leaves of 
the in\-te\-gra\-ble distributions $\,\mathcal{V}\hs\oplus\mathcal{H}^\pm$ 
(see Corollary~\ref{totgd}), while $\,v,u\,$ are both tangent to them, as 
$\,\mathcal{V}=\mathrm{Span}\hs(\hn v,u)$. Therefore, 
$\,\nabla\hskip-3pt_v\w w\,$ and $\,\nabla\hskip-3pt_u\w w$, being, as a 
result, also normal to those leaves for both signs\/ $\,\pm\hs$, are -- by 
(\ref{tme}) -- sections of $\,\mathcal{H}$. The same is true of 
$\,\nabla\hskip-3pt_w\w v,\nabla\hskip-3pt_w\w u$ (and hence of 
$\,[v,w],[u,w]$) due to $\,\dv$-in\-var\-i\-ance in (\ref{tme}), with 
$\,\dv=\nabla\nh v$ and $\,\nabla\nh u=\du=JS=SJ$, cf.\ (\ref{loc}.a). At the 
same time, $\,\pi$-pro\-ject\-a\-bil\-i\-ty of $\,w$ implies, via 
Remark~\ref{liebr} and Lemma~\ref{intpr}(viii), that $\,[v,w]\,$ and 
$\,[u,w]\,$ are sections of 
$\,\zy=\mathrm{Ker}\hskip2.3ptd\pi=\mathcal{H}^\perp$. We thus obtain (a1) 
along with (a3) for $\,\mathcal{V}\nh$. Next, (a2) follows: due to 
$\,\pi$-pro\-ject\-a\-bil\-i\-ty of $\,w$, with $\,y\in\sd^\pm$ fixed, 
$\,d\pi\nnh_x\w w_x\w$ is independent of the choice of $\,x\in\mf'$ such that 
$\,\pi^\pm\nh(x)=y$, and hence so must be $\,d\pi\nh_x^{\hs\pm}w_x\w$, as the 
differential at $\,y\,$ of the bundle projection 
$\,\beta^\pm\nnh\circ\mathrm{pr}^\pm\nnh:\sd^\pm\nnh\to\sb\,$ (see (vii) -- 
(viii) in Lemma~\ref{intpr}) sends $\,d\pi\nh_x^{\hs\pm}w_x\w$ to 
$\,d\pi\nnh_x\w w_x\w$, which determines $\,d\pi\nh_x^{\hs\pm}w_x\w$ uniquely 
due to its being orthogonal, by Lemma~\ref{intpr}(v), to $\,\zy_y^\pm\nh$, 
for the vertical distribution $\,\zy^\pm$ of 
$\,\beta^\pm\nnh\circ\mathrm{pr}^\pm\nnh$.

We obtain the remainder of (a3) by noting that, for either fixed sign 
$\,\pm\hs$,
\begin{equation}\label{lft}
\begin{array}{l}
\mathrm{all\ vectors\ in\ }\,\,\mathcal{H}^\pm\mathrm{\ are\ realized\ by\ 
}\,\pi^\pm\hskip-1.2pt\hyp\mathrm{pro\-ject\-able\ local}\\
\mathrm{sections\ }\hs w^\pm\nnh\mathrm{\ of\ }\hs\mathcal{H}^\pm\nnh\mathrm{\ 
commuting\ with\ }\hs w\hh\mathrm{,\ the\ 
}\hs\pi^\pm\hskip-1.2pt\hyp\mathrm{im\-a\-ges}\\
\hat w^\pm\nnh\mathrm{\ of\ which\ also\ commute\ with\ the\ 
}\pi^\pm\hskip-1.2pt\hyp\mathrm{im\-a\-ge\ \nh}\,\hat w\,\mathrm{\nh\ of\ 
\nh}\,w.
\end{array}
\end{equation}
Namely, since the $\,\pi^\pm\hskip-1.2pt$-im\-a\-ge $\,\hat w\,$ of $\,w\,$ is 
obviously $\,\zy^\pm\hskip-1.2pt$-pro\-ject\-able, we may prescribe the 
$\,\pi^\pm\hskip-1.2pt$-im\-a\-ge $\,\hat w^\pm$ of $\,w^\pm$ to be a local 
section of $\,\zy^\pm$ commuting with $\,\hat w\,$ (see (v) -- (vi) in 
Lemma~\ref{intpr} and  Remark~\ref{cmmut}), and then lift $\,\hat w^\pm$ to 
$\,\mathcal{H}^\pm\nh$, using Remark~\ref{inequ}(iii). For the resulting lift 
$\,w^\pm\nh$, (\ref{cmt}) and parts (ix), (vi) of Lemma~\ref{intpr} give 
$\,[w,w^\pm]=0$.

We now derive (b) from Remark~\ref{tglvs}. According to Remark~\ref{frthr}, it 
suffices to establish (i) in Remark~\ref{tglvs} for local sections of 
$\,\zy\,$ having the form $\,w\hh'\nh=w^0\nh+\hh w^+\nnh+\hh w^-$ with 
$\,w^\pm$ satisfying (\ref{lft}) and $\,w^0$ equal to a 
con\-stant-co\-ef\-fi\-cient combination of $\,v$ and $\,u$. Orthogonality in 
(\ref{tme}) combined with (\ref{gvv}) shows that 
$\,g(\hn w\hh'\nh,w\hh'\hh)\,$ equals a constant multiple of $\,Q\,$ plus the 
sum of the terms $\,g(\hn w^\pm\nh,w^\pm\nh)$. Verifying part (i) of 
Remark~\ref{tglvs} thus amounts to showing that 
$\,d_w\w Q=d_w\w[\hs g(\hn w^\pm\nh,w^\pm\nh)]=0$. In terms of the 
$\,\pi^\pm\hskip-1.2pt$-im\-a\-ge $\,\hat w^\pm$ of $\,w^\pm\nh$, 
Remark~\ref{inequ}(iv) gives 
$\,(\tp\hskip-2.2pt-\tm)\hs g(\hn w^\pm\nh,w^\pm\nh)
=(\vt-\hn\tmp)\hs h(\hat w^\pm\nnh,\hat w^\pm\nh)$, where $\,h\,$ is the 
sub\-man\-i\-fold metric of $\,\sd^\pm\nh$. Since $\,\tpm$ are constants, our 
claim is thus reduced to two separate parts, $\,d_w\w Q=d_w\w\vt=0\,$ and 
$\,d_w\w[h(\hat w^\pm\nnh,\hat w^\pm\nh)]=0$. The former part is immediate: 
$\,Q\,$ is a function of $\,\vt$, cf.\ Remark~\ref{ascdt}(i), while $\,w\,$ 
and $\,v=\navp\,$ are sections of the mutually orthogonal summands 
$\,\mathcal{H}\,$ and $\,\mathcal{V}=\mathrm{Span}\hs(\hn v,u)$ in 
(\ref{tme}). For the latter part, (a2) allows us to replace $\,w\,$ by its 
$\,\pi^\pm\hskip-1.2pt$-im\-a\-ge $\,\hat w$, noting that 
$\,h(\hat w^\pm\nnh,\hat w^\pm\nh)$ is the 
$\,\pi^\pm\hskip-1.2pt$-pull\-back of a function defined (locally) in 
$\,\sd^\pm\nh$. Now $\,d_w\w[h(\hat w^\pm\nnh,\hat w^\pm\nh)]=0$ due to the 
fact that (ii) implies (i) in Remark~\ref{tglvs}, and $\,\hat w$, or 
$\,\hat w^\pm\nh$, is normal or, respectively, tangent to the totally geodesic 
leaves of the in\-te\-gra\-ble distribution $\,\zy^\pm\nh$, while $\,\hat w$, 
besides being -- as noted above -- projectable along $\,\zy^\pm\nh$, also 
commutes with $\,\hat w^\pm$ (see (v) -- (vi) in Lemma~\ref{intpr} and 
(\ref{lft})).
\end{proof}
We say that a (lo\-cal\-ly-triv\-i\-al) hol\-o\-mor\-phic fibre bundle 
carries a specific lo\-cal-type {\medit fibre geometry\/} if such a geometric 
structure is selected in each of its fibres and suitable local 
$\,C^\infty$ trivializations make the structures appear the same in all 
nearby fibres. For instance, hol\-o\-mor\-phic complex vectors bundle endowed 
with Her\-mit\-i\-an fibre metrics may be referred to as 
\begin{enumerate}
  \def\theenumi{{\rm\roman{enumi}}}
\item[{\rm(i)}] hol\-o\-mor\-phic bundles of Her\-mit\-i\-an vector spaces.
\end{enumerate}
The fact that (i) leads to the presence of the distinguished Chern connection 
(Section~\ref{ck}) has obvious generalizations to two situations (ii) -- (iii) 
discussed below. 

By a {\medit horizontal distribution\/} for a hol\-o\-mor\-phic bundle 
projection $\,\pi:\mf\to\sb$ between complex manifolds, also called a 
{\medit connection in the hol\-o\-mor\-phic bundle\/} $\hs\mf$ \hbox{over 
$\,\sb$,} we mean any $\,C^\infty$ real vector sub\-bun\-dle 
$\,\mathcal{H}\,$ of $\,T\nnh\mf\nh$, complementary to the vertical 
distribution $\,\mathrm{Ker}\hskip2.3ptd\pi$, so that 
$\,T\nnh\mf\,$ is the direct sum of $\,\mathrm{Ker}\hskip2.3ptd\pi\,$ and 
$\,\mathcal{H}$. {\medit Horizontal lifts\/} of vectors tangent to $\,\sb$, 
and of piecewise $\,C^1$ curves in $\,\sb$, as well as {\medit parallel 
transports\/} along such curves, are then defined in the usual fashion, 
although the maximal domain of a lift of a curve (or, of a parallel transport) 
may in general be a proper sub\-in\-ter\-val of the original domain interval. 
This last possibility does not, however, occur in bundles with compact fibres, 
or in vector bundles with linear connections, where horizontal lifts of curves 
and parallel transports are all {\medit global}.

We proceed to describe the {\medit Chern connection\/} $\,\mathcal{H}\,$ in 
the cases of
\begin{enumerate}
  \def\theenumi{{\rm\alph{enumi}}}
\item[{\rm(ii)}] hol\-o\-mor\-phic bundles of Fu\-bi\-ni-Stu\-dy complex 
projective spaces, and
\item[{\rm(iii)}] hol\-o\-mor\-phic bundles of \,CP triples, over any complex 
manifold $\,\sb$.
\end{enumerate}
Their fibre geometries consist of 
Fu\-bi\-ni-Stu\-dy metrics (Remark~\ref{fbstm}) and, respectively, the 
structures of a \,CP triple (Section~\ref{eg}).

For (ii), $\,\mathcal{H}\,$ arises since local $\,C^\infty$ trivializations 
mentioned earlier may be chosen so as to share their domains with local 
hol\-o\-mor\-phic trivializations; the former make the fibre geometry appear 
constant, and the latter turn the bundle, locally, into the projectivization 
(\ref{prz}) of a hol\-o\-mor\-phic vector bundle $\,\ee\,$ endowed with a 
Her\-mit\-i\-an fibre metric $\,(\hskip2.3pt,\hskip1.3pt)\,$ that induces the 
Fu\-bi\-ni-Stu\-dy metrics of the original fibres. Since 
$\,(\hskip2.3pt,\hskip1.3pt)$ is unique up to multiplications by positive 
functions (Remark~\ref{fbstm}), Lemma~\ref{ddrsq}(iv) easily implies that its 
choice does not affect the resulting parallel transports between the 
projectivized fibres, thus giving rise to $\,\mathcal{H}$.

The Chern connection $\,\mathcal{H}\,$ now also arises in case (iii) since, 
according to Remark~\ref{cnvrs}, (iii) is a sub\-case of (ii). The situation 
is, however, more special: the critical manifolds -- analogs of (\ref{tmm}) -- 
in the fibres now constitute two hol\-o\-mor\-phic bundles $\,\sd^\pm$ of 
Fu\-bi\-ni-Stu\-dy complex projective spaces over $\,\sb\,$ (with fibre 
dimensions that need not be both positive; see Remark~\ref{spmcp}), contained 
as sub\-bundles in the original bundle, and invariant under all 
$\,\mathcal{H}$-par\-al\-lel transports. Also, the fibre-ge\-om\-e\-try 
gradients and their $\,J$-im\-a\-ges (analogous to what we normally denote by 
$\,v=\navp\,$ and $\,u$) together form two hol\-o\-mor\-phic vertical vector 
fields $\,v\,$ and $\,u=J\nh v$ on the total space. This is immediate from 
the preceding paragraph, with the two sub\-bundles $\,\sd^\pm$ corresponding 
to a $\,(\hskip2.3pt,\hskip1.3pt)$-or\-thog\-o\-nal hol\-o\-mor\-phic 
decomposition $\,\ee\nh=\ee^+\nnh\oplus\ee^-$ of the lo\-cal\-ly-de\-fined 
vector bundle $\,\ee\nh$, cf.\ (\ref{dta}.ii) and (\ref{spm}.c) -- 
(\ref{spm}.d), while the flow of $\,u$, described in the lines following 
(\ref{dta}), acts in both $\,\ee^\pm$ via multiplications by two (unrelated) 
constant unit complex scalars. In case (ii), or (iii),
\begin{equation}\label{prt}
\begin{array}{l}
\mathrm{the\ }\hs\mathcal{H}\hyp\mathrm{par\-al\-lel\ transports\ are\ 
hol\-o\-mor\-phic\ isometries\ or,}\\
\mathrm{respectively,\ CP}\nh\hyp\mathrm{triple\ isomorphisms\ between\ the\ 
fibres,}\\
\end{array}
\end{equation}
which holds for (ii) since it does for (i), cf.\ Section~\ref{ck} and, 
consequently, also extends to the case of (iii) via the canonical 
modifications in Remarks~\ref{alltq} and~\ref{cnvrs}.

The following assumptions and notations will now be used to construct compact 
ge\-o\-des\-ic-gra\-di\-ent K\"ah\-ler triples, each of which we call a 
{\medit horizontal extension\/} of the \,CP triple provided by any fibre 
$\,(\pi^{-\nnh1}(z),g^z\nh,\vt^z)$.
\begin{enumerate}
  \def\theenumi{{\rm\alph{enumi}}}
\item[{\rm(a)}] $\pi\nh:\nnh\mf\nh\to\nh\sb\,$ and $\,\mathcal{H}\,$ are the 
bundle projection and the Chern connection of a hol\-o\-mor\-phic bundle of 
\,CP triples with a compact base $\,\sb\,$ and the 
\hbox{\,CP\nh-}\hskip0pttriple fibres $\,(\pi^{-\nnh1}(z),g^z\nh,\vt^z)$, 
$\,z\in\sb$, while $\,\sd^\pm$ stand for the above sub\-bundles of 
Fu\-bi\-ni-Stu\-dy complex projective spaces, invariant under 
$\,\mathcal{H}$-par\-al\-lel transports.
\item[{\rm(b)}] We let $\,\tpm,a\,$ be the data associated with 
some\hs$/$\hn any fibre $\,(\pi^{-\nnh1}(z),g^z\nh,\vt^z)$ as in 
Remark~\ref{ascdt}(i), and $\,\vt:\mf\to\bbR\,$ (or, 
$\,\pi^\pm\nnh:\mf\nnh\smallsetminus\nh\sd^\mp\nh\to\sd^\pm$) be the 
$\,C^\infty$ function (or, hol\-o\-mor\-phic bundle projection) which, 
restricted to each $\,\pi^{-\nnh1}(z)$, equals $\,\vt^z$ or, respectively, the 
version of (\ref{dbp}) corresponding to $\,(\pi^{-\nnh1}(z),g^z\nh,\vt^z)$. We 
also set $\,\mf'\nh=\mf\smallsetminus(\sd^+\nnh\cup\sd^-)$.
\item[{\rm(c)}] One is given two K\"ah\-ler metrics $\,\hpm$ on the total 
spaces $\,\sd^\pm$ of our hol\-o\-mor\-phic bundles of Fu\-bi\-ni-Stu\-dy 
complex projective spaces such that either $\,\hpm$ makes the fibres 
$\,\sd^\pm_{\nh z}\nh$, $\,z\in\sb$, or\-thog\-o\-nal to $\,\mathcal{H}\,$ 
along $\,\sd^\pm$ and, restricted to each fibre, $\,\hpm$ equals 
$\,2(\tp\hskip-2.2pt-\nh\tm)/\hn a\,$ times the Fu\-bi\-ni-Stu\-dy metric of 
$\,\sd^\pm_{\nh z}\nh$.
\item[{\rm(d)}] We define a Riemannian metric $\,g\,$ on $\,\mf'$ by 
requiring that $\,\mathcal{H}\,$ be $\,g$-or\-thog\-o\-nal to the vertical 
distribution $\,\mathrm{Ker}\hskip2.3ptd\pi$, that $\,g\,$ agree on the fibres 
$\,\pi^{-\nnh1}(z)\,$ with the metrics $\,g^z\nh$, and that 
$\,(\tp\hskip-2.2pt-\nh\tm)\hs g\hs=(\vt-\hn\tm)\hs\hp\nh
+(\tp\hskip-2.3pt-\vt)\hs\hm$ on $\,\mathcal{H}$, the symbols $\,\hpm$ being 
also used for the $\,\pi^\pm\hskip-1.2pt$-pull\-backs of $\,\hpm\nh$, cf.\ (b) 
-- (c).
\item[{\rm(e)}] Our final assumption is that the Riemannian metric $\,g\,$ on 
the dense open sub\-man\-i\-fold $\,\mf'$ has an extension to a K\"ah\-ler 
metric on $\,\mf$ (still denoted by $\,g$).
\end{enumerate}
\begin{remark}\label{adggk}Under the hypotheses (a) -- (e), the resulting 
horizontal extension $\,(\mf\nh,g,\vt)\,$ is actually a 
ge\-o\-des\-ic-gra\-di\-ent K\"ah\-ler triple. Namely, being a part of the 
geometry of the fibres $\,(\pi^{-\nnh1}(z),g^z\nh,\vt^z)$, the functions 
$\,\vt^z$ are preserved by $\,\mathcal{H}$-par\-al\-lel parallel transports, 
that is, $\,\vt\,$ is constant along $\,\mathcal{H}$, and so its (vertical) 
$\,g$-gra\-di\-ent must, by Remark~\ref{gpgrd}. coincide with the 
hol\-o\-mor\-phic vertical vector field $\,v\,$ described in the lines 
preceding (\ref{prt}). On the other hand, the function $\,Q=g(\hn v,v)$, equal 
- consequently - to its fibre version, is a specific function of $\,\vt$. 
Thus, by Lemma~\ref{ggqft}, $\,\vt\,$ has a hol\-o\-mor\-phic geodesic 
$\,g$-gra\-di\-ent.
\end{remark}
\begin{remark}\label{hhext}Whenever a compact ge\-o\-des\-ic-gra\-di\-ent 
K\"ah\-ler triple $\,(\mf\nh,g,\vt)\,$ is a horizontal extension arising as in 
Remark~\ref{adggk}, the distribution 
$\,\zy=\mathcal{V}\hs\oplus\mathcal{H}^+\hskip-3pt\oplus\mathcal{H}^-$ on 
$\,\mf'$ coming from the decomposition (\ref{tme}) for $\,(\mf\nh,g,\vt)\,$ 
coincides, on $\,\mf'\nh$, with the vertical distribution 
$\,\mathrm{Ker}\hskip2.3ptd\pi\,$ of the bundle projection 
$\,\pi\nh:\nnh\mf\nh\to\nh\sb\,$ (see (a) above) and, consequently, 
$\,\zy\,$ is integrable.

In fact, applying Remark~\ref{tglvs} to $\,\mathcal{H}$-hor\-i\-zon\-tal lifts 
$\,w\,$ of local vector fields on $\,\sb$, we see that, by (\ref{prt}), 
$\,\mathrm{Ker}\hskip2.3ptd\pi\,$ has totally geodesic leaves. Using 
(\ref{img}) for both $\,(\mf\nh,g,\vt)\,$ and the fibres 
$\,(\pi^{-\nnh1}(z),g^z\nh,\vt^z)$, we now conclude that the projections 
$\,\pi^\pm\nnh:\mf\nnh\smallsetminus\nh\sd^\mp\nh\to\sd^\pm$ defined in (b) 
are the same as those in (\ref{dbp}). (Note that, due to the orthogonality 
requirement in (c), the minimizing geodesic segment in $\,\pi^{-\nnh1}(z)$, 
$\,z\in\sb$, joining a point $\,x\in\pi^{-\nnh1}(z)\,$ to $\,\sd^\pm_{\nh z}$ 
a normal to $\,\sd^\pm_{\nh z}\nh$, serves as the segment with the same 
properties for $\,\mf\,$ rather than $\,\pi^{-\nnh1}(z)$.) Now (\ref{hpm}) 
implies that the distribution 
$\,\mathcal{V}\hs\oplus\mathcal{H}^+\hskip-3pt\oplus\mathcal{H}^-$ is 
contained in $\,\mathrm{Ker}\hskip2.3ptd\pi\,$ and, restricted to every fibre, 
equals the analog of 
$\,\mathcal{V}\hs\oplus\mathcal{H}^+\hskip-3pt\oplus\mathcal{H}^-$ for the 
fibre, that is, its tangent bundle (see Theorem~\ref{cptrp}(ii)). Thus, 
$\,\zy\,$ coincides with the full vertical distribution 
$\,\mathrm{Ker}\hskip2.3ptd\pi$.
\end{remark}
\begin{theorem}\label{cpbdl}{\medit 
A ge\-o\-des\-ic-gra\-di\-ent K\"ah\-ler triple\/ $\,(\mf\nh,g,\vt)$, with 
compact\/ $\,\mf\nh$, satisfies one$/\hn$all of the 
mu\-tu\-al\-ly-e\-quiv\-a\-lent conditions\/ {\rm(i)} -- {\rm(iii)} of 
Lemma\/~{\rm\ref{intpr}}, if and only if it is iso\-mor\-phic to a horizontal 
extension of a CP triple, defined as above using\/ {\rm(a)} -- {\rm(e)}.
}
\end{theorem}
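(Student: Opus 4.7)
The plan is, first, to settle the easy direction via Remark~\ref{hhext}: any horizontal extension of a CP triple satisfies condition~(i) of Lemma~\ref{intpr} because the remark identifies $\,\zy=\mathcal{V}\hs\oplus\mathcal{H}^+\hskip-3pt\oplus\mathcal{H}^-\,$ on $\,\mf'\,$ with the vertical distribution of the defining bundle projection, which is automatically in\-te\-gra\-ble.

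For the converse, I would start from condition~(i) and invoke Lemma~\ref{intpr}(viii) to produce a hol\-o\-mor\-phic bundle projection $\,\pi:\mf\to\sb\,$ whose vertical distribution on $\,\mf'\,$ equals $\,\zy\nh$. The first substantive step will be to identify each fibre $\,\pi^{-\nnh1}(z)$, equipped with the restrictions $\,g^z\nh,\vt^z\nh$, as a compact CP triple. Since $\,v=\navp\,$ lies in $\,\mathcal{V}\subseteq\zy\,$ and so is tangent to the fibres, Remark~\ref{gpgrd} identifies $\,v|_{\pi^{-\nnh1}(z)}\,$ with the $\,g^z\nh$-gradient of $\,\vt^z\nh$. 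Total geodesicity of the fibres, provided by Lemma~\ref{ttgds}(b), lets the equation $\,\nabla\hskip-3pt_v\w v=\psi v\,$ pass to the sub\-man\-i\-fold Le\-vi-Ci\-vi\-ta connection, and hol\-o\-mor\-phic\-i\-ty is inherited trivially. Since the restriction of the decomposition~(\ref{tme}) to a fibre makes the $\,\mathcal{H}\,$ summand trivial, Theorem~\ref{cptrp} qualifies each fibre as a CP triple.

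Next I would upgrade $\,\pi\,$ to a hol\-o\-mor\-phic bundle of CP triples and identify the distribution $\,\mathcal{H}\,$ from~(\ref{tme}) with its Chern connection. By Lemma~\ref{intpr}(v) and Remark~\ref{spmcp}, the critical manifolds $\,\sd^\pm\,$ form hol\-o\-mor\-phic sub\-bundles of $\,\pi\,$ whose fibres are Fu\-bi\-ni-Stu\-dy complex projective spaces. Lemma~\ref{ttgds}(a) shows that the flows of $\,\pi$-pro\-ject\-a\-ble local sections of $\,\mathcal{H}\,$ preserve $\,\mathcal{V},\mathcal{H}^+,\mathcal{H}^-\,$ (and hence $\,\sd^\pm$); part~(b) of the same lemma, combined with the fact that $\,\mathcal{H}\hs\perp\hs v,u\,$ and hence leaves $\,\vt\,$ invariant, makes those flows act as CP-triple isomorphisms between fibres. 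The uniqueness of the Chern connection on a bundle of CP triples (discussed before~(\ref{prt})) then identifies $\,\mathcal{H}\,$ with it.

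The final and most delicate step will be the metric formula of condition~(d) in the horizontal-ex\-ten\-sion setup. I take $\,\hpm\,$ to be the sub\-man\-i\-fold metrics on $\,\sd^\pm\nh$; by Remark~\ref{spmcp} they restrict on each fibre of $\,\sd^\pm\nh\to\sb\,$ to $\,2(\tp\hskip-2.3pt-\nh\tm)/\hn a\,$ times the Fu\-bi\-ni-Stu\-dy metric, and by~(\ref{tme}) they make those fibres $\,g$-or\-thog\-o\-nal to $\,\mathcal{H}\,$ along $\,\sd^\pm\nh$. For the linear interpolation formula on $\,\mathcal{H}\hs$, fix $\,w\in\mathcal{H}_x\,$ at $\,x\in\mf'\,$ and extend $\,w\,$ along the integral curve of $\,v\,$ through $\,x\,$ using the Ja\-co\-bi equation~(\ref{nxw}); by Theorem~\ref{dcomp}(g), $\,g(\hn w,w)\,$ is an affine function of $\,\vt\,$ along that curve, while Theorem~\ref{dcomp}(b) and Remark~\ref{inequ}(iii) identify its endpoint values with $\,\hp(d\pi^+w,d\pi^+w)\,$ and $\,\hm(d\pi^-w,d\pi^-w)$. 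Affine interpolation between these boundary values reproduces $\,(\tp\hskip-2.3pt-\nh\tm)\hs g=(\vt-\hn\tm)\hs\hp+(\tp\hskip-2.3pt-\vt)\hs\hm\,$ on $\,\mathcal{H}\nh$. The hardest part of the argument is precisely this last interpolation, where both projections $\,\pi^\pm\,$ must be tracked simultaneously through a Ja\-co\-bi-field calculation along $\,\ic\hs$; the smoothness of the resulting K\"ah\-ler structure across $\,\sd^\pm\,$ is automatic from the smoothness of the original $\,g\,$ on $\,\mf\nh$, and all remaining verifications are structural and follow from the lemmas already in place.
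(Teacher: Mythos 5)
Your outline follows essentially the same route as the paper's proof: Remark~\ref{hhext} for the ``if'' direction; for the converse, Lemma~\ref{intpr}(viii) to obtain $\,\pi:\mf\to\sb\,$ with vertical distribution $\,\zy$, total geodesicity of the leaves (Lemma~\ref{ttgds}(b)) combined with Theorem~\ref{cptrp} to recognize the fibres as CP triples, the submanifold metrics of $\,\sd^\pm$ as the data for (c), and the affine-in-$\vt$ interpolation of Theorem~\ref{dcomp}(g) -- with endpoint values controlled by the final clause of Theorem~\ref{dcomp}(b) and Remark~\ref{inequ}(iii) -- to verify condition (d). That last step is carried out in the paper exactly as you describe it (with a pair $\,w,w\hh'$ of $\,\pi$-projectable sections rather than a single $\,w$, which is only a polarization difference).

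The one genuine gap is your identification of $\,\mathcal{H}\,$ with the Chern connection of the bundle of CP triples. Knowing that the $\,\mathcal{H}$-parallel transports are CP-triple isomorphisms between the fibres does not single out $\,\mathcal{H}$: connections whose parallel transports are fibre-geometry isomorphisms form a whole affine family (modeled on $1$-forms valued in the Lie algebra of the fibre's automorphism group, which is positive-dimensional here), and the ``uniqueness'' discussed before (\ref{prt}) only asserts that the canonical connection obtained by projectivizing a Hermitian holomorphic vector bundle is well defined -- not that any connection with isomorphic parallel transports coincides with it. So your step (3) as written does not close. The paper's argument at this point is different and is what you need: by Theorem~\ref{first}(ii), the $\,g$-orthogonal complement of $\,\mathrm{Ker}\hskip2.3ptd\pi^{\hs\pm}$ is, under $\,\varPhi^\pm\nh$, the horizontal distribution of the Chern connection of the normal bundle of $\,\sd^\pm\nh$; hence $\,\mathcal{H}=\zy^\perp$ is the intersection of these two horizontal distributions, and the functoriality property (a) of Section~\ref{ck} then identifies this intersection with the Chern connection of the holomorphic bundle of CP triples over $\,\sb$. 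With that substitution, the rest of your argument goes through.
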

\begin{proof}Remark~\ref{hhext} clearly yields the `if' part of our claim.

Conversely, let $\,(\mf\nh,g,\vt)\,$ satisfy (i) -- (iii) in 
Lemma~\ref{intpr}. Lemma~\ref{intpr}(viii) states that 
$\,\zy=\mathcal{V}\hs\oplus\mathcal{H}^+\hskip-3pt\oplus\mathcal{H}^-$ 
coincides, on $\,\mf'\nh$, with the vertical distribution 
$\,\mathrm{Ker}\hskip2.3ptd\pi$ of the hol\-o\-mor\-phic bundle projection 
$\,\pi:\mf\to\sb$. Also, in view of Remark~\ref{trivl}, the leaves of 
$\,\zy\,$ form ge\-o\-des\-ic-gra\-di\-ent K\"ah\-ler triples, due to their 
being complex sub\-man\-i\-folds of $\,\mf\,$ tangent to $\,v=\navp\,$ (since 
$\,\mathcal{V}=\mathrm{Span}\hs(\hn v,u)$) and, as they are also totally 
geodesic (see Lemma~\ref{ttgds}(b)), (\ref{hvk}) and the 
$\,\dv$-in\-var\-i\-ance in (\ref{tme}), with $\,\dv=\nabla\nh v$, imply via 
Theorem~\ref{cptrp} that they are all iso\-mor\-phic to \,CP\ triples. The 
local isometries of Lemma~\ref{ttgds}(b) can obviously be made global due to 
compactness (see the lines preceding (ii) above) which, consequently, turns 
$\,\mf\,$ into a hol\-o\-mor\-phic bundle of \,CP triples over $\,\sb$, in the 
sense of (iii).

On the other hand, the $\,g$-or\-thog\-o\-nal complement of 
$\,\zy=\mathrm{Ker}\hskip2.3ptd\pi\,$ is equal, on $\,\mf'\nh$, to the summand 
$\,\mathcal{H}\,$ in (\ref{tme}). Thus, $\,\mathcal{H}\,$ constitutes a 
connection in the bundle $\,\mf\,$ over $\,\sb$, as defined in the lines 
following (i), and -- being the intersection of the horizontal distribution of 
the Chern connections $\,\mathcal{V}\hs\oplus\mathcal{H}^\pm$ in the normal 
bundles $\,\nr=\nr\hskip-2.3pt\sd^\pm\nh$, cf.\ Theorem~\ref{first}(ii) -- 
$\,\mathcal{H}\,$ itself is, according to (a) in Section~\ref{ck}, the Chern 
connection of the hol\-o\-mor\-phic bundle $\,\mf\,$ of \,CP triples over 
$\,\sb$.

This provides parts (a) -- (b) of the data (a) -- (e) required above, with 
$\,\sd^\pm$ and $\,\tpm,a\,$ given by (\ref{tmm}) and, respectively, 
Remark~\ref{ascdt}(i). The sub\-man\-i\-fold metrics $\,\hpm$ of $\,\sd^\pm$ 
have, by (v) -- (vi) in Lemma~\ref{intpr} and the final clause of 
Theorem~\ref{dcomp}(b), all the properties needed for (c). 

To show that $\,g\,$ satisfies (d), consider two $\,\pi$-pro\-ject\-a\-ble 
nonzero local sections $\,w,w\hh'$ of the distribution 
$\,\mathcal{H}=\zy^\perp\nh$, cf.\ (\ref{tme}). According to 
Lemma~\ref{ttgds}(a) and the last line of Remark~\ref{pralg}, $\,w\,$ and 
$\,w\hh'$ are projectable along $\,\mathcal{V}\,$ and $\,\mathcal{H}^\pm\nh$, 
as well as $\,\pi^\pm\hskip-1.2pt$-pro\-ject\-able, for either sign 
$\,\pm\hs$. Their restrictions to any fixed normal geodesic segment 
$\,\ic\hs$ emanating from $\,\sd^\pm$ thus lie in the space $\,\mathcal{W}\,$ 
(cf.\ (\ref{vtg}) and (i) -- (ii) in Theorem~\ref{jacob}) and, by 
Theorem~\ref{dcomp}(g), $\,g(\hn w,w\hh'\hh)\,$ restricted to $\,\ic\hs$ is 
a (possibly nonhomogeneous) linear function of $\,\vt$. The same linearity 
condition obviously holds for $\,g(\hn w,w\hh'\hh)\,$ when $\,g\,$ is defined 
as in (d), rather than being the metric of our triple $\,(\mf\nh,g,\vt)$. The 
two definitions of $\,g(\hn w,w\hh'\hh)\,$ must now agree, as the two linear 
functions have -- in view of Remark~\ref{inequ}(iii) and the final clause of 
Theorem~\ref{dcomp}(b) -- the same values 
$\,\hpm\nh(\hn w,w\hh'\hh)\,$ at either endpoint $\,\tpm$ of the interval 
$\,[\hh\tm,\tp]$.
\end{proof}
\begin{remark}\label{clone}All compact SKRP triples of Class 1 (cf.\ 
Section~\ref{ev}) must be
\begin{equation}\label{cdo}
\mathrm{iso\-mor\-phic\ to\ horizontal\ extensions\ of\ CP\ triples\ of\ 
complex\ dimension\ }\,1\hh,
\end{equation}
while those of Class 2 are themselves CP triples of a special type. The former 
claim is easily verified using \cite[Theorem 16.3]{derdzinski-maschler-06}; 
for the latter, see Lemma~\ref{cltwo}.

The classification result of \cite[Theorem 6.1]{derdzinski-kp} may be 
rephrased as the conclusion (\ref{cdo}) about all compact 
ge\-o\-des\-ic-gra\-di\-ent K\"ah\-ler triples $\,(\mf\nh,g,\vt)\,$ with 
$\,\dimc\nh\mf=2\,$ other than Class 2 SKRP triples are. Similarly, 
(\ref{cdo}) is the case -- by their very construction -- for the gradient 
K\"ah\-\hbox{ler\hs-}\hskip0ptRic\-ci sol\-i\-tons of Koiso \cite{koiso} and 
Cao \cite{cao}, mentioned in the Introduction.
\end{remark}

\section{Con\-stant-rank multiplications\done}\label{cr}
\setcounter{equation}{0}
In this section all vector spaces are 
fi\-\hbox{nite\hh-}\hskip0ptdi\-men\-sion\-al and complex. Bi\-lin\-e\-ar 
mappings of the type discussed here arise in any compact 
ge\-o\-des\-ic-gra\-di\-ent K\"ah\-ler triple (see Theorem~\ref{ctrkm}), which 
leads to the dichotomy conclusion of Theorem~\ref{dicho}.

A {\medit con\-stant-rank multiplication\/} is any bi\-lin\-e\-ar mapping 
$\,\mu:\on\nnh\times\nh\tw\to\oy\nh$, where $\,\on,\tw\nnh,\oy\,$ are vector 
spaces, such that the function $\,\on\smallsetminus\{0\}\ni\xi
\mapsto\mathrm{rank}\hskip1.7pt\mu(\xi,\,\cdot\,)\,$ is constant or, 
equivalently, $\,\dim\hs\mathrm{Ker}\hskip1.7pt\mu(\xi,\,\cdot\,)\,$ is the 
same for all nonzero $\,\xi\in\on\nh$. When 
$\,\dim\hs\mathrm{Ker}\hskip1.7pt\mu(\xi,\,\cdot\,)=k\,$ for all 
$\,\xi\in\on\smallsetminus\{0\}$, we also say that 
$\,\mu:\on\nnh\times\nh\tw\to\oy\,$ {\medit has the constant rank\/} 
$\,\dim\tw\hskip-2.3pt-k$. With the notations of Section~\ref{eg}, such 
$\,\mu\,$ leads to a mapping
\begin{equation}\label{phi}
\mg:\mathrm{P}\nh\on\to\,\mathrm{Gr}\nh_k\w\nh\tw\hskip10pt
\mathrm{given\ by}\hskip8pt\mg(\hn\bbC\xi)\hs
=\hs\mathrm{Ker}\hskip1.7pt\mu(\xi,\,\cdot\,)\hskip6pt
\mathrm{for}\hskip5pt\xi\in\on\smallsetminus\{0\}\hh.
\end{equation}
\begin{lemma}\label{dfphi}{\medit 
For\/ $\,\mu\,$ and\/ $\,\mg\,$ as above, 
$\,\on\smallsetminus\{0\}\ni\xi
\mapsto\hs\mathrm{Ker}\hskip1.7pt\mu(\xi,\,\cdot\,)
\in\hs\mathrm{Gr}\nh_k\w\nh\tw\,$ and\/ $\,\mg$ are both hol\-o\-mor\-phic. 
In terms of the identification\/ {\rm(\ref{twg})}, the differential of the 
former mapping at any\/ $\,\xi\in\on\smallsetminus\{0\}\,$ sends\/ 
$\,\eta\in\on\,$ to the unique\/ 
$\,H\in\mathrm{Hom}\hs(\ws,\tw\hskip-1.8pt/\hh\ws)\,$ with\/ 
$\,\mu(\eta,w)=\mu(\xi,\nnh-\nnh\tilde H\nh w)\,$ for all\/ 
$\,w\in\ws=\mg(\hn\bbC\xi)$, where\/ $\,\tilde H:\ws\to\tw\,$ is any linear 
lift of\/ $\hs H$.
}
\end{lemma}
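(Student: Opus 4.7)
My plan is to reduce the lemma to a direct application of Remark~\ref{dfrtl}, using the fact that the correspondence $\xi\mapsto\mu(\xi,\,\cdot\,)$ is itself linear.

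First I would set $F:\on\to\mathrm{Hom}\hs(\tw\nnh,\oy)$ to be the linear (hence holomorphic) map $F(\xi)=\mu(\xi,\,\cdot\,)$. The constant-rank hypothesis on $\mu$ is exactly the statement that, restricted to $U=\on\smallsetminus\{0\}$, the function $\xi\mapsto\dim\hs\mathrm{Ker}\hskip1.4ptF(\xi)$ is constantly equal to $k$. Remark~\ref{dfrtl}, applied to this $F$, yields at once that $\xi\mapsto\mathrm{Ker}\hskip1.4ptF(\xi)=\mathrm{Ker}\hskip1.7pt\mu(\xi,\,\cdot\,)$ is a holomorphic mapping $\on\smallsetminus\{0\}\to\mathrm{Gr}\nh_k\w\nh\tw$. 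For the descent to $\mathrm{P}\nh\on$, note that bi\-lin\-e\-ar\-i\-ty gives $\mu(c\hh\xi,\,\cdot\,)=c\hh\mu(\xi,\,\cdot\,)\,$ for every $c\in\bbC\smallsetminus\{0\}$, so $\mathrm{Ker}\hskip1.7pt\mu(\xi,\,\cdot\,)$ depends only on the line $\,\bbC\xi\hh$; thus the mapping factors through the hol\-o\-mor\-phic quotient projection $\on\smallsetminus\{0\}\to\mathrm{P}\nh\on$, yielding holomorphicity of $\mg$.

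For the differential formula, I would simply translate the characterization (\ref{fxh}) from Remark~\ref{dfrtl}. Since $F$ is linear, $d\hskip-.8ptF\hskip-3pt_\xi\w\eta=F(\eta)=\mu(\eta,\,\cdot\,)$ for every $\eta\in\on=T\hskip-2.3pt_\xi\w U$. The defining condition (\ref{fxh}), namely $F(\xi)\circ\tilde\vx$ equals the restriction of $-d\hskip-.8ptF\hskip-3pt_\xi\w\eta$ to $\ws=\mg(\hn\bbC\xi)$, therefore reads $\mu(\xi,\tilde H\nh w)=-\hh\mu(\eta,w)$ for all $w\in\ws$, which is exactly $\mu(\eta,w)=\mu(\xi,\nnh-\nnh\tilde H\nh w)$. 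The uniqueness of $H\in\mathrm{Hom}\hs(\ws,\tw\hskip-1.8pt/\hh\ws)$ and the fact that any linear lift $\tilde H:\ws\to\tw$ may be used are both part of Remark~\ref{dfrtl}.

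The only genuinely routine check needed is that the hypothesis of Remark~\ref{dfrtl} is met, that is, $\xi\mapsto\mathrm{rank}\hskip1.7ptF(\xi)$ is locally constant on $U=\on\smallsetminus\{0\}$; but this is precisely the constant-rank assumption on $\mu$, rephrased via $\mathrm{rank}\hskip1.7ptF(\xi)=\dim\tw\hskip-2.3pt-k$. No real obstacle arises: the entire content of the lemma is packaged into the framework of Remark~\ref{dfrtl}, with linearity of $F$ trivializing the computation of $d\hskip-.8ptF\hskip-3pt_\xi$.
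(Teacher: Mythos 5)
Your proposal is correct and follows exactly the paper's route: the paper's entire proof is to set $\,F(\xi)=\mu(\xi,\,\cdot\,)\,$ and invoke Remark~\ref{dfrtl}, which is precisely what you do, with the additional (correct and harmless) details that linearity of $\,F\,$ gives $\,d\hskip-.8ptF\hskip-3pt_\xi\w\eta=\mu(\eta,\,\cdot\,)\,$ and that homogeneity of $\,\mu(\xi,\,\cdot\,)\,$ in $\,\xi\,$ lets the kernel map descend to $\,\mathrm{P}\nh\on$. No gaps.
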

\begin{proof}This is obvious if one sets $\,F(\xi)=\mu(\xi,\,\cdot\,)\,$ in 
Remark~\ref{dfrtl}. 
\end{proof}
\begin{example}\label{cstrk}Any given con\-stant-rank multiplication 
$\,\mu:\on\nnh\times\nh\tw\to\oy\,$ leads to further such multiplications, 
$\,\mu':\on\nnh\times\nh\tw\hh'\nh\to\oy'$ and 
$\,\mu^*:\on\nnh\times\nh\oy^*\nh\to\tw^*\nh$, obtained by setting 
$\,\mu'(\xi,\,\cdot\,)=\gamma[\mu(\xi,\alpha\,\cdot\,)]\,$ and  
$\,\mu^*(\xi,\,\cdot\,)=[\mu(\xi,\,\cdot\,)]^*\nh$. Here $\,\tw\hh'\nh,\oy'$ 
are vector spaces, $\,\alpha:\tw\hh'\nh\to\tw\,$ (or, $\,\gamma:\oy\to\oy'$) is 
surjective (or, injective) and linear, while $\,[\hskip3.2pt]^*$ stands for 
the dual of a vector space or a linear operator.
\end{example}
\begin{lemma}\label{holem}{\medit 
If\/ $\,\mu:\on\nnh\times\nh\tw\to\oy\,$ has the constant 
rank\/ $\,\dim\tw\nnh-k\,$ and\/ $\,\mg\,$ with\/ {\rm(\ref{phi})} 
is nonconstant, then\/ $\,\mg\,$ is a hol\-o\-mor\-phic embedding.

Whether\/ $\,\mg\,$ is constant, or not, the same is the case for all 
multiplications\/ $\,\on\nnh\times\nh\tw\to\oy$ of the constant rank\/ 
$\,\dim\tw\nnh-k$, sufficiently close to\/ $\,\mu$.
}
\end{lemma}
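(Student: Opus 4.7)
The plan is to first reduce the embedding claim to injectivity of $\,\mg$, then rule out non\hyp injectivity by restricting to complex projective planes and invoking the Picard\hyp rank\hyp one rigidity of $\,\bbCP^2$; the second assertion will then follow from integrality of the Chern class of the pullback line bundle under continuous deformation of $\,\mu$.

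Two structural observations come from Lemma~\ref{dfphi}. First, $\,[\eta]\in\on/\bbC\xi\,$ lies in $\,\mathrm{Ker}\,d\mg_{\bbC\xi}\,$ iff $\,\mu(\eta,w)=0\,$ for all $\,w\in\ws=\mg(\bbC\xi)$, and since $\,\ws\,$ already has the maximal dimension $\,k\,$ allowed in any $\,\mathrm{Ker}\,\mu(\eta,\,\cdot\,)$, this is equivalent to $\,\mg(\bbC\eta)=\ws$. Secondly, if $\,\mg(\bbC\xi)=\mg(\bbC\xi')=\ws\,$ for linearly independent $\,\xi,\xi'$, bilinearity of $\,\mu\,$ and the same dimension count yield $\,\mg(\bbC(\alpha\xi+\beta\xi'))=\ws\,$ whenever $\,(\alpha,\beta)\ne(0,0)$, so every nonempty fiber of $\,\mg\,$ is the projectivization of the linear subspace $\,\ls_\ws=\{\xi\in\on:\mu(\xi,\ws)=0\}$. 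Combining these, $\,\mg\,$ is an immersion at $\,\bbC\xi\,$ iff the fiber through $\,\bbC\xi\,$ equals $\,\{\bbC\xi\}$; thus $\,\mg\,$ is an immersion everywhere iff it is injective, in which case (being holomorphic, with compact domain and Hausdorff target) it is automatically an embedding.

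To finish the first claim, I would rule out the possibility that $\,\mg\,$ is nonconstant while some fiber $\,\mathrm{P}\ls_0\,$ has $\,\dim\ls_0\ge 2$. Choosing linearly independent $\,\xi_0,\eta\in\ls_0\,$ together with $\,\xi_1\in\on\,$ satisfying $\,\mg(\bbC\xi_1)\ne\mg(\mathrm{P}\ls_0)$, the three vectors are linearly independent; restricting $\,\mg\,$ to $\,\mathrm{P}(\mathrm{Span}_\bbC(\xi_0,\eta,\xi_1))\cong\bbCP^2\,$ and post\hyp composing with the Pl\"ucker embedding $\,\mathrm{Gr}_k\w\nh\tw\hookrightarrow\mathrm{P}(\wedge^k\tw)\,$ gives a morphism $\,\bar\mg:\bbCP^2\to\bbCP^N$. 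Because $\,\mathrm{Pic}(\bbCP^2)=\bbZ$, one has $\,\bar\mg^*\mathcal{O}(1)\cong\mathcal{O}(d)\,$ for some $\,d\ge 0$. The case $\,d=0\,$ would make $\,\bar\mg\,$ constant, contradicting $\,\mg(\bbC\xi_0)\ne\mg(\bbC\xi_1)$; the case $\,d\ge 1\,$ would make $\,\bar\mg^*\mathcal{O}(1)\,$ ample and $\,\bar\mg\,$ finite, contradicting the fact that $\,\bar\mg\,$ contracts the projective line $\,\mathrm{P}(\mathrm{Span}_\bbC(\xi_0,\eta))\,$ to the single point representing $\,\ws_0$. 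Hence all fibers of $\,\mg\,$ are zero\hyp dimensional.

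For the second assertion, applying Remark~\ref{dfrtl} to the joint parameter $\,(\xi,\mu)\,$ shows that $\,\mg_\mu\,$ varies continuously as a map $\,\mathrm{P}\on\to\mathrm{Gr}_k\w\nh\tw\,$ when $\,\mu\,$ moves within the space of constant\hyp rank multiplications, and the pullback line bundle $\,\mg_\mu^*\mathcal{O}_{\mathrm{Gr}}(1)\,$ therefore varies continuously as well. Its first Chern class $\,d(\mu)\in H^2(\mathrm{P}\on,\bbZ)\cong\bbZ\,$ is then a continuous integer\hyp valued function of $\,\mu$, hence locally constant. By Lemma~\ref{nontr} applied to the restriction of $\,\mg_\mu\,$ to any projective line in $\,\mathrm{P}\on$, $\,d(\mu)=0\,$ if and only if $\,\mg_\mu\,$ is constant, so both $\,\{d=0\}\,$ and $\,\{d\ge 1\}\,$ are open, proving the claim. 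The principal obstacle is the first assertion: the linearity of the fibers of $\,\mg\,$ combined with the Picard\hyp rank\hyp one rigidity of $\,\bbCP^2\,$ — which forbids any nonconstant finite morphism out of $\,\bbCP^2\,$ from contracting a curve — is what drives the argument.
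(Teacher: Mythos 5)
Your proposal is correct and follows essentially the same route as the paper: fibres of $\,\mg\,$ are linear subvarieties by bilinearity and the constant-rank hypothesis, a nonconstant $\,\mg\,$ cannot contract a projective line for cohomological reasons, the kernel of $\,d\mg_{\bbC\xi}\,$ is identified via Lemma~\ref{dfphi} with the tangent directions of the fibre, and the final clause follows from topological invariance of the pullback class under small perturbations. The only cosmetic difference is that you phrase the cohomological step through $\,\mathrm{Pic}\hh(\bbCP^2)\,$ and ampleness on an auxiliary plane, and the stability claim through an integer degree, where the paper invokes Lemma~\ref{nontr} directly on $\,\mathrm{P}\nh\on$.
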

\begin{proof}Let $\,\ws\in\mathrm{Gr}\nh_k\w\nh\tw$. The subset of $\,\on\,$ 
consisting of $\,0\,$ and all \hbox{$\xi\in\on\smallsetminus\{0\}\,$} with 
$\,\mg(\hn\bbC\xi)=\ws\,$ is a vector sub\-space. In fact, if 
$\,\xi,\eta\in\on\smallsetminus\{0\}\,$ and 
$\,\ws=\mathrm{Ker}\hskip1.7pt\mu(\xi,\,\cdot\,)
=\mathrm{Ker}\hskip1.7pt\mu(\eta,\,\cdot\,)$, then 
$\,\ws\subseteq\mathrm{Ker}\hskip1.7pt\mu(\zeta,\,\cdot\,)\,$ for any 
$\,\zeta\in\mathrm{Span}\hh(\xi,\eta)\,$ and, unless $\,\zeta=0$, this 
inclusion is actually an equality due to the con\-stant-rank property of 
$\,\mu$.

Therefore, $\,\mg\hs$-pre\-im\-ages of points of 
$\,\mathrm{Gr}\nh_k\w\nh\tw\,$ are linear sub\-va\-ri\-e\-ties in 
$\,\mathrm{P}\nh\on$. If $\,\mg$ is nonconstant, all these 
sub\-va\-ri\-e\-ties are ze\-\hbox{ro\hh-}\hskip0ptdi\-men\-sion\-al, that is, 
$\,\mg\,$ has to be injective. Namely, by Lemma~\ref{nontr}, for the 
K\"ah\-ler form $\,\omega\,$ of any K\"ah\-ler metric on 
$\,\mathrm{Gr}\nh_k\w\nh\tw\nnh$, the integral of $\,\mg\sk\omega\,$ 
over any projective line $\,\pl\,$ in $\,\mathrm{P}\nh\on\,$ is nonzero, and 
so $\,\pl$ cannot lie in the $\,\mg\hs$-pre\-im\-age of a point. Also, 
Lemma~\ref{dfphi} guarantees hol\-o\-mor\-phic\-i\-ty of $\,\mg$.

Let $\,\mg\hs$ now be nonconstant. Then $\,\mg\,$ must be an embedding, that 
is, $\,d\mg_{\bbC\xi}\w$ is injective at any 
$\,\bbC\xi\in\mathrm{P}\nh\on\,$ or, equivalently, the differential of 
$\,\xi\mapsto\hs\mathrm{Ker}\hskip1.7pt\mu(\xi,\,\cdot\,)\,$ 
at any $\,\xi\in\on\smallsetminus\{0\}\,$ has the kernel $\,\bbC\xi$. 
Namely, in Lemma~\ref{dfphi} we may set $\,\tilde H=0$ when $\,H=0$, and so 
$\,\eta\,$ lies in the kernel if and only if the inclusion 
$\,\ws\subseteq\mathrm{Ker}\hskip1.7pt\mu(\eta,\,\cdot\,)$ holds for 
$\,\ws\nh=\mg(\hn\bbC\xi)$. Unless $\,\eta=0$, this inclusion is, as 
before, an equality, and injectivity of $\,\mg\,$ then yields 
$\,\eta\in\bbC\xi$, which completes the proof, the final clause being 
an immediate consequence of that in Lemma~\ref{nontr}.
\end{proof}
Given a compact ge\-o\-des\-ic-gra\-di\-ent K\"ah\-ler triple $\,(\mf\nh,g,\vt)$, 
we use the notation of (\ref{sym}) and (\ref{tmm}) to set, for 
$\,\xi,\eta\in\nr\hskip-2.4pt_y\w\sd^\pm$ and $\,w\in\tyb^\pm\nh$, with either 
fixed sign $\,\pm\hs$,
\begin{equation}\label{zxe}
Z_y^\pm\nh(\xi,\eta)\hh w\,=\,a\hh g_y\w(\xi,\eta)\hh w\,
+\,(\tp\hskip-2.2pt-\nh\tm)\hs R_y\w(\xi,J\hskip-2pt_y\eta)\hh J\hskip-2pt_y\w w\hh.
\end{equation}
Thus, $\,Z_y^\pm\nh(\xi,\eta)\hh w\in\tyb^\pm\nh$, as $\,\xi,\eta\,$ are 
tangent, and $\,w\,$ normal, to the totally geodesic leaf through $\,y\,$ of 
the $\,J$-in\-var\-i\-ant in\-te\-gra\-ble distribution 
$\,\mathrm{Ker}\hskip2.3ptd\pi^\pm\nh
=\hs\mathcal{V}\hs\oplus\mathcal{H}^\mp\nh$, cf.\ (\ref{hpm}), 
Theorem~\ref{dcomp}(c), Corollary~\ref{totgd}, and the first line of 
Remark~\ref{ptnrs}. Also, denoting by $\,Z_y^\pm\nh(\xi,\eta)\,$ the 
en\-do\-mor\-phism $\,w\mapsto Z_y^\pm\nh(\xi,\xi)\hh w\,$ of $\,\tyb^\pm\nh$, 
one has
\begin{equation}\label{zex}
Z_y^\pm\nh(\xi,\eta)=Z_y^\pm\nh(\eta,\xi)
=Z_y^\pm\nh(J\hskip-2pt_y\w\xi,J\hskip-2pt_y\w\eta)\hh w\hh,
\hskip16pt
J\hskip-2pt_y\w[Z_y^\pm\nh(\xi,\eta)]=[Z_y^\pm\nh(\xi,\eta)]J\hskip-2pt_y\w\hh,
\end{equation}
as an obvious consequence of (\ref{rcm}) and (\ref{gjw}). Next, we define a 
com\-plex-bi\-lin\-e\-ar mapping 
$\,\mu_y^\pm\nnh:\nr\hskip-2.4pt_y\w\sd^\pm\nnh\times\tyb^\pm\nh\to
\overline{\mathrm{Hom}}_\bbC\w(\nr\hskip-2.4pt_y\w\sd^\pm\nh,\tyb^\pm\nh)\,$ 
by
\begin{equation}\label{mxw}
\mu_y^\pm(\xi,w)\,=\,Z_y^\pm\nh(J\hskip-2pt_y\w\xi,\,\cdot\,)\hh w\,
+\,Z_y^\pm\nh(\xi,\,\cdot\,)J\hskip-2pt_y\w w\hh.
\end{equation}
By $\,\overline{\mathrm{Hom}}_\bbC\w$ we mean here `the space of 
anti\-lin\-e\-ar operators' and 
$\,\overline{\mathrm{Hom}}_\bbC\w(\nr\hskip-2.4pt_y\w\sd^\pm\nh,\tyb^\pm\nh)$ 
is treated as a complex vector space in which the multiplication by 
$\,i\,$ 
acts via composition with $\,J\hskip-2pt_y\w$ from the left. (The product thus equals 
the given operator $\,\nr\hskip-2.4pt_y\w\sd^\pm\nh\to\tyb^\pm$ {\medit 
followed\/} by $\,J\hskip-2pt_y\w$.) Anti\-lin\-e\-ar\-i\-ty of $\,\mu_y^\pm(\xi,w)\,$ 
and com\-plex-bi\-lin\-e\-ar\-i\-ty of $\,\mu_y^\pm$ are both obvious from 
(\ref{zex}).
\begin{theorem}\label{ctrkm}{\medit 
For a compact ge\-o\-des\-ic-gra\-di\-ent K\"ah\-ler triple\/ 
$\,(\mf\nh,g,\vt)$, a fixed sign\/ $\,\pm\hs$, and any point\/ 
$\,y\in\sd^\pm\nh$, the mapping\/ $\,\mu_y^\pm$ with\/ {\rm(\ref{mxw})} is a 
con\-stant-rank multiplication, cf.\ Section\/~{\rm\ref{cr}}. Furthermore, 
if\/ $\,\mg=\mg_y^\pm$ corresponds to\/ 
$\,\mu=\mu_y^\pm$ as in\/ {\rm(\ref{phi})} and\/ $\,\xi\,$ is any nonzero 
vector normal to\/ $\,\sd^\pm$ at\/ $\,y$, then
\begin{equation}\label{dph}
\begin{array}{rl}
\mathrm{i)}&\mg_y^\pm(\hn\bbC\xi)\,=\,d\pi\nh_x^{\hs\pm}\nh(\mathcal{H}_x^\pm\nh)\,
=\,d\pi\nh_x^{\hs\pm}\nh(\mathcal{V}_{\nh x}\w\oplus\mathcal{H}_x^\pm\nh)\hh,
\hskip12pt\mathrm{where}\hskip7ptx=\varPhi(y,\xi)\hh,\phantom{_{j_j}}\\
\mathrm{ii)}&\mg_y^\pm(\hn\bbC\xi)\,
=\,\mathrm{Ker}\hskip1.7ptZ_y^\pm\nh(\xi,\xi)\hh,\hskip40pt\mathrm{for}
\hskip7ptZ_y^\pm\nh(\xi,\xi)\hskip7pt\mathrm{as\ \ in\ \ (\ref{zex}).}
\phantom{^{1^1}}
\end{array}
\end{equation}
}
\end{theorem}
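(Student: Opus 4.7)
The plan is to establish (\ref{dph}.i) and (\ref{dph}.ii) simultaneously; the constant-rank property of $\mu_y^\pm$ will then follow from identifying $\mathrm{Ker}\,\mu_y^\pm(\xi,\cdot)$ with the geometric subspace $d\pi^\pm_x(\mathcal{H}_x^\pm)$, which has constant complex dimension $k_\pm$ by Remark~\ref{holom}. Fix $y\in\sd^\pm$, a nonzero $\xi\in\nr\hskip-2.4pt_y\w\sd^\pm$, set $x=\varPhi(y,\xi)$ with $\varPhi$ as in Theorem~\ref{first}, and for $w\in\tyb^\pm$ let $w_x$ denote the image under $d\varPhi$ of the $\mathrm{D}\hh$-hor\-i\-zon\-tal lift at $(y,\xi)$. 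The first step is to subtract $Q(x)$ times (\ref{hgv}.c) from $2(\vt-\tmp)$ times (\ref{gsw}), evaluated on the Chern-horizontal lifts $w_x,w'_x$. Using the identity $|\vt-\tmp|+|\vt-\tpm|=\tp-\tm$, the Ricci equation for the totally geodesic submanifold $\sd^\pm$ — which gives $\la R^{\mathrm{D}}\nh(w,Jw')\xi,i\xi\ra=g_y(R_y(w,Jw')\xi,J\xi)$ — and the pair symmetry together with K\"ah\-ler identities for $R$, I expect to arrive at the identity
\begin{equation*}
g_x\!\big([2(\vt-\tmp)\dv_x-Q(x)\mathrm{Id}]\,w_x,w'_x\big)\,=\,-\frac{Q(x)}{a\hh\hr^2}\,g_y(Z_y^\pm(\xi,\xi)w,w'),
\end{equation*}
where $\hr=|\xi|$ and $Z_y^\pm$ is as in (\ref{zxe}). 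Since $w_x\in\mathcal{V}_x^\perp$ by Chern-horizontality and $2(\vt-\tmp)\dv_x-Q(x)\mathrm{Id}$ is invertible on $\mathcal{H}_x$ — because, by (\ref{hvk}), $\mathcal{H}_x^\pm$ is already the full $Q/[2(\vt-\tmp)]$-eigen\-space of $\dv_x$ in $\mathcal{V}_x^\perp$ — this identity shows $w_x\in\mathcal{H}_x^\pm$ iff $Z_y^\pm(\xi,\xi)w=0$. Combined with Lemma~\ref{vsbkr} and Remark~\ref{inequ}(iii), it follows that $d\pi^\pm_x(\mathcal{H}_x^\pm)=d\pi^\pm_x(\mathcal{V}_x\oplus\mathcal{H}_x^\pm)=\mathrm{Ker}\,Z_y^\pm(\xi,\xi)$, which has constant complex dimension $k_\pm$; consequently $\bbC\xi\mapsto\mathrm{Ker}\,Z_y^\pm(\xi,\xi)$ defines a hol\-o\-mor\-phic map $\mathrm{P}(\nr\hskip-2.4pt_y\w\sd^\pm)\to\mathrm{Gr}\nh_{k_\pm}(\tyb^\pm)$ by Remark~\ref{holom}.

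Next I would match this geometric kernel with $\phi_y^\pm(\bbC\xi)=\mathrm{Ker}\,\mu_y^\pm(\xi,\cdot)$. The inclusion $\mathrm{Ker}\,\mu_y^\pm(\xi,\cdot)\subseteq\mathrm{Ker}\,Z_y^\pm(\xi,\xi)$ is immediate: $g_y(J\xi,\xi)=0$ and $R_y(J\xi,J\xi)=0$ give $Z_y^\pm(J\xi,\xi)=0$, so the symmetries (\ref{zex}) reduce $\mu_y^\pm(\xi,w)(J\xi)$ to $Z_y^\pm(\xi,\xi)w$. For the reverse inclusion, a direct expansion using $R(J\xi,J\eta)=R(\xi,\eta)$, $R(\xi,J\eta)=-R(J\xi,\eta)$, and $JR=RJ$ yields the alternate form
\begin{equation*}
\mu_y^\pm(\xi,w)(\eta)\,=\,Z_y^\pm(\xi,\eta)\,Jw\,-\,Z_y^\pm(\xi,J\eta)\,w.
\end{equation*}
Applying Remark~\ref{dfrtl} with $F(\xi):=Z_y^\pm(\xi,\xi)$, the differential of $\xi\mapsto\mathrm{Ker}\,F(\xi)$ at $\xi$ in direction $\eta$ is represented by the $\tilde H_\eta$ with $Z_y^\pm(\xi,\xi)\,\tilde H_\eta w\equiv-2Z_y^\pm(\xi,\eta)w$ modulo $\mathrm{Ker}\,Z_y^\pm(\xi,\xi)$, for $w\in\mathrm{Ker}\,Z_y^\pm(\xi,\xi)$. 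Complex-linearity of this differential in $\eta$ — equivalent to hol\-o\-mor\-phic\-i\-ty of the induced map to the Grassmannian, established above — amounts to $Z_y^\pm(\xi,\xi)\big(J\tilde H_\eta w-\tilde H_{J\eta}w\big)=0$, which via the alternate form is precisely the condition $\mu_y^\pm(\xi,w)(\eta)=0$ for every $w\in\mathrm{Ker}\,Z_y^\pm(\xi,\xi)$ and every $\eta$.

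The principal obstacle will be the derivation of the key identity of the first paragraph: carefully matching the various sign conventions through the subtraction, invoking the Ricci equation in the correct form for a totally geodesic complex submanifold, and rearranging via pair- and K\"ah\-ler-symmetry of $R$ to collect the factor $g_y(Z_y^\pm(\xi,\xi)w,w')$ on the right-hand side. Once that identity is in hand, the alternate expression for $\mu_y^\pm$ in terms of $Z_y^\pm$ and a clean application of Remark~\ref{dfrtl} complete the proof in essentially routine fashion.
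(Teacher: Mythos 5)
Your proposal is correct and follows essentially the same route as the paper's proof: first identify $\,d\pi\nh_x^{\hs\pm}\nh(\mathcal{H}_x^\pm\nh)\,$ with $\,\mathrm{Ker}\hskip1.7ptZ_y^\pm\nh(\xi,\xi)\,$ (the paper does this via Theorem~\ref{dcomp}(h2) and Remark~\ref{bthex} along a normal geodesic, while you use the equivalent model formulas (\ref{hgv}.c) and (\ref{gsw}) transported by $\,\varPhi$ -- the same computation in different clothing), then deduce $\,\mathrm{Ker}\hskip1.7ptZ_y^\pm\nh(\xi,\xi)=\mathrm{Ker}\hskip1.7pt\mu_y^\pm(\xi,\,\cdot\,)\,$ by combining the evaluation of $\,\mu_y^\pm(\xi,w)\,$ at $\,\eta=J\hskip-2pt_y\w\xi\,$ with com\-plex-lin\-e\-ar\-i\-ty of the differential of $\,\xi\mapsto\mathrm{Ker}\hskip1.7ptZ_y^\pm\nh(\xi,\xi)\,$ via Remark~\ref{dfrtl}, exactly as in the paper. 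All the sign and symmetry manipulations you flag as the main obstacle do go through as you anticipate.
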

\begin{proof}Whenever $\,x=\varPhi(y,\xi)\,$ and 
$\,\xi\in\nr\hskip-2.4pt_y\w\sd^\pm\nh\smallsetminus\nh\{0\}$, we have
\begin{equation}\label{krz}
\mathrm{Ker}\hskip1.7ptZ_y^\pm\nh(\xi,\xi)\,=\,
d\pi\nh_x^{\hs\pm}\nh(\mathcal{H}_x^\pm\nh)\,
=\,d\pi\nh_x^{\hs\pm}\nh(\mathcal{V}_{\nh x}\w\oplus\mathcal{H}_x^\pm\nh)\hh.
\end{equation}
In fact, let $\,x=x(t)\in\ic\hs$ as in Theorem~\ref{dcomp}, with some fixed 
$\,t\in(t_-\w,t_+\w)$. According to (\ref{hvk}) and parts (iii), (iv), (vi) of 
Theorem~\ref{jacob}, the vectors forming $\,\mathcal{H}_x^\pm$ are precisely 
the values $\,w(t)\,$ for all $\,w\,$ as in Theorem~\ref{dcomp}(e) which also 
have the property that 
$\,2\hh(\vt-\hn\tmp)\hh Q^{-\nnh1}g(\dv\nh w,w\hh'\hh)=g(\hn w,w\hh'\hh)\,$ 
whenever $\,w\hh'$ satisfies the hypotheses of Theorem~\ref{dcomp}(e). Since 
the values $\,w_\pm'$ in Theorem~\ref{dcomp}(h2) fill $\,\tyb^\pm$ (cf.\ 
assertions (d) -- (f) of Theorem~\ref{dcomp}), replacing $\,g(\hn w,w\hh'\hh)\,$ 
and $\,g(\dv\nh w,w\hh'\hh)\,$ in the last equality with the expressions 
provided by Theorem~\ref{dcomp}(h2) and Remark~\ref{bthex}, we easily verify, 
using (\ref{rcm}) and Remark~\ref{inequ}(i), that $\,w(t)\in\mathcal{H}_x^\pm$ 
if and only if $\,Z_y^\pm\nh(\xi,\eta)\hh w_\pm\w\nh=0$. Now the final clause 
of Theorem~\ref{dcomp}(b) (or, Remark~\ref{holom}) yields the first 
\hbox{(or, second) equality in (\ref{krz}).}

To simplify notations, let us write $\,g,Z,J\,$ rather than 
$\,g_y\w,Z_y^\pm\nh,J\hskip-2pt_y\w$. Since $\,x=\varPhi(y,\xi)\,$ in (\ref{krz}) and 
$\,\varPhi\,$ is hol\-o\-mor\-phic (Theorem~\ref{first}), (\ref{krz}) and 
Remark~\ref{holom} clearly imply that, for a suitable integer $\,k=k_\pm\w$, 
the resulting mapping
\begin{equation}\label{mpg}
\nr\hskip-2.4pt_y\w\sd^\pm\nh\smallsetminus\nh\{0\}\ni\xi\,\,
\mapsto\,\,\mathrm{Ker}\hskip1.7ptZ(\xi,\xi)
\in\hs\mathrm{Gr}\nh_k\w\nh(\tyb^\pm\nh)\hskip12pt\mathrm{is\ 
hol\-o\-mor\-phic.}
\end{equation}
The $\,C^\infty$ version of the assumptions 
listed in Remark~\ref{dfrtl} is thus satisfied if one chooses 
$\,\,U\nh,\tw\nnh,\oy\,$ to be 
$\,\nr\hskip-2.4pt_y\w\sd^\pm\nh\smallsetminus\nh\{0\},\tyb^\pm\nh,\tyb^\pm$ 
and sets $\,F(\xi)=Z(\xi,\xi)$. By (\ref{fxh}), the differential
of (\ref{mpg}) at any nonzero $\,\xi\in\nr\hskip-2.4pt_y\w\sd^\pm$ sends 
any $\,\eta\in\nr\hskip-2.4pt_y\w\sd^\pm$ to the 
unique $\,H:\ws\to\tw\hskip-1.8pt/\hh\ws\nh$, where 
$\,\ws=\hs\mathrm{Ker}\hskip1.7ptZ(\xi,\xi)$, with a linear lift 
$\,\tilde H:\ws\to\tw\nh=\tyb^\pm$ such that
$\,Z(\xi,\xi)\circ\tilde H\,$ equals the restriction of 
$\,-\nnh2Z(\xi,\eta)\,$ to $\,\ws\nh$. (We have 
$\,d\hskip-.8ptF\hskip-3pt_\xi\w=2Z(\xi,\,\cdot\,)$ since $\,Z(\xi,\eta)\,$ is 
real-bi\-lin\-e\-ar and symmetric in $\,\xi,\eta$, cf.\ (\ref{zex}).) 
Consequently,
\begin{equation}\label{cpl}
2Z(\xi,\eta)\hh w\,=\,-Z(\xi,\xi)\hh\tilde H\nh w\hskip12pt\mathrm{for\ all\ 
}\hskip5ptw\in\mathrm{Ker}\hskip1.7ptZ(\xi,\xi)\hh.
\end{equation}
Com\-plex-lin\-e\-ar\-i\-ty of the differential, due to (\ref{mpg}), means 
that (\ref{cpl}) will still hold if we replace $\,\eta\,$ with $\,J\hn\eta\,$ 
and $\,\tilde H\,$ with $\,J\nh\tilde H$. Then, from (\ref{zex}) and 
(\ref{cpl}), $\,2Z(J\xi,\eta)\hh w=-\nnh2Z(\xi,J\hn\eta)\hh w
=Z(\xi,\xi)\hh J\nh\tilde H\nnh w=J[Z(\xi,\xi)\hh\tilde H\nh w]
=-\nnh2J[Z(\xi,\eta)\hh w]=-\nnh2Z(\xi,\eta)\hh J\nh w$. In other words,
$\,Z(J\xi,\eta)\hh w+Z(\xi,\eta)\hh J\nh w=0\,$ whenever 
$\,w\in\mathrm{Ker}\hskip1.7ptZ(\xi,\xi)$ and 
$\,\eta\in\nr\hskip-2.4pt_y\w\sd^\pm\nh$. Thus, by (\ref{mxw}), 
$\,\mathrm{Ker}\hskip1.7ptZ(\xi,\xi)\subseteq\mg_y^\pm(\hn\bbC\xi)
=\mathrm{Ker}\hskip1.7pt\mu_y^\pm(\xi,\,\cdot\,)$, while the opposite 
inclusion is obvious since (\ref{zex}) gives $\,Z(\xi,J\xi)=0$, and so the 
expression $\,Z(J\xi,\eta)\hh w+Z(\xi,\eta)\hh J\nh w=0\,$ for 
$\,\eta=J\xi\,$ equals $\,Z(\xi,\xi)\hh w$.

The equality $\,\mathrm{Ker}\hskip1.7ptZ(\xi,\xi)=\mg_y^\pm(\hn\bbC\xi)\,$ 
and (\ref{krz}) -- (\ref{mpg}) complete the proof.
\end{proof}
The description of $\,\dot x\nh_\pm\w$ in the lines preceding (\ref{mpg}) also 
gives
\begin{equation}\label{nng}
g_y\w(Z_y^\pm\nh(\xi,\xi)w,w)\,\ge\,0\hskip12pt\mathrm{for\ all}\hskip7pt
\xi\in\nr\hskip-2.4pt_y\w\sd^\pm\hskip7pt\mathrm{and}\hskip7pt
w\in\tyb^\pm\nh,
\end{equation}
which one sees taking the limit of the equality in Theorem~\ref{dcomp}(h2) 
with $\,w\hh'\nh=w\,$ as $\,t\in(t_-\w,t_+\w)\,$ approaches the other endpoint 
$\,t_\mp\w$ (and so $\,\vt\to\tmp$).

\section{The dichotomy theorem\done}\label{dt}
\setcounter{equation}{0}
This section uses the notations listed at the beginning of Section~\ref{cc} 
and the symbols $\,k_\pm\w$ of Remark~\ref{dppdm}. Any $\,y\in\sd^\pm$ leads 
to the assignment
\begin{equation}\label{xtd}
\nr\hskip-2.4pt_y\w\sd^\pm\nnh\smallsetminus\nnh\{0\}\ni\xi\hn
\mapsto\nh d\pi\nh_x^{\hs\pm}\nh(\mathcal{H}_x^\pm\nh)
\in\hn\mathrm{Gr}\nh_k\w\nh(\tyb^\pm\nh)\hh,\hskip3.5pt\mathrm{where}
\hskip5ptx\nh=\hn\varPhi(y,\xi)\hskip5pt\mathrm{and}\hskip5ptk\hn=\nh k_\pm\w\hh,
\end{equation}
$\varPhi=\hs\varPhi^\pm$ being defined by (\ref{phe}). (Due to (\ref{hpm}) and 
(\ref{tme}), $\,d\pi\nh_x^{\hs\pm}$ is injective on $\,\mathcal{H}_x^\pm$.)
\begin{theorem}\label{dicho}{\medit 
Given any compact ge\-o\-des\-ic-gra\-di\-ent K\"ah\-ler triple\/ 
$\,(\mf\nh,g,\vt)$, one and only one of the following two cases occurs.
\begin{enumerate}
  \def\theenumi{{\rm\alph{enumi}}}
\item[{\rm(a)}] Either the mappings\/ {\rm(\ref{xtd})} are all constant, for 
both signs\/ $\,\pm\hs$, or
\item[{\rm(b)}] each of\/ {\rm(\ref{xtd})}, for both signs\/ $\,\pm\hs$, 
descends to a nonconstant hol\-o\-mor\-phic embedding\/ 
$\,\mathrm{P}\nnh\nr\hskip-2.4pt_y\w\nh\to\mathrm{Gr}\nh_k\w\nh(\tyb^\pm\nh)$, 
where\/ $\,\mathrm{P}\nnh\nr\hskip-2.4pt_y\w$ is the projective space of\/ 
$\,\nr\hskip-2.4pt_y\w=\nr\hskip-2.4pt_y\w\sd^\pm\nh$.
\end{enumerate}
Condition\/ {\rm(a)} holds if and only if\/ $\,(\mf\nh,g,\vt)\,$ satisfies\/ 
{\rm(i)} -- {\rm(iii)} in Lemma\/~{\rm\ref{intpr}}.
}
\end{theorem}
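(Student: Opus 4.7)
The plan is to deduce the dichotomy from three earlier results: Theorem~\ref{ctrkm}, Lemma~\ref{holem}, and Lemma~\ref{intpr}. By~(\ref{dph}.i) of Theorem~\ref{ctrkm}, the assignment~(\ref{xtd}) factors through $\xi\mapsto\bbC\xi$ as the hol\-o\-mor\-phic mapping $\mg_y^\pm:\mathrm{P}\nnh\nr\hskip-2.4pt_y\w\sd^\pm\to\mathrm{Gr}\nh_k\w\nh(\tyb^\pm\nh)$ associated, via~(\ref{phi}), with the con\-stant-rank multiplication $\mu_y^\pm$ of~(\ref{mxw}). Lemma~\ref{holem} immediately yields the pointwise alternative: each $\mg_y^\pm$ is either constant, or a hol\-o\-mor\-phic embedding. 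Whenever $\mg_y^\pm$ fails to be constant, the ``nonconstant hol\-o\-mor\-phic embedding'' assertion of case~(b) is therefore already in hand.

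Next I would promote this pointwise alternative to an ``all-or-nothing'' statement, one sign at a time. Formulae~(\ref{zxe}) and~(\ref{mxw}) express $\mu_y^\pm$ smoothly in terms of the metric and curvature tensor at $y\in\sd^\pm$, so $y\mapsto\mu_y^\pm$ is a continuous family of con\-stant-rank multiplications. The second clause of Lemma~\ref{holem} then shows that both the set $\{y\in\sd^\pm:\mg_y^\pm\mathrm{\ is\ constant}\}$ and its complement are open in $\sd^\pm$. Connectedness of $\sd^\pm$ (Remark~\ref{ascdt}(iii)) forces one of them to be empty, so for each fixed sign~$\pm$ either every $\mg_y^\pm$ is constant, or every $\mg_y^\pm$ is a nonconstant hol\-o\-mor\-phic embedding.

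The last step is to synchronize the two signs and identify the ``all-constant'' alternative with condition~(\ref{spn}), i.e.\ with (i)--(iii) of Lemma~\ref{intpr}. The statement that $\mg_y^+$ is constant for every $y\in\sd^+$ says exactly that $d\pi\nh_x^+\nh(\mathcal{H}_x^+\nh)$ depends only on $\pi^+\nh(x)$; since $\mathcal{V}\subseteq\mathrm{Ker}\hskip2.3ptd\pi^+\nh$, this is precisely the $\pi^+\hskip-1.2pt$-pro\-ject\-a\-bil\-i\-ty of $\mathrm{Ker}\hskip2.3ptd\pi^-\nh=\mathcal{V}\oplus\mathcal{H}^+\nh$, which is condition~(ii) of Lemma~\ref{intpr}. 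Symmetrically, all $\mg_y^-$ being constant is condition~(iii) there. Since (ii) and (iii) are equivalent in Lemma~\ref{intpr}, the $+$ and $-$ alternatives must hold together, and both coincide with~(i), that is, with~(\ref{spn}).

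The main obstacle is precisely this sign-synchronization: nothing in $\mu_y^+$, which depends only on the geometry of $\sd^+\nh$, couples it a priori to the analogous multiplication along $\sd^-\nh$, so pointwise reasoning at the two critical manifolds remains logically independent. The rescue is supplied entirely by Lemma~\ref{intpr}, whose equivalence of~(ii) and~(iii) -- itself routed through Lemma~\ref{intgr} and the in\-te\-gra\-bil\-i\-ty of $\mathcal{V}\oplus\mathcal{H}^+\oplus\mathcal{H}^-$ on~$\mf'\nh$ -- forces the two signs to agree and closes the argument.
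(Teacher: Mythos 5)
Your proposal is correct and follows essentially the same route as the paper's proof: identify (\ref{xtd}) with the mapping $\,\mg_y^\pm$ of the con\-stant-rank multiplication $\,\mu_y^\pm$ via Theorem~\ref{ctrkm}, apply Lemma~\ref{holem} together with a continuity-plus-connectedness argument to get the all-or-nothing alternative for each fixed sign, and then synchronize the two signs by observing that constancy for one sign amounts to condition (ii) or (iii) of Lemma~\ref{intpr}, which are equivalent. The only difference is that you spell out explicitly the openness argument that the paper dismisses as ``an obvious continuity argument,'' which is a welcome clarification rather than a deviation.
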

\begin{proof}In view of Theorem~\ref{ctrkm}, we may use Lemma~\ref{holem} for 
$\,\mg=\mg_y^\pm$ corresponding to $\,\mu=\mu_y^\pm$ as in (\ref{phi}), 
concluding (from an obvious continuity argument) that, with either fixed sign 
$\,\pm\hs$, all the mappings (\ref{xtd}) descend to hol\-o\-mor\-phic 
embeddings of $\,\mathrm{P}\nnh\nr\hskip-2.4pt_y\w$ unless they are 
all constant. Their constancy for one sign implies, however, the same for the 
other, since it amounts to (ii) or (iii) in Lemma~\ref{intpr}, while (ii) and 
(iii) are equivalent. This completes the proof.
\end{proof}
\begin{remark}\label{infty}Case (a) of Theorem~\ref{dicho} is equivalent to 
(\ref{spn}), as one sees combining Lemma~\ref{intpr}(i) with (\ref{hpm}). 
According to (iv) -- (vi) in Lemma~\ref{intpr}), the immersions of 
Theorem~\ref{tgimm}(c) are then embeddings and their images form the leaves of 
foliations on $\,\sd^\mp\nh$, both of which have the same leaf space $\,\sb$.
\end{remark}
\begin{remark}\label{nncst}When (b) holds in Theorem~\ref{dicho}, images of 
the totally geodesic hol\-o\-mor\-phic immersions of Theorem~\ref{tgimm}(c) 
pass through every point $\,y\in\sd^\pm\nh$, realizing an uncountable family 
of tangent spaces: the image of the embedding (\ref{xtd}).
\end{remark}

\section{More on Grass\-mann\-i\-an triples\done}\label{mg}
\setcounter{equation}{0}
We continue using the asumptions and notation of Section~\ref{he}.
\begin{lemma}\label{lfspc}{\medit 
The leaf space\/ $\,\mf'\hskip-2.8pt/\mathcal{V}\,$ of the in\-te\-gra\-ble 
distribution\/ $\,\mathcal{V}=\mathrm{Span}\hs(\hn v,u)\,$ on\/ 
$\,\mf'\nh=\mf\smallsetminus(\sd^+\nnh\cup\sd^-)$, cf.\ 
Lemma\/~{\rm\ref{dvgww}(a)}, carries a natural structure of a compact complex 
manifold of complex dimension\/ $\,m-1$, with\/ $\,m=\dimc\nh\mf\nh$, such 
that the quo\-tient-space projection\/ 
$\,\mf'\nh\to\mf'\hskip-3pt/\mathcal{V}\,$ forms a hol\-o\-mor\-phic fibration 
and, for either sign\/ $\,\pm\hs$, the projectivization\/ 
$\,\mathrm{P}\nnh\nr\hs$ of the normal bundle\/ 
$\,\nr=\nr\hskip-2.3pt\sd^\pm\nh$, defined as in\/ {\rm(\ref{prz})}, is  
bi\-hol\-o\-mor\-phic to\/ $\,\mf'\hskip-3pt/\mathcal{V}\,$ via the 
bi\-hol\-o\-mor\-phisms sending each complex line\/ $\,\pl\,$ through\/ 
$\,0\,$ in the normal space of\/ $\,\sd^\pm$ at any point to the\/ 
$\,\mathrm{Exp}^\perp\nnh$-im\-age of the punctured radius\/ $\,\delta\,$ 
disk in\/ $\,\pl\hh$, the latter image being a leaf of\/ $\,\mathcal{V}\,$ 
according to Lemma\/~{\rm\ref{vsbkr}(a)}.

The mappings\/ {\rm(\ref{dbp})}, restricted to\/ $\,\mf'\nh$, descend to 
hol\-o\-mor\-phic bundle projections
\begin{equation}\label{hbp}
\pi^\pm\nnh:\mf'\hskip-3pt/\mathcal{V}\to\sd^\pm,
\end{equation}
also denoted by\/ $\,\pi^\pm\nh$, which, under the bi\-hol\-o\-mor\-phic 
identifications\/ 
$\,\mf'\hskip-3pt/\mathcal{V}=\mathrm{P}(\nnh\nr\hskip-2.3pt\sd^\pm\nnh)$ of 
the preceding paragraph, coincide with the bundle projections\/ 
$\,\mathrm{P}(\nnh\nr\hskip-2.3pt\sd^\pm\nnh)\to\sd^\pm\nh$.
}
\end{lemma}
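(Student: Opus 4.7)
The plan is to transport the leaf-space structure through the normal-geodesic biholomorphism of Theorem~\ref{first} and identify it with the (already compact, complex) projectivized normal bundle. Fix a sign $\pm$, and let $\nr=\nr\hskip-2.3pt\sd^\pm$, $\nr'\nh=\nr\nh\smallsetminus\sd^\pm$. By Theorem~\ref{first}(i), $\varPhi=\varPhi^\pm:\nr\to\mf\nh\smallsetminus\sd^\mp$ is a bi\-hol\-o\-mor\-phism; restricted to $\nr'$ it is a bi\-hol\-o\-mor\-phism $\nr'\to\mf'$. By Theorem~\ref{first}(iii) (or equivalently (\ref{prs}.ii) and Lemma~\ref{vsbkr}(a)), $\varPhi$ sends the complex radial distribution $\hat{\mathcal{V}}$ on $\nr'$ onto $\mathcal{V}\,$, and carries each leaf of $\hat{\mathcal{V}}$ (a punctured complex line through $0$ in a fibre of $\nr$) onto a leaf of $\mathcal{V}$. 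The tautological bundle projection $\nr'\ni(y,\xi)\mapsto(y,\bbC\xi)\in\mathrm{P}\nnh\nr\,$ is a hol\-o\-mor\-phic $\bbC^*$-bundle whose fibres are exactly the leaves of $\hat{\mathcal{V}}$, so $\nr'/\hat{\mathcal{V}}=\mathrm{P}\nnh\nr\,$ as complex manifolds.

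I would then define the complex structure on $\mf'\hskip-2.8pt/\mathcal{V}$ by declaring the bijection $\mf'\hskip-2.8pt/\mathcal{V}\to\mathrm{P}\nnh\nr\,$ induced by $\varPhi^{-1}$ and the tautological projection to be a bi\-hol\-o\-mor\-phism; the quotient map $\mf'\to\mf'\hskip-2.8pt/\mathcal{V}$ then automatically forms a hol\-o\-mor\-phic fibration, and the dimension count $\dimc\mathrm{P}\nnh\nr=d_\pm+(\dimc\nr\hskip-2.4pt_y\w-1)=d_\pm+(m-1-d_\pm)=m-1$ gives the required dimension. Compactness of $\mathrm{P}\nnh\nr\,$ follows from compactness of $\sd^\pm\nh$ (Remark~\ref{ascdt}(iii)) and of projective space fibres. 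The descent of $\pi^\pm$ to a hol\-o\-mor\-phic bundle projection $\mf'\hskip-2.8pt/\mathcal{V}\to\sd^\pm$ coinciding with the bundle projection $\mathrm{P}\nnh\nr\hs\to\sd^\pm$ is an immediate consequence of (\ref{pcf}): the composite $\pi^\pm\nh\circ\varPhi^\pm$ is the nor\-mal-bun\-dle projection $\nr\to\sd^\pm\nh$, which factors through $\mathrm{P}\nnh\nr\,$ as the bundle projection of the projectivization.

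The one place that requires genuine argument is consistency: the two bi\-hol\-o\-mor\-phic identifications $\mf'\hskip-2.8pt/\mathcal{V}\approx\mathrm{P}(\nr\hskip-2.3pt\sd^+)$ and $\mf'\hskip-2.8pt/\mathcal{V}\approx\mathrm{P}(\nr\hskip-2.3pt\sd^-)$ must induce the same complex structure on the leaf space. To verify this, I would consider the transition map $\varUpsilon=(\varPhi^-)^{-1}\nh\circ\varPhi^+\nh:\nr\hskip-2.3pt\sd^+\nh\smallsetminus\sd^+\to\nr\hskip-2.3pt\sd^-\nh\smallsetminus\sd^-\nh$, obtained by restricting both $\varPhi^\pm$ to $\mf'\nh$. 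As a composite of two restricted bi\-hol\-o\-mor\-phisms it is itself a bi\-hol\-o\-mor\-phism, and it sends leaves of $\hat{\mathcal{V}}_+$ to leaves of $\hat{\mathcal{V}}_-$ (both correspond to leaves of $\mathcal{V}$ under the respective identifications, cf.\ Lemma~\ref{vsbkr}(a)). Hence $\varUpsilon$ descends to a bi\-hol\-o\-mor\-phism $\mathrm{P}(\nr\hskip-2.3pt\sd^+)\to\mathrm{P}(\nr\hskip-2.3pt\sd^-)$ intertwining the two quotient maps, so the two candidate complex structures on $\mf'\hskip-2.8pt/\mathcal{V}$ agree.

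The main (very mild) obstacle is the last consistency step, together with the bookkeeping needed to see that the descended $\pi^\pm$ is literally the projectivized nor\-mal-bun\-dle projection under the chosen identification; both reduce to chasing definitions through Theorem~\ref{first} and Lemma~\ref{vsbkr}(a), together with (\ref{pcf}).
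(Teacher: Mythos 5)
Your proposal is correct and follows essentially the same route as the paper's own proof: both identify $\mf'\hskip-3pt/\mathcal{V}$ with $\mathrm{P}(\nnh\nr\hskip-2.3pt\sd^\pm\nnh)$ via the restriction of $\varPhi^\pm$ from Theorem~\ref{first}, check consistency of the two resulting complex structures by observing that the transition map $(\varPhi^-)^{-1}\nh\circ\varPhi^+$ is a leaf-preserving bi\-hol\-o\-mor\-phism descending to a bi\-hol\-o\-mor\-phism of the two projectivizations, and deduce the statement about $\pi^\pm$ from (\ref{pcf}).
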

\begin{proof}The restrictions 
$\,\varPhi^\pm\nnh=\varPhi:\nr\hskip-2.3pt\sd^\pm\nnh\smallsetminus\nh\sd^\pm
\nh\to\mf'$ given by (\ref{phe}) with the two possible signs $\,\pm\,$ are 
bi\-hol\-o\-mor\-phisms (Theorem~\ref{first}), and hence so is the composite 
of one of them followed by the inverse of the other. At the same time, by 
Theorem~\ref{first}(iii), either of them descends to a bijection 
$\,\mathrm{P}(\nnh\nr\hskip-2.3pt\sd^\pm\nnh)
\to\mf'\hskip-3pt/\mathcal{V}\nh$, and the composite just mentioned yields a 
bi\-hol\-o\-mor\-phism between 
$\,\mathrm{P}(\nnh\nr\hskip-2.3pt\sd^\pm\nnh)\,$ and 
$\,\mathrm{P}(\nnh\nr\hskip-2.3pt\sd^\mp\nnh)$. This turns 
$\,\mf'\hskip-3pt/\mathcal{V}\,$ into a compact complex manifold in a manner 
independent of the bijection used. Our assertion is now immediate from 
(\ref{pcf}).
\end{proof}
\begin{remark}\label{trint}The direct sum of the two vertical distributions 
$\,\mathrm{Ker}\hskip2.3ptd\pi^\pm$ of the projections (\ref{hbp}) is a 
distribution on $\,\mf'\hskip-3pt/\mathcal{V}\nh$, since, at every point 
$\,\lf\in\mf'\hskip-3pt/\mathcal{V}\nh$, they intersect trivially: 
$\,\mathrm{Ker}\hskip2.3ptd\pi_{\nnh\lf}^+\hs
\cap\,\mathrm{Ker}\hskip2.3ptd\pi_{\nnh\lf}^-=\hs\{0\}$. In fact, as a 
consequence of (\ref{tme}), the original vertical distributions on 
$\,\mf'\nh$, given by (\ref{hpm}), intersect along $\,\mathcal{V}\nh$.
\end{remark}
For a Grass\-mann\-i\-an triple $\,(\mf\nh,g,\vt)\,$ obtained as in 
Section~\ref{eg} from some data (\ref{dta}.i), the descriptions of $\,\sd^\pm$ 
provided by (\ref{spm}.a), and
\begin{equation}\label{mpv}
\begin{array}{l}
\mf'\hskip-3pt/\mathcal{V}\,\,\,
=\,\,\hh\{(\ws,\ws')\in\mathrm{Gr}\nh_k\w\nnh\vs
\times\mathrm{Gr}\nh_{k-\nnh1}\w\hskip-2.3pt\vs:\ws'\nh\subseteq\ws\hs\}\hh,
\mathrm{\ under\ which}\\
\pi^\pm\mathrm{\ in\ (\ref{hbp})\ correspond\ to\ 
}\,(\ws,\ws')\mapsto\ws\,\mathrm{\ and\ }\,(\ws,\ws')\mapsto\ws'\nh.
\end{array}
\end{equation}
the equality meaning a natural bi\-hol\-o\-mor\-phic identification. If 
$\,(\mf\nh,g,\vt)\,$ is in turn a CP triple, arising from (\ref{dta}.ii), 
$\,\sd^\pm$ must be as in (\ref{spm}.b), and (\ref{mpv}) is replaced by 
$\,\mf'\hskip-3pt/\mathcal{V}=\hs\sd^+\nnh\times\sd^-\nh$, while 
$\,\pi^\pm$ in (\ref{hbp}) then become the factor projections.

All these claims are immediate consequences of Remark~\ref{ppmgc}(d).
\begin{lemma}\label{ttlds}{\medit 
For a fi\-\hbox{nite\hh-}\hskip0ptdi\-men\-sion\-al complex vector space\/ 
$\,\vs\nh$, any\/ $\,k\in\{1,\dots,\dim\vs\}$, and\/ 
$\,\mf'\hskip-3pt/\mathcal{V}\,$ given by\/ {\rm(\ref{mpv})}, let\/ 
$\,(\ws_0\w,\ws_0'),(\ws,\ws')\in\mf'\hskip-3pt/\mathcal{V}\nh$. Then there 
exist an integer $\,\q\ge1\,$ and\/ 
$\,(\ws_j\w,\ws_j')\in\mf'\hskip-3pt/\mathcal{V}\nh$, $\,j=0,1,\dots,\q$, 
with\/ $\,(\ws_{\!\q}\w,\ws_{\!\q}')=(\ws,\ws')\,$ and\phantom{\hs} 
\hbox{$(\ws_{\!j-\nh1}\w,\ws_{\!j-1}')\sim(\ws_j\w,\ws_j')\,$} whenever\/ 
$\,j=1,\dots,\q$, the notation\/ $\,(\tilde\ws,\tilde\ws')\sim(\ws,\ws')$ 
meaning that\/ $\,\ws=\tilde\ws\,$ or\/ $\,\ws'=\tilde\ws'\nh$.
}
\end{lemma}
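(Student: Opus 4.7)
The plan is to reduce the claim to a purely linear-algebraic connectedness question for $k$-planes in $\vs$, and dispatch that by a short inductive argument. Set $n=\dim\vs$. The extreme cases are immediate: if $k=1$, every $(k-1)$-plane equals $\{0\}$, so $(\ws_0,\{0\})\sim(\ws,\{0\})$ holds directly with $q=1$; if $k=n$, the only $k$-plane is $\vs$, and again $q=1$ works. Assume hereafter $2\le k\le n-1$.

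The heart of the argument is the Grass\-mann\-i\-an statement that any two $k$-planes $\tilde\ws_0,\tilde\ws_1\in\mathrm{Gr}_k\vs$ can be joined by a finite chain $\tilde\ws_0=\ws^{(0)},\ws^{(1)},\ldots,\ws^{(N)}=\tilde\ws_1$ with $\dim(\ws^{(i-1)}\cap\ws^{(i)})=k-1$ for $i=1,\ldots,N$. I plan to prove this by induction on the invariant $d=\dim(\tilde\ws_0+\tilde\ws_1)-k\in\{0,1,\ldots,n-k\}$. The base case $d=0$ gives $\tilde\ws_0=\tilde\ws_1$ and nothing need be done. For $d\ge1$, pick $v\in\tilde\ws_1\setminus\tilde\ws_0$ together with a hyperplane $U\subset\tilde\ws_0$ containing $\tilde\ws_0\cap\tilde\ws_1$ (possible since $\dim(\tilde\ws_0\cap\tilde\ws_1)=k-d\le k-1$), and set $\ws^{(1)}=U+\bbC v$. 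A direct dimension count then yields $\dim(\ws^{(1)}\cap\tilde\ws_0)=k-1$, so $\ws^{(1)}$ is an allowable first step of the chain, while $\dim(\ws^{(1)}\cap\tilde\ws_1)\ge k-d+1$, hence $\dim(\ws^{(1)}+\tilde\ws_1)-k\le d-1$, and the inductive hypothesis applied to the pair $\ws^{(1)},\tilde\ws_1$ closes the argument.

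Applying this to $\tilde\ws_0=\ws_0$ and $\tilde\ws_1=\ws$, and writing $\ws^{(i-1,i)}=\ws^{(i-1)}\cap\ws^{(i)}\in\mathrm{Gr}_{k-1}\vs$, the sought-after $\sim$-chain in $\mf'/\mathcal{V}$ is
\[
(\ws_0,\ws_0')\sim(\ws_0,\ws^{(0,1)})\sim(\ws^{(1)},\ws^{(0,1)})\sim(\ws^{(1)},\ws^{(1,2)})\sim\cdots\sim(\ws^{(N)},\ws^{(N-1,N)})\sim(\ws,\ws'),
\]
in which every step either keeps the $k$-plane coordinate fixed (the accompanying $(k-1)$-plane then lies in it either by the definition of $\ws^{(i-1,i)}$ or, in the very last step, by the standing hypothesis $\ws'\subseteq\ws$) or keeps the $(k-1)$-plane coordinate $\ws^{(i-1,i)}$ fixed (which lies in both adjacent $k$-planes $\ws^{(i-1)}$ and $\ws^{(i)}$ by definition). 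The resulting length is $q=2N+1$ in general, and $q=1$ when $N=0$.

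The only real obstacle is the inductive construction of the $k$-plane chain, which amounts to choosing the hyperplane $U$ and the vector $v$ so that the dimension invariant $d$ strictly drops; everything else is formal bookkeeping. Notably the argument is purely linear-algebraic and draws on none of the K\"ah\-ler or curvature structure developed earlier in the paper --- only on the explicit model \eqref{mpv} of $\mf'/\mathcal{V}$ as a partial flag variety in the Grass\-mann\-i\-an-triple setting.
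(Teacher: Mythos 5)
Your proof is correct and follows essentially the same route as the paper: the paper likewise picks a hyperplane $\,\ws_1'$ of $\,\ws_0\w$ containing $\,\ws_0\w\cap\ws\,$ and replaces $\,\ws_0\w$ by $\,\mathrm{Span}\hh(\ws_1'\cup\{v\})\,$ for some $\,v\in\ws\smallsetminus\ws_0\w$, iterating until $\,\dim(\ws_j\w\cap\ws)\,$ reaches $\,k$ --- which is exactly your induction on $\,d=\dim(\tilde\ws_0\w+\tilde\ws_1\w)-k\,$ in disguise. The only cosmetic difference is that you first build the chain of $\,k$-planes and then interleave the $\,(k-1)$-planes, while the paper constructs the pairs in a single pass.
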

\begin{proof}If $\,\ws_0\w=\ws$, our claim is obvious as 
$\,(\ws_0\w,\ws_0')\sim(\ws,\ws')$. Otherwise we may first choose 
$\,\ws_1\w=\ws_0\w$ and $\,\ws_1'$ such that 
$\,\ws_0\w\cap\ws\subseteq\ws_1'\nh\subseteq\ws_0\w$, and then select 
$\,\ws_2'=\ws_1'$ along with $\,\ws_2\w$ spanned by $\,\ws_1'$ and a vector in 
$\,\ws\smallsetminus\ws_0\w$. Now 
$\,(\ws_0\w,\ws_0')\sim(\ws_1\w,\ws_1')\sim(\ws_2\w,\ws_2')\,$ and 
$\,\dim(\ws_2\w\cap\ws)>\dim(\ws_0\w\cap\ws)$. This step may be repeated for 
$\,(\ws_2\w,\ws_2')\,$ instead of $\,(\ws_0\w,\ws_0')$, as long as 
$\,\ws_2\w\ne\ws$.
\end{proof}
\begin{corollary}\label{strbg}{\medit 
Let\/ $\,(\mf\nh,g,\vt)\,$ be any Grass\-mann\-i\-an triple, arising from some 
data\/ {\rm(\ref{dta}.i)} as in Section\/~{\rm\ref{eg}}. Then the direct 
sum\/ $\,\hs\mathcal{V}\hs\oplus\mathcal{H}^+\hskip-3pt\oplus\mathcal{H}^-$ 
appearing in Lemma\/~{\rm\ref{intpr}(i)} is a strongly 
brack\-et-gen\-er\-at\-ing distribution on\/ $\,\mf'\nh$, in the sense that 
any two points of\/ $\,\mf'$ can be joined by a piecewise\/ $\,C^\infty$ curve 
tangent to\/ 
$\,\mathcal{V}\hs\oplus\mathcal{H}^+\hskip-3pt\oplus\mathcal{H}^-\nh$.
}
\end{corollary}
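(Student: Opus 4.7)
My plan is to transfer the problem to the leaf space $\mf'\hskip-3pt/\mathcal{V}$, where the combinatorial chain provided by Lemma~\ref{ttlds} gives connectivity through fibers of $\pi^+$ and $\pi^-$, and then to lift each step of the chain back to $\mf'$ using connectedness of the fibers of these projections.

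First, I would fix two arbitrary points $x^0, x \in \mf'$ and pass to the leaf space described by Lemma~\ref{lfspc}, identifying it via (\ref{mpv}) with the set of pairs $(\ws, \ws')$ satisfying $\ws' \subseteq \ws$, under which $\pi^+$ and $\pi^-$ become the two coordinate projections. Representing the images of $x^0$ and $x$ in $\mf'\hskip-3pt/\mathcal{V}$ as $(\ws_0, \ws_0')$ and $(\ws, \ws')$, Lemma~\ref{ttlds} yields a chain $(\ws_j, \ws_j')$, $j = 0, 1, \ldots, \q$, each consecutive pair of which shares its first or its second coordinate --- equivalently, lies in a single fiber of either $\pi^+$ or $\pi^-$ on $\mf'\hskip-3pt/\mathcal{V}$.

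Next, denoting by $\lf_j$ the $\mathcal{V}$-leaf in $\mf'$ represented by $(\ws_j, \ws_j')$, I would select lifts $x_j \in \lf_j$ with $x_0 = x^0$ and $x_\q = x$. Since $\mathcal{V} \subseteq \mathrm{Ker}\hskip2.3pt d\pi^\pm$ (Lemma~\ref{vsbkr}), every $\mathcal{V}$-leaf lies inside a single fiber of each projection $\pi^\pm$ on $\mf'$; in particular $\lf_{j-1}$ and $\lf_j$ share a fiber $F_j$ of the appropriate $\pi^{\pm_j}$. By Theorem~\ref{first} (via $\mathrm{Exp}^\perp$), $F_j$ is biholomorphic to an open complex disk in the normal space of the corresponding critical manifold with its origin removed; since in case (\ref{dta}.i) the hypothesis $0 < k < \dim_\bbC\vs = n$ forces the fiber dimensions $m - d_+ = n - k$ and $m - d_- = k$ to be positive, this punctured disk is path-connected. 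Consequently I can join $x_{j-1}$ to $x_j$ by a piecewise $C^\infty$ curve in $F_j$, whose tangent vectors lie in $\mathrm{Ker}\hskip2.3pt d\pi^{\pm_j} = \mathcal{V}\oplus\mathcal{H}^{\mp_j} \subseteq \zy$ by (\ref{hpm}).

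Concatenating the $\q$ segments produced this way will yield the required piecewise smooth curve from $x^0$ to $x$ tangent to $\zy = \mathcal{V}\oplus\mathcal{H}^+\oplus\mathcal{H}^-$. The conceptual heart of the argument is the purely combinatorial Lemma~\ref{ttlds} in the quotient space, whose chain construction encodes exactly the bracket-generating phenomenon; by contrast, the lifting step is essentially automatic, since each $\mathcal{V}$-leaf in $\mf'$ is entirely contained in a single fiber of each $\pi^\pm$ rather than spreading across several fibers, so no serious obstacle arises.
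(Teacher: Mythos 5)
Your proof is correct and follows essentially the same route as the paper's: Lemma~\ref{ttlds} applied in the leaf space via the identification (\ref{mpv}), together with connectedness of the relevant fibres. The only (harmless) difference is organizational: the paper first establishes the bracket-generating property for the direct-sum distribution of Remark~\ref{trint} on $\,\mf'\hskip-3pt/\mathcal{V}\,$ and then lifts through the connected fibres of $\,\mf'\nh\to\mf'\hskip-3pt/\mathcal{V}$, whereas you join the lifted points directly inside the path-connected punctured-disk fibres of $\,\pi^\pm$ in $\,\mf'\nh$.
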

\begin{proof}According to (\ref{mpv}), whenever 
$\,(\tilde\ws,\tilde\ws')\sim(\ws,\ws')\,$ in Lemma~\ref{ttlds}, both 
$\,(\tilde\ws,\tilde\ws')\,$ and $\,(\ws,\ws')\,$ must lie in the same fibre 
of one of the bundle projections (\ref{hbp}). As the fibres of either 
projection (\ref{hbp}), being complex projective spaces (see the last line in 
Lemma~\ref{lfspc}), are connected, the strong brack\-et-gen\-er\-at\-ing 
property thus follows for the di\-rect-sum distribution of Remark~\ref{trint}. 
Our claim is now immediate since 
$\,\hs\mathcal{V}\hs\oplus\mathcal{H}^+\hskip-3pt\oplus\mathcal{H}^-$ projects 
onto that latter distribution under the quo\-tient-space projection 
$\,\mf'\nh\to\mf'\hskip-3pt/\mathcal{V}\nh$, which also has connected fibres 
(bi\-hol\-o\-mor\-phic to twice-punc\-tur\-ed complex projective lines, cf.\ 
Lemma~\ref{vsbkr}(b)).
\end{proof}

\begin{remark}\label{nintg}A compact ge\-o\-des\-ic-gra\-di\-ent K\"ah\-ler 
triple need not, in general, satisfy conditions (i) -- (iii) of 
Lemma~\ref{intpr}, that is, (\ref{spn}). Examples are provided by 
all Grass\-mann\-i\-an triples $\,(\mf\nh,g,\vt)\,$ arising via Lemma~\ref{chone} 
from data (\ref{dta}.i) such that $\,2\le k\le n-2$, where 
$\,n=\dimc\nnh\vs\nh$. 

Namely, in (\ref{dmq}), $\,q=(k-1)(n-1-k)\,$ as $\,m=(n-k)k\,$ (see 
Remark~\ref{grass}) and, similarly, 
$\,\{d_+\w,\hs d_-\w\}=\{(n-k)(k-1),(n-1-k)k\}\,$ from (\ref{spm}.a) -- 
(\ref{spm}.b), where $\,\dimc\nh\ls=1\,$ by (\ref{dta}.i). Thus, $\,q>0\,$ and 
$\,\hs\mathcal{V}\hs\oplus\mathcal{H}^+\hskip-3pt\oplus\mathcal{H}^-$ in 
(\ref{tme}) is a proper sub\-bun\-dle of $\,T\nnh\mf'\nh$. Consequently, due 
to Corollary~\ref{strbg}, it cannot be in\-te\-gra\-ble.
\end{remark}
\begin{remark}\label{twweb}For any compact ge\-o\-des\-ic-gra\-di\-ent 
K\"ah\-ler triple $\,(\mf\nh,g,\vt)$, the leaf space 
$\,\mf'\hskip-3pt/\mathcal{V}\,$ carries what might be called a {\medit 
hol\-o\-mor\-phic $\,2$-web of complex projective spaces}, formed by the two 
hol\-o\-mor\-phic fibrations (\ref{hbp}) with fibres bi\-hol\-o\-mor\-phic to 
(pos\-i\-\hbox{tive\hh-}\hskip0ptdi\-men\-sion\-al) complex projective spaces, 
having the triv\-i\-al-in\-ter\-sec\-tion property of Remark~\ref{trint}. 
There is also a natural hol\-o\-mor\-phic complex line bundle over 
$\,\mf'\hskip-3pt/\mathcal{V}\nh$, the restriction of which to every fibre of 
$\,\pi^+$ (or, $\,\pi^-$), with (\ref{hbp}), is bi\-hol\-o\-mor\-phic\-al\-ly 
isomorphic to the tautological (or, respectively, dual tautological) bundle of 
the fibre. Specifically, the complex line attached to a leaf 
$\,\lf\hs\subseteq\mf'$ of $\,\mathcal{V}\,$ is 
$\,\{0\}\cup\varPhi^{-\nnh1}\nnh(\lf)\subseteq\nr\hskip-2.4pt_y\w\sd^\pm\nh$, 
cf.\ Theorem~\ref{first}(iii); that changing the sign $\,\pm\,$ to $\,\mp\,$ 
leads to its dual complex line follows from 
\cite[Remark 4.1]{derdzinski-maschler-06} and (\ref{hgv}.a) -- (\ref{hgv}.b).
\end{remark}






\end{document}